\title{Cluster parking functions}
\author[1]{Theo Douvropoulos}
\author[2]{Matthieu Josuat-Vergès}
\affil[1]{Brandeis University}
\affil[2]{IRIF, CNRS, Université Paris-Cité}
\newtheorem{theo}{Theorem}[section]
\newtheorem{lemm}[theo]{Lemma}
\newtheorem{prop}[theo]{Proposition}
\newtheorem{conj}[theo]{Conjecture}
\theoremstyle{definition}
\newtheorem{defi}[theo]{Definition}
\newtheorem{rema}[theo]{Remark}
\newtheorem{open}[theo]{Open problem}
\DeclareMathOperator{\NC}{NC}
\DeclareMathOperator{\LC}{LC}
\DeclareMathOperator{\PF}{PF}
\DeclareMathOperator{\CPF}{CPF}
\DeclareMathOperator{\bNC}{\overline{NC}}
\DeclareMathOperator{\bPF}{\overline{PF}}
\DeclareMathOperator{\Cat}{Cat}
\DeclareMathOperator{\Inv}{Inv}
\DeclareMathOperator{\Red}{Red}
\DeclareMathOperator{\Span}{Span}
\DeclareMathOperator{\rk}{rk}
\newcommand{\bchi}{\boldsymbol{\chi}}
\newcommand{\bzeta}{\boldsymbol{\zeta}}
\newcommand{\park}{\mathbf{Park}}
\newcommand{\sign}{\mathbf{Sign}}
\newcommand{\ch}{\mathbf{Ch}}
\newcommand{\bC}{\mathbf{C}}
\newcommand{\bH}{\mathbf{H}}
\begin{document}

\maketitle

\begin{abstract}
    The cluster complex on one hand, parking functions on the other hand, are two combinatorial (po)sets that can be associated to a finite real reflection group.  Cluster parking functions are obtained by taking an appropriate fiber product (over noncrossing partitions).  There is a natural structure of simplicial complex on these objects, and our main goal is to show that it has the homotopy type of a (pure) wedge of spheres.  The unique nonzero homology group (as a representation of the underlying reflection group) is a sign-twisted parking representation, which is the same as Gordon's quotient of diagonal coinvariants.  
    Along the way, we prove some properties of the poset of parking functions.  We also provide a long list of open problems. 
\end{abstract}

\emph{Keywords:} Cluster complex, parking functions, finite Coxeter groups, poset topology, noncrossing partitions.

\emph{Mathematics subject classification:} 05A15, 05E05, 20F55

\section{Introduction}

\subsection{Parking functions} Before going into the general framework of finite real reflection groups, let us illustrate our main points in the case of the symmetric group.  A {\it parking function} can be represented as a Dyck path, together with some labels, for example
\[
    \begin{tikzpicture}[scale=0.4]
        \draw[lightgray] (0,0) grid (8,8);
        \draw[very thick] (0,0) -- (0,3) -- (1,3) -- (1,4) -- (3,4) -- (3,8) -- (8,8);
        \draw[dashed] (0,0) -- (8,8);
        \node at (0.5,0.5) {$2$};
        \node at (0.5,1.5) {$3$};
        \node at (0.5,2.5) {$7$};
        \node at (1.5,3.5) {$4$};
        \node at (3.5,4.5) {$1$};
        \node at (3.5,5.5) {$5$};
        \node at (3.5,6.5) {$6$};
        \node at (3.5,7.5) {$8$};
    \end{tikzpicture}
\]
is a parking function of size $n=8$.  The rule is that each $i\in\{1, \dots, n\}$ appears exactly once as a label of some vertical step, and labels are increasing along sequences of consecutive vertical steps.  Equivalently, there is a set partition $\pi$ of $\{1,\dots,n\}$ such that each sequence of $k$ vertical steps in the Dyck path is associated to a $k$-element block of $\pi$.  

Generalizing the $(n\times n)$-square in the example above, one can consider an $(n\times mn)$-rectangle (with $n$ rows and $mn$ columns, where $m\geq 1$) and its main diagonal.  The paths above the diagonal are {\it $m$-Dyck paths}, counted by the {\it Fuß-Catalan numbers}: \begin{align} \label{eq:deffusscat}
    C_n^{(m)} 
        :=
    \frac{1}{(m+1)n+1}\binom{(m+1)n+1}{n}.
\end{align}
The associated labeled paths give the set $\PF_n^{(m)}$ of {\it $m$-parking functions}.  For example, an element of $\PF^{(3)}_5$ is
\[
    \begin{tikzpicture}[scale=0.4]
        \draw[lightgray] (0,0) grid (15,5);
        \draw[very thick] (0,0) -- (0,2) -- (3,2) -- (3,3) -- (8,3) -- (8,5) -- (15,5);
        \draw[dashed] (0,0) -- (15,5);
        \node at (0.5,0.5) {$2$};
        \node at (0.5,1.5) {$4$};
        \node at (3.5,2.5) {$3$};
        \node at (8.5,3.5) {$1$};
        \node at (8.5,4.5) {$5$};
    \end{tikzpicture}\;.
\]
Moreover, let $\PF_n'^{(m)}$ denote the subset of {\it prime} $m$-parking functions, {\it i.e.}, those such that the underlying path is a {\it prime} $m$-Dyck path (which means that it doesn't touch the diagonal except at the two opposite corners of the rectangle).  See Armstrong, Loehr and Warrington~\cite{armstrongloehrwarrington} (they define rational parking functions indexed by pairs of coprime integers $(a,b)$, and the two sets $\PF_n^{(m)}$ and $\PF_n'^{(m)}$ correspond to $(n,mn+1)$ and $(n,mn-1)$, respectively).  

For each $m$-Dyck path, we associate an integer partition $\lambda$ (its {\it type}) by the rule that $k$ vertical steps with the same abscissa correspond to a part $k$ in $\lambda$.  For example, the underlying paths in the examples above  respectively give $(4,3,1)$ and $(2,2,1)$.  The Frobenius characteristic of the symmetric group $\mathfrak{S}_n$ acting on $\PF^{(m)}$ and $\PF'^{(m)}$ by relabelling can be given explicitly~\cite{armstrongloehrwarrington}:
\begin{align} \label{eq:chpf}
    \operatorname{ch}(\PF_n^{(m)})
        =
    \sum_{\lambda \vdash n}  K_{\lambda,m} h_\lambda
        =
    \sum_{\lambda \vdash n} (mn+1)^{\ell(\lambda)} \frac{p_\lambda}{z_\lambda}, \\
    \label{eq:chpf'}
    \operatorname{ch}(\PF_n'^{(m)})
        =
    \sum_{\lambda \vdash n}  K'_{\lambda,m} h_\lambda
        =
    \sum_{\lambda \vdash n} (mn-1)^{\ell(\lambda)} \frac{p_\lambda}{z_\lambda},
\end{align}
where:
\begin{itemize}
    \item $K_{\lambda,m}$ (respectively, $K'_{\lambda,m}$) is the number of $m$-Dyck paths (respectively, prime $m$-Dyck paths) of type $\lambda$,
    \item $\ell(\lambda)$ is the length of $\lambda$, $h_\lambda$ are the homogeneous symmetric functions, $p_\lambda$ are the powersum symmetric functions, $z_\lambda$ is a combinatorial factor (see Macdonald~\cite{macdonald}).
\end{itemize}
Moreover, there are explicit formulas for $K_{\lambda,m}$ and $K'_{\lambda,m}$:
\begin{align} \label{eq:formulaK}
      K_{m,\lambda} = \frac{\prod_{i=1}^{\ell(\lambda)-1}(mn+1-i) }{ \prod_{i\geq 1} \mu_i(\lambda)! },
        \qquad
      K'_{m,\lambda} = \frac{\prod_{i=1}^{\ell(\lambda)-1}(mn-1-i) }{ \prod_{i\geq 1} \mu_i(\lambda)! },
\end{align}
where $\mu_i(\lambda)$ is the multiplicity of $i$ in $\lambda$.

Loosely speaking, our cluster parking functions are similar to parking functions in the sense that we put labels on faces of the {\it cluster complex}, rather than Dyck paths. 

\subsection{The cluster complex} Consider the simplicial complex $\Delta_n$ of {\it dissections} of a convex $(n+2)$-gon: its vertices are the $\frac{(n-1)(n+2)}{2}$ diagonals, its faces are sets of pairwise noncrossing diagonals, and its facets are triangulations.  This is the combinatorial description of the cluster complex of type $A_{n-1}$, see~\cite{fominzelevinsky}.  For example, a face and a facet of $\Delta_6$ are:
\[
    \begin{tikzpicture}
        \draw[very thick] (0.38,-0.92) -- (-0.38,-0.92) -- (-0.92,-0.38) -- (-0.92,0.38) -- (-0.38,0.92) -- (0.38,0.92) -- (0.92,0.38) -- (0.92,-0.38) -- (0.38,-0.92);
        \draw[very thick] (-0.38,-0.92) -- (0.38,0.92);
        \draw[very thick] (-0.38,-0.92) -- (0.92,0.38);
    \end{tikzpicture}
    ,\qquad 
    \begin{tikzpicture}
        \draw[very thick] (0.38,-0.92) -- (-0.38,-0.92) -- (-0.92,-0.38) -- (-0.92,0.38) -- (-0.38,0.92) -- (0.38,0.92) -- (0.92,0.38) -- (0.92,-0.38) -- (0.38,-0.92);
        \draw[very thick] (-0.92,-0.38) -- (-0.38,0.92);
        \draw[very thick] (-0.92,-0.38) -- (0.38,0.92);
        \draw[very thick] (-0.38,-0.92) -- (0.38,0.92);
        \draw[very thick] (-0.38,-0.92) -- (0.92,0.38);
        \draw[very thick] (-0.38,-0.92) -- (0.92,-0.38);
    \end{tikzpicture}.
\]
The {\it Fuß analog} can be described similarly, following Fomin and Reading~\cite{fominreading}.  Let $m\geq 1$ be an integer (a {\it Fuß parameter}).  The {\it generalized cluster complex} $\Delta^{(m)}_n$ (of type $A_{n-1}$) is the complex of dissections of an $(mn+2)$-gon, where each inner polygon in the dissection is a $(mk+2)$-gon for some $k \geq 1$.  The number of facets in $\Delta^{(m)}_n$ is the Fuß-Catalan number $C_n^{(m)}$ as given in~\eqref{eq:deffusscat}. 

To each face $f \in \Delta^{(m)}_n$, we associate an integer partition $\lambda = (\lambda_1 ,\lambda_2,\dots )$ of $n$ (called its {\it type}, as with parking functions and Dyck paths).  The rule is that each inner $(mk+2)$-gon gives a part of $\lambda$ equal to $k$.  In the example above, we get $\lambda=(3,2,1)$ for the face with two diagonals, and $\lambda=(1,1,1,1,1,1)$ for the facet (or any other facet).  This integer partition has a geometric interpretation: the link of the face $f$ in $\Delta^{(m)}_n$ is isomorphic to the product $\Delta^{(m)}_{\lambda_1} \times \Delta^{(m)}_{\lambda_2} \times \cdots$ (this is especially relevant when $m=1$, where the complex $\Delta_n$ is the dual of a simple polytope called the {\it associahedron}:  for a face $f\in \Delta_n$ of type $\lambda$, the corresponding face in the associahedron is combinatorially isomorphic to a product of smaller associahedra of dimensions $\lambda_1-1,\lambda_2-1, \dots$).  

There is a natural {\it positive} subcomplex $\Delta^{(m),+}_n \subset \Delta^{(m)}_n$.  The number of faces $f \in \Delta^{(m)}_n$ (respectively, $f \in \Delta^{(m),+}_n$) of a given type $\lambda$ is denoted by $L_{m,\lambda}$ (respectively, $L^+_{m,\lambda}$).  One result from our previous work is the type-refined enumeration of faces of the generalized cluster complex, and it can be obtained from the relations~\cite[Theorem~7.1]{douvropoulosjosuatverges}:
\begin{equation} \label{eq!llambda}
    L_{m,\lambda} = (-1)^{\ell(\lambda)} K'_{-m,\lambda},
        \qquad
    L^+_{m,\lambda} = (-1)^{\ell(\lambda)} K_{-m,\lambda}.
\end{equation}
In particular, explicit formulas can be deduced using those in~\eqref{eq:formulaK}.  From~\eqref{eq:chpf} and~\eqref{eq:chpf'} together with the previous equation, we get:
\begin{align}    \label{altern_sum}
    \sum_{\lambda \vdash n}  (-1) ^{\ell(\lambda)} L_{m,\lambda} h_\lambda
        =
    \sum_{\lambda \vdash n }(-mn-1)^{\ell(\lambda)} \frac{p_\lambda}{z_\lambda},\\
    \label{altern_sum2}
    \sum_{\lambda \vdash n}  (-1) ^{\ell(\lambda)} L^+_{m,\lambda} h_\lambda
        =
    \sum_{\lambda \vdash n }(-mn+1)^{\ell(\lambda)} \frac{p_\lambda}{z_\lambda}.
\end{align}
Our cluster parking functions will give a representation theoretic interpretation of~\eqref{altern_sum} and~\eqref{altern_sum2}. 

\subsection{Cluster parking functions}
At the origin of this work, is the idea that the alternating sum in the left-hand side of~\eqref{altern_sum} (respectively,~\eqref{altern_sum2}) can be interpreted as the reduced equivariant Euler characteristic\footnote{The term  ``equivariant Euler characteristic'' is far from standard, and indeed has sometimes different meanings in the literature.  See the next section for its meaning here.} of a simplicial complex $\CPF^{(m)}_n$ (respectively, $\CPF^{(m),+}_n$).  An element of $\CPF^{(m)}_n$, called a {\it cluster parking function}, is a face $f\in \Delta^{(m)}_n$ together with labels such that each inner $(mk+2)$-gon has $k$ (unordered) labels and each $i\in \{1,\dots,n \}$ appears exactly once as a label.  For example, a 2-dimensional face of $\CPF_6$ (the absence of the Fuß parameter $m$ means that $m=1$) is:
\[
    \begin{tikzpicture}
        \draw[very thick] (0.38,-0.92) -- (-0.38,-0.92) -- (-0.92,-0.38) -- (-0.92,0.38) -- (-0.38,0.92) -- (0.38,0.92) -- (0.92,0.38) -- (0.92,-0.38) -- (0.38,-0.92);
        \draw[very thick] (-0.38,-0.92) -- (0.38,0.92);
        \draw[very thick] (-0.38,-0.92) -- (0.92,0.38);
        \node at (-0.4,0.2) {\small 235};
        \node at (0.45,0.35)  {\small 4};
        \node at (0.5,-0.4)  {\small 16};
    \end{tikzpicture}.
\]
A smaller face is obtained by removing some diagonals (as in $\Delta^{(m)}_n$, the complex of dissections), and when inner faces merge we accordingly merge their label sets.  For example, the smaller faces obtained from the face in the example above are:
\[
    \begin{tikzpicture}
        \draw[very thick] (0.38,-0.92) -- (-0.38,-0.92) -- (-0.92,-0.38) -- (-0.92,0.38) -- (-0.38,0.92) -- (0.38,0.92) -- (0.92,0.38) -- (0.92,-0.38) -- (0.38,-0.92);
        \draw[very thick] (-0.38,-0.92) -- (0.38,0.92);
        \node at (-0.4,0.2) {\small 235};
        \node at (0.45,-0.2)  {\small 146};
    \end{tikzpicture},
    \qquad
        \begin{tikzpicture}
        \draw[very thick] (0.38,-0.92) -- (-0.38,-0.92) -- (-0.92,-0.38) -- (-0.92,0.38) -- (-0.38,0.92) -- (0.38,0.92) -- (0.92,0.38) -- (0.92,-0.38) -- (0.38,-0.92);
        \draw[very thick] (-0.38,-0.92) -- (0.92,0.38);
        \node at (-0.2,0.2) {\small 2345};
        \node at (0.5,-0.4)  {\small 16};
    \end{tikzpicture},
    \qquad
            \begin{tikzpicture}
        \draw[very thick] (0.38,-0.92) -- (-0.38,-0.92) -- (-0.92,-0.38) -- (-0.92,0.38) -- (-0.38,0.92) -- (0.38,0.92) -- (0.92,0.38) -- (0.92,-0.38) -- (0.38,-0.92);
        \node at (0,0) {\small 123456};
    \end{tikzpicture},
\]
(the last element being the empty face).  Moreover, the group $\mathfrak{S}_n$ acts on $\CPF_n^{(m)}$ and  $\CPF_n^{(m),+}$ by relabeling just as in the case of parking functions.  

Now, our main result says that $\CPF^{(m)}$ and $\CPF^{(m),+}$ have the homotopy type of wedges of $(n-2)$-dimensional spheres.  It follows that the characters of $\mathfrak{S}_n$ acting on their degree $n-2$ homology are the symmetric functions in~\eqref{altern_sum} and~\eqref{altern_sum2}, up to a factor $(-1)^n$.  Let us complete the picture by stating the main theorem in the general case of finite real reflection groups. 





\begin{theo}[{Theorem \ref{theo:topcpf} below}]
Let $W$ be a finite irreducible real reflection group of rank $n$, and let $h$ be its Coxeter number.  The simplicial complexes $\CPF^{(m)}(W)$ and $\CPF^{(m),+}(W)$ (defined in Section~\ref{sec:cpf}) have the homotopy type of a wedge of $(n-1)$-dimensional spheres, consisting of $(mh+1)^n$ spheres for the former and $(mh-1)^n$ spheres for the latter.  Moreover, the character of $W$ acting on the nonzero homology group $\tilde H_{n-1}(\CPF^{(m)}(W))$, respectively $\tilde H_{n-1}(\CPF^{(m),+}(W))$, is 
\begin{equation}  \label{eq:charhcpf1}
    w \mapsto (-1)^{\ell(w)} (mh+1)^{n-\ell(w)},
\end{equation}
respectively
\begin{equation}  \label{eq:charhcpf2}
    w \mapsto (-1)^{\ell(w)} (mh-1)^{n-\ell(w)}
\end{equation}
({\it i.e.}, these are parking characters twisted by the sign character).
\end{theo}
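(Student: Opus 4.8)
The plan is to split the statement into a topological part---that $\CPF^{(m)}(W)$ and $\CPF^{(m),+}(W)$ are homotopy equivalent to wedges of $(n-1)$-spheres---and an equivariant part, and to reduce the second to the first. Granting the wedge-of-spheres form, reduced homology is concentrated in degree $n-1$, so the reduced equivariant Euler characteristic $\sum_i(-1)^i[\tilde H_i(\CPF^{(m)}(W))]$ equals $(-1)^{n-1}[\tilde H_{n-1}(\CPF^{(m)}(W))]$ as an honest $W$-character; on the other hand this Euler characteristic is computed face by face, and sorting faces by their (noncrossing, hence integer) type $\lambda$ and passing from the type-refined count~\eqref{eq!llambda} to class functions identifies it with the right-hand side of~\eqref{altern_sum}, that is, with the sign-twisted parking character~\eqref{eq:charhcpf1} up to the global sign $(-1)^{n-1}$; the positive complex is the same story with~\eqref{altern_sum2} and~\eqref{eq:charhcpf2}. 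Evaluating at $w=1$ gives the sphere counts $(mh+1)^n$ and $(mh-1)^n$, and the identification with Gordon's quotient of diagonal coinvariants is the already-known equality of that module with the sign-twisted parking representation. So the whole content is the connectivity statement.

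For the topological statement I would show that $\CPF^{(m)}(W)$ is homotopy Cohen--Macaulay; being pure of dimension $n-1$, it is then a wedge of $(n-1)$-spheres. The argument is by induction on $n=\rk(W)$. The local half is easy: by the product structure of links in the generalized cluster complex, lifted to the labelled setting, the link in $\CPF^{(m)}(W)$ of a nonempty face is a join of cluster parking complexes $\CPF^{(m)}(W')$ for parabolic subgroups $W'$ of $W$ of strictly smaller rank; these are wedges of spheres of the expected dimension by the inductive hypothesis, so the join is as well, of the correct dimension by additivity of joins. Thus all proper links are homotopy Cohen--Macaulay, and all that is left is the single global assertion that $\CPF^{(m)}(W)$ itself is $(n-2)$-connected. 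For this I would exploit the fiber-product presentation of $\CPF^{(m)}(W)$ over $\NC(W)$ together with the labelled form of the map $\nc$: a poset fiber theorem (Quillen's fiber lemma, or the Bj\"orner--Wachs--Welker refinements), applied to the projection onto the noncrossing partition lattice---or equally to the forgetful map onto $\Delta^{(m)}(W)$---should reduce the required connectivity to the high connectivity of $\Delta^{(m)}(W)$ (known from Athanasiadis--Tzanaki's work), the Cohen--Macaulayness of $\NC(W)$, and the properties of the parking poset of $W$ established earlier in the paper. The positive complex $\CPF^{(m),+}(W)$ runs in parallel, with the fibers now involving the positive generalized cluster complexes $\Delta^{(m),+}(W)$, which are likewise wedges of $(n-1)$-spheres.

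I expect the main obstacle to be precisely this global connectivity step: fitting together the cluster-complex side and the parking side over the common base $\NC(W)$ and checking the connectivity hypotheses of the fiber theorem---that every relevant fiber and open interval is as highly connected as its position over the base requires---uniformly in the pair $(W,m)$. Two alternative routes would be to build an explicit (EL- or CL-) shelling of the face poset of $\CPF^{(m)}(W)$, lifted from a shelling of the cluster complex and refined over the parking data, or an acyclic discrete Morse matching with every critical cell in top dimension; either would also make the inductive count of the $(mh+1)^n$ homology facets explicit, but the fiber-theorem/Cohen--Macaulay route seems the most natural given that $\CPF^{(m)}(W)$ is a fiber product by construction. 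The base cases of small rank are direct computations.
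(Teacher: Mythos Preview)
Your overall architecture is exactly the paper's: prove that $\CPF^{(m)}$ (and $\CPF^{(m),+}$) is homotopy Cohen--Macaulay of dimension $n-1$, then read off the character as $(-1)^{n-1}\tilde\bchi$ using the face-by-face alternating sum computed in Proposition~\ref{prop:altersumcharac}. That part of your outline is correct and matches the paper.

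The divergence is in the fiber-theorem step. You propose to project onto $\NC$ or onto $\Delta^{(m)}$. The paper instead projects $\rho:\CPF^{(m)}\to\PF$, $(wW_{\underline f},f)\mapsto wW_{\underline f}$. With this choice the Quillen fiber $\rho/(wW_\pi)$ is literally $\Delta^{(m)}(\pi)=\{f:\underline f\le\pi\}$, which is pure and shellable by Proposition~\ref{prop:subcompl} (Athanasiadis--Tzanaki). So Quillen's Theorem~9.1 applies as soon as one knows the \emph{base} $\PF$ is homotopy Cohen--Macaulay. That last statement is the real crux of the paper (Sections~\ref{sec:topnc}--\ref{sec:toppf}): one proves $\Omega(\bPF)\simeq(\mathbb S^{n-1})^{\vee(h-1)^n}$ by a ``generalized shelling'' along a linear extension of Bruhat order, reducing via Lemma~\ref{lemm:ideals} to the topology of order ideals $\mathcal I(\bNC,\Inv(w))$, which are shown to be wedges of $(n-2)$-spheres using the geometry of the positive cluster fan (Proposition~\ref{prop:idealNC}).

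By contrast, your suggested projections do not decouple the two factors. Over $\NC$ the fiber $\{(wW_{\underline f},f):\underline f\le\pi\}$ is again a fiber product mixing cosets and cluster faces; over $\Delta^{(m)}$ the fiber $\{(wW_{\underline{f'}},f'):f'\subset f\}$ is an order ideal in $\PF$ (varying with $f$). In either case you are thrown back onto exactly the question of the Cohen--Macaulayness of $\PF$ or of its order ideals, which is the hard input you only gesture at (``properties of the parking poset established earlier''). So the gap in your plan is not the framework but the \emph{choice of target}: project to $\PF$, and then recognise that proving $\PF$ is homotopy Cohen--Macaulay is where the substantive work lies. Your inductive treatment of links is correct (it is Proposition~\ref{prop:cpflink}) but becomes redundant once Quillen's theorem is applied to $\CPF^{(m)}\to\PF$: that theorem gives the full Cohen--Macaulay conclusion in one stroke, not just the global connectivity.
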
  

The complex $\CPF_n^+$ (in the case of the symmetric group) was also defined in~\cite{delcroixogerjosuatvergesrandazzo}, where we study the topology of a poset structure on parking functions (denoted by $\PF_n$).  The hope was to use cluster parking functions in order to get a nice derivation of the topology of $\PF_n$.  This turned out to be the other way around: an important step of the present work, which is also interesting on its own, is to obtain the topology of $\PF(W)$ as a wedge of spheres (see Theorem~\ref{theo:pftop}).

A few remarks are in order about the general case of finite real reflection groups.   All the character calculations are amenable because they take place in the {\it parabolic Burnside ring} (which is very well-understood in a uniform way, unlike the full character ring).  Various relevant character calculations have been presented in our previous work~\cite{douvropoulosjosuatverges}, to which we refer when needed.  Our combinatorial construction relies on {\it noncrossing parking functions}, introduced by Armstrong, Reiner and Rhoades~\cite{armstrongreinerrhoades} in the context of their parking space theory.  Here (Section~\ref{sec:toppf}), we examine the topology of an associated partial order, generalizing the type $A$ results in~\cite{delcroixogerjosuatvergesrandazzo}.  

To finish this introduction, let us mention that the parking character twisted by sign (the case $m=1$ in~\eqref{eq:charhcpf1}) is also the (ungraded) character of Gordon's ring $R_W$ defined in terms of diagonal coinvariants~\cite{gordon}.  So $\tilde H_{n-1}(\CPF)$ is isomorphic to this ring as a representation of $W$.  It is unclear whether this is just a mere coincidence or not. 

Let us now outline the contents of this article.  After some preliminaries in Section~\ref{sec:defi}, we give first properties of cluster parking functions in Section~\ref{sec:cpf}.  The proof of the main result has three steps:
\begin{itemize}
    \item In Section~\ref{sec:topnc}, we give a preliminary result about the topology of some order ideals in the poset of noncrossing partitions.
    \item In Section~\ref{sec:toppf}, we describe the topology of the parking function poset $\PF$ (using the result in Section~\ref{sec:topnc}).  
    \item In Section~\ref{sec:topcpf}, we use a poset map $\CPF \to \PF$ and Quillen's theory of Cohen-Macaulay posets to arrive at our main result, Theorem~\ref{theo:topcpf}.
\end{itemize}
In Section~\ref{ssec:h_vecs}, we give positive formulas for the $f$- and $h$-vectors of the complexes $\CPF$ involving numerical invariants of the corresponding reflection group.  In Section~\ref{sec:enum}, we introduce new combinatorial objects that are equinumerous with parking functions as a consequence of the results in Section~\ref{sec:toppf}.  Several open problems are given in Section~\ref{sec:open}.

\section{Definitions and background}
\label{sec:defi}

\subsection{Poset topology}

We refer to Wachs~\cite{wachs} or Kozlov~\cite{kozlov} for this topic.

Let $\Gamma$ be any finite simplicial complex, that we identify with its face poset (where the order relation is inclusion).  We say $\Gamma$ is {\it pure} if all its facets (maximal faces) have the same dimension.  We denote by $|\Gamma|$ the {\it geometric realization} of $\Gamma$.  We denote by $\Gamma_d$ its set of $d$-dimensional faces, for $d\geq -1$.  If we have an {\it orientation} of $\Gamma$ (each face $f$ is endowed with a total order $\omega_f$, and if $f'\subset f$ then $\omega_{f'}$ is the total order induced by $\omega_f$), we can define the {\it reduced simplicial homology groups} over $\mathbb{Z}$, denoted by $\tilde H_i(\Gamma)$ for $i\geq -1$.  The {\it reduced Euler characteristic} $\tilde\chi$ of $\Gamma$ is:
\[
    \tilde \chi(\Gamma) 
    :=
    \sum_{i \geq -1} (-1)^i \# \Gamma_i
    =
    \sum_{i \geq -1} (-1)^i \dim_{\mathbb{Q}} (\mathbb{Q}\otimes \tilde H_i(\Gamma) ).
\]

Let $G$ be finite group acting on $\Gamma$, so that inclusion and dimension of faces are preserved.   Let $\Gamma^g := \{ f\in \Gamma \;:\; g\cdot f = f\}$ be the fixed-point set of $g\in G$.  We assume that the group action is {\it admissible}, which means that for all $g\in G$, $\Gamma^g$ is a subcomplex of $\Gamma$ (an order ideal in the face poset).  Let $\bC_i(\Gamma)$ denote the character of $G$ acting on $\Gamma_i$ (the $i$th chain space).  There is also an induced action of $G$ on the homology group $\tilde H_i(\Gamma)$.  We denote by $\tilde\bH_i(\Gamma)$ its character.  The (reduced) {\it equivariant Euler characteristic} is the (virtual) character
\[
    \tilde \bchi(\Gamma) 
    := 
    \sum_{i\geq -1} (-1)^i \bC_i(\Gamma)
    =
    \sum_{i\geq -1} (-1)^i \tilde\bH_i(\Gamma),
\]
where the last equality comes from the {\it Hopf trace formula}.   The (reduced) {\it Lefschetz number} $\tilde \Lambda(\Gamma, g)$ is defined as $\tilde \chi(\Gamma^g)$.  Baclawski and Björner~\cite{baclawskibjorner} have showed that $\tilde \Lambda(\Gamma,g)$ is the evaluation of $\tilde \bchi(\Gamma)$ at $g$.

The {\it order complex} $\Omega(P)$ of a finite poset $P$ is the simplicial complex having the elements of $P$ as vertices, and sets of pairwise comparable elements as faces.  A $d$-dimensional face $f\in \Omega(P)$ can be represented as a strict chain $x_0 < x_1 < \dots < x_{d}$ in $P$.  In particular, the order induced by $P$ gives a natural orientation of $\Omega(P)$, which we always use to compute reduced homology.  As a shortcut, we denote by $\tilde H_i(P) := \tilde H_i(\Omega(P))$ and similarly for $\tilde \chi$, {\it etc}.  The poset $P$ is {\it ranked} iff $\Omega(P)$ is pure.  We write $\rk(p)$ for the rank of $p\in P$, omitting $P$ in the notation.  If the poset $P$ is the poset of nonempty faces of a simplicial complex $\Gamma$, the order complex $\Omega(P)$ is called the {\it barycentric subdivision} of $\Gamma$.  A simplicial complex and its barycentric subdivision have homeomorphic geometric realizations. 

The poset $P$ is called a $G$-poset if there is an action of $G$ preserving the order relation.  There is an induced action of $G$ on $\Omega(P)$, and it is admissible.  
The {\it $\zeta$-character} $\bzeta(P,m)$ of a $G$-poset $P$ is a polynomial in $m$ with values in the character ring of $G$ such that, for each integer $m\geq 1$, $\bzeta(P,m)$ is the character of $G$ acting on multichains $p_1 \leq \dots \leq p_m$.  If $P$ has a minimum $0$ but no maximum, we have the relation (see~\cite[Proposition~1.7]{delcroixoger}):
\begin{equation} \label{eq:relzetachi}
    \tilde \bchi(P\backslash\{0\})
        =
    - \bzeta(P,-1).
\end{equation}

For any poset $P$, denote by $\overline{P}$ the {\it proper part} of $P$, obtained by removing its minimum if it exists, and its maximum if it exists.  A finite poset $P$ is {\it bounded} if it has a minimum and a maximum.  If $P$ is not bounded, denote by $\hat P$ the smallest bounded poset containing $P$ (thus obtained by adding an extra top element when $P$ doesn't already have one, and an extra bottom element similarly).

Let $\mathbb{S}^d$ denote the standard $d$-dimensional sphere, and $\vee$ denote the wedge product (to be precise, the sphere should be considered as a {\it pointed} topological space to define wedge products).  The $k$-fold wedge product of $\mathbb{S}^d$ is thus denoted by $(\mathbb{S}^d)^{\vee k}$.  When $k=0$, this is the one-point topological space, denoted by $\bullet$.  Generally speaking, poset topology deals with the problem of finding the homotopy type of $\Omega(P)$ for a finite poset $P$.  Some of the most simple homotopy types are given by a wedge of spheres, and typical results have the form $\Omega(P) \simeq (\mathbb{S}^d)^{\vee k}$ for some nonnegative integers $d$ and $k$ ($\simeq$ stands for homotopy equivalence).

Let us also mention the following characterization of a wedge of spheres (see~\cite[Theorem~1.5.2]{wachs} or~\cite[Section~4.C]{hatcher}).

\begin{prop} \label{prop:characwedge}
    Let $\Gamma$ be a $d$-dimensional simplicial complex with $d\geq 2$ such that:
    \begin{itemize}
        \item $\Gamma$ is simply connected,
        \item $\tilde H_d(\Gamma)$ is isomorphic to $\mathbb{Z}^k$ for some integer $k\geq 0$,
        \item all other homology groups vanish.
    \end{itemize}
    Then $\Gamma \simeq (\mathbb{S}^d)^{\vee k}$.
\end{prop}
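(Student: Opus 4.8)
The plan is to derive this from two classical facts of algebraic topology, applied to the canonical CW structure that a simplicial complex carries: the Hurewicz isomorphism theorem and the homological form of Whitehead's theorem.

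First I would use Hurewicz to translate the homological hypotheses into homotopical ones. Since $\Gamma$ is simply connected and $\tilde H_i(\Gamma)=0$ for $1\le i\le d-1$, the higher-connectivity form of the Hurewicz theorem gives that $\Gamma$ is $(d-1)$-connected and that the Hurewicz map $\pi_d(\Gamma)\to\tilde H_d(\Gamma)\cong\mathbb{Z}^k$ is an isomorphism. Next I would pick classes $\alpha_1,\dots,\alpha_k\in\pi_d(\Gamma)$ whose images form the standard basis of $\mathbb{Z}^k$, represent each by a map $f_j\colon\mathbb{S}^d\to\Gamma$, and glue them along the wedge point to obtain a single map $f\colon(\mathbb{S}^d)^{\vee k}\to\Gamma$.

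Then I would check that $f$ induces an isomorphism on $\tilde H_i(-;\mathbb{Z})$ in every degree. Because $d\ge 2$, the wedge $(\mathbb{S}^d)^{\vee k}$ is simply connected, hence $(d-1)$-connected by Hurewicz, so both sides of $f_*$ vanish for $i\le d-1$; for $i>d$ the target vanishes by hypothesis and the source vanishes because the reduced homology of a wedge of $d$-spheres is concentrated in degree $d$. In degree $d$, the naturality of the Hurewicz map, together with the fact that $\pi_d((\mathbb{S}^d)^{\vee k})\cong\mathbb{Z}^k$ is generated by the inclusions of the summands, shows that $f_*$ carries that basis to $\alpha_1,\dots,\alpha_k$, hence is an isomorphism on $\tilde H_d$. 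Finally, $f$ is a map of simply connected CW complexes inducing an isomorphism on all integral homology groups, so it is a homotopy equivalence by the homological Whitehead theorem (equivalently: apply the relative Hurewicz theorem to the mapping cone of $f$ to see that $f$ is a weak equivalence, then invoke Whitehead's theorem). This yields $\Gamma\simeq(\mathbb{S}^d)^{\vee k}$, the case $k=0$ being included, the empty wedge being a point and $\Gamma$ then being contractible.

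The only delicate points are bookkeeping rather than genuine obstacles: one must track the naturality of the Hurewicz isomorphism to conclude that $f_*$ on $\tilde H_d$ is an isomorphism and not merely a homomorphism $\mathbb{Z}^k\to\mathbb{Z}^k$ with the correct source and target, and one genuinely needs $d\ge 2$ so that the comparison space $(\mathbb{S}^d)^{\vee k}$ is simply connected and the Hurewicz/Whitehead machinery applies to it as well. As this is a standard fact, in the paper I would simply cite~\cite[Theorem~1.5.2]{wachs} or~\cite[Section~4.C]{hatcher} rather than reproduce the argument in full.
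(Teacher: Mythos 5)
Your argument is correct and is precisely the standard proof (Hurewicz plus the homological Whitehead theorem applied to a wedge map realizing a basis of $\tilde H_d$) that underlies the references the paper cites; the paper itself gives no proof but simply points to \cite[Theorem~1.5.2]{wachs} and \cite[Section~4.C]{hatcher}, exactly as you propose to do. Nothing further is needed.
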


Note that the assumption $d\geq 2$ is not restrictive: for a $1$-dimensional simplicial complex $\Gamma$ with $\tilde H_0(\Gamma)=0$ ({\it i.e.}, $\Gamma$ is a connected graph), we always have $\Gamma \simeq (\mathbb{S}^1)^{\vee k}$ for some $k$.

\subsection{Finite real reflection groups and noncrossing partitions}

Through this work, $W$ is a finite real reflection group, $e$ its neutral element, $T\subset W$ the set of reflections, $S \subset T$ a set of simple reflections.  We don't assume $W$ to be irreducible.  For $I\subset S$, denote by $W_I$ the {\it standard parabolic subgroup} generated by $I$.  A {\it parabolic subgroup} $P\subset W$ is a subgroup conjugate to $W_I$ for some $I\subset S$.  

Let $V$ denote the geometric representation of $W$, and for $w\in W$ we denote by $ V^w \subset V$ its fixed-point set: $ V^w := \ker(w-e)$.  The {\it intersection lattice} $\mathcal{L}$ of $W$ is
\[
    \mathcal{L} := 
    \{
        V^w \;:\; w\in W
    \},
\]
where by convention the partial order is reverse inclusion. The map
\[
    F \mapsto \{ w \in W \;:\; F\subset V^w \}
\]
is a poset isomorphism from $\mathcal{L}$ to the poset of parabolic subgroups of $W$ (where the partial order is inclusion).

We refer to~\cite{armstrong} concerning noncrossing partitions and related material.  Let $c$ be a {\it standard Coxeter element}, {\it i.e.}, the product all simple reflections $s\in S$ in some (arbitrary) order.  Let $\ell$ denote the {\it reflection length} in $W$:
\[
    \ell(w) 
        := \min\{k\geq0 \;:\; \exists t_1,\dots,t_k \in T,\; w=t_1\cdots t_k\}.
\]
We also have:
\[
    \ell(w)
        =
    n - \dim(V^w).
\]
Note that $\ell(c)=n$.  The {\it absolute order} on $W$ is defined by 
\[
    v \geq w \Leftrightarrow \ell(v) + \ell(v^{-1}w) = \ell(w).
\]
Another characterization is the following: consider a minimal reflection factorization $w = t_1 \cdots t_k$ (which means that $k$ is minimal and each $t_i$ is a reflection), then $v\geq w$ iff a minimal reflection factorization of $v$ can be extracted as a subword of $t_1 \cdots t_k$.  We denote by $\NC$ the lattice of {\it noncrossing partitions}:
\[
    \NC := \big\{
        w \in W \;:\; w \geq c
    \big\},
\]
{\it i.e.}, this is the interval $[c,e]$ in the absolute order.  Moreover, denote by $\NC' \subset \NC$ the set of {\it prime noncrossing partitions}, {\it i.e.}, those $\pi\in\NC$ that don't belong to a proper standard parabolic subgroup of $W$.  The rank function on $\NC$ is given by
\[
    \rk(\pi) = n-\ell(\pi) = \dim(V^\pi).
\]

\begin{rema}
Our definition of $\NC$ and the absolute order is such that $c$ is the minimum, and $e$ is the maximum.  The dual convention (the other way around) is more common.  If we accept this unusual convention, some other common definitions will be respected and several poset maps will be order-preserving rather than order-reversing.
\end{rema}

For each $\pi \in \NC$, there is an associated {\it noncrossing parabolic subgroup} defined by
\[
    W_{\pi} := 
    \big\{
        w \in W \; : \; V^\pi \subset V^w
    \big\}.
\]
The map $\pi \mapsto W_{\pi}$ from $\NC$ to the lattice of parabolic subgroups is injective and order-reversing.  It can be used to describe to the join operation $\vee$ of the lattice $\NC$, as we have:
\begin{align}  \label{eq:joinnc}
    W_{\pi \vee \tau}
        =
    W_\pi \cap W_\tau.
\end{align}
This property follows from the proof of the lattice property by Brady and Watt~\cite[Section~7]{bradywatt}.


\subsection{Parking functions}
\label{sec:defpark}

We use here {\it noncrossing parking functions}, as introduced by Armstrong, Reiner and Rhoades in~\cite{armstrongreinerrhoades}.  The poset structure on type $A$ parking functions was introduced by Edelman~\cite{edelman} (on slightly different objects, called 2-noncrossing partitions), and further studied in~\cite{delcroixogerjosuatvergesrandazzo}.  
\begin{defi}
    The $W$-poset of {\it parking functions}, denoted by $\PF$, is defined as a set of cosets:
    \[
        \PF 
            :=
        \big\{
            w W_\pi \;:\; w\in W,\; \pi\in\NC
        \big\}.
    \]
    The partial order is reverse inclusion, and the action of $W$ is the natural one by left multiplication on right cosets.  A parking function $w W_\pi \in \PF$ is {\it prime} if $\pi\in\NC'$.  We denote by $\PF' \subset \PF$ the subposet of prime parking functions.  Note that the action clearly preserves inclusion, so that $\PF$ and $\PF'$ are indeed $W$-posets.
\end{defi}

Before stating elementary properties of the poset, let us quickly explain the link with ``classical'' parking functions.  As explained in~\cite[Section~2.3]{armstrongreinerrhoades}, noncrossing parking functions of type $A_{n-1}$ as defined above can be seen pictorially as (classical) noncrossing partitions with labels, for example:
\[
\begin{tikzpicture}[scale=0.25]
   \tikzstyle{ver} = [circle, draw, fill, inner sep=0.5mm]
   \tikzstyle{edg} = [line width=0.6mm]
   \tikzstyle{edg2} = [line width=0.6mm,dotted]
   \node      at (1,-1) {\small 2};
   \node      at (3,-1) {\small 3};
   \node      at (5,-1) {\small 4};
   \node      at (7,-1) {\small 7};
   \node      at (9,-1) {\small 1};
   \node      at (11,-1) {\small 5};
   \node      at (13,-1) {\small 8};
   \node      at (15,-1) {\small 10};
   \node      at (17,-1) {\small 9};
   \node      at (19,-1) {\small 6};
   \node[ver] at (1,0) {};
   \node[ver] at (3,0) {};
   \node[ver] at (5,0) {};
   \node[ver] at (7,0) {};
   \node[ver] at (9,0) {};
   \node[ver] at (11,0) {};
   \node[ver] at (13,0) {};
   \node[ver] at (15,0) {};
   \node[ver] at (17,0) {};
   \node[ver] at (19,0) {};
   \draw[edg] (1,0) to[bend left=60] (5,0);
   \draw[edg] (5,0) to[bend left=60] (7,0);
   \draw[edg] (9,0) to[bend left=60] (17,0);
   \draw[edg] (11,0) to[bend left=60] (13,0);
   \draw[edg] (13,0) to[bend left=60] (15,0);
 \end{tikzpicture}.
\]
The underlying noncrossing partitions is $\pi = \{\{1,3,4\},\{2\},\{5,9\},\{6,7,8\},\{10\}\}$ (this is obtained via the standard labelling with $1,2,\dots$ from left to right, unwritten here), and the noncrossing parabolic subgroup is $\mathfrak{S}_\pi = \mathfrak{S}_{\{1,3,4\}} \times \mathfrak{S}_{\{5,9\}} \times \mathfrak{S}_{\{6,7,8\}}$ (a product of symmetric groups).  The labels correspond to the permutation $\sigma = 2347158(10)96$, which is a minimal length coset representative in $\mathfrak{S}_{10} / \mathfrak{S}_\pi$ (meaning here that the labels are increasing in each block of $\pi$, from left to right).  The order relation and other combinatorial properties (in particular the link with a more classical definition of parking functions) on these type $A_{n-1}$ objects have been discussed in~\cite{delcroixogerjosuatvergesrandazzo}.

The {\it coset poset} of a group was introduced by Brown~\cite{brown}, who obtained some of its topological properties.  Up to some details (Brown only considers proper cosets, and the order is inclusion rather than reverse inclusion), $\PF$ is a subposet of this coset poset.  Note also that a subposet of $\PF$ is the {\it Coxeter complex}, which is the poset of cosets $w W_I$ where $W_I$ is a standard parabolic subgroup. 


It is clear that $\PF$ has a minimum, namely $W$ itself, and $|W|$ maximal elements, namely the singletons $\{w\}$ for $w \in W$.  (Representation-theoretically, we have the trivial representation in rank $0$ and the regular representation in rank $n$.)  The poset $\PF$ is ranked, and the rank function is such that
\[
    \rk(wW_\pi) = \rk(\pi)
\]
(see below for details).

\begin{lemm}
    Each element of $\PF$ can be uniquely written $w W_{\pi}$, where $\pi  \in \NC$ and $w$ is a minimal length coset representative in $W / W_{\pi}$.
\end{lemm}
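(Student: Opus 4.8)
The plan is to split the statement into an existence part and a uniqueness part, and to reduce both to a single standard fact about parabolic subgroups of Coxeter groups. The key preliminary observation for \emph{uniqueness} is that a parking function $C = wW_\pi$, regarded merely as a subset of $W$, already remembers the subgroup $W_\pi$: for any $x \in C$ the set $x^{-1}C$ is a subgroup of $W$, it does not depend on the choice of $x \in C$, and it equals $W_\pi$. Since the map $\pi \mapsto W_\pi$ from $\NC$ to the lattice of parabolic subgroups is injective (recalled in Section~\ref{sec:defi}), $C$ determines $\pi$. Hence if $wW_\pi = w'W_{\pi'}$ with $w$ and $w'$ both minimal length coset representatives, then first $\pi = \pi'$, and then $w, w'$ lie in a common left coset of $W_\pi$, so it suffices to know that such a coset contains \emph{at most one} element of minimal length (throughout, ``length'' means the word length with respect to $S$, not the reflection length $\ell$).

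For \emph{existence}, any $C \in \PF$ is by definition of the form $wW_\pi$ for some $w \in W$ and $\pi \in \NC$; replacing $w$ by an element of minimal length in the finite set $C$ does not change the coset, so $C = w_0 W_\pi$ with $w_0$ of minimal length. Thus both parts follow at once from the claim: \emph{every left coset of a parabolic subgroup $P \leq W$ contains exactly one element of minimal $S$-length.} Granting this, $C$ determines $\pi$, hence the coset $W_\pi$, hence its unique minimal length representative $w$; and conversely the datum $(w,\pi)$ obviously recovers $C = wW_\pi$.

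The main — and essentially only nontrivial — obstacle is this claim, and the subtlety is that the noncrossing parabolic subgroups $W_\pi$ need not be \emph{standard} parabolic subgroups (already $W_{(13)} = \langle (13) \rangle$ in $\mathfrak{S}_3$ is not standard for a suitable Coxeter element), so the textbook statement about minimal coset representatives for $W/W_J$ with $J \subseteq S$ does not apply verbatim. I would deduce it from the classical theory of parabolic subgroups: by Steinberg's fixed-point theorem, $W_\pi$ is the full stabilizer in $W$ of a point $v$ chosen generically in the fixed space $V^\pi$, so $W / W_\pi$ is identified $W$-equivariantly with the orbit $W \cdot v$; one then invokes the standard fact (see standard references on Coxeter groups) that a parabolic subgroup of a finite Coxeter group admits a system of minimal length left coset representatives, unique in each coset — which can alternatively be proved directly by a short exchange/descent argument, reducing the length of a representative as long as it is not yet the chamber-closest one to the target point of the orbit. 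With the claim in hand the lemma is immediate from the two paragraphs above.
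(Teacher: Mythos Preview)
Your argument is correct and matches the paper's: recover $W_\pi$ (and hence $\pi$, by injectivity of $\pi\mapsto W_\pi$) from the coset as a set, then invoke uniqueness of the minimal-length representative. The paper's proof is terser and simply asserts the latter; your observation that $W_\pi$ need not be a \emph{standard} parabolic subgroup, together with your sketch of the gate-property/stabilizer argument to justify uniqueness of minimal-length representatives in that generality, supplies rigor the paper leaves implicit.
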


\begin{proof}
    Uniqueness of $\pi$ follows from the fact that each orbit of cosets contains a unique subgroup, together with the injectivity of the map $\pi \mapsto W_{\pi}$ defined for $\pi\in \NC$.  We then use uniqueness of the minimal length coset representative.
\end{proof}

In what follows, by writing $w W_{\pi} \in \PF$ we always assume that $\pi \in \NC$ (and make explicit when we assume that $w$ is a minimal length coset representative).  The previous lemma makes clear that the orbits of $\PF$ are naturally indexed by $\NC$, in such a way that the orbit indexed by $\pi\in\NC$ is $W/W_\pi$ as a $W$-set.

\begin{lemm}  \label{lemm:pftonc}
    The map $\PF \to \NC$ defined by $w W_{\pi} \mapsto \pi$ is order-preserving and rank-preserving.  Moreover, its restriction to any interval $[W,\{w\}]$ is a poset isomorphism. 
\end{lemm}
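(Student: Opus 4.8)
The plan is to verify each of the three assertions in Lemma~\ref{lemm:pftonc} directly from the definitions, leveraging the structure of the noncrossing parabolic subgroups and the rank function on $\NC$.

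First I would check that the map $\phi\colon wW_\pi \mapsto \pi$ is well-defined and order-preserving. Well-definedness is already built into the setup: by the preceding lemma, the coset $wW_\pi$ determines $\pi$ uniquely (the injectivity of $\pi\mapsto W_\pi$ plus the fact that each orbit of cosets contains a unique subgroup). For order-preservation, suppose $wW_\pi \leq w'W_\tau$ in $\PF$, which by definition means $wW_\pi \supseteq w'W_\tau$ as sets. Translating, this forces $W_\tau \subseteq w^{-1}w'{}^{-1}\cdots$; more cleanly, since the cosets are comparable they share a representative, so we may assume $w=w'$ and then $W_\pi \supseteq W_\tau$. By the order-reversing property of $\pi\mapsto W_\pi$ recorded in the preliminaries, $W_\pi \supseteq W_\tau$ is equivalent to $\pi \leq \tau$ in $\NC$. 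Hence $\phi$ is order-preserving. Rank-preservation then follows from the already-stated facts $\rk(wW_\pi)=\rk(\pi)$ together with $\rk(\pi)=\dim(V^\pi)$; alternatively, a chain $w_0W_{\pi_0} < \cdots < w_kW_{\pi_k}$ maps to a chain $\pi_0 < \cdots < \pi_k$ of the same length (strict inequalities are preserved since $\phi$ is order-preserving and the $\pi_i$ are forced distinct when the cosets are), and maximal chains on both sides have length $n$.

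Next I would handle the restriction to an interval $[W,\{w\}]$. An element of this interval is a coset $uW_\pi$ with $W \supseteq uW_\pi \supseteq \{w\}$, i.e.\ $w \in uW_\pi$, so we may take $u=w$ and the interval consists exactly of the cosets $wW_\pi$ for $\pi\in\NC$, one for each $\pi$. Thus $\phi$ restricted to $[W,\{w\}]$ is a bijection onto $\NC$: injectivity because $wW_\pi = wW_\tau \iff W_\pi=W_\tau \iff \pi=\tau$, and surjectivity is immediate. It remains to see this bijection is an isomorphism of posets, i.e.\ that $wW_\pi \leq wW_\tau$ in $\PF$ iff $\pi\leq\tau$ in $\NC$; but with the common representative $w$ fixed, $wW_\pi \supseteq wW_\tau \iff W_\pi \supseteq W_\tau \iff \pi\leq\tau$, exactly as above. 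So the inverse map is also order-preserving, completing this part.

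The only genuinely subtle point—and the step I expect to be the main obstacle—is the claim that $\PF$ is ranked with $\rk(wW_\pi)=\rk(\pi)$, which the lemma statement defers ``see below for details'' but which I am using to get rank-preservation cleanly. To make the argument self-contained I would instead argue combinatorially: a saturated chain in $\PF$ from $W$ to $\{w\}$ corresponds (via the interval description just obtained) to a saturated chain in $\NC$ from $c$ to $e$, and since $\NC$ is known to be a graded lattice of rank $n$, every such chain has length $n$; hence $\PF$ is graded with $\rk(wW_\pi)$ equal to the common length of saturated chains from $W$ down to $wW_\pi$, which matches $\rk(\pi)$. Care is needed to check that a covering relation $wW_\pi \lessdot w'W_\tau$ in $\PF$ really does project to a covering relation $\pi\lessdot\tau$ in $\NC$ and not merely a strict inequality—this uses that if $\pi < \sigma < \tau$ then $W_\tau \subsetneq W_\sigma \subsetneq W_\pi$ produces an intermediate coset $wW_\sigma$ strictly between, contradicting the covering. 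Once that is in hand, the rank statement and hence the full lemma follow.
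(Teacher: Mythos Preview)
Your proposal is correct and follows essentially the same route as the paper: identify $[W,\{w\}]$ with $\{wW_\pi : \pi \in \NC\}$ by noting that any coset containing $w$ can be rewritten with representative $w$, then transport the order via the map $\pi \mapsto W_\pi$. The paper streamlines slightly by observing that the interval isomorphism already implies the global order- and rank-preservation (since every element of $\PF$ lies in some $[W,\{w\}]$), so it proves only the second statement; your choice to verify order-preservation separately first is fine but redundant, and the extra discussion of covering relations for the rank statement is more than is needed once the interval isomorphism is in hand.

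One small point worth tightening: you assert that $W_\pi \supseteq W_\tau$ is \emph{equivalent} to $\pi \leq \tau$, citing only that $\pi \mapsto W_\pi$ is order-reversing. Order-reversing gives only one implication; for the converse (needed so that the inverse bijection is order-preserving), combine injectivity with the join formula~\eqref{eq:joinnc}: if $W_\pi \supseteq W_\tau$ then $W_{\pi\vee\tau} = W_\pi \cap W_\tau = W_\tau$, hence $\pi\vee\tau = \tau$, i.e.\ $\pi \leq \tau$. The paper's own proof is equally terse on this step.
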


\begin{proof}
    Note that the second part of the statement implies the first one.  So let $w\in W$, we show that the map $[W,\{w\}] \to \NC$, $wW_{\pi} \mapsto \pi$ is an isomorphism.

    First note that
    \[
        [W,\{w\}]
            =
        \big\{ wW_{\pi} \;:\; \pi\in\NC \big\}.
    \]
    Indeed, if $w'W_{\pi} \in [W,\{w\}]$, we have $w'W_{\pi} \leq \{w\}$, {\it i.e.}, $w \in w'W_{\pi}$, and it follows $w'W_{\pi} = wW_{\pi}$.  The previous lemma implies that the map $[W,\{w\}] \to \NC$, $wW_{\pi} \mapsto \pi$ is bijective.  Moreover, it preserves the order because the map $\pi \mapsto W_\pi$ does.
\end{proof}

Note that the previous lemma implies that the principal order ideal $\big\{ w' W_{\pi'} \in \PF \;:\; w' W_{\pi'} \leq wW_\pi \big\}$ is isomorphic to the interval $[c,\pi]$ in $\NC$.  The next lemma gives the structure of a principal order filter.

In the next statement, we use the notation $\PF(W_\pi)$ to indicate that we consider the poset $\PF$ defined with respect to the reflection group $W_\pi$.  (Similar notation will be used for $\NC$, $\CPF$, etc.)

\begin{lemm}  \label{lemm:pffilter}
    For each $wW_\pi \in \PF$, there is a natural poset isomorphism 
    \begin{align} \label{eq:isoordefilter}
        \big\{ w' W_{\pi'} \in \PF \;:\; w' W_{\pi'} \geq wW_\pi \big\}
        \to
        \PF(W_\pi).
    \end{align}
\end{lemm}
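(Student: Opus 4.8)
The plan is to produce the isomorphism \eqref{eq:isoordefilter} explicitly, as left translation by $w^{-1}$. First, since the order on $\PF$ is reverse inclusion, the order filter on the left-hand side of \eqref{eq:isoordefilter} is the set of cosets $w'W_{\pi'}$ that are \emph{contained} in $wW_\pi$ as subsets of $W$. An elementary coset computation shows that $w'W_{\pi'}\subseteq wW_\pi$ holds if and only if $w'\in wW_\pi$ and $W_{\pi'}\subseteq W_\pi$; writing $w'=wu$ with $u\in W_\pi$, the filter becomes
\[
    \bigl\{\, wu\,W_{\pi'} \;:\; u\in W_\pi,\ \pi'\in\NC,\ W_{\pi'}\subseteq W_\pi \,\bigr\}.
\]

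Second, I would pin down which $\pi'$ occur. Combining the injectivity of $\pi\mapsto W_\pi$ with the join formula \eqref{eq:joinnc} gives, for $\pi,\pi'\in\NC$, that $W_{\pi'}\subseteq W_\pi$ is equivalent to $\pi\le\pi'$ in $\NC$, i.e.\ to $\pi'\in[\pi,e]$. I would then invoke the standard structural properties of $\NC$ (see \cite{armstrong}, or the proof in \cite{bradywatt}): $\pi$ is a Coxeter element of the parabolic subgroup $W_\pi$, the interval $[\pi,e]$ of $\NC(W)$ is naturally identified with $\NC(W_\pi)$, and under this identification the noncrossing parabolic subgroup $W_{\pi'}$ of $W$ coincides with the noncrossing parabolic subgroup of $W_\pi$ attached to $\pi'$. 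For this last point I would simply compare the two fixed-space definitions after restricting the geometric representation of $W$ to the $W_\pi$-invariant complement of $V^\pi$, using that reflection length in $W_\pi$ agrees with reflection length in $W$. Hence the filter equals $\{\,wu\,Q : u\in W_\pi,\ Q\text{ a noncrossing parabolic of }W_\pi\,\}$, whereas by definition $\PF(W_\pi)=\{\,uQ : u\in W_\pi,\ Q\text{ a noncrossing parabolic of }W_\pi\,\}$.

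Finally, I would define the map \eqref{eq:isoordefilter} by $A\mapsto w^{-1}A$, with inverse $B\mapsto wB$; the two descriptions above show that these are mutually inverse bijections between the two sets, and since left translation by a fixed element of $W$ preserves inclusion of subsets, this is an isomorphism of posets (both being ordered by reverse inclusion). For the word ``natural'': the filter depends only on the coset $wW_\pi$, and changing the representative $w\mapsto wu_0$ with $u_0\in W_\pi$ merely post-composes the isomorphism with left multiplication by $u_0^{-1}$, i.e.\ with the corresponding symmetry of the $W_\pi$-poset $\PF(W_\pi)$; more precisely, the isomorphism intertwines the action of the stabilizer $wW_\pi w^{-1}\le W$ of $wW_\pi$ with the $W_\pi$-action on $\PF(W_\pi)$. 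The only step carrying any real content is the identification, in the second paragraph, of the noncrossing data of $W$ sitting inside $W_\pi$ with the intrinsic noncrossing data of $W_\pi$; but this is routine $\NC$-theory (and is already used implicitly, e.g.\ in the remark that the order ideal below $wW_\pi$ is isomorphic to $[c,\pi]$), so everything else is coset bookkeeping.
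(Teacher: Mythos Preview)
Your proof is correct and follows essentially the same approach as the paper: reduce to $w=e$ by left translation (the paper phrases this as ``via the action of $W$, we can assume $w=e$'', whereas you write the map $A\mapsto w^{-1}A$ explicitly), observe that $w'W_{\pi'}\subset W_\pi$ forces $w'\in W_\pi$ and $\pi'\ge\pi$, and identify the resulting cosets with $\PF(W_\pi)$. You are more careful than the paper about the identification $[\pi,e]\simeq\NC(W_\pi)$ and the agreement of the two noncrossing parabolic subgroups attached to $\pi'$, and your naturality discussion anticipates exactly the content of the remark following the lemma; the paper simply declares these points ``straightforward''.
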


\begin{proof}   
    Via the action of $W$, we can assume $w=e$ in the left-hand side of~\eqref{eq:isoordefilter}.  The relation $w' W_{\pi'} \geq wW_\pi = W_\pi$ means $w' W_{\pi'} \subset W_\pi$.  In particular we get $w' \in W_{\pi}$ and  $\pi'\in W_\pi$ ({\it i.e.}, $ \pi' \geq \pi$), so $w' W_{\pi'} \in \PF(W_\pi)$.  It is straightforward to complete the proof.
\end{proof}

\begin{rema}
    Note that the group $w W_\pi w^{-1}$ naturally acts on the left-hand side of~\eqref{eq:isoordefilter}, and $W_\pi$ acts on the right-hand side.  It is easily seen that the characters of these actions are the same (if we identify the characters of $W_\pi$ and $w W_\pi w^{-1}$ in the natural way).  In the case $w=e$, the poset isomorphism in the lemma commutes with the action of $W_\pi$.
\end{rema}

Although we will not use the following property of being a lattice, it is worth writing it.

\begin{lemm}
    Let $wW_\pi,w'W_{\pi'}\in \PF$ such that $wW_\pi \cap w'W_{\pi'} \neq \varnothing$.  Then $wW_\pi \cap w'W_{\pi'} \in \PF$ and it is the least upper bound of $wW_\pi$ and $w'W_{\pi'}$ in $\PF$.
\end{lemm}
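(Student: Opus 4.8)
The plan is to reduce the intersection of the two cosets to a single coset representative and then invoke the identity $W_\pi\cap W_{\pi'}=W_{\pi\vee\pi'}$ from \eqref{eq:joinnc}. First I would pick any element $x$ lying in the (assumed nonempty) set $wW_\pi\cap w'W_{\pi'}$. Since $x\in wW_\pi$ we have $wW_\pi=xW_\pi$, and similarly $w'W_{\pi'}=xW_{\pi'}$, so
\[
    wW_\pi\cap w'W_{\pi'}=xW_\pi\cap xW_{\pi'}=x\,(W_\pi\cap W_{\pi'}).
\]
By \eqref{eq:joinnc} we have $W_\pi\cap W_{\pi'}=W_{\pi\vee\pi'}$, and $\pi\vee\pi'\in\NC$ because $\NC$ is a lattice; hence $wW_\pi\cap w'W_{\pi'}=xW_{\pi\vee\pi'}$ is an element of $\PF$.

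It then remains to check that this coset is the least upper bound of $wW_\pi$ and $w'W_{\pi'}$ in $\PF$. Here I would unwind the (reverse-inclusion) order: an upper bound of $wW_\pi$ and $w'W_{\pi'}$ is a coset $C\in\PF$ with $C\geq wW_\pi$ and $C\geq w'W_{\pi'}$, i.e. $C\subseteq wW_\pi$ and $C\subseteq w'W_{\pi'}$, i.e. $C\subseteq wW_\pi\cap w'W_{\pi'}$. Since $xW_{\pi\vee\pi'}=wW_\pi\cap w'W_{\pi'}$ is itself such a coset (trivially contained in each of $wW_\pi$ and $w'W_{\pi'}$), every upper bound $C$ satisfies $C\subseteq xW_{\pi\vee\pi'}$, hence $C\geq xW_{\pi\vee\pi'}$. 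Thus $xW_{\pi\vee\pi'}$ is the least upper bound.

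I do not expect any serious obstacle here: the only nontrivial ingredient is the lattice property of $\NC$ together with \eqref{eq:joinnc}, both already available. The one point worth stating explicitly is the elementary fact that a nonempty intersection of two cosets (of possibly different subgroups) is automatically a coset of the intersection of the subgroups, which is exactly what licenses the reduction to a common representative $x$; beyond the hypothesis $wW_\pi\cap w'W_{\pi'}\neq\varnothing$ no case analysis on $\pi$ and $\pi'$ is needed.
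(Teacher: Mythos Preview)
Your proposal is correct and follows essentially the same approach as the paper: pick a common representative of the two cosets, use \eqref{eq:joinnc} to identify the intersection as $xW_{\pi\vee\pi'}\in\PF$, and then note that reverse inclusion makes this the least upper bound. The paper is slightly terser on the last point (it simply says the least-upper-bound property is clear from reverse inclusion), but your unwinding of that step is exactly what is meant.
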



\begin{proof}
    Let $w'' \in wW_\pi \cap w'W_{\pi'}$, so that $wW_\pi = w''W_\pi$ and $w'W_{\pi'} = w'' W_{\pi'}$.  We have:
    \[
        wW_\pi \cap w'W_{\pi'}
            =
        w''W_\pi \cap w''W_{\pi'}
            =
        w''(W_\pi \cap W_{\pi'})
            =
        w'' W_{\pi\vee\pi'} \in \PF,
    \]
    by~\eqref{eq:joinnc}.  As the order is reverse inclusion, $wW_\pi \cap w'W_{\pi'}$ is clearly the least upper bound of $wW_\pi$ and $w'W_{\pi'}$. 
\end{proof}

\begin{prop}
    The poset $\hat{\PF}$ is a lattice.
\end{prop}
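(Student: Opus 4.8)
The strategy is to reduce to the elementary fact that a finite bounded poset in which every pair of elements has a least upper bound is automatically a lattice, and then to supply the joins using the preceding lemma. First I would observe that $\hat{\PF}$ is bounded: the poset $\PF$ already has a minimum (namely $W$ itself), so $\hat{\PF}$ is obtained by adjoining only a maximum $\hat 1$. The remaining task is to check that every pair of elements of $\hat{\PF}$ admits a join.

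For the joins, I would split into cases. If one of the two elements is the adjoined top $\hat 1$, the join is obviously $\hat 1$. If both elements $wW_\pi, w'W_{\pi'}$ lie in $\PF$, there are two subcases. When $wW_\pi \cap w'W_{\pi'}\neq\varnothing$, the previous lemma already gives that this intersection lies in $\PF$ and is the least upper bound of the two cosets in $\PF$; since any upper bound in $\hat\PF$ either lies in $\PF$ or equals $\hat 1$ (and $\hat 1$ is above everything), this intersection is also the least upper bound in $\hat\PF$. When $wW_\pi \cap w'W_{\pi'}=\varnothing$, I claim the two cosets have no common upper bound inside $\PF$: indeed any $uW_\tau\in\PF$ with $uW_\tau\geq wW_\pi$ and $uW_\tau\geq w'W_{\pi'}$ would satisfy (the order being reverse inclusion) $uW_\tau\subseteq wW_\pi\cap w'W_{\pi'}=\varnothing$, which is impossible as $uW_\tau$ is a nonempty coset. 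Hence the unique upper bound of $wW_\pi$ and $w'W_{\pi'}$ in $\hat\PF$ is $\hat 1$, which is therefore their join.

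Finally I would invoke the standard lemma: in a finite bounded poset, the existence of all pairwise joins forces the existence of all pairwise meets (given $x,y$, the set $L$ of common lower bounds is nonempty because it contains $\hat 0$; its join $z=\bigvee L$ exists by iterating pairwise joins, and since $x$ and $y$ are each upper bounds of $L$ we get $z\leq x$ and $z\leq y$, so $z\in L$ and $z$ is the greatest element of $L$, i.e.\ the meet $x\wedge y$). Thus $\hat\PF$ is a lattice.

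\textbf{Main obstacle.} There is no serious obstacle here: everything reduces to the join computation in the previous lemma together with the general poset fact. The only point requiring a small argument is the case of disjoint cosets, where one must rule out the existence of any nontrivial common upper bound in $\PF$; this is immediate once one recalls that the order on $\PF$ is reverse inclusion and that cosets are nonempty.
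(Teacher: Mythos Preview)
Your proof is correct and follows essentially the same approach as the paper: use the previous lemma to show joins always exist in $\hat{\PF}$ (the paper phrases this slickly by identifying $\hat 1$ with $\varnothing$ so that the join is simply the set-theoretic intersection, while you spell out the disjoint/non-disjoint case split), then invoke the standard fact that a finite bounded poset with all pairwise joins is a lattice.
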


\begin{proof}
    It is clear from the previous lemma that joins in $\hat{\PF}$ always exist.  If we identify the extra top element $\hat 1 \in \hat{\PF}$ with $\varnothing$, a join of two elements is given by taking their intersection.  As $\hat{\PF}$ is bounded, it follows that meets also exist. 
\end{proof}

\begin{defi}
    For each integer $m\geq 1$, let $\park_m$ denote the character of $W$ acting on $m$-element multichains $w_1W_{\pi_1} \leq \dots \leq w_mW_{\pi_m}$ in $\PF$, {\it i.e.}:
    \[
        \park_m
            :=
        \bzeta(\PF,m).
    \]
    And for each integer $m\geq 1$, let $\park'_m$ denote the character of $W$ acting on $m$-element multichains $w_1W_{\pi_1} \leq \dots \leq w_mW_{\pi_m}$ in $\PF$ such that $w_1W_{\pi_1} \in \PF'$.
\end{defi}

We refer to~\cite{douvropoulosjosuatverges} for more context and results about these two characters, in particular for the analog of Equations~\eqref{eq:chpf} and~\eqref{eq:chpf'} in the context of finite real reflection groups.  As we focus here on the topology of cluster parking functions, these characters will only be used in the final step where we consider the characters of $W$ acting on the homology of cluster parking functions.



\begin{prop}  \label{prop:zetaPF}
    If $W$ is irreducible and its Coxeter number is $h$, we have for each $w\in W$:
    \begin{align*}
        \park_m(w) 
            &=
        (mh+1)^{n-\ell(w)},\\
        \park'_m(w) 
            &=
        (mh-1)^{n-\ell(w)}.
    \end{align*}
\end{prop}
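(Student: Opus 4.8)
The plan is to compute $\park_m(w) = \bzeta(\PF,m)(w)$ directly by counting fixed points of $w$ acting on $m$-multichains in $\PF$, using the structural lemmas established above. By the theorem of Baclawski--Björner (or simply by the definition of the $\zeta$-character evaluated at $w$), $\park_m(w)$ equals the number of $m$-multichains $w_1 W_{\pi_1} \leq \dots \leq w_m W_{\pi_m}$ that are fixed (as a chain) by $w$. Since $w$ acts by left multiplication and preserves ranks, such a fixed multichain is a multichain in the fixed subposet $\PF^w$. So the first step is to identify $\PF^w$: a coset $v W_\pi$ is fixed by $w$ iff $w v W_\pi = v W_\pi$, i.e. $v^{-1} w v \in W_\pi$, i.e. $w \in v W_\pi v^{-1}$. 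Thus $\PF^w$ consists of those cosets $vW_\pi$ whose associated parabolic $vW_\pi v^{-1}$ contains $w$.

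Next I would relate $\PF^w$ to a parking poset of a smaller group. Fixing a reduced reflection factorization context: among the parabolics containing $w$, the minimal one is $W(w)$, the parabolic closure of $\langle w\rangle$, with fixed space $V^w$. A coset $vW_\pi$ lies in $\PF^w$ iff $v W_\pi v^{-1} \supseteq W(w)$. I expect $\PF^w$ to be isomorphic, as a poset, to $\PF$ built relative to the parabolic subgroup $W(w)$ — more precisely, to a disjoint union (over the $W(w)$-conjugates appearing) that nevertheless has the homotopy/counting structure of a single $\PF(W')$ where $W'$ is a reflection group whose combinatorics are governed by $\ell(w)$. The cleanest route: reduce to the case where $w$ itself is (conjugate into) a parabolic, use that $W(w)$ is parabolic of rank $\ell(w)$, and that the upper interval structure of Lemma~\ref{lemm:pffilter} together with the interval isomorphism $[W,\{w\}]\cong\NC$ of Lemma~\ref{lemm:pftonc} lets us describe $\PF^w$ as the parking poset of a rank-$(n-\ell(w))$ reflection group (the pointwise stabilizer acting on $V^w$, which for irreducible $W$ and the relevant Coxeter element is again a reflection group with Coxeter number $h$).

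Granting that reduction, the count of $m$-multichains in $\PF^w$ becomes $\bzeta(\PF(W'),m)(e)$ for $W'$ of rank $n-\ell(w)$, which is exactly $\park_m$ evaluated at the identity for the rank-$(n-\ell(w))$ group. So the whole statement reduces to the identity-evaluation case: $\park_m(e)$ counts all $m$-multichains in $\PF$, and I must show this equals $(mh+1)^n$ (resp. $(mh-1)^n$ for the prime version). For this I would use the orbit decomposition: multichains are indexed by their bottom coset's orbit $\pi_1\in\NC$ together with multichain data, and the fiber product / product-over-$\NC$ structure means the generating count factors. Concretely, summing over $\NC$ and using the known Fuß--Catalan-type enumeration of multichains in $\NC$ weighted by $|W/W_\pi|$ — this is precisely the computation carried out in \cite{douvropoulosjosuatverges} — yields $(mh+1)^n$; one recognizes the sum $\sum_{\pi\in\NC}(\text{something})$ as the evaluation giving a clean power of $mh+1$ by the product formula for the characteristic polynomial of $W$ (whose exponents sum appropriately), exploiting that for irreducible $W$ the number of such multichains is the parking number $(mh+1)^n$. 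The prime case replaces $\NC$ by $\NC'$ and shifts $mh+1\mapsto mh-1$, consistent with~\eqref{eq:chpf} vs.~\eqref{eq:chpf'}.

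The main obstacle I anticipate is making the reduction $\PF^w \simeq \PF(W')$ precise and equivariantly correct — in particular verifying that the relevant smaller group $W'$ is again an (irreducible, or product of irreducible) reflection group with the same Coxeter number $h$, so that the induction closes with the exponent $n-\ell(w)$ rather than something coarser. This is where one needs the fact (standard, via Springer/Lehrer--Springer theory or direct case analysis) that centralizers/pointwise stabilizers arising from the Coxeter element behave well; alternatively one sidesteps it by computing $\park_m(w)$ directly from the coset description of $\PF^w$ and the product formula~\eqref{eq:joinnc} for joins in $\NC$, which decouples the $\ell(w)$ ``used-up'' dimensions from the remaining $n-\ell(w)$, each remaining dimension contributing an independent factor of $mh+1$. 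That direct approach avoids the representation-theoretic subtleties at the cost of a slightly longer bookkeeping of cosets.
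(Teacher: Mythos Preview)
The paper does not actually prove this proposition: it treats it as a black box, citing Rhoades' conjecture~\cite{rhoades} (proved by Rhoades for the infinite families), the equivalence of the two formulas established in~\cite{douvropoulosjosuatverges}, and a case-free proof announced in~\cite{douvropoulosjosuatverges2}. So the right comparison is between your attempted direct argument and the fact that the authors regard this as a genuinely hard external input.

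Your proposal has a real gap at its central step. You want to identify $\PF^w$ with $\PF(W')$ for some reflection group $W'$ of rank $n-\ell(w)$ and Coxeter number $h$. No such $W'$ exists in general. The pointwise stabilizer of $V^w$ is the parabolic closure $W(w)$; it has rank $\ell(w)$, not $n-\ell(w)$, and its Coxeter number is typically strictly smaller than $h$. Neither the centralizer $C_W(w)$ nor any Springer-type subgroup repairs this: Springer theory produces a rank-$(n-\ell(w))$ reflection group only when $w$ is a regular element (e.g.\ a power of $c$), and even then one has to work to match Coxeter numbers. For a generic $w$ --- say a single reflection in $\mathfrak{S}_4$, where $h=4$ and $n-\ell(w)=2$ --- there is no rank-$2$ parabolic or natural subgroup of $\mathfrak{S}_4$ with Coxeter number $4$ whose parking poset models $\PF^w$. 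The miracle that $\park_m(w)$ depends only on $\ell(w)$, and does so through the same $h$, is precisely the content of Rhoades' conjecture; it does not follow from a structural reduction of $\PF^w$.

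Your fallback (``decouple the $\ell(w)$ used-up dimensions, each remaining dimension contributes a factor $mh+1$'') is too vague to be an argument: nothing in the coset description of $\PF^w$ singles out $n-\ell(w)$ independent factors of $mh+1$. The identity-evaluation case $\park_m(e)=(mh+1)^n$ is indeed known and not the issue; the difficulty is entirely in the non-identity values, and that is why the paper imports the result rather than proving it.
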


\begin{proof}
    The first equality is a conjecture by Rhoades~\cite{rhoades}, who proved it for infinite families in the finite type classification (more precisely, his conjecture is more general because it involves an extra cyclic group).  The variant about prime parking functions has been introduced in~\cite{douvropoulosjosuatverges}, where it is proved that the two statements are equivalent.  Eventually, a full (case-free) proof has been announced in~\cite{douvropoulosjosuatverges2}.
\end{proof}

We finish this section with a conjecture.  See Proposition~\ref{prop:flag} for its relevance concerning cluster parking functions.

\begin{conj} \label{conj:helly}
    $\PF$ has the Helly property, {i.e.}: if $k\geq 3$ and $w_1 W_{\pi_1},\dots, w_k W_{\pi_k} \in \PF$ are such that $w_i W_{\pi_i} \cap w_j W_{\pi_j} \neq \varnothing$ for $1\leq i <j \leq n$, then we have $\cap_{i=1}^k w_i W_{\pi_i} \neq \varnothing$.
\end{conj}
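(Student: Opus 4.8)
A natural line of attack proceeds in three steps; the last one is the crux, and is the reason we state the result only as a conjecture.

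\textbf{Reduction to triples.} Recall that a nonempty intersection of two elements of $\PF$ is again an element of $\PF$. Granting this, it suffices to treat the case $k=3$: given pairwise intersecting $A_1,\dots,A_k\in\PF$, set $B:=A_{k-1}\cap A_k\in\PF$; then $A_1,\dots,A_{k-2},B$ are still pairwise intersecting, since $A_i\cap B=A_i\cap A_{k-1}\cap A_k\neq\varnothing$ by the case of three cosets, and induction on $k$ concludes.

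\textbf{Reduction to a product identity.} Let $A_1,A_2,A_3\in\PF$ be pairwise intersecting. Acting by $W$, assume $A_1=W_{\pi_1}$, and choosing representatives inside $A_1\cap A_2$ and $A_1\cap A_3$, write $A_2=g_2W_{\pi_2}$, $A_3=g_3W_{\pi_3}$ with $g_2,g_3\in W_{\pi_1}$. Because $g_2,g_3\in W_{\pi_1}$ we get $A_1\cap A_2=g_2(W_{\pi_1}\cap W_{\pi_2})=g_2W_{\pi_1\vee\pi_2}$ by~\eqref{eq:joinnc}, similarly $A_1\cap A_3=g_3W_{\pi_1\vee\pi_3}$, while $A_2\cap A_3\neq\varnothing$ says exactly that $g_2^{-1}g_3\in W_{\pi_2}W_{\pi_3}$ as a product of subsets of $W$. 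Hence $A_1\cap A_2\cap A_3\neq\varnothing$ would follow from $g_2^{-1}g_3\in W_{\pi_1\vee\pi_2}\,W_{\pi_1\vee\pi_3}$, and since also $g_2^{-1}g_3\in W_{\pi_1}$, it is enough to establish, for all $\pi_1,\pi_2,\pi_3\in\NC$, the identity of subsets of $W$
\[
    W_{\pi_1}\cap\big(W_{\pi_2}W_{\pi_3}\big)
    \;=\;
    \big(W_{\pi_1}\cap W_{\pi_2}\big)\big(W_{\pi_1}\cap W_{\pi_3}\big),
\]
the inclusion $\supseteq$ being trivial. (Conversely, a failure of this identity produces a counterexample to the Helly property, so this is the right reduction.)

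\textbf{The product identity.} Writing $W_{\pi_i}$ as the pointwise stabilizer of the flat $V^{\pi_i}$, the identity reads: if $w$ fixes $V^{\pi_1}$ pointwise and $w=w_2w_3$ with $w_i$ fixing $V^{\pi_i}$, then $w$ admits such a factorization in which moreover $w_2$ fixes $V^{\pi_1}+V^{\pi_2}$ and $w_3$ fixes $V^{\pi_1}+V^{\pi_3}$. This \emph{fails} for general parabolic subgroups — in $W=\mathfrak{S}_4$, take $W_{\pi_1}=\mathfrak{S}_{\{1,3\}}\times\mathfrak{S}_{\{2,4\}}$, $W_{\pi_2}=\mathfrak{S}_{\{1,2\}}\times\mathfrak{S}_{\{3,4\}}$, $W_{\pi_3}=\mathfrak{S}_{\{1,4\}}\times\mathfrak{S}_{\{2,3\}}$, for which the left-hand side contains $(13)(24)=(12)(34)\cdot(14)(23)$ but the right-hand side is trivial — so any proof must genuinely use that $\pi_1,\pi_2,\pi_3$ are noncrossing for a common Coxeter element. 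In type $A$, unwinding the ``labelled noncrossing partition'' description of $\PF$, the identity becomes a statement about triple common refinements of noncrossing set partitions, which we expect can be proved by induction on $n$ (removing an outer block); the classical types $B$ and $D$ admit analogous combinatorial models, and the remaining (rank-two and exceptional) groups are finite in number. A case-free argument — perhaps via the parabolic Burnside ring of~\cite{douvropoulosjosuatverges}, or via the $\mathrm{CAT}(0)$ geometry of the dual braid complex — would be preferable, and the lack of one is exactly why the Helly property is left here as a conjecture.
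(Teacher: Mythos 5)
This statement is a \emph{conjecture} in the paper: no proof is given, only a remark that $\hat{\PF}$ is an atomic lattice and that consequently the Helly property need only be checked on the rank-one elements of $\PF$; the problem reappears verbatim in the list of open problems. So there is no proof of the paper's to compare yours against, and your proposal — as you yourself say in its final sentences — is not a proof either. The genuine gap is exactly where you locate it: the set-theoretic identity $W_{\pi_1}\cap\big(W_{\pi_2}W_{\pi_3}\big)=\big(W_{\pi_1}\cap W_{\pi_2}\big)\big(W_{\pi_1}\cap W_{\pi_3}\big)$ for $\pi_1,\pi_2,\pi_3\in\NC$ is asserted to be provable ("we expect can be proved by induction on $n$") but is never established, in any type. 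Everything downstream of that assertion is therefore conditional, and the conjecture remains open.

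That said, the parts you do carry out are correct and are a different (and in some ways sharper) reduction than the one the paper records. The reduction from general $k$ to $k=3$ is valid because a nonempty pairwise intersection of elements of $\PF$ is again in $\PF$ (this is the paper's lemma on least upper bounds preceding the lattice proposition). The translation of the $k=3$ case into the product identity is an exact equivalence: your verification that $g_2^{-1}g_3\in W_{\pi_1\vee\pi_2}W_{\pi_1\vee\pi_3}$ produces a common element $g_2a=g_3b^{-1}$ of all three cosets is right, as is the converse, using $W_{\pi}\cap W_{\tau}=W_{\pi\vee\tau}$ from~\eqref{eq:joinnc}. The $\mathfrak{S}_4$ example with the three Klein four-groups is a correct demonstration that the identity genuinely requires the parabolics to be noncrossing for a common Coxeter element (the partition $\{13\mid 24\}$ is crossing), which is a useful sanity check absent from the paper. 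By contrast, the paper's only recorded reduction is via atomicity to the rank-one elements of $\PF$. Your reduction and the paper's are complementary, and either could be a reasonable first step toward an eventual proof; but as submitted, the proposal establishes only the reduction, not the conjecture.
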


\begin{rema}
    It is easily seen that $\hat{\PF}$ is an atomic lattice (every element is a join of atoms).  This follows from $\NC$ being atomic.  Using this, one easily shows that $\PF$ has the Helly property if and only if its subset of rank 1 elements has the Helly property. 
\end{rema}

The Helly numbers of cosets in finite groups have been investigated in \cite{domokos}.

\subsection{The cluster complex}

\label{sec:clus}

Let $\Phi = \Phi_+ \cup \Phi_-$ be a root system for $W$ (in the sense of Coxeter groups), let $\Pi\subset \Phi_+$ denote the set of simple roots, and for $\alpha\in \Phi$ we denote by $t_{\alpha}\in T$ the corresponding reflection.  Consider a set partition $\Pi = \Pi_\bullet \uplus \Pi_\circ$ where each term $\Pi_\bullet$ or $\Pi_\circ$ contains pairwise orthogonal elements, let $c_\bullet = \prod_{\alpha\in \Pi_\bullet} t_{\alpha}$ and $c_\circ = \prod_{\alpha\in \Pi_\circ} t_{\alpha}$.  The (standard) Coxeter element $c = c_\bullet c_\circ$ is called the {\it bipartite Coxeter element}.  

There are several equivalent definitions for the cluster complex, and a review sufficient for our purpose has been given in~\cite[Section~6]{douvropoulosjosuatverges}.  We give here the original definition of Fomin and Reading~\cite{fominreading}.

\begin{defi}[\cite{fominreading}]
    A {\it colored root} is a pair $(\alpha,i)\in \Phi \times \mathbb{N}$, denoted by $\alpha^i$.  Let $m\geq 1$ be an integer.  The set of $m$-{\it colored positive roots} is $\Phi^{(m)}_+ := \Phi_+ \times \llbracket 1,m\rrbracket$, and the set of $m$-{\it colored almost positive roots} is:
    \[
        \Phi^{(m)}_{\geq -1}
            :=
        \Phi^{(m)}_+
        \uplus
        \big\{ (-\alpha)^1 \;:\; \alpha\in\Pi \big\}.
    \]
The {\it rotation} $\mathcal{R}$ is a bijection of $\Phi^{(m)}_{\geq -1}$ onto itself, defined by:
\[
    \mathcal{R}(\alpha^i) = 
    \begin{cases}
        \alpha^{i+1}   &\text{ if } \alpha \in \Phi_+ \text{ and } i<m,\\
        (-\alpha)^1    &\text{ if } \alpha \in -\Pi_\bullet \text{ and } i=1, \text{ or } \alpha\in\Pi_\circ \text{ and } i=m, \\
        c(\alpha)^1    &\text{ if }  \alpha\in -\Pi_\circ \text{ and } i=1, \text{ or } \alpha \in \Phi_+ \backslash \Pi_\circ \text{ and } i=m.
    \end{cases}
\]
\end{defi}

\begin{defi}[\cite{fominreading}]
    The binary relation $\mathrel{\|}$ on $\Phi^{(m)}_{\geq -1}$ is defined by the following properties.
    \begin{itemize}
        \item $(-\alpha)^1 \mathrel{\|} \beta^i$ with $\alpha\in\Pi$ holds iff $\alpha$ has coefficient $0$ in the expansion of $\beta$ as a linear combination of simple roots,
        \item $\alpha^i \mathrel{\|} \beta^j \Longleftrightarrow \mathcal{R}(\alpha^i) \mathrel{\|} \mathcal{R}(\beta^j)$, for all $\alpha^i, \beta^j \in \Phi^{(m)}_{\geq -1}$.
    \end{itemize}
    The simplicial complex $\Delta^{(m)}$ is defined as the flag complex with vertex set $\Phi^{(m)}_{\geq -1}$ associated to the binary relation $\mathrel{\|}$, and $\Delta^{(m),+}$ is the full subcomplex with vertex set $\Phi^{(m)}_+$.
\end{defi}

The definition of the binary relation immediately gives some useful properties:
\begin{itemize}
    \item For $\alpha\in\Pi$, the link of $(-\alpha)^1$ in $\Delta^{(m)}$ is $\Delta^{(m)}(W_{S\backslash\{t_\alpha\}})$.
    \item The self-map on $\Delta^{(m)}$ induced by $\mathcal{R}$ is an automorphism.
\end{itemize}

\begin{rema}
It is natural to include the case $m=0$ by declaring that $\Delta^{(0)}$ is an $(n-1)$-simplex with vertex set $-\Pi$.  The properties of $\Delta^{(m)}$ generally hold for $m=0$ as well.  It might seem natural to declare that $\Delta^{(0),+}$ is the empty complex, however this will not reflect the properties of $\Delta^{(m),+}$ in general.  In what follows, we assume $m\geq 1$ and discuss explicitly the case $m=0$.
\end{rema}

An alternative definition of the complex $\Delta^{(m)}$ was given by Athanasiadis and Tzanaki~\cite{athanasiadistzanaki}, and a useful consequence is the following: 

\begin{prop}[\cite{athanasiadistzanaki}]
    For each $f\in \Delta^{(m)}$, there is an indexing $f = \{\alpha_1^{i_1},\dots,\alpha_k^{i_k}\}$ such that $\prod f := \prod_{i=1}^k t_{\alpha_i}$ is in $\NC$.  Moreover $\prod_{i=1}^k t_{\alpha_i}$ is a reduced factorization.  
\end{prop}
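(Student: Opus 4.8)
The statement to prove is: for each face $f\in\Delta^{(m)}$, there is an indexing $f=\{\alpha_1^{i_1},\dots,\alpha_k^{i_k}\}$ such that $\prod_{i=1}^k t_{\alpha_i}\in\NC$ and this factorization is reduced (i.e.\ $\ell(\prod f)=k$). The plan is to proceed by induction on $k=\dim f+1$, using the two structural properties of $\Delta^{(m)}$ recorded just before the statement, namely that the link of $(-\alpha)^1$ (for $\alpha\in\Pi$) is $\Delta^{(m)}(W_{S\setminus\{t_\alpha\}})$, and that the rotation $\mathcal R$ induces an automorphism of $\Delta^{(m)}$.

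First I would handle the base case $k\le 1$, which is immediate: the empty product is $e\in\NC$, and a single colored root $\alpha^i$ gives $t_\alpha\in\NC$ since every reflection lies in $[c,e]$ (equivalently, $t_\alpha\le c$ in the absolute order, as $c=t_1\cdots t_n$ and any reflection can be inserted; more simply, $\ell(t_\alpha)=1=n-\dim V^{t_\alpha}$ and $t_\alpha$ is below $c$ because it is below some conjugate — one must be slightly careful, but the rank-$1$ case of $\NC$ being exactly the set of reflections that are $\le c$ is standard). For the inductive step, take $f$ with $|f|=k\ge 2$. The key reduction is to move $f$, via a suitable power of the rotation automorphism $\mathcal R$, into a position where it contains at least one negative simple root $(-\alpha)^1$ with $\alpha\in\Pi$; this is possible because $\mathcal R$ acts on the full vertex set $\Phi^{(m)}_{\ge-1}$ with the negative simples lying in a single $\mathcal R$-orbit structure, and any face of a cluster complex can be rotated to meet the ``negative'' part. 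Since $\mathcal R$ is a simplicial automorphism, it suffices to prove the claim for $\mathcal R^j(f)$; so assume $(-\alpha)^1\in f$. Then $f\setminus\{(-\alpha)^1\}$ is a face in $\mathrm{lk}\big((-\alpha)^1\big)=\Delta^{(m)}(W_{S\setminus\{t_\alpha\}})$, a cluster complex for the smaller reflection group $W':=W_{S\setminus\{t_\alpha\}}$. By induction applied inside $W'$, there is an indexing of the remaining $k-1$ colored roots whose product $w'=t_{\alpha_2}\cdots t_{\alpha_k}$ lies in $\NC(W')=[c',e]$ and is reduced in $W'$; since reflection length in a parabolic subgroup agrees with that in $W$, $w'$ is reduced in $W$ as well. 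It remains to show that prepending $t_\alpha$ — i.e.\ $w:=t_\alpha w'$ — is again reduced and lies in $\NC(W)$.

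The main obstacle is exactly this last gluing step: controlling how $t_\alpha$ interacts with $\NC(W')$ inside $\NC(W)$. There are two things to check. Reducedness ($\ell(w)=k$) should follow from a dimension/rank count: $w'$ fixes $V^{w'}\supseteq\ker\alpha^\perp$-type data coming from $W'$, and $t_\alpha$ acts nontrivially on the line it reflects; one argues $\dim V^w=\dim V^{w'}-1$ using that $t_\alpha\notin W'$ and the simple root $\alpha$ is linearly independent from the roots of $W'$. The noncrossing condition $w\ge c$ is the subtler point: here I would invoke the bipartite structure of $c=c_\bullet c_\circ$ and the precise combinatorics of the binary relation $\|$ and the rotation, following Athanasiadis–Tzanaki. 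Concretely, the relation $\|$ is designed so that ``compatible'' colored roots assemble into factorizations of $c$: by a standard fact, the facets of $\Delta^{(m)}$ correspond to maximal chains through $\NC$, and every face extends to a facet; extending $f$ to a facet $F$ and using that $\prod F$ (suitably indexed) equals $c$, one extracts $\prod f$ as an initial segment of a reduced word for $c$, which forces $\prod f\in[c,e]=\NC$. This last argument, properly executed, subsumes both the reducedness and the noncrossing claim, so in the write-up I would likely reorganize around ``every face sits inside a facet, and facets realize reduced factorizations of $c$'' rather than the parabolic induction — the induction is cleaner conceptually but the facet argument is what actually closes the compatibility gap.
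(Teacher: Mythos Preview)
The paper does not give its own proof of this proposition: it is stated with a citation to Athanasiadis--Tzanaki and no proof environment follows. In that reference the cluster complex is \emph{defined} via such reduced factorizations, so the proposition is essentially a consequence of the equivalence between their definition and the Fomin--Reading one. Your attempt is therefore not being compared against an argument in the paper but against a cited result.

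That said, your proposal has a genuine gap at the rotation step. You assert that ``since $\mathcal R$ is a simplicial automorphism, it suffices to prove the claim for $\mathcal R^j(f)$.'' This does not follow: the rotation $\mathcal R$ does not merely permute colors, it can change the underlying reflection (the case $\mathcal R(\alpha^i)=c(\alpha)^1$). So even if some ordering of $\mathcal R(f)$ multiplies to an element of $\NC$, you have not explained why the same holds for $f$, whose underlying multiset of reflections may differ. One can in fact show that $\prod f$ and $\prod\mathcal R(f)$ are related in a controlled way (conjugation together with the Kreweras complement), but establishing that relation already presupposes the well-definedness you are trying to prove.

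Your alternative closing idea --- extend $f$ to a facet $F$, use that $\prod F=c$ is reduced, then extract $\prod f$ as a subword --- is the right shape of argument, and it is closer to how Athanasiadis--Tzanaki actually proceed. But you have not proved the facet case: saying ``by a standard fact, the facets of $\Delta^{(m)}$ correspond to maximal chains through $\NC$'' is exactly the content of the proposition for facets, so invoking it is circular unless you supply an independent argument (which is what the alternative definition in the cited reference provides). In short, the skeleton is reasonable, but both branches of your argument lean on the very statement you are proving.
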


When $\NC$ is defined with respect to the bipartite Coxeter element, define for each $\pi\in\NC$:
\[
    \pi^\kappa := c_\bullet \pi c_\circ.
\]
The map $\pi \mapsto \pi^\kappa$ is an involutive anti-automorphism of $\NC$, called the {\it bipartite Kreweras complement} (see~\cite{bianejosuatverges}).

\begin{defi}[\cite{douvropoulosjosuatverges}]
    For $f\in \Delta^{(m)}$, we define
    \[
        \underline{f} := \big( \prod f \big)^\kappa.
    \]
    As the Kreweras complement of $\prod f \in \NC$, we have $ \underline{f} \in \NC$.
\end{defi}

As explained in~\cite{douvropoulosjosuatverges}, it is the conjugacy class of $\underline{f}$ that is of primary interest.  In type $A_{n-1}$, the conjugacy class of $\underline{f}$ is an integer partition of $n$, defined as the cyclic type of $\underline{f}$ as an element of the symmetric group. 
 We will see that it corresponds to the integer partition $\lambda$ defined in the introduction (see Lemma~\ref{lemm:bijtypeA}).

\begin{lemm}
    The map $f \mapsto \underline{f}$ is order-preserving.
\end{lemm}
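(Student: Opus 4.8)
The plan is to show that if $f, g \in \Delta^{(m)}$ with $g \subseteq f$, then $\underline{g} \le \underline{f}$ in $\NC$. Since $\underline{f} = (\prod f)^\kappa$ and the bipartite Kreweras complement $\pi \mapsto \pi^\kappa = c_\bullet \pi c_\circ$ is an involutive \emph{anti}-automorphism of $\NC$, it suffices to prove that $f \mapsto \prod f$ is order-\emph{reversing}: that is, $g \subseteq f$ implies $\prod f \le \prod g$ in the absolute order on $\NC$. This reduction is the first step, and it is immediate from the stated properties of $\kappa$.

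For the order-reversing claim, I would use the result of Athanasiadis and Tzanaki quoted just above: for $f = \{\alpha_1^{i_1}, \dots, \alpha_k^{i_k}\} \in \Delta^{(m)}$ there is an indexing such that $\prod f = t_{\alpha_1} \cdots t_{\alpha_k}$ is a \emph{reduced} factorization lying in $\NC$. The key point I would establish is that this indexing can be chosen compatibly with the inclusion $g \subseteq f$: given a facet $F \supseteq f$, one gets a reduced factorization of $c = \prod F$ refining that of $\prod f$, and restricting to the sub-multiset of colored roots in $g \subseteq f \subseteq F$ yields a reduced subword. Because $v \ge w$ in the absolute order precisely when a minimal reflection factorization of $v$ can be extracted as a subword of one for $w$ (the subword characterization recalled in the excerpt), the reduced factorization of $\prod g$ being a subword of that of $\prod f$ gives $\prod f \le \prod g$. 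Combined with the first step, $g \subseteq f \Rightarrow \underline{g} \le \underline{f}$, which is the assertion.

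The main obstacle is the compatibility of indexings: a priori the Athanasiadis–Tzanaki statement only guarantees \emph{some} reduced NC-factorization for each face separately, with no coherence between $f$ and its subfaces. I expect one must invoke the stronger structural fact — already used in \cite{douvropoulosjosuatverges} and implicit in the Athanasiadis–Tzanaki development — that the facets of $\Delta^{(m)}$ biject with (colored) maximal chains / reduced factorizations of $c$ in a way that is functorial under taking subfaces, so that the products $\prod(\cdot)$ assemble into a well-defined order-reversing map on the whole face poset rather than a choice made face-by-face. Once that coherence is in hand the argument is a short subword manipulation; making the coherence precise (or citing it cleanly from the companion paper) is where the real content sits.
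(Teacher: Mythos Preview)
Your proposal is correct and takes the same approach as the paper: show that $f \mapsto \prod f$ is order-reversing via the subword characterization of the absolute order, then compose with the order-reversing Kreweras complement $\kappa$. The paper's proof is in fact terser than yours---it simply asserts ``by definition $\prod f'$ is a subword of $\prod f$'' without discussing the indexing-coherence you flag; that coherence is indeed part of the Athanasiadis--Tzanaki construction (a valid ordering of a face restricts to one for any subface, and $\prod f$ is well-defined), so your caution is warranted but does not block the argument.
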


\begin{proof}
    If $f'\subset f$, by definition $\prod f'$ is a subword of $\prod f$.  By definition of the absolute order, this shows that the map $f\mapsto \prod f$ is order-reversing.  Since the Kreweras complement is also order-reversing, we get the result. 
\end{proof}

It was proved in~\cite{fominreading} that the link of a face $f\in \Delta^{(m)}$ is isomorphic to $\Delta^{(m)}(W_I)$ for some $I\subset S$.  The following is a more precise statement:

\begin{prop}[{\cite[Remark~6.15]{douvropoulosjosuatverges}}] \label{prop:linkdelta}
    The link of $f$ in $\Delta^{(m)}$ is isomorphic to $\Delta^{(m)}(W_{\underline{f}})$.
\end{prop}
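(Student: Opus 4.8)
The plan is to prove this by induction on $|f|$, using the rotation automorphism $\mathcal{R}$ to reduce to the case that $f$ contains a negative simple root, where one can appeal to the identity ``link of $(-\alpha)^{1}$ in $\Delta^{(m)}$ is $\Delta^{(m)}(W_{S\setminus\{t_{\alpha}\}})$'' recorded after the definition of $\mathrel{\|}$. Throughout it suffices to match parabolic subgroups up to conjugacy in $W$, since the isomorphism type of $\Delta^{(m)}(W')$ depends only on the conjugacy class (equivalently, the Coxeter type) of the parabolic $W'$.

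\emph{Base cases.} If $f=\varnothing$ then $\prod f=e$, so $\underline{f}=e^{\kappa}=c_{\bullet}c_{\circ}=c$; since $V^{c}=\{0\}$ we get $W_{\underline f}=\{w:V^{c}\subseteq V^{w}\}=W$, and the link of $\varnothing$ is all of $\Delta^{(m)}=\Delta^{(m)}(W_{\underline f})$. If $f=\{(-\alpha)^{1}\}$ then $\prod f=t_{\alpha}$, so $\underline f=t_{\alpha}^{\kappa}=c_{\bullet}t_{\alpha}c_{\circ}$; using that the roots in $\Pi_{\bullet}$ (resp. in $\Pi_{\circ}$) are pairwise orthogonal, this equals $\big(\prod_{\gamma\in\Pi_{\bullet}\setminus\{\alpha\}}t_{\gamma}\big)c_{\circ}$ if $\alpha\in\Pi_{\bullet}$, and $c_{\bullet}\big(\prod_{\gamma\in\Pi_{\circ}\setminus\{\alpha\}}t_{\gamma}\big)$ if $\alpha\in\Pi_{\circ}$. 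In both cases $\underline f$ is a bipartite Coxeter element of the standard parabolic $W_{S\setminus\{t_{\alpha}\}}$, so $V^{\underline f}$ is one-dimensional and equals the line $V^{W_{S\setminus\{t_{\alpha}\}}}$; since standard parabolics are parabolically closed, $W_{\underline f}=\{w:V^{\underline f}\subseteq V^{w}\}=W_{S\setminus\{t_{\alpha}\}}$, which matches the link identity. In particular this also pins down the bipartite Coxeter element that $W_{S\setminus\{t_{\alpha}\}}$ inherits from $W$.

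\emph{Inductive step.} Suppose $|f|\geq 1$. One checks from the explicit rotation rule that the $\langle\mathcal{R}\rangle$-orbit of every vertex of $\Delta^{(m)}$ meets $\{(-\alpha)^{1}:\alpha\in\Pi\}$ (applying $\mathcal{R}^{-1}$ lowers the color until it is $1$, and from color $1$ one reaches $-\Pi$ after finitely many further steps). Choose $v\in f$ and $k$ with $\mathcal{R}^{k}(v)=(-\alpha)^{1}$. As $\mathcal{R}$ is a simplicial automorphism of $\Delta^{(m)}$, the link of $f$ is isomorphic to the link of $\mathcal{R}^{k}f$; writing $g:=\mathcal{R}^{k}f\setminus\{(-\alpha)^{1}\}$, the set $g$ is a face of the link of $(-\alpha)^{1}$, which is $\Delta^{(m)}(W_{S\setminus\{t_{\alpha}\}})$, and the link of $\mathcal{R}^{k}f$ in $\Delta^{(m)}$ is the link of $g$ in $\Delta^{(m)}(W_{S\setminus\{t_{\alpha}\}})$. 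Since $W_{S\setminus\{t_{\alpha}\}}$ has rank $n-1$ and carries the bipartite Coxeter element identified above, the inductive hypothesis gives that the link of $g$ is isomorphic to $\Delta^{(m)}\big((W_{S\setminus\{t_{\alpha}\}})_{\underline g}\big)$, where $\underline g$ is formed inside $W_{S\setminus\{t_{\alpha}\}}$. To conclude, I would chain: the link of $f$ is isomorphic to the link of $\mathcal{R}^{k}f$, hence to $\Delta^{(m)}\big((W_{S\setminus\{t_{\alpha}\}})_{\underline g}\big)$, and it remains to show (a) $W_{\underline{\mathcal{R}^{k}f}}$ is conjugate in $W$ to $W_{\underline f}$, and (b) $(W_{S\setminus\{t_{\alpha}\}})_{\underline g}$ (formed inside the smaller group) is conjugate in $W$ to $W_{\underline{\mathcal{R}^{k}f}}$; then $\Delta^{(m)}\big((W_{S\setminus\{t_{\alpha}\}})_{\underline g}\big)\cong\Delta^{(m)}(W_{\underline f})$, closing the induction.

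The heart of the matter is (a) and (b), and I expect (b) to be the main obstacle. It asks that the reflection-factorization map $f\mapsto\prod f$ and the bipartite Kreweras complement behave functorially enough along the inclusion $W_{S\setminus\{t_{\alpha}\}}\hookrightarrow W$: deleting the vertex $(-\alpha)^{1}$ from $\mathcal{R}^{k}f$ deletes the single reflection $t_{\alpha}$ from the factorization of $\prod(\mathcal{R}^{k}f)$, and one must check that, after applying the Kreweras complements of $W$ and of $W_{S\setminus\{t_{\alpha}\}}$, this is reflected by passing from $W_{\underline{\mathcal{R}^{k}f}}$ to a $W$-conjugate of it. Statement (a) is the parallel compatibility of $\underline{(-)}$ with $\mathcal{R}$ (the rotation changes $\prod(-)$, and hence $\underline{(-)}$, only up to conjugacy). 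Both facts are part of the combinatorics of the generalized cluster complex developed in \cite{douvropoulosjosuatverges}, from which Proposition~\ref{prop:linkdelta} is quoted; everything else in the argument is an unwinding of the definitions of $\mathrel{\|}$, of $\underline{(-)}$, and of the noncrossing parabolic $W_{\underline f}$.
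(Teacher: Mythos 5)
Your skeleton coincides with the paper's: rotate $f$ until it contains a negative simple root, invoke the identity that the link of $(-\alpha)^1$ is $\Delta^{(m)}(W_{S\setminus\{t_\alpha\}})$, and induct on the rank. Your fact (a) — that the conjugacy class of $\underline{f}$ is invariant under $\mathcal{R}$ — is also handled by citation in the paper (to Proposition~11.1 of \cite{douvropoulosjosuatverges}), so deferring it is legitimate.

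The genuine gap is your point (b), which you yourself flag as ``the main obstacle'' and then defer to the very reference from which the proposition is quoted. Since that reference only sketches the proof and the present paper's purpose is precisely to supply the missing details, this deferral is circular and leaves the argument incomplete. The paper closes (b) with a short computation that is entirely within your reach (you essentially perform it in your base case $f=\{(-\alpha)^1\}$): writing $f'=f\setminus\{(-\alpha)^1\}$, one has $\prod f' = t_\alpha(\prod f)$ if $\alpha\in\Pi_\bullet$ and $\prod f' = (\prod f)\,t_\alpha$ if $\alpha\in\Pi_\circ$; the bipartite Coxeter element of $W'=W_{S\setminus\{t_\alpha\}}$ has factors $c'_\bullet=c_\bullet t_\alpha$, $c'_\circ=c_\circ$ in the first case (resp.\ $c'_\bullet=c_\bullet$, $c'_\circ=t_\alpha c_\circ$ in the second), because the reflections within $\Pi_\bullet$ (resp.\ $\Pi_\circ$) commute. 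Hence the extra $t_\alpha$ is absorbed and $\underline{f'}=c'_\bullet(\prod f')c'_\circ=c_\bullet(\prod f)c_\circ=\underline{f}$ \emph{exactly} — no conjugation is needed, and the noncrossing parabolic attached to $f'$ inside $W'$ is literally $W_{\underline{f}}$. With this identity your induction closes; without it the proof is not complete.
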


\begin{proof}
    The proof was just sketched in~\cite{douvropoulosjosuatverges}, so let us write some details.  First assume that $f$ contains $(-\alpha)^1$ where $\alpha\in \Pi$.  Since the link of $\{(-\alpha)^1\}$ in $\Delta^{(m)}$ identifies with $\Delta^{(m)}(W_{S\backslash\{t_\alpha\}})$, the link of $f$ in $\Delta^{(m)}$ identifies with the link of $f\backslash\{-\alpha\}$ in $\Delta^{(m)}(W_{S\backslash\{t_\alpha\}})$.  Moreover, we have
    \[
        \prod(f\backslash\{-\alpha\}) 
        =
        \begin{cases}
            t_\alpha (\prod f) & \text{ if } \alpha\in \Pi_\bullet, \\
            (\prod f) t_\alpha & \text{ if } \alpha\in \Pi_\circ.
        \end{cases}
    \]
    It follows that $\underline{f\backslash\{-\alpha\}} = \underline{f}$ (where in the left-hand side, the bipartite Kreweras complement is computed inside $W_{S\backslash\{t_\alpha\}}$).  The conclusion is that when $f$ contains an element of $-\Pi$, the result holds via an induction on the rank.  

    Now assume $f\cap (-\Pi) = \varnothing$.  There is an integer $i$ such that $\mathcal{R}^i(f) \cap  (-\Pi) \neq \varnothing$ (see~\cite{fominreading}).  Moreover, it has been shown in~\cite[Proposition~11.1]{douvropoulosjosuatverges} that $\underline{f}$ and $\underline{\mathcal{R}(f)}$ are conjugate in $W$.  These two properties show that the general case where $f\cap (-\Pi) = \varnothing$ follows from the previous case.
\end{proof}




\begin{prop} \label{prop:subcompl}
For each $\pi\in \NC$, the subcomplexes
\begin{align}
    \Delta^{(m),+}(\pi) 
        :=
    \big\{ 
            f\in \Delta^{(m),+} \;:\; \underline{f} \leq \pi 
    \big\},
            \qquad 
    \Delta^{(m)}(\pi) 
        :=
    \big\{ 
            f\in \Delta^{(m)} \;:\; \underline{f} \leq \pi 
    \big\}
\end{align}
of $\Delta^{(m)}$ are pure $(\rk(\pi)-1)$-dimensional and shellable. 
\end{prop}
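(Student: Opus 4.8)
The plan is to prove both claims — purity of dimension $\rk(\pi)-1$ and shellability — simultaneously by reducing to the known structure of the full complexes $\Delta^{(m)}$ and $\Delta^{(m),+}$, which are already known to be pure shellable of dimension $n-1$ (by Fomin--Reading and Athanasiadis--Tzanaki). First I would settle purity. A facet $f$ of $\Delta^{(m)}(\pi)$ is a maximal face with $\underline{f}\le\pi$, equivalently (by the Kreweras complement being an anti-automorphism) $\prod f \ge \pi^\kappa$, a reduced factorization refining a fixed element; since $\prod f\in\NC$ with $\ell(\prod f)=\#f$, the condition $\underline f\le\pi$ means $\ell(\underline f)\le\rk(\pi)$... wait, I want $\rk(\underline f)\le\rk(\pi)$, i.e. $\ell(\underline f)\ge\ell(\pi)$, hmm — the point is that maximality within the sublevel set forces $\underline f=\pi$ exactly for facets, hence $\#f=\ell(\prod f)=\ell((\underline f)^{\kappa^{-1}})$; using that $\kappa$ sends rank-$k$ elements of $\NC$ to elements whose fixed space has dimension $n-k$ appropriately, one computes $\#f=\rk(\pi)$, so every facet has the same dimension $\rk(\pi)-1$. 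The key input here is Proposition~\ref{prop:linkdelta}: the link of any $f$ with $\underline f=\pi$ inside $\Delta^{(m)}$ is $\Delta^{(m)}(W_{\underline f})=\Delta^{(m)}(W_\pi)$, whose facets correspond to the remaining freedom, but since $\underline f$ is already maximal in the sublevel set the link is a single point — this pins down the dimension cleanly and uniformly.

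Next, for shellability, I would exhibit $\Delta^{(m)}(\pi)$ (and $\Delta^{(m),+}(\pi)$) as a ``pseudo-ball/pseudo-manifold slice'' of the ambient shellable complex. The cleanest route is to use the \emph{rotation-invariance} of the type function: by Proposition~11.1 of~\cite{douvropoulosjosuatverges} (quoted in the proof of Proposition~\ref{prop:linkdelta}), the conjugacy class of $\underline f$ is $\mathcal{R}$-invariant, but $\underline f$ itself is not, so I would instead fix a face $g\in\Delta^{(m)}$ with $\underline g=\pi$ and observe that $\Delta^{(m)}(\pi)$ is exactly the union of closed faces $\overline{f}$ over those facets $f\supseteq$-compatible with $g$; more precisely $\Delta^{(m)}(\pi)=\bigcup\{\,\overline f : f\in\Delta^{(m)},\ \underline f=\pi\,\}$ and each such $f$ has $\underline{f'}\le\pi$ for every $f'\subseteq f$ since $f'\mapsto\underline{f'}$ is order-preserving. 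So $\Delta^{(m)}(\pi)$ is the subcomplex generated by the equidimensional facets $\{f:\underline f=\pi\}$. To shell it, I would restrict a shelling order of $\Delta^{(m)}$ itself: take the Athanasiadis--Tzanaki (or Fomin--Reading) shelling of $\Delta^{(m)}$, list its facets $F_1,F_2,\dots$, and keep only those $F_i$ with $\underline{F_i}$ in the $\NC$-conjugacy-or-rather order-ideal below $\pi$... but the subtle point is that facets of $\Delta^{(m)}(\pi)$ need \emph{not} be facets of $\Delta^{(m)}$. I would therefore instead proceed by induction on $\rk(\pi)$ using the link description: for a vertex $v$ of $\Delta^{(m)}(\pi)$, the link of $v$ in $\Delta^{(m)}(\pi)$ is $\Delta^{(m)}(\pi')$ for the appropriate smaller $\pi'$ in a parabolic of $W$ (reading off from $\prod(f\setminus v)$ versus $\prod f$ as in the proof of Proposition~\ref{prop:linkdelta}), and then invoke that a complex all of whose vertex-links are shellable, together with a suitable ordering coming from a linear extension of $\NC$ restricted below $\pi$, is shellable — or, more robustly, directly verify the shelling condition ``$\overline{F_i}\cap(\overline{F_1}\cup\dots\cup\overline{F_{i-1}})$ is pure of codimension one'' using the lattice structure of $[c,\pi]\subset\NC$ and the fact that intersections of faces correspond to joins in $\NC$.

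The main obstacle I anticipate is exactly this last point: producing an \emph{explicit} shelling order rather than merely arguing shellability abstractly. The honest way is to pull back a known shelling. Fomin--Reading prove $\Delta^{(m)}$ is shellable via a lexicographic-type order on facets indexed by their ``compatibility degrees'' or equivalently (Athanasiadis--Tzanaki) via an $EL$-labeling-type argument on the associated $\NC$-data; since every facet $f$ of $\Delta^{(m)}(\pi)$ satisfies $\underline f=\pi$ and hence $\prod f = \pi^{\kappa^{-1}}$ is a \emph{fixed} element of $\NC$, the facets of $\Delta^{(m)}(\pi)$ are precisely the reduced reflection factorizations of that fixed element realized as colored-root faces — and the restriction of the ambient shelling to this fiber is a shelling of $\Delta^{(m)}(\pi)$ because the shelling of $\Delta^{(m)}$ refines the map $f\mapsto\underline f$ (faces are added in an order compatible with the value of $\underline{(\cdot)}$, by $\mathcal R$-equivariance of the construction). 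So the plan is: (1) check purity via Proposition~\ref{prop:linkdelta}; (2) identify facets of $\Delta^{(m)}(\pi)$ with the fiber $\{f:\underline f=\pi\}$; (3) observe the ambient shelling order, restricted to this fiber, still satisfies the shelling condition, because the codimension-one intersection faces $\overline{F_i}\cap\overline{F_j}$ that arise already lie in $\Delta^{(m)}(\pi)$ (their $\underline{(\cdot)}$ value is $\le\pi$ by order-preservation). The same three steps apply verbatim to $\Delta^{(m),+}(\pi)$ using the positive part. I expect step (3) to require the most care in matching conventions with~\cite{fominreading,athanasiadistzanaki}, but no genuinely new idea.
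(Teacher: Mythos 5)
There is a genuine gap, in fact two. First, your purity argument is circular: you assert that a maximal face of $\Delta^{(m)}(\pi)$ must satisfy $\underline{f}=\pi$, but that is exactly what purity says --- a priori a face $f$ with $\underline{f}<\pi$ strictly might not extend to any $f'\supset f$ with $\underline{f'}\leq\pi$, and nothing in your sketch rules this out. (The appeal to Proposition~\ref{prop:linkdelta} does not help: that proposition describes the link of $f$ in the \emph{ambient} complex $\Delta^{(m)}$, not in the sublevel set, and the link in $\Delta^{(m)}$ is certainly not a point.) Second, the shellability step does not go through by either of the routes you propose. Restricting a shelling order of $\Delta^{(m)}$ cannot work, as you yourself notice: the facets of $\Delta^{(m)}(\pi)$ have dimension $\rk(\pi)-1<n-1$, so they are not facets of $\Delta^{(m)}$ and there is no ``restriction of the ambient shelling to the fiber'' to speak of. Your fallback --- that shellability of all vertex links plus a suitable ordering yields shellability --- is not a theorem; vertex-decomposability-style arguments require control of the \emph{deletion} as well as the link.

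The argument the paper uses supplies precisely these missing pieces. One introduces the auxiliary complexes $\Gamma(\pi,I)=\{f\in\Delta^{(m)}:\underline{f}\leq\pi,\ f\cap(-\Pi)\subset I\}$ for $I\subset-\Pi$ and inducts simultaneously on $\#I$ and on the rank of $W$. The base case $I=\varnothing$ is the positive subcomplex $\Delta^{(m),+}(\pi)$, which is pure and shellable by Proposition~3.1(i) of Athanasiadis--Tzanaki~\cite{athanasiadistzanaki} --- this is the external input, and note it is the \emph{positive} case, not the full complex, that is in the literature. For the inductive step one adds one negative simple vertex $\alpha$ at a time: either $t_\alpha\not\geq\pi^\kappa$ and nothing changes, or both the deletion of $\alpha$ (which is $\Gamma(\pi,I')$) and the link of $\alpha$ (which is a complex of the same form for a maximal standard parabolic) are pure shellable of the correct dimensions by induction, and then the deletion-plus-link lemma \cite[Lemma~2.1(i)]{athanasiadistzanaki} gives purity and shellability of $\Gamma(\pi,I)$ in one stroke. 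This is the step your proposal is missing: purity and shellability must be established together by this induction, rather than purity first and shellability second.
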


\begin{proof}
    These results are essentially in Athanasiadis and Tzanaki~\cite{athanasiadistzanaki} (the first one is explicitly stated, and the second one is a straightforward consequence of their methods and results).  For the sake of thoroughness, let us give some details.  
    
    First note that since $f\mapsto \underline{f}$ is order-preserving, both $\Delta^{(m),+}(\pi)$ and $\Delta^{(m)}(\pi)$ are order ideals in the face poset of $\Delta^{(m)}$ ({\it i.e.}, subcomplexes of $\Delta^{(m)}$).  For $I\subset -\Pi$, define:
    \[
        \Gamma(\pi,I)
            :=
        \big\{ 
            f\in \Delta^{(m)} \;:\; \underline{f} \leq \pi \text{ and } f\cap (-\Pi) \subset I
        \big\}.
    \]
    It is again immediate to check that this is a subcomplex of $\Delta^{(m)}$.  By induction on $\#I$ and the rank of $W$, we show that these subcomplexes are all purely $(n-\ell(\pi)-1)$-dimensional and shellable.  The proposition will follow from the cases $I=\varnothing$ and $I=-\Pi$, respectively.  In the case where $I = \varnothing$, the result is precisely~\cite[Proposition~3.1,~(i)]{athanasiadistzanaki}.  So, now we assume $I\neq\varnothing$ and write $I = I' \uplus\{ \alpha\}$ for some $\alpha\in -\Pi$.  
    
    First note that if $\alpha$ is a vertex of $f \in \Gamma(\pi,I)$, we get $t_{\alpha} \geq \prod f \geq \pi^\kappa$.  So, in the case where $t_{\alpha} \not\geq \pi^\kappa$, $\alpha$ cannot be a vertex of $f\in \Gamma(\pi,I)$. So we have $\Gamma(\pi,I)= \Gamma(\pi,I')$, and the result holds by the induction hypothesis.
    
    Now consider the case $t_{\alpha} \geq \pi^\kappa$.  We examine two subcomplexes of $\Gamma(\pi,I)$.
    \begin{itemize}
        \item The {\it deletion} of the vertex $\alpha$ in $\Gamma(\pi,I)$: this is  $\Gamma(\pi,I')$, by definition.  By the induction hypothesis, it is purely $(n-\ell(\pi)-1)$-dimensional and shellable.
        \item The {\it link} of the vertex $\alpha$ in $\Gamma(\pi,I)$: it can be described as follows.  Let $W'$ be the maximal standard parabolic subgroup of $W$ generated by $S-\{t_{\alpha} \}$.  Its rank is $n'=n-1$.  We have either $\pi = t_{\alpha} \pi' $ with $\pi' \in \NC(W')$ (if $\alpha \in -\Pi_\bullet$), or $\pi = \pi' t_{\alpha}$ with $\pi' \in \NC(W')$ (if $\alpha \in -\Pi_\circ$).  In each case, we can directly identify $\Gamma(\pi,I)$ with $\Gamma(\pi',I')$, where the latter is defined in the cluster complex attached to $W'$.  By the induction hypothesis, it is purely $(n'-\ell(\pi)-1)$-dimensional and shellable.
    \end{itemize} 
    From the properties of these two subcomplexes, we deduce that $\Gamma(\pi,I)$ is also purely $(n-\ell(\pi)-1)$-dimensional and shellable (see~\cite[Lemma~2.1,~(i)]{athanasiadistzanaki}, for example).
\end{proof}

We end this section by stating a result from our previous work~\cite{douvropoulosjosuatverges}.  Define the sign character $\sign$ of $W$ by
\[
    \sign(w) := (-1)^{\ell(w)}.
\]
For each parabolic subgroup $P$, there is a left action of $W$ on the quotient $W/P$.  The character of this action is denoted by $\ch(W/P)$.  In the following result, we consider an alternating sum in the character ring of $W$.  Essentially, the identities below come from a refined enumeration of faces of $\Delta^{(m)}$ and $\Delta^{(m),+}$ according to the conjugacy class of $\underline{f}$ (see~\cite{douvropoulosjosuatverges} for details).

\begin{prop}[{\cite[Section~12, Equations~(72,73)]{douvropoulosjosuatverges}}] \label{prop:altersumcharac}
    We have:
    \begin{align}
        \sum_{f\in \Delta^{(m)}} (-1)^{\dim(f)} \ch(W/W_{\underline{f}})
            =
        (-1)^{n-1} \sign \otimes \park_m,\\
        \sum_{f\in \Delta^{(m),+}} (-1)^{\dim(f)} \ch(W/W_{\underline{f}})
            =
        (-1)^{n-1} \sign \otimes \park'_m.        
    \end{align}
\end{prop}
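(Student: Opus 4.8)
The identity to prove is, in the character ring of $W$,
\[
    \sum_{f\in \Delta^{(m)}} (-1)^{\dim(f)} \ch(W/W_{\underline{f}})
        =
    (-1)^{n-1} \sign \otimes \park_m,
\]
together with the positive analog. The natural strategy is to evaluate both sides on a fixed $w \in W$ and reduce everything to the parabolic Burnside ring, where characters of the form $\ch(W/W_{\underline f})$ are detected by counting fixed points: $\ch(W/P)(w) = \#\{gP : wgP = gP\} = \#\{gP : g^{-1}wg \in P\}$. First I would group the faces $f\in\Delta^{(m)}$ according to the conjugacy class of $\underline f$; by Proposition~\ref{prop:linkdelta} this class is a refined ``type'' invariant (in type $A$ it is the integer partition $\lambda$ of the introduction, by Lemma~\ref{lemm:bijtypeA}), and the number of faces with $\underline f$ in a given parabolic class is exactly the type-refined face count encoded by the $L_{m,\lambda}$ (and $L^+_{m,\lambda}$ for the positive complex) via~\eqref{eq!llambda}. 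So the left-hand side becomes a sum over parabolic conjugacy classes $[P]$ of $\bigl(\sum_{\underline f \in [P]} (-1)^{\dim f}\bigr)\,\ch(W/P)$, and the signs can be controlled because $\dim f = \ell(\prod f) - 1 = \ell(\underline f) - 1 = n - \rk(\underline f) - 1$ is determined by $[P]$.

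The second step is to recognize the resulting class function as $(-1)^{n-1}\sign\otimes\park_m$. Here I would use Proposition~\ref{prop:zetaPF}: on $w\in W$ one has $\park_m(w) = (mh+1)^{n-\ell(w)}$ (and $\park'_m(w) = (mh-1)^{n-\ell(w)}$), so the target evaluated at $w$ is $(-1)^{n-1}(-1)^{\ell(w)}(mh+1)^{n-\ell(w)}$. On the other side, evaluating $\ch(W/W_{\underline f})$ at $w$ and summing, one can reorganize the fixed-point count: $\sum_f (-1)^{\dim f}\ch(W/W_{\underline f})(w)$ counts (with sign) pairs $(f, gW_{\underline f})$ with $g^{-1}wg$ fixing $V^{\underline f}$ setwise, i.e. lying in the parabolic $gW_{\underline f}g^{-1}$. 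Conjugating, this is a sum over faces $f$ and subspaces in the $W$-orbit of $V^{\underline f}$ containing $V^w$, and one is led to an alternating sum over the interval structure of the intersection lattice $\mathcal L$ below $V^w$, i.e. an Euler-characteristic computation in $\NC$. The bridge to $(mh\pm1)^{n-\ell(w)}$ is then the known Fuß–Catalan / chain-counting identities for $\NC(W)$ (the $\zeta$-polynomial of $\NC$ evaluated at $m$, and its behavior under the parabolic decomposition indexed by $V^w$), combined with~\eqref{eq:relzetachi} relating reduced Euler characteristics to $\zeta$-characters at $-1$.

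Concretely, I expect the cleanest route is: (i) establish $\ch(W/W_{\underline f})$ is additive over the parabolic-Burnside decomposition and that summing $(-1)^{\dim f}$ over faces of the ``upper interval'' $\Delta^{(m)}(\pi)$ for varying $\pi$ telescopes — exactly the combinatorial content of Proposition~\ref{prop:subcompl}, which says each $\Delta^{(m)}(\pi)$ is pure shellable of dimension $\rk(\pi)-1$, hence its reduced Euler characteristic is (up to sign) its number of facets (a Fuß-type multichain count in $[c,\pi]\subset\NC$); (ii) use the formula $\tilde\chi(\Delta^{(m)}(\pi)) = (-1)^{\rk(\pi)-1}(\text{number of facets})$ together with the known count of facets of $\Delta^{(m)}(\pi)$ as the Fuß–Catalan number of $W_\pi$; (iii) feed these into the Burnside-ring identity and match with $\sign\otimes\bzeta(\PF,m)$ by comparing orbit-by-orbit over $\NC$, using that the orbit of $\PF$ indexed by $\pi$ is $W/W_\pi$ (the Lemma after Lemma~\ref{lemm:pftonc}) and that multichains in $\PF$ above a fixed coset factor through $\PF(W_\pi)$ (Lemma~\ref{lemm:pffilter}).

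**Main obstacle.** The genuinely delicate point is step (ii)–(iii): getting the signs and the transition from ``alternating sum of induced characters $\ch(W/W_{\underline f})$'' to ``$\sign$-twist of $\park_m$'' right, because this is where the reflection-length grading of $\Delta^{(m)}$ (via $\dim f = n - \rk(\underline f) - 1$) must be matched against the rank grading of $\PF$ (via $\rk(wW_\pi)=\rk(\pi)$), and the Kreweras complement $\underline f = (\prod f)^\kappa$ introduces an order-reversal that has to be tracked carefully. In practice I would organize this as an identity in the parabolic Burnside ring: both sides, evaluated via fixed points, reduce to sums over the poset of parabolic subgroups weighted by face counts $L_{m,\lambda}$ resp. multichain counts for $\PF$, and the equality then follows from~\eqref{altern_sum}, i.e. from the $h_\lambda$-versus-$p_\lambda$ identity $\sum_\lambda (-1)^{\ell(\lambda)}L_{m,\lambda}h_\lambda = \sum_\lambda(-mn-1)^{\ell(\lambda)}p_\lambda/z_\lambda$ in type $A$ and its uniform analog from~\cite{douvropoulosjosuatverges}. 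So the proof is ultimately a bookkeeping of two known enumerative identities through the Burnside-ring dictionary, but the bookkeeping — especially the global sign $(-1)^{n-1}$ and the role of the bipartite Kreweras complement — is where all the care is needed. Since the statement is quoted from~\cite[Section~12]{douvropoulosjosuatverges} as a known result, I would in fact cite that computation rather than reproduce it, and only sketch the Burnside-ring reduction above for the reader's orientation.
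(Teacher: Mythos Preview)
Your proposal is correct, and in fact you do more than the paper does: the paper gives no proof at all for this proposition, simply citing it as \cite[Section~12, Equations~(72,73)]{douvropoulosjosuatverges} and remarking that it is the general-type analog of Equations~\eqref{altern_sum} and~\eqref{altern_sum2}. Your concluding sentence---that one should cite the computation from the earlier paper rather than reproduce it---is exactly what the authors do, so your instinct is right and the Burnside-ring sketch you offer is a helpful orientation but not part of the paper's argument.
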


This is the analog of Equations~\eqref{altern_sum} and~\eqref{altern_sum2} in the introduction.

\subsection{Combinatorial models in terms of dissections of polygons}

Let us describe explicitly the combinatorics of type $A_{n-1}$ cluster complex, following Fomin and Zelevinsky~\cite{fominzelevinsky}.  As mentioned in the introduction, vertices are diagonals of an $(n+2)$-gon.  Let $(e_i)_{1\leq i \leq n}$ be the canonical basis of $\mathbb{R}^n$.  The indexing is characterized (up to an order 2 symmetry of the polygon) by the following properties.
\begin{itemize}
    \item Negative simple roots $(-e_i+e_{i+1})_{1\leq i \leq n-1}$ form the ``snake'' triangulation
    \[
        \begin{tikzpicture}
            \draw[very thick] (0.38,-0.92) -- (-0.38,-0.92) -- (-0.92,-0.38) -- (-0.92,0.38) -- (-0.38,0.92) -- (0.38,0.92) -- (0.92,0.38) -- (0.92,-0.38) -- (0.38,-0.92);
            \draw[very thick,rounded corners=1] (-0.38,0.92) -- (-0.92,-0.38) -- (0.38,0.92) -- (-0.38,-0.92) -- (0.92,0.38) -- (0.38,-0.92);
        \end{tikzpicture},
    \]
    in such a way that two diagonals indexed by $-e_i+e_{i+1}$ and $-e_j+e_{j+1}$ share an endpoint if and only if $j-i = \pm 1$.
    \item The diagonal indexed by a positive root $e_i-e_j$ (with $i<j$) crosses the diagonal indexed $-e_k+e_{k+1}$ if and only if $i\leq k < j$.
\end{itemize}
For example, 
\begin{align} \label{eq:exf1f2}
    \begin{tikzpicture}
            \draw[very thick] (0.38,-0.92) -- (-0.38,-0.92) -- (-0.92,-0.38) -- (-0.92,0.38) -- (-0.38,0.92) -- (0.38,0.92) -- (0.92,0.38) -- (0.92,-0.38) -- (0.38,-0.92);
            \draw[very thick,rounded corners=1] (-0.38,0.92) -- (-0.92,-0.38) -- (0.38,0.92);
            \draw[very thick,rounded corners=1] 
            (-0.92,-0.38) -- (0.92,-0.38) -- (-0.38,-0.92);
    \end{tikzpicture}
    \quad 
    \text{ and }
    \quad
        \begin{tikzpicture}
            \draw[very thick] (0.38,-0.92) -- (-0.38,-0.92) -- (-0.92,-0.38) -- (-0.92,0.38) -- (-0.38,0.92) -- (0.38,0.92) -- (0.92,0.38) -- (0.92,-0.38) -- (0.38,-0.92);
            \draw[very thick,rounded corners=1] (-0.38,-0.92) -- (-0.38,0.92) -- (0.38,-0.92);
    \end{tikzpicture}
\end{align}
respectively correspond to the faces
\[
    f_1 = \{
        -e_1+e_2,-e_2+e_3, e_3-e_6, e_5-e_6
    \} \in \Delta_6
    \text{ and }
    f_2 = \{
        e_2-e_3, e_2-e_5
    \} \in \Delta_6.
\]
See~\cite{fominreading,fominzelevinsky} for detailed combinatorial descriptions of $\Delta^{(m)}$ and $\Delta^{(m),+}$ in types $A$, $B$, {\it etc}.

Let us go on with describing the noncrossing partition $\prod f \in \NC$ in type $A$.  To this end, we label $n$ vertices on a circle with $1,\dots,n$ in such a way that going counterclockwise we read the labels $1,3,5,\dots,n-1,n,n-2,\dots,4,2$ (if $n$ is even) or $1,3,5,\dots,n-2,n,n-1,\dots,4,2$ (if $n$ is odd).  These labels come from reading the bipartite Coxeter element as a cycle.  A {\it noncrossing forest} is obtained from $f$ by drawing an edge between $j$ and $k$ if $f$ contains an element $\alpha^i$ such that $t_\alpha$ is the transposition $(j,k)\in\mathfrak{S}_n$.  For example, the faces $f_1$ and $f_2$ as above respectively give
\begin{align}  \label{eq:exf1f2nc}
    \begin{tikzpicture}
        \draw[very thick] (0,0) circle (1);
        \node at (-0.54,1.1) {$2$};
        \node at (0.54,1.1) {$4$};
        \node at (1.2,0) {$6$};
        \node at (0.54,-1.1) {$5$};
        \node at (-0.54,-1.1) {$3$};
        \node at (-1.2,0) {$1$};
        \draw[very thick] (-1,0) -- (-0.5,0.866);
        \draw[very thick] (-0.5,0.866) -- (-0.5,-0.866);
        \draw[very thick] (-0.5,-0.866) -- (1,0);
        \draw[very thick] (1,0) -- (0.5,-0.866);        
    \end{tikzpicture}
    \text{ and }
        \begin{tikzpicture}
        \draw[very thick] (0,0) circle (1);
        \node at (-0.54,1.1) {$2$};
        \node at (0.54,1.1) {$4$};
        \node at (1.2,0) {$6$};
        \node at (0.54,-1.1) {$5$};
        \node at (-0.54,-1.1) {$3$};
        \node at (-1.2,0) {$1$};
        \draw[very thick] (-0.5,-0.866) -- (-0.5,0.866) -- (0.5,-0.866);
    \end{tikzpicture}.
\end{align}
The noncrossing partition $\prod f$ is obtained by taking the connected components of this noncrossing forest, so 
\[
    \prod f_1 = \{ \{1,2,3,5,6\},\{4\}\},
    \quad
    \prod f_2 = \{ \{1\}, \{2,3,5\},\{4\},\{6\}\}.
\]
Now, write $n$ additional labels $1',\dots,n'$ on the circle so that reading it clockwise gives $1,2',3,4',\dots 3',2,1'$.  The bipartite Kreweras complement of the noncrossing partition is obtained by connecting $i'$ with $j'$ by a path whenever it is possible to do so without creating a crossing, and looking at the connected components created by these paths.  In the example above, the pictures become 
\begin{align} \label{eq:exf1f2krew}
    \begin{tikzpicture}
        \draw[very thick,color=red] (0,1) -- (0.866,0.5);
        \draw[very thick] (0,0) circle (1);
        \node at (-0.54,1.1) {$2$};
        \node at (0.54,1.1) {$4$};
        \node at (1.2,0) {$6$};
        \node at (0.54,-1.1) {$5$};
        \node at (-0.54,-1.1) {$3$};
        \node at (-1.2,0) {$1$};
        \node at (1.1,-0.54) {\textcolor{red}{$6'$}};
        \node at (1,0.7) {\textcolor{red}{$5'$}};
        \node at (0,1.2) {\textcolor{red}{$3'$}};
        \node at (-1,0.7) {\textcolor{red}{$1'$}};
        \node at (-1.1,-0.54) {\textcolor{red}{$2'$}};
        \node at (0,-1.2) {\textcolor{red}{$4'$}};
        \draw[very thick] (-1,0) -- (-0.5,0.866);
        \draw[very thick] (-0.5,0.866) -- (-0.5,-0.866);
        \draw[very thick] (-0.5,-0.866) -- (1,0);
        \draw[very thick] (1,0) -- (0.5,-0.866);        
    \end{tikzpicture}
    \text{ and }
        \begin{tikzpicture}
        \draw[very thick,color=red] (-0.866,0.5) -- (-0.866,-0.5);
        \draw[very thick,color=red] (0,1) -- (0.866,0.5) -- (0.866,-0.5) -- (0,1);
        \draw[very thick] (0,0) circle (1);
        \node at (-0.54,1.1) {$2$};
        \node at (0.54,1.1) {$4$};
        \node at (1.2,0) {$6$};
        \node at (0.54,-1.1) {$5$};
        \node at (-0.54,-1.1) {$3$};
        \node at (-1.2,0) {$1$};
        \node at (1.1,-0.54) {\textcolor{red}{$6'$}};
        \node at (1,0.7) {\textcolor{red}{$5'$}};
        \node at (0,1.2) {\textcolor{red}{$3'$}};
        \node at (-1,0.7) {\textcolor{red}{$1'$}};
        \node at (-1.1,-0.54) {\textcolor{red}{$2'$}};
        \node at (0,-1.2) {\textcolor{red}{$4'$}};
        \draw[very thick,rounded corners=1] (-0.5,-0.866) -- (-0.5,0.866) -- (0.5,-0.866);
    \end{tikzpicture}.
\end{align}
This illustrates that
\[
    \underline{f_1} = \{\{1\},\{2\},\{3,5\},\{4\},\{6\}\}
    \text{ and }
    \underline{f_2} = \{\{1,2\},\{3,5,6\},\{4\}\},
\]
which can also be checked in $\mathfrak{S}_n$ by unfolding the definitions of the general case. 

To complete this combinatorial description of type $A$, we have the following:  

\begin{lemm} \label{lemm:bijtypeA}
    Let $f \in \Delta^{(m)}_n$, viewed as a dissection of an $(mn+2)$-gon.  The integer partition $\lambda \vdash n$ associated to $f$ (as defined in the introduction) is the decreasing sort of block sizes of $\underline{f}$.  
    
\end{lemm}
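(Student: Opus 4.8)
\textbf{Proof strategy for Lemma~\ref{lemm:bijtypeA}.} The plan is to compute the link of $f$ inside $\Delta^{(m)}_n$ in two different ways and compare the results. By the very definition of the type $\lambda=(\lambda_1,\lambda_2,\dots)$ of $f$ in the dissection model of $\Delta^{(m)}_n$, each inner $(m\lambda_i+2)$-gon of $f$ may be dissected further, and these choices are made independently for distinct inner polygons; hence the link of $f$ in $\Delta^{(m)}_n$ is the simplicial join
\[
    \Delta^{(m)}_{\lambda_1}\ast\Delta^{(m)}_{\lambda_2}\ast\cdots
\]
(this is exactly the statement recalled in the Introduction, the ``product'' there being the join). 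On the other hand, Proposition~\ref{prop:linkdelta} identifies this same link with $\Delta^{(m)}(W_{\underline f})$.

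Next I would unwind $\Delta^{(m)}(W_{\underline f})$ in type $A$. The element $\underline f\in\NC$ is a noncrossing set partition of $\{1,\dots,n\}$; writing $B_1,\dots,B_r$ for its blocks, one has $V^{\underline f}=\{x: x \text{ is constant on each }B_j\}$, so that $W_{\underline f}=\{w: V^{\underline f}\subseteq V^w\}=\mathfrak S_{B_1}\times\cdots\times\mathfrak S_{B_r}$, a Young subgroup of type $(|B_1|,\dots,|B_r|)$. Since Fomin--Reading's complex $\Delta^{(m)}$ is multiplicative over the decomposition of a reflection group into irreducible components (colored roots from distinct components are always compatible, as is clear from the definition of $\mathrel{\|}$), we get
\[
    \Delta^{(m)}(W_{\underline f})\;\cong\;\Delta^{(m)}_{|B_1|}\ast\cdots\ast\Delta^{(m)}_{|B_r|}.
\]
Combining the two computations yields $\Delta^{(m)}_{\lambda_1}\ast\Delta^{(m)}_{\lambda_2}\ast\cdots\;\cong\;\Delta^{(m)}_{|B_1|}\ast\cdots\ast\Delta^{(m)}_{|B_r|}$, and it remains to read the partition off of such an isomorphism.

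To do this I would use that $\Delta^{(m)}$ is a flag complex, and that any finite flag complex decomposes canonically and uniquely as the join of the full subcomplexes on the connected components of its incompatibility graph (vertices the vertices of the complex, edges the pairs that do \emph{not} form a face). The factor $\Delta^{(m)}_1$ has empty vertex set, hence is the unit for the join and invisible in this decomposition; for $k\ge 2$ the complex $\Delta^{(m)}_k$ is join-indecomposable (its incompatibility graph is connected) and is determined up to isomorphism by $k$, for instance by its number of vertices $m\binom{k}{2}+k-1$. Therefore the multisets $\{\lambda_i:\lambda_i\ge 2\}$ and $\{|B_j|:|B_j|\ge 2\}$ agree; since $\sum_i\lambda_i=n=\sum_j|B_j|$, the numbers of parts equal to $1$ agree as well, and sorting decreasingly gives the claim.

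\textbf{Main obstacle.} The only step carrying real content is the join-indecomposability of $\Delta^{(m)}_k$ for $k\ge 2$, i.e.\ the connectedness of its incompatibility graph: for $m=1$ this is the ``crossing graph'' of all diagonals and snake-edges of the $(k+2)$-gon, which is connected for $k\ge 2$, and in general it should follow from the connectedness of the exchange graph of $\Delta^{(m)}_k$ together with the fact that $\Delta^{(m)}_k$ is not a cone. Everything else is a routine assembly of Proposition~\ref{prop:linkdelta}, the identification of $W_{\underline f}$ with a Young subgroup, and the multiplicativity of $\Delta^{(m)}$. One could instead try to argue by induction on the number of diagonals of $f$ (adding a diagonal splits one inner polygon, and, via $\prod f\lessdot\prod(f\setminus\{\delta\})$ and the bipartite Kreweras complement, a cover relation in $\NC$, i.e.\ the splitting of one block of $\underline f$); but matching the right polygon with the right block makes this route fiddlier than the link computation, so I would not pursue it.
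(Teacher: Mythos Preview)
Your argument is correct and follows the same route as the paper: compute the link of $f$ in two ways (via the dissection model and via Proposition~\ref{prop:linkdelta}) and compare the resulting join decompositions. The only difference lies in the final step, where the paper invokes an external result (\cite[Theorem~2.12]{josuatverges}) stating that $\Delta^{(m)}(W)\simeq\Delta^{(m)}(W')$ forces $W$ and $W'$ to be Coxeter-isomorphic, whereas you argue more directly and elementarily via the canonical join decomposition of a flag complex along the connected components of its incompatibility graph, distinguishing the factors $\Delta^{(m)}_k$ by their vertex counts $m\binom{k}{2}+k-1$. Your version is more self-contained; the paper's version is shorter but relies on a stronger outside fact.
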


\begin{proof}
    Consider the link of $f \in \Delta^{(m)}_n$ (viewed as a dissection of an $(mn+2)$-gon).  Clearly, faces $f' \in \Delta^{(m)}_n $ containing $f$ are obtained by choosing a dissection of each inner polygon defined by $f$.  This shows that the link of $f$ is isomorphic to a join
    \begin{equation} \label{eq:joinA}
        \Delta^{(m)}_{\lambda_1} \ast
        \Delta^{(m)}_{\lambda_2} \ast \cdots.
    \end{equation}
    
    On the other hand, recall from Proposition~\ref{prop:linkdelta} that the link of $f$ is $\Delta^{(m)}(W_{\underline{f}})$ which is the join of the complexes $\Delta^{(m)}(W_i)$ where $W_i$ are the irreducible factors of $W_{\underline{f}}$.  As we focus on type $A$, it means that this link is the join 
        \begin{equation} \label{eq:joinA2}
        \Delta^{(m)}_{\mu_1} \ast
        \Delta^{(m)}_{\mu_2} \ast \cdots
    \end{equation}
    where the partition $\mu \vdash n$ is given by block sizes of $\underline{f}$.

    The result $\lambda = \mu$ follows from the absence of exceptional isomorphisms: $\Delta^{(m)}(W) \simeq \Delta^{(m)}(W')$ implies that $W$ and $W'$ are Coxeter-isomorphic (see~\cite[Theorem~2.12]{josuatverges}).
\end{proof}

Another way to prove the previous lemma is to give an explicit bijection between inner $(mk+2)$-gons of the dissections and blocks of size $k$ in $\underline{f}$, for each $k\geq 1$.  We omit details and only give an example (in the case $m=1$) to illustrate a general construction.  Consider the inner pentagon in the second dissection in~\eqref{eq:exf1f2}. Its diagonals are indexed by the roots $e_2-e_4$, $e_2-e_6$, $e_4-e_5$, $e_4-e_6$, $-e_5+e_6$.  The associated reflections are $(24)$, $(26)$, $(45)$, $(46)$, $(56)$.  By dividing the disk into components separated by edges of the noncrossing tree in~\eqref{eq:exf1f2nc}, we see that all these reflections lie in the same component.  This component corresponds to the block $\{3,5,6\}$ (see~\eqref{eq:exf1f2krew}).  Thus, the inner pentagon we started from is associated to the block $\{3,5,6\}$.  

The construction of the previous paragraph doesn't work when $k=1$, because we get an empty set of reflections.  To overcome this, consider the triangle as an intersection of inner polygons from smaller faces, for example:
\begin{align*}
    \begin{tikzpicture}
            \draw[very thick] (0.38,-0.92) -- (-0.38,-0.92) -- (-0.92,-0.38) -- (-0.92,0.38) -- (-0.38,0.92) -- (0.38,0.92) -- (0.92,0.38) -- (0.92,-0.38) -- (0.38,-0.92);
            \draw[very thick,rounded corners=1,fill=lightgray] (-0.38,-0.92) -- (-0.38,0.92) -- (0.38,-0.92) -- (-0.38,-0.92);
    \end{tikzpicture}
    \quad = \quad
    \begin{tikzpicture}
            \draw[very thick] (0.38,-0.92) -- (-0.38,-0.92) -- (-0.92,-0.38) -- (-0.92,0.38) -- (-0.38,0.92) -- (0.38,0.92) -- (0.92,0.38) -- (0.92,-0.38) -- (0.38,-0.92);
            \draw[very thick,rounded corners=1,fill=lightgray] (-0.38,0.92) -- (0.38,-0.92) -- (-0.38,-0.92) -- (-0.92,-0.38) -- (-0.92,0.38) -- (-0.38,0.92);
    \end{tikzpicture}
    \quad \cap \quad 
    \begin{tikzpicture}
        \draw[very thick] (0.38,-0.92) -- (-0.38,-0.92) -- (-0.92,-0.38) -- (-0.92,0.38) -- (-0.38,0.92) -- (0.38,0.92) -- (0.92,0.38) -- (0.92,-0.38) -- (0.38,-0.92);
        \draw[very thick,rounded corners=1,fill=lightgray](-0.38,-0.92) -- (-0.38,0.92) -- (0.38,0.92) -- (0.92,0.38) -- (0.92,-0.38) -- (0.38,-0.92) -- (-0.38,-0.92);
    \end{tikzpicture}.
\end{align*}
We expect that the same relation holds for the associated blocks of $\underline{f}$.  By computing those in the right-hand side, we respectively get $\{1,2,4\}$ and $\{3,4,5,6\}$.  So the block associated to the triangle is:
\[
    \{4\}
    =
    \{1,2,4\} \cap \{3,4,5,6\}.
\]

Let us now describe the combinatorics in the case where $n=2$, {\it i.e.}, $W$ is a dihedral group $I_2(k)$ for some $k\geq 2$.  There is a natural embedding $\Delta^{(m)}(I_2(k)) \subset \Delta^{(m)}(\mathfrak{S}_{k})$ (though only as a subposet rather than as a subcomplex, as dimension is not preserved).  All this relies on the embedding $I_2(k) \subset \mathfrak{S}_k$.  A facet $f$ of $\Delta^{(m)}(I_2(k))$ is an $(m+2)$-angulation of the regular $(mk+2)$-gon such that:
\begin{itemize}
    \item the adjacency graph of its inner polygons is the path graph on $k$ vertices,
    \item by taking every other diagonal (every other edge in the adjacency graph), we get a set of pairwise parallel diagonals.
\end{itemize}
A vertex of this facet $f$ is a dissection obtained by keeping every other diagonal.  For example, a facet of $\Delta^{(3)}(I_2(5))$ is the quintangulation of the heptadecagon 
\[
    \begin{tikzpicture}[scale=1.4]
        \draw[very thick] (1.00, 0.000) -- (0.932, 0.361) -- (0.739, 0.674) -- (0.446, 0.895) -- (0.0923, 0.996) -- (-0.274, 0.962) -- (-0.603, 0.798) -- (-0.850, 0.526) -- (-0.983, 0.184) -- (-0.983, -0.184) -- (-0.850, -0.526) -- (-0.603, -0.798) -- (-0.274, -0.962) -- (0.0923, -0.996) -- (0.446, -0.895) -- (0.739, -0.674) -- (0.932, -0.361) -- (1.00,0.00);
        \draw[very thick,rounded corners=1] (1.00, 0.000) -- (0.0923, 0.996);
        \draw[very thick,rounded corners=1] (0.932, -0.361) -- (-0.603, 0.798);
        \draw[very thick,rounded corners=1] (-0.850, 0.526) -- (0.446, -0.895);
        \draw[very thick,rounded corners=1] 
        (0.0923, -0.996) -- (-0.983, -0.184);
    \end{tikzpicture},
\]
and its two vertices are the dissections:
\[
     \begin{tikzpicture}[scale=1.4]
        \draw[very thick] (1.00, 0.000) -- (0.932, 0.361) -- (0.739, 0.674) -- (0.446, 0.895) -- (0.0923, 0.996) -- (-0.274, 0.962) -- (-0.603, 0.798) -- (-0.850, 0.526) -- (-0.983, 0.184) -- (-0.983, -0.184) -- (-0.850, -0.526) -- (-0.603, -0.798) -- (-0.274, -0.962) -- (0.0923, -0.996) -- (0.446, -0.895) -- (0.739, -0.674) -- (0.932, -0.361) -- (1.00,0.00);
        \draw[very thick,rounded corners=1] (0.932, -0.361) -- (-0.603, 0.798);
        \draw[very thick,rounded corners=1] 
        (0.0923, -0.996) -- (-0.983, -0.184);
    \end{tikzpicture},  \qquad
     \begin{tikzpicture}[scale=1.4]
        \draw[very thick] (1.00, 0.000) -- (0.932, 0.361) -- (0.739, 0.674) -- (0.446, 0.895) -- (0.0923, 0.996) -- (-0.274, 0.962) -- (-0.603, 0.798) -- (-0.850, 0.526) -- (-0.983, 0.184) -- (-0.983, -0.184) -- (-0.850, -0.526) -- (-0.603, -0.798) -- (-0.274, -0.962) -- (0.0923, -0.996) -- (0.446, -0.895) -- (0.739, -0.674) -- (0.932, -0.361) -- (1.00,0.00);
        \draw[very thick,rounded corners=1] (1.00, 0.000) -- (0.0923, 0.996);
        \draw[very thick,rounded corners=1] (-0.850, 0.526) -- (0.446, -0.895);
    \end{tikzpicture}.
\]  
We omit the details, but it is straightforward to see that this model is equivalent to the one given in~\cite[Section~4]{fominreading}.

\section{Cluster parking functions}
\label{sec:cpf}

\subsection{Definition and properties}  We announced that cluster parking functions form a simplicial complex, however it is convenient to introduce them as a poset (recall that we identify a simplicial complex with its face poset). 

\begin{defi}
The $W$-poset of {\it $m$-cluster parking functions}, denoted by $\CPF^{(m)}$, is defined as follows:
\begin{itemize}
    \item its elements are the pairs $(wW_{\underline{f}} , f) \in \PF \times \Delta^{(m)}$,
    \item the order relation is inherited from the product order on $ \PF \times \Delta^{(m)}$,
    \item and the action of $W$ is inherited from the left action of $W$ on $\PF$.
\end{itemize}
Moreover, let $\CPF^{(m),+}$ denote the set of {\it positive cluster parking functions}, {\it i.e.}, those $(wW_{\underline{f}} , f)$ such that $f \in \Delta^{(m),+}$.
\end{defi}

Note that the poset $\CPF^{(m)}$ can be viewed as the fiber product of $\PF$ and $\Delta^{(m)}$ over $\NC$, along the order-preserving maps
\[
    wW_\pi \mapsto \pi,
    \quad \text{and}
    \quad
    f \mapsto \underline{f}.
\]
This point of view will be helpful in Section~\ref{sec:topcpf}, where we use a fiber poset theorem. 

\begin{rema} \label{rema:coxeter}
In the case $m=0$, $\Delta^ {(0)}$ contains a unique facet and its faces are in bijection with standard parabolic subgroups via $f\mapsto W_{\underline{f}}$.  So $\CPF^{(0)}$ (endowed with the action of $W$) naturally identifies to the {\it Coxeter complex} of $W$.
\end{rema}

\begin{lemm} \label{lemm:intervalbool}
    The order ideal generated by any element in $\CPF^{(m)}$ is a Boolean lattice. 
\end{lemm}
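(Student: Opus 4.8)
The plan is to exhibit, for a fixed element $(wW_{\underline f},f)\in\CPF^{(m)}$, an explicit poset isomorphism from the order ideal it generates onto the Boolean lattice of subsets of the vertex set of $f$.

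First I would unpack what it means for $(w'W_{\underline{f'}},f')$ to lie below $(wW_{\underline f},f)$ in the product order on $\PF\times\Delta^{(m)}$. The condition on the second coordinate is just $f'\subseteq f$, so $f'$ ranges over the faces of the simplex $f$ (as $\Delta^{(m)}$ is a simplicial complex, every subset of $f$ is a face). For the first coordinate, $w'W_{\underline{f'}}\leq wW_{\underline f}$ in $\PF$ means, since the order on $\PF$ is reverse inclusion of cosets, that $wW_{\underline f}\subseteq w'W_{\underline{f'}}$; in particular $w\in w'W_{\underline{f'}}$, hence $w'W_{\underline{f'}}=wW_{\underline{f'}}$. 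Conversely, since $f\mapsto\underline f$ is order-preserving (the lemma above) and $\pi\mapsto W_\pi$ is order-reversing, $f'\subseteq f$ forces $\underline{f'}\leq\underline{f}$, hence $W_{\underline f}\subseteq W_{\underline{f'}}$, hence $wW_{\underline f}\subseteq wW_{\underline{f'}}$, i.e.\ $wW_{\underline{f'}}\leq wW_{\underline f}$ in $\PF$ automatically. So the order ideal generated by $(wW_{\underline f},f)$ is exactly $\{(wW_{\underline{f'}},f')\;:\;f'\subseteq f\}$.

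Next I would verify that $f'\mapsto(wW_{\underline{f'}},f')$ is a poset isomorphism from the face poset of $f$ (the Boolean lattice on the $|f|$ vertices of $f$, ordered by inclusion) onto this ideal. Bijectivity is immediate from the previous paragraph. For order-preservation: if $f'\subseteq f''\subseteq f$ then the second coordinates are comparable the right way, and by the same monotonicity argument ($\underline{f'}\leq\underline{f''}$, so $W_{\underline{f'}}\supseteq W_{\underline{f''}}$, so $wW_{\underline{f'}}\leq wW_{\underline{f''}}$ in $\PF$) so are the first coordinates; conversely $(wW_{\underline{f'}},f')\leq(wW_{\underline{f''}},f'')$ forces $f'\subseteq f''$ directly from the product order. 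Hence the ideal is isomorphic to a Boolean lattice, as claimed.

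I do not expect a real obstacle here: the whole point is that the $\NC$-valued constraint on the $\PF$-coordinate is rendered redundant by order-preservation of $f\mapsto\underline f$, so the ideal collapses to the face lattice of a single simplex. The only place requiring care is consistently tracking the directions of the inequalities, since $\PF$ and $\NC$ are ordered by reverse inclusion while $\Delta^{(m)}$ is ordered by inclusion; that bookkeeping is the sole source of potential error.
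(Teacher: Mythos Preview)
Your proof is correct and follows essentially the same approach as the paper's: both argue that an element below $(wW_{\underline f},f)$ must have the form $(wW_{\underline{f'}},f')$ for some $f'\subseteq f$, and that the map $f'\mapsto(wW_{\underline{f'}},f')$ identifies the ideal with the Boolean lattice of subsets of $f$. The only difference is that the paper invokes Lemma~\ref{lemm:pftonc} to normalize $w'=w$, whereas you argue this directly from the coset relation $w\in w'W_{\underline{f'}}$; your version is slightly more self-contained and makes the order-preservation in both directions explicit.
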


It follows that the poset $\CPF^{(m)}$ is ranked, and the rank of $(w W_{\underline{f}} , f )$ is $\# f = 1+\dim(f)$.

\begin{proof}
    Let $(wW_{\underline{f}},f) \in \CPF^{(m)}$.  A smaller element can be written $(w'W_{\underline{f'}},f')$ with $f'\subset f$ and $w'W_{\underline{f'}} \supset wW_{\underline{f}}$, since the order is inherited from $ \PF(W) \times \Delta^{(m)}$.  By Lemma~\ref{lemm:pftonc}, we can choose $w'=w$.  These smaller elements can thus be identified to the subsets of $f$ via the map $(w'W_{\underline{f'}},f') \mapsto f'$.
\end{proof}

\begin{prop}
    The poset $\CPF^{(m)}$ is the face poset of a simplicial complex. 
\end{prop}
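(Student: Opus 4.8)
The plan is to show that $\CPF^{(m)}$ is the face poset of a simplicial complex by verifying the standard order-theoretic characterization: a finite poset $P$ with a minimum $\hat 0$ is the face poset of a simplicial complex if and only if every principal order ideal $[\hat 0, x]$ is a Boolean lattice, and any two elements $x, y$ with a common lower bound possess a greatest lower bound $x \wedge y$ (equivalently, the order ideals generated by $x$ and $y$, being Boolean, intersect in an order ideal that is again a principal Boolean ideal, whose top is the meet, and which is realized as the "intersection of faces"). The first of these two conditions is exactly Lemma~\ref{lemm:intervalbool}, already proved. So the real content is to produce meets.

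First I would note that $\CPF^{(m)}$ has a minimum: the pair $(eW_{\underline{\varnothing}}, \varnothing) = (W, \varnothing)$, since the empty face $\varnothing \in \Delta^{(m)}$ satisfies $\underline{\varnothing} = c^\kappa$, whose associated parabolic is all of $W$, and this is below everything in the product order. Next, given two elements $(wW_{\underline f}, f)$ and $(w'W_{\underline{f'}}, f')$ with a common lower bound $(vW_{\underline g}, g)$, I want to build their meet. From the common lower bound we get $g \subseteq f$ and $g \subseteq f'$, hence $g \subseteq f \cap f'$; and on the $\PF$ side $vW_{\underline g} \supseteq wW_{\underline f}$ and $vW_{\underline g} \supseteq w'W_{\underline{f'}}$, so in particular the cosets $wW_{\underline f}$ and $w'W_{\underline{f'}}$ both contain the coset $vW_{\underline g}$ and hence share a common representative; pick $v$ itself, so that $wW_{\underline f} = vW_{\underline f}$ and $w'W_{\underline{f'}} = vW_{\underline{f'}}$. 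Now set $h := f \cap f'$, a face of $\Delta^{(m)}$ (simplicial complexes are closed under intersection of faces), with $h \supseteq g$. I claim $(vW_{\underline h}, h)$ is the desired meet. It lies below both given elements: $h \subseteq f$ and $h \subseteq f'$, while $\underline h \le \underline f$ and $\underline h \le \underline{f'}$ because $f \mapsto \underline f$ is order-preserving, so $W_{\underline h} \supseteq W_{\underline f}$ and $W_{\underline h} \supseteq W_{\underline{f'}}$, giving $vW_{\underline h} \supseteq vW_{\underline f} = wW_{\underline f}$ and similarly for the primed coset; thus $(vW_{\underline h}, h)$ is a lower bound. For maximality, any lower bound $(uW_{\underline k}, k)$ of both elements has $k \subseteq f \cap f' = h$ and, choosing $u$ compatibly, $uW_{\underline k} \supseteq wW_{\underline f} = vW_{\underline f}$, so $vW_{\underline f} \subseteq uW_{\underline k}$ forces $u \in vW_{\underline k}$, i.e. $uW_{\underline k} = vW_{\underline k}$; combined with $k \subseteq h$ this gives $(uW_{\underline k}, k) \le (vW_{\underline h}, h)$. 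Hence $(vW_{\underline h}, h)$ is the meet.

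Finally, to conclude that $\CPF^{(m)}$ is a face poset, I would invoke the clean version of the criterion: the principal order ideals are Boolean (Lemma~\ref{lemm:intervalbool}), and we have just shown that the set of lower bounds of any family of elements, when nonempty, has a maximum (pairwise meets exist and associate, being computed by intersecting faces and joining the corresponding $\NC$-data in the bounded-above situation). Concretely, the simplicial complex is the one whose vertex set is the set of rank-$1$ elements of $\CPF^{(m)}$ and whose faces are the vertex sets of the Boolean order ideals $[\hat 0, x]$; the map $x \mapsto (\text{atoms below } x)$ is then the required poset isomorphism onto the face poset, injectivity following because $x$ is the join of the atoms below it — which holds since in a Boolean lattice $[\hat 0,x]$ the top $x$ is the join of the atoms, and these atoms are genuinely joined in $\CPF^{(m)}$ as they have $x$ as a common upper bound.

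\medskip

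The step I expect to be the main obstacle is verifying that the putative meet $(vW_{\underline h}, h)$ with $h = f \cap f'$ is genuinely an element of $\CPF^{(m)}$ and genuinely the infimum — specifically the coset-theoretic bookkeeping showing that the choices of representatives are consistent and that $\underline{f\cap f'}$ behaves correctly under the order-preserving map; one must be slightly careful that $\underline{h} \le \underline f$, $\underline h \le \underline{f'}$ is all one needs (we do \emph{not} need $\underline h = \underline f \wedge \underline{f'}$ in $\NC$, only the inequalities), and that $W_{\underline h}$ is indeed $\supseteq$ both $W_{\underline f}$ and $W_{\underline{f'}}$ so the coset $vW_{\underline h}$ contains both smaller cosets. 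Once that is in hand the rest is the routine verification that principal ideals are Boolean (done) plus the standard translation between "poset with Boolean principal ideals and existing meets" and "face poset of a simplicial complex".
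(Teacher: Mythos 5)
Your overall strategy (Boolean principal ideals, which is Lemma~3.3, plus existence of meets, plus the standard translation to face posets) is sound, but the construction of the meet contains a genuine error, and it sits exactly where you suspected: the coset bookkeeping. From $vW_{\underline{g}} \supseteq wW_{\underline{f}}$ and $vW_{\underline{g}} \supseteq w'W_{\underline{f'}}$ you conclude that $wW_{\underline{f}}$ and $w'W_{\underline{f'}}$ ``share a common representative'' and that one may ``pick $v$ itself.'' Neither holds: two cosets contained in a common larger coset need not intersect, and $v \in vW_{\underline{g}}$ does not imply $v \in wW_{\underline{f}}$ (that would require $w^{-1}v \in W_{\underline{f}}$, whereas you only know $v^{-1}w \in W_{\underline{g}}$, a larger group). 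A concrete failure: take two distinct vertices $(w_1W_{\underline{\{\alpha\}}},\{\alpha\})$ and $(w_2W_{\underline{\{\alpha\}}},\{\alpha\})$ with the same root $\alpha$ but $w_1W_{\underline{\{\alpha\}}} \neq w_2W_{\underline{\{\alpha\}}}$. Their only common lower bound is the minimum $(W,\varnothing)$, so their meet must be $\hat{0}$; your recipe instead sets $h = f\cap f' = \{\alpha\}$ and asks for a common representative $v$ of two disjoint cosets, which does not exist. The correct meet in a face poset is governed by the common \emph{vertices}, not by $f\cap f'$ alone, so any repair of your argument has to track which rank-$1$ elements lie below both inputs --- at which point you are essentially forced into the paper's argument.

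The paper avoids meets altogether: after Lemma~3.3 it suffices to show that the map sending an element to its set of rank-$1$ elements is injective, and this is done by \emph{reconstructing} $(wW_{\underline{f}},f)$ from its atoms. The face $f$ is the union of the singletons $\{\alpha_i\}$, and the coset is recovered as $wW_{\underline{f}} = \bigcap_{i} wW_{\underline{\{\alpha_i\}}}$, which follows because $\underline{f}$ is the join in $\NC$ of the $\underline{\{\alpha_i\}}$ (the Kreweras complement being an anti-automorphism applied to a reduced factorization) together with Equation~\eqref{eq:joinnc}, giving $W_{\underline{f}} = \bigcap_i W_{\underline{\{\alpha_i\}}}$. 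If you want to keep your meet-semilattice route, you would need to define the candidate meet via the set $A$ of common rank-$1$ elements (returning $\hat{0}$ when $A=\varnothing$) and then prove that $A$ has a join realized in $\CPF^{(m)}$ --- which again requires the intersection-of-cosets identity above. As written, the proof has a gap that the paper's argument is specifically designed to close.
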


\begin{proof}
    Let $X$ be the set of rank 1 elements in $\CPF^{(m)}$ (which should be thought of as the vertices of the underlying simplicial complex).  In view of Lemma~\ref{lemm:intervalbool}, it remains only to prove that the map 
    \begin{align}
        (wW_{\underline{f}},f) \mapsto \big\{ v\in X \;:\; v\leq (wW_{\underline{f}},f) \big\}
    \end{align}
    is injective.  Indeed, this map will then permit to identify each element of $\CPF^{(m)}$ with a subset of $X$, providing explicitly the structure of a simplicial complex with $X$ as its vertex set.

    So, consider $(wW_{\underline{f}},f) \in \CPF^{(m)}$, and write $f = \{ \alpha_1 , \dots , \alpha_k \}$.  The rank $1$ elements below $(wW_{\underline{f}}, f )$ are $(wW_{\underline{\{\alpha_i\}}} , \{\alpha_i\} )$ for $1\leq i \leq k$ (this follows from the bijection in the proof of the previous lemma).  To show injectivity, we need to recover $(wW_{\underline{f}}, f )$ from these rank 1 elements, which is done as follows:
    \begin{itemize}
        \item $f = \cup_{i=1}^k \{\alpha_i\}$,
        \item $wW_{\underline{f}} = \bigcap_{i=1}^k wW_{\underline{\{\alpha_i\}}}.$
    \end{itemize}
    Let us justify the latter point.  First note that $\prod f = t_{\alpha_1} \cdots t_{\alpha_k} $ is the meet in $\NC$ of the elements $t_{\alpha_i}$ ($1\leq i \leq k$).  Since the Kreweras complement is an anti-automorphism, $\underline{f}$ is the join of the elements $\underline{\{\alpha_i\}}$ ($1\leq i \leq k$).  By~\eqref{eq:joinnc}, we also have 
    \[
        W_{\underline{f}}
            =
        \bigcap_{i=1}^k W_{\underline{\{\alpha_i\}}}
    \]
    and the result follows.
\end{proof}

\begin{prop} \label{prop:flag}
    Assume that Conjecture~\ref{conj:helly} holds ({\it i.e.}, $\PF$ has the Helly property).  Then $\CPF^{(m)}$ is a flag simplicial complex.  (And consequently, $\CPF^{(m),+}$ is also flag.)
\end{prop}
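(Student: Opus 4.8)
A simplicial complex is flag precisely when it is the clique complex of its $1$-skeleton, i.e.\ whenever a set of vertices is pairwise joined by an edge, the set spans a face. So the plan is to start with vertices $v_1,\dots,v_k$ of $\CPF^{(m)}$ that are pairwise compatible (every pair spans a $1$-face) and show they span a $(k{-}1)$-face. By definition a vertex has the form $v_i = (w_i W_{\underline{\{\alpha_i\}}}, \{\alpha_i\})$ for some colored root $\alpha_i$ and some coset; pairwise compatibility of $v_i, v_j$ means $(v_i, v_j)$ is a face, which by the product structure on $\PF \times \Delta^{(m)}$ amounts to two conditions: (a) $\{\alpha_i, \alpha_j\}$ is a face of $\Delta^{(m)}$ (the two colored roots are compatible under $\mathrel{\|}$), and (b) the cosets $w_i W_{\underline{\{\alpha_i\}}}$ and $w_j W_{\underline{\{\alpha_j\}}}$ have a common upper bound in $\PF$, which — since $\PF$-intervals below a maximal element are isomorphic to intervals in $\NC$, and by the lattice lemma for $\hat\PF$ — is equivalent to $w_i W_{\underline{\{\alpha_i\}}} \cap w_j W_{\underline{\{\alpha_j\}}} \neq \varnothing$.

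First I would handle the cluster-complex coordinate. Since $\Delta^{(m)}$ is itself a flag complex (it is defined as the flag complex associated to the binary relation $\mathrel{\|}$), pairwise compatibility of $\alpha_1,\dots,\alpha_k$ immediately gives that $f := \{\alpha_1,\dots,\alpha_k\}$ is a face of $\Delta^{(m)}$. This is the easy half and uses nothing beyond the definition. Next I would handle the $\PF$-coordinate: by hypothesis (Conjecture~\ref{conj:helly}) $\PF$ has the Helly property, so from the pairwise nonempty intersections $w_i W_{\underline{\{\alpha_i\}}} \cap w_j W_{\underline{\{\alpha_j\}}} \neq \varnothing$ we conclude $\bigcap_{i=1}^k w_i W_{\underline{\{\alpha_i\}}} \neq \varnothing$. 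Picking $w$ in this intersection, we get $w W_{\underline{\{\alpha_i\}}} = w_i W_{\underline{\{\alpha_i\}}}$ for every $i$, so all the cosets are ``coherently represented'' by a single $w$.

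It then remains to check that $(w W_{\underline{f}}, f)$ is genuinely an element of $\CPF^{(m)}$ lying above each $v_i$. The point is that $W_{\underline{f}} = \bigcap_{i=1}^k W_{\underline{\{\alpha_i\}}}$: as in the proof of Proposition~\ref{prop:flag}'s preceding proposition, $\prod f = t_{\alpha_1}\cdots t_{\alpha_k}$ is the meet in $\NC$ of the $t_{\alpha_i}$, so after applying the Kreweras anti-automorphism $\underline{f}$ is the join of the $\underline{\{\alpha_i\}}$, and then \eqref{eq:joinnc} gives the intersection formula for the parabolic subgroups. Hence $w W_{\underline{f}} = \bigcap_i w W_{\underline{\{\alpha_i\}}}$, which in particular is the coset associated (via $wW_\pi \mapsto \pi$) to the face $f$, so the pair $(w W_{\underline{f}}, f)$ does satisfy the defining constraint of $\CPF^{(m)}$ and is $\geq v_i$ for each $i$ in the product order. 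Thus $v_1,\dots,v_k$ span a face and $\CPF^{(m)}$ is flag. Finally, $\CPF^{(m),+}$ is a full subcomplex of $\CPF^{(m)}$ (its faces are exactly those $(wW_{\underline{f}},f)$ with all vertices supported on $\Phi^{(m)}_+$, which is a vertex condition), and a full subcomplex of a flag complex is flag.

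The only real obstacle here is the step invoking Helly: it is exactly the content of Conjecture~\ref{conj:helly}, so under that hypothesis there is nothing further to prove. The mild technical care needed is in translating ``$(v_i,v_j)$ is an edge of $\CPF^{(m)}$'' into the two clean conditions (a) and (b) above, i.e.\ checking that a common upper bound of two vertices in $\CPF^{(m)}$ exists iff the two cosets meet \emph{and} the two colored roots are $\mathrel{\|}$-compatible — this follows by unwinding the product order together with the identification of $\hat\PF$-joins with coset intersections.
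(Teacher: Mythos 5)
Your proposal is correct and follows essentially the same route as the paper's proof: use the flag property of $\Delta^{(m)}$ for the cluster coordinate, invoke the Helly hypothesis to find a common representative $w$ of all the cosets, and then identify $\bigcap_i w W_{\underline{\{\alpha_i\}}}$ with $wW_{\underline{f}}$ via the join in $\NC$, the Kreweras anti-automorphism, and Equation~\eqref{eq:joinnc}. The paper phrases the last step through the isomorphism of the interval $[W,\{w\}]$ with $\NC$, but this is the same computation.
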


\begin{proof}
    Let $( w_i W_{\underline{\{\alpha_i\}}} , \{\alpha_i\})_{1 \leq i \leq k}$ be a family of vertices in $\CPF^{(m)}$.  Assume that each pair of vertices form an edge in $\CPF^{(m)}$.  By the previous result and its proof, this means that for $1\leq i<j\leq n$, we have:
    \begin{itemize}
        \item $\{\alpha_i,\alpha_j\}$ is an edge in $\Delta^{(m)}$,
        \item $w_i W_{\underline{\{\alpha_i\}}} \cap w_j W_{\underline{\{\alpha_j\}}} = w_{i,j}  W_{\underline{\{\alpha_i, \alpha_j\}}} $ for some $w_{i,j} \in W$.
    \end{itemize}
    The first property above imply that $f = \{\alpha_1,\dots, \alpha_k\} \in \Delta^{(m)}_{k-1}$, since $\Delta^{(m)}$ is a flag simplicial complex.  Moreover, $\underline{f}$ is the join in $\NC$ of the elements $\underline{\{\alpha_i\}}$, $1\leq i \leq k$.
    
    The second property above implies $w_i W_{\underline{\{\alpha_i\}}} \cap w_j W_{\underline{\{\alpha_j\}}} \neq \varnothing$.  By the Helly property, $\cap_{i=1}^k w_i W_{\underline{\{\alpha_i\}}} \neq \varnothing$.  Let $w\in \cap_{i=1}^k w_i W_{\underline{\{\alpha_i\}}}$.  Using the isomorphism between the interval $[W,\{w\}]$ and $\NC$, we see that $\cap_{i=1}^k w_i W_{\underline{\{\alpha_i\}}} = w W_{\underline{f}}$. 

    By construction, the vertices of the simplex $(wW_{\underline{f}},f) \in \CPF^{(m)}_{k-1}$ are $( w_i W_{\underline{\{\alpha_i\}}} , \{\alpha_i\})_{1 \leq i \leq k}$.  This proves the flag property.
\end{proof}

\begin{prop} \label{prop:cpflink}
    The link of $(wW_{\underline{f}},f)$ in 
   $\CPF^{(m)}$ is naturally isomorphic to $\CPF^{(m)}(W_{\underline{f}})$.
\end{prop}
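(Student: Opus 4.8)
\emph{Plan.} The idea is to realise the link directly as a principal order filter and then split that filter along the two structure maps $\PF\to\NC$ and $f\mapsto\underline{f}$ of the fibre product defining $\CPF^{(m)}$.

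\textbf{Step 1 (reduction to a filter).} For any face $\sigma$ of a simplicial complex $\Gamma$, the face poset of $\operatorname{lk}_\Gamma(\sigma)$ is the principal order filter $\{\rho\in\Gamma:\rho\supseteq\sigma\}$, via $\tau\mapsto\tau\cup\sigma$. Since the $W$-action on $\CPF^{(m)}$ only changes the $\PF$-coordinate, the orbit of $(wW_{\underline f},f)$ contains $(W_{\underline f},f)$ and $W_{\underline f}$ is unchanged, so by equivariance I may assume $w=e$. As the order on $\CPF^{(m)}$ is the product order with reverse inclusion on $\PF$, the filter above $\sigma=(W_{\underline f},f)$ is
\[
  \mathcal F=\bigl\{\,(w'W_{\underline g},g)\in\CPF^{(m)}\ :\ f\subseteq g,\ \ w'W_{\underline g}\subseteq W_{\underline f}\,\bigr\}.
\]
Here $f\subseteq g$ forces $\underline f\le\underline g$ in $\NC$ (recall $h\mapsto\underline h$ is order-preserving), hence $W_{\underline g}\subseteq W_{\underline f}$ automatically, so the second condition merely says $w'\in W_{\underline f}$ and $w'W_{\underline g}$ is a genuine coset inside $W_{\underline f}$.

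\textbf{Step 2 (the isomorphism).} I would assemble an order isomorphism $\Theta\colon\mathcal F\to\CPF^{(m)}(W_{\underline f})$ from the two available ``relative'' isomorphisms. On the parking side, Lemma~\ref{lemm:pffilter} gives $\{w'W_{\pi'}\in\PF(W):w'W_{\pi'}\ge W_{\underline f}\}\cong\PF(W_{\underline f})$; concretely such a $\pi'$ satisfies $\pi'\ge\underline f$, so $\pi'\in\NC(W_{\underline f})=[\underline f,e]_{\NC(W)}$, and $\pi'\ge\underline f$ forces $W_{\pi'}\subseteq W_{\underline f}$, whence $W_{\pi'}^{(W_{\underline f})}=W_{\pi'}^{(W)}$; thus $w'W_{\underline g}$ is literally an element of $\PF(W_{\underline f})$ and this identification is compatible with the maps to $\NC(W_{\underline f})$ (and, by the remark after that lemma, with the $W_{\underline f}$-action). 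On the cluster side, Proposition~\ref{prop:linkdelta} provides an isomorphism $\psi\colon\operatorname{lk}_{\Delta^{(m)}(W)}(f)\to\Delta^{(m)}(W_{\underline f})$; put $\bar g:=\psi(g\setminus f)$. I then set $\Theta(w'W_{\underline g},g):=(w'W_{\underline g},\bar g)$. This lands in $\CPF^{(m)}(W_{\underline f})$ precisely because of the compatibility
\begin{equation}\label{eq:cpflink-compat}
  W_{\underline g}^{(W)}=W_{\underline{\bar g}}^{(W_{\underline f})}\qquad\text{for all }g\supseteq f
\end{equation}
(equivalently $\underline g=\underline{\bar g}$ inside $[\underline f,e]_{\NC(W)}$); granting \eqref{eq:cpflink-compat}, $\Theta$ is an order isomorphism with inverse $(vW_{\underline{\bar g}},\bar g)\mapsto\bigl(vW_{\underline{\bar g}},\,f\cup\psi^{-1}(\bar g)\bigr)$, it carries $\sigma$ to the bottom element $(W_{\underline f},\varnothing)$ of $\CPF^{(m)}(W_{\underline f})$, and it intertwines the action of $\operatorname{Stab}_W(\sigma)=W_{\underline f}$ with the natural $W_{\underline f}$-action, which gives the asserted naturality. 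Abstractly, $\Theta$ is just the fibre product over $\NC(W_{\underline f})=[\underline f,e]_{\NC(W)}$ of the two isomorphisms above.

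\textbf{Step 3 (the crux).} The one nontrivial ingredient is \eqref{eq:cpflink-compat}: that the link isomorphism $\psi$ of Proposition~\ref{prop:linkdelta} intertwines $g\mapsto\underline g$ with $\bar g\mapsto\underline{\bar g}$. I would prove it by revisiting the proof of Proposition~\ref{prop:linkdelta} and tracking $\underline{\cdot}$, by induction on $\rk(W)$. If $f=\varnothing$ then $W_{\underline f}=W$ and $\psi=\operatorname{id}$, so \eqref{eq:cpflink-compat} is trivial. If $f$ contains some $(-\alpha)^1$ with $\alpha\in\Pi$, the link of $\{(-\alpha)^1\}$ in $\Delta^{(m)}(W)$ is the cluster complex of the \emph{standard} parabolic $W_{S\setminus\{t_\alpha\}}$, the identities $\prod(h\setminus\{-\alpha\})=t_\alpha\prod h$ or $(\prod h)t_\alpha$ give $\underline{h\setminus\{-\alpha\}}=\underline h$ on the nose (as elements of $W_{S\setminus\{t_\alpha\}}$), and $W_{\underline f}\subseteq W_{S\setminus\{t_\alpha\}}$ since $\underline f\in W_{S\setminus\{t_\alpha\}}$ and both are parabolic subgroups; so one peels the vertices of $f\cap(-\Pi)$ off one at a time and concludes by the induction hypothesis applied to $W_{S\setminus\{t_\alpha\}}$ and $f\setminus\{-\alpha\}$. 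If $f\neq\varnothing$ but $f\cap(-\Pi)=\varnothing$, one first applies a suitable power of the rotation $\mathcal R$ (an automorphism of $\Delta^{(m)}(W)$) to reach the previous case; here $\underline{\mathcal R h}$ is obtained from $\underline h$ by one fixed Kreweras-type conjugation, independent of $h$ (see \cite[Prop.~11.1]{douvropoulosjosuatverges}), so this only replaces $W_{\underline f}$ by a conjugate parabolic $W'$ with $\CPF^{(m)}(W')\cong\CPF^{(m)}(W_{\underline f})$ and transports $\mathcal F$ isomorphically. Making precise that this last conjugation is genuinely uniform over all $g\supseteq f$ (so that the link isomorphism really descends) is, I expect, the main obstacle; once it is in hand, the rest — the coset bookkeeping of Steps 1--2 and the identity \eqref{eq:joinnc} used implicitly when matching $\NC$-labels — is routine.
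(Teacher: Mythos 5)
Your proof follows essentially the same route as the paper's: realise the link as the principal order filter above $(W_{\underline{f}},f)$ and split it along the two fibre-product coordinates using Lemma~\ref{lemm:pffilter} on the parking side and Proposition~\ref{prop:linkdelta} on the cluster side. The compatibility $\underline{g}=\underline{\bar g}$ that you isolate as the crux — and only partially establish, since the uniformity in $g$ of the conjugation arising from the rotation step is left as an expectation — is exactly what the paper's ``it is straightforward to complete the proof'' leaves implicit, so your write-up is a faithful (and more candid) expansion of the same argument rather than a different one.
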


\begin{proof}
    By definition of the order relations, this link is 
    \[
        \big\{ (w'W_{\underline{f'}},f') 
            \;:\;
            w'W_{\underline{f'}} \subset wW_{\underline{f}} \text{ and } f \subset f'
        \big\}.
    \]
    By Lemma~\ref{lemm:pffilter} and Proposition~\ref{prop:linkdelta}, this naturally identifies to a subposet of $\PF(W_{\underline{f}}) \times \Delta^{(m)}(W_{\underline{f}})$.  It is straightforward to complete the proof.
\end{proof}

\begin{prop}
    If $W$ is reducible and $W\simeq W_1\times W_2$, we have $\CPF^{(m)}(W) \simeq \CPF^{(m)}(W_1) \ast \CPF^{(m)}(W_2)$ where $\ast$ is the join operation on simplicial complexes.
\end{prop}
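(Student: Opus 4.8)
The plan is to unwind the fiber-product description ``$\CPF^{(m)} = \PF \times_{\NC} \Delta^{(m)}$'' and check that each of the three posets in it splits along $W = W_1 \times W_2$. Fix once and for all a bipartite Coxeter element $c$ of $W$; since the set of simple roots decomposes as $\Pi = \Pi^{(1)} \uplus \Pi^{(2)}$ (the simple roots of $W_1$ and of $W_2$, lying in orthogonal summands of $V$), the bipartition $\Pi = \Pi_\bullet \uplus \Pi_\circ$ restricts to a bipartition of each $\Pi^{(i)}$. Hence $c = c_1 c_2$ with $c_i$ a bipartite Coxeter element of $W_i$, and $c_\bullet = c_\bullet^{(1)} c_\bullet^{(2)}$, $c_\circ = c_\circ^{(1)} c_\circ^{(2)}$. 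From this I would record three decompositions: (i) $\NC(W) = \NC(W_1) \times \NC(W_2)$ as ranked posets (standard, from additivity of reflection length), and the bipartite Kreweras complement acts factorwise, $(\pi_1,\pi_2)^\kappa = (\pi_1^\kappa, \pi_2^\kappa)$, because $c_\bullet \pi c_\circ$ splits as a product; (ii) $\PF(W) = \PF(W_1) \times \PF(W_2)$ as $W$-posets, because $V^{(\pi_1,\pi_2)} = V_1^{\pi_1} \oplus V_2^{\pi_2}$ gives $W_{(\pi_1,\pi_2)} = W_{\pi_1} \times W_{\pi_2}$ and hence $wW_\pi = w_1 W_{\pi_1} \times w_2 W_{\pi_2}$, and moreover the projection $\PF \to \NC$ is the product of the projections $\PF(W_i) \to \NC(W_i)$; (iii) $\Delta^{(m)}(W) = \Delta^{(m)}(W_1) \ast \Delta^{(m)}(W_2)$, since the colored almost positive roots of $W$ are the disjoint union of those of $W_1$ and $W_2$, and any colored almost positive root of $W_1$ is $\mathrel{\|}$-compatible with any one of $W_2$ (this is in Fomin--Reading).

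Next I would verify the compatibility needed to glue (i), (ii), (iii) together, namely that $f \mapsto \underline f$ respects the join decomposition in (iii). If $f = f_1 \uplus f_2$ with $f_i \in \Delta^{(m)}(W_i)$, then, choosing a reduced indexing inside each factor (possible by Athanasiadis--Tzanaki, and compatible because reflections of $W_1$ commute with those of $W_2$), one has $\prod f = (\prod f_1)(\prod f_2)$ in $W = W_1 \times W_2$, so by (i) $\underline f = (\prod f)^\kappa = (\underline{f_1}, \underline{f_2})$. Consequently, under the identifications above an element $(wW_{\underline f}, f)$ of $\CPF^{(m)}(W)$ corresponds exactly to the pair $\big( (w_1 W_{\underline{f_1}}, f_1), (w_2 W_{\underline{f_2}}, f_2) \big) \in \CPF^{(m)}(W_1) \times \CPF^{(m)}(W_2)$, and the order and the action of $W = W_1 \times W_2$ are inherited factorwise. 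Thus $\CPF^{(m)}(W) \cong \CPF^{(m)}(W_1) \times \CPF^{(m)}(W_2)$ as $W$-posets.

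To finish, I would invoke the elementary fact that the face poset of a join $\Gamma_1 \ast \Gamma_2$ of simplicial complexes, with the empty face adjoined as a minimum, is the product of the two face posets (again with empty faces adjoined). Since $\CPF^{(m)}$ already contains the element $(W, \varnothing)$ as a global minimum (the empty face), its face poset is augmented in this sense, so the poset isomorphism above is precisely an isomorphism of simplicial complexes $\CPF^{(m)}(W) \simeq \CPF^{(m)}(W_1) \ast \CPF^{(m)}(W_2)$, equivariant for $W_1 \times W_2$ acting with $W_i$ on the $i$-th factor. The step that most deserves care is the interaction of the bipartite data with the product --- verifying (iii) and the factorwise behavior of the bipartite Kreweras complement --- together with the bookkeeping of the empty face; but none of this is a genuine obstacle, and the same argument applies verbatim to $\CPF^{(m),+}$ via $\Delta^{(m),+}(W) = \Delta^{(m),+}(W_1) \ast \Delta^{(m),+}(W_2)$.
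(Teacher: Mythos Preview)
Your proposal is correct and is precisely the kind of argument the paper has in mind: the proof there is omitted as ``straightforward'' and declared ``analog to the corresponding statement on $\Delta^{(m)}$'', and what you wrote is the natural unpacking of that one-liner---split $\NC$, $\PF$, and $\Delta^{(m)}$ along the factors, check that $f\mapsto\underline f$ is compatible, and translate the resulting product of face posets into a join of complexes.
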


The proof is straightforward and omitted.  This is analog to the corresponding statement on $\Delta^{(m)}$~\cite{fominreading}. 

\begin{prop} \label{prop:bchicpf}
    We have:
    \begin{align} \label{eq:chitildecpf}
        \tilde \bchi(\CPF^{(m)}) 
            = 
        (-1)^{n-1} \sign \otimes \park_m, \\
        \tilde \bchi(\CPF^{(m),+}) 
            = 
        (-1)^{n-1} \sign \otimes \park'_m.
    \end{align}
\end{prop}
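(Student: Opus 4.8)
The plan is to compute the reduced equivariant Euler characteristic directly from the chain spaces, using the description of $\CPF^{(m)}$ as the face poset of a simplicial complex together with the structure of its faces given in Section~\ref{sec:cpf}. Recall that $\tilde\bchi(\CPF^{(m)}) = \sum_{i\geq -1} (-1)^i \bC_i(\CPF^{(m)})$, where $\bC_i$ is the character of $W$ acting on the $i$-dimensional faces. A face of dimension $i$ is an element $(wW_{\underline f}, f)$ with $\#f = i+1$, so the faces are partitioned by $W$-orbits according to the underlying $f\in\Delta^{(m)}$ (the action on $\Delta^{(m)}$ is trivial in the relevant sense, since $W$ acts only on the $\PF$-component, while the orbit of $(wW_{\underline f},f)$ maps onto a fixed $f$ and the stabilizer structure is governed by $W_{\underline f}$). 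More precisely, for fixed $f$, the set $\{(wW_{\underline f},f) : w\in W\}$ is a copy of $W/W_{\underline f}$ as a $W$-set, so it contributes $\ch(W/W_{\underline f})$ to $\bC_{\#f-1}$. This gives
\[
    \tilde\bchi(\CPF^{(m)})
        =
    \sum_{f\in\Delta^{(m)}} (-1)^{\#f-1} \ch(W/W_{\underline f})
        =
    -\sum_{f\in\Delta^{(m)}} (-1)^{\dim f} \ch(W/W_{\underline f}),
\]
and similarly with $\Delta^{(m),+}$ in place of $\Delta^{(m)}$.

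The right-hand side is exactly the alternating sum appearing in Proposition~\ref{prop:altersumcharac}, which states that $\sum_{f\in\Delta^{(m)}} (-1)^{\dim f}\ch(W/W_{\underline f}) = (-1)^{n-1}\sign\otimes\park_m$, and the analogous identity for $\Delta^{(m),+}$ with $\park'_m$. Substituting, we get $\tilde\bchi(\CPF^{(m)}) = -(-1)^{n-1}\sign\otimes\park_m = (-1)^{n}\sign\otimes\park_m$. Here I need to reconcile the sign: the factor $(-1)^{\#f-1}$ versus $(-1)^{\dim f} = (-1)^{\#f-1}$ are the same, so in fact $\tilde\bchi(\CPF^{(m)}) = \sum_{f}(-1)^{\dim f}\ch(W/W_{\underline f}) = (-1)^{n-1}\sign\otimes\park_m$, matching the claimed~\eqref{eq:chitildecpf}. (The careful bookkeeping here is that the $i$-dimensional faces of $\CPF^{(m)}$ have $\#f = i+1$ and run over all $f\in\Delta^{(m)}$ with $\dim f = i$, together with all cosets in $W/W_{\underline f}$; summing $(-1)^i = (-1)^{\dim f}$ over these gives precisely the Proposition~\ref{prop:altersumcharac} sum.)

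The one step requiring genuine care is the claim that the $W$-set of faces of $\CPF^{(m)}$ lying over a fixed $f\in\Delta^{(m)}$ is isomorphic to $W/W_{\underline f}$. This follows from the first lemma on $\PF$ in Section~\ref{sec:defpark}: each element $(wW_{\underline f}, f)$ is determined by the coset $wW_{\underline f}\in W/W_{\underline f}$ with $f$ held fixed, and $W$ acts by left multiplication on this coset while fixing $f$ (the $\Delta^{(m)}$-component carries no $W$-action in the definition of $\CPF^{(m)}$). Hence $\bC_i(\CPF^{(m)}) = \sum_{f\in\Delta^{(m)},\,\dim f = i}\ch(W/W_{\underline f})$, and the Hopf trace formula identity $\tilde\bchi = \sum(-1)^i\bC_i$ closes the argument. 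The positive case $\CPF^{(m),+}$ is identical, restricting the sum to $f\in\Delta^{(m),+}$ and invoking the second identity of Proposition~\ref{prop:altersumcharac}. I expect no serious obstacle here — the result is essentially a repackaging of Proposition~\ref{prop:altersumcharac} through the combinatorial description of $\CPF^{(m)}$ established earlier in this section; the only thing to double-check is the orbit decomposition and the resulting sign.
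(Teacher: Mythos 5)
Your proof is correct and is essentially identical to the paper's: both decompose the $i$-dimensional faces of $\CPF^{(m)}$ into $W$-orbits indexed by $f\in\Delta^{(m)}_i$, each isomorphic to $W/W_{\underline f}$, and then invoke Proposition~\ref{prop:altersumcharac}. The transient sign confusion in your second display resolves itself correctly, since $(-1)^{\#f-1}=(-1)^{\dim f}$ with no extra global minus sign.
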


\begin{proof}
    By definition of $\CPF^{(m)}$, its orbits are in bijection with $\Delta^{(m)}$.  More precisely: to each $f\in \Delta^{(m)}_i$, there corresponds an orbit in $\CPF^{(m)}_i$ isomorphic to $W/W_{\underline{f}}$ as a $W$-set.  So, we have
    \begin{align}
        \tilde \bchi(\CPF^{(m)}) 
            &=
        \sum_{f\in \Delta^{(m)}} (-1)^{\dim(f)} \ch(W/W_{\underline{f}}),\\
        \tilde \bchi(\CPF^{(m),+}) 
            &=
        \sum_{f\in \Delta^{(m),+}} (-1)^{\dim(f)} \ch(W/W_{\underline{f}}).
    \end{align}
    The result is then a direct consequence of Proposition~\ref{prop:altersumcharac}.
\end{proof}



\subsection{Combinatorial models and examples}
\label{sec:combmod}

Let us sketch the combinatorics of $\CPF^{(m)}$ in type $A$, in order to recover the labeled dissections that we defined in the introduction.  Let $f \in \Delta_n^{(m)}$, viewed as a dissection of an $(mn+2)$-gon.  A parking function having $\underline{f}$ as underlying noncrossing partition is obtained by labeling the blocks of $\underline{f}$, as described at the beginning of Section~\ref{sec:defpark}.  Via the bijection mentioned after Lemma~\ref{lemm:bijtypeA}, we can label the inner polygons defined by the dissection $f$, rather than the blocks of $\underline{f}$.  We thus recover the definition of $\CPF_n^{(m)}$ in the introduction.  By properties of the bijection, each inner $(mk+2)$-gon receives $k$ labels.  To complete the picture, it remains to see why the partial orders agree: when we remove a diagonal in $f$ to get $f'$, its two adjacent inner polygons merge and correspond to the blocks of $\underline{f}$ that merge to give $\underline{f'}$.  We omit details.  

Furthermore, it is possible to describe cluster parking functions of type $B$ as centrally symmetric dissections of a $(2n)$-gon (which are the faces of the type $B_n$ cluster complex) and labelling the inner polygons with blocks of a type $B$ set partition, in such a way that the central symmetry sends each block to the opposite block.  More explicitly, consider the complex $\CPF^{(m)}_{2n}$ where the $2n$ labels are now the integers $\{\pm i \;|\; 1\leq i \leq n\}$.  A face of this complex is in $\CPF^{(m)}(B_n)$ iff it is fixed by the half-turn symmetry composed with the relabelling $i\mapsto -i$.  For example, two $1$-dimensional faces of $\CPF(B_2)$ are:
\[
    \begin{tikzpicture}
            \draw[very thick] (0,1) -- (0.866, 0.5) -- (0.866,-0.5) -- (0,-1) -- (-0.866,-0.5) -- (-0.866,0.5) -- (0,1);
            \draw[very thick] (0,1) -- (0,-1);
            \node at (-0.46,0){$12$};
            \node at (0.46,0){$\bar 1 \bar 2$};
    \end{tikzpicture},
    \qquad 
    \begin{tikzpicture}
            \draw[very thick] (0,1) -- (0.866, 0.5) -- (0.866,-0.5) -- (0,-1) -- (-0.866,-0.5) -- (-0.866,0.5) -- (0,1);
            \draw[very thick] (0, 1) --(-0.866,-0.5);
            \draw[very thick] (0.866, 0.5) -- (0,-1);
            \node at (-0.66,0.34){$1$};
            \node at (0,0){$2\bar 2$};
            \node at (0.66,-0.34){$\bar 1$};
    \end{tikzpicture}
\]
(where $\bar i$ means $-i$), and other $1$-dimensional faces are obtained via symmetries of the hexagon and relabelling through the action of signed permutations. 

There is also an explicit combinatorial model in the dihedral case.  Building on the embedding $\Delta^{(m)}(I_2(k)) \subset \Delta^{(m)}(\mathfrak{S}_k)$, there is an embedding $\CPF^{(m)}(I_2(k)) \subset \CPF^{(m)}(\mathfrak{S}_k)$.  Consider a facet in $\CPF^{(m)}(\mathfrak{S}_k)$, such that the underlying $(m+2)$-angulation is in $\Delta^{(m)}(I_2(k))$.  The condition on the labels for this to be a facet in $\CPF^{(m)}(I_2(k))$ is the following: by reading labels from one extremity to the other in the adjacency graph, the sequence has the form $i,i-1,i+1,i-2,i+2,\dots$ or $i,i+1,i-1,i+2,i-2,\dots$ (where $1\leq i \leq k$ and labels are taken modulo $k$).  For example, a facet and its two adjacent vertices in $\CPF^{(3)}(I_2(5))$ are:
\[
    \begin{tikzpicture}[scale=1.4]
        \draw[very thick] (1.00, 0.000) -- (0.932, 0.361) -- (0.739, 0.674) -- (0.446, 0.895) -- (0.0923, 0.996) -- (-0.274, 0.962) -- (-0.603, 0.798) -- (-0.850, 0.526) -- (-0.983, 0.184) -- (-0.983, -0.184) -- (-0.850, -0.526) -- (-0.603, -0.798) -- (-0.274, -0.962) -- (0.0923, -0.996) -- (0.446, -0.895) -- (0.739, -0.674) -- (0.932, -0.361) -- (1.00,0.00);
        \draw[very thick,rounded corners=1] (1.00, 0.000) -- (0.0923, 0.996);
        \draw[very thick,rounded corners=1] (0.932, -0.361) -- (-0.603, 0.798);
        \draw[very thick,rounded corners=1] (-0.850, 0.526) -- (0.446, -0.895);
        \draw[very thick,rounded corners=1] 
        (0.0923, -0.996) -- (-0.983, -0.184);
        \node at (-0.56,-0.68){$1$};
        \node at (-0.38,-0.38){$2$};
        \node at (-0,0){$5$};
        \node at (0.38,0.38){$3$};
        \node at (0.56,0.66){$4$};
    \end{tikzpicture},  \qquad
     \begin{tikzpicture}[scale=1.4]
        \draw[very thick] (1.00, 0.000) -- (0.932, 0.361) -- (0.739, 0.674) -- (0.446, 0.895) -- (0.0923, 0.996) -- (-0.274, 0.962) -- (-0.603, 0.798) -- (-0.850, 0.526) -- (-0.983, 0.184) -- (-0.983, -0.184) -- (-0.850, -0.526) -- (-0.603, -0.798) -- (-0.274, -0.962) -- (0.0923, -0.996) -- (0.446, -0.895) -- (0.739, -0.674) -- (0.932, -0.361) -- (1.00,0.00);
        \draw[very thick,rounded corners=1] (0.932, -0.361) -- (-0.603, 0.798);
        \draw[very thick,rounded corners=1] 
        (0.0923, -0.996) -- (-0.983, -0.184);
        \node at (-0.56,-0.68){$1$};
        \node at (-0.19,-0.19){$25$};
        \node at (0.4,0.52){$34$};
    \end{tikzpicture},  \qquad
     \begin{tikzpicture}[scale=1.4]
        \draw[very thick] (1.00, 0.000) -- (0.932, 0.361) -- (0.739, 0.674) -- (0.446, 0.895) -- (0.0923, 0.996) -- (-0.274, 0.962) -- (-0.603, 0.798) -- (-0.850, 0.526) -- (-0.983, 0.184) -- (-0.983, -0.184) -- (-0.850, -0.526) -- (-0.603, -0.798) -- (-0.274, -0.962) -- (0.0923, -0.996) -- (0.446, -0.895) -- (0.739, -0.674) -- (0.932, -0.361) -- (1.00,0.00);
        \draw[very thick,rounded corners=1] (1.00, 0.000) -- (0.0923, 0.996);
        \draw[very thick,rounded corners=1] (-0.850, 0.526) -- (0.446, -0.895);
        \node at (-0.47,-0.47){$12$};
        \node at (0.19,0.19){$53$};
        \node at (0.56,0.66){$4$};
    \end{tikzpicture}.
\]  
The group $I_2(k)$ acts via the relabelling $i\mapsto i+j$ and $i\mapsto -i+j$ (where indices are taken modulo $k$, and $0\leq j <k$).

We end this section by explicit examples when $m=1$ in type $A_1$ and $A_2$, using the combinatorial description as in the introduction.  Counting faces will illustrate Proposition~\ref{prop:bchicpf}. The $0$-dimensional complex $\CPF_2$ contains 4 isolated vertices, namely:
\[
    \begin{tikzpicture}[scale=0.4]
            \draw[very thick] (0,0) -- (0,2) -- (2,2) -- (2,0) -- (0,0);
            \draw[very thick] (0,2) -- (2,0);
            \node at (0.6,0.6){1};
            \node at (1.4,1.4){2};
    \end{tikzpicture}\,,
    \qquad 
    \begin{tikzpicture}[scale=0.4]
            \draw[very thick] (0,0) -- (0,2) -- (2,2) -- (2,0) -- (0,0);
            \draw[very thick] (0,2) -- (2,0);
            \node at (0.6,0.6){2};
            \node at (1.4,1.4){1};
    \end{tikzpicture}\,,
    \qquad 
    \begin{tikzpicture}[scale=0.4]
            \draw[very thick] (0,0) -- (0,2) -- (2,2) -- (2,0) -- (0,0);
            \draw[very thick] (2,2) -- (0,0);
            \node at (0.6,1.4){2};
            \node at (1.4,0.6){1};
    \end{tikzpicture}\,,
    \qquad 
    \begin{tikzpicture}[scale=0.4]
            \draw[very thick] (0,0) -- (0,2) -- (2,2) -- (2,0) -- (0,0);
            \draw[very thick] (2,2) -- (0,0);
            \node at (1.4,0.6){2};
            \node at (0.6,1.4){1};
    \end{tikzpicture}\,.
\]
So $\tilde\chi(\CPF_2)=-1+4=3 = \#\PF_2$.

The $1$-dimensional complex $\CPF_3$ is the graph in Figure~\ref{fig:cpf3}.  Its number of facets (edges) is $3! \times C_3 = 6\times 5 = 30$.  By Proposition~\ref{prop:cpflink} and the previous example about $\CPF_2$, the graph $\CPF_3$ is $4$-regular (the link of a vertex has Coxeter type $A_1$, which is the only possibility for a rank 1 subtype of $A_2$).  So $\CPF_3$ has $15$ vertices (half the number of edges).  So $\tilde\chi(\CPF_3)=-1+15-30=-16=-\#\PF_3$.  It turns out that this graph can be drawn on a torus: see Figure~\ref{fig:cpf3}, where the red parallelogram encloses a fundamental domain (for readability, we did not write the labeled triangulations corresponding to each edge).  The sides of this parallelogram are the vectors $(3,-3)$ and $(4,1)$.  If $\sigma = (23)$, it is easily seen that the fixed-point subcomplex $\CPF_3^{\sigma}$ contains 5 isolated vertices, namely those where a triangle is labeled 1 and a quadrangle is labeled 23.  So, $\tilde\chi(\CPF_3^{\sigma})=-1+5 = 4$ (which is also the number of elements in $\PF_3$ fixed by $\sigma$).

\begin{figure}
    \centering
        \begin{tikzpicture}[scale=0.3]
            \usetikzlibrary {arrows.meta}
            \begin{scope}[xshift=-7cm,yshift=7cm]         \draw[very thick,color=red] (0,0) -- (21,-21) -- (49,-14) -- (28,7) -- (0,0);
            \draw[very thick,color=red,->,arrows = {-Stealth[scale=1.5]}] (0,0) -- (10.5,-10.5);
            \draw[very thick,color=red,->,arrows = {-Stealth[scale=1.5]}] (28,7) -- (38.5,-3.5);
            \draw[very thick,color=red,->,arrows = {-Triangle[scale=1.5,fill=none]}] (0,0) -- (14,3.5);
            \draw[very thick,color=red,->,arrows = {-Triangle[scale=1.5,fill=none]}] (21,-21) -- (35,-17.5);
            \end{scope}
            \draw[very thick] (0,2) -- (1.9,0.62) -- (1.18,-1.62) -- (-1.18,-1.62) -- (-1.9,0.62) -- (0,2);
            \draw[very thick] (0,2) -- (-1.18,-1.62);
            \node at (-1.2,0.3){1};
            \node at (0.5,-0.4){23};
            \begin{scope}[xshift=7cm,yshift=-7cm]
                \draw[very thick] (0,2) -- (1.9,0.62) -- (1.18,-1.62) -- (-1.18,-1.62) -- (-1.9,0.62) -- (0,2);
                \draw[very thick] (0,2) -- (-1.18,-1.62);
                \node at (-1.2,0.3){2};
                \node at (0.5,-0.4){13};
            \end{scope}
            \begin{scope}[xshift=-7cm,yshift=7cm]
                \draw[very thick] (0,2) -- (1.9,0.62) -- (1.18,-1.62) -- (-1.18,-1.62) -- (-1.9,0.62) -- (0,2);
                \draw[very thick] (0,2) -- (-1.18,-1.62);
                \node at (-1.2,0.3){3};
                \node at (0.5,-0.4){12};
            \end{scope}
            \begin{scope}[xshift=0cm,yshift=7cm]
                \draw[very thick] (0,2) -- (1.9,0.62) -- (1.18,-1.62) -- (-1.18,-1.62) -- (-1.9,0.62) -- (0,2);
                \draw[very thick] (1.9,0.62) -- (-1.18,-1.62);
                \node at (0.8,-0.9){2};
                \node at (-0.4,0.4){13};
            \end{scope}
            \begin{scope}[xshift=7cm,yshift=0cm]
                \draw[very thick] (0,2) -- (1.9,0.62) -- (1.18,-1.62) -- (-1.18,-1.62) -- (-1.9,0.62) -- (0,2);
                \draw[very thick] (1.9,0.62) -- (-1.18,-1.62);
                \node at (0.8,-0.9){3};
                \node at (-0.4,0.4){12};
            \end{scope}
            \begin{scope}[xshift=14cm,yshift=-7cm]
                \draw[very thick] (0,2) -- (1.9,0.62) -- (1.18,-1.62) -- (-1.18,-1.62) -- (-1.9,0.62) -- (0,2);
                \draw[very thick] (1.9,0.62) -- (-1.18,-1.62);
                \node at (0.8,-0.9){1};
                \node at (-0.4,0.4){23};
            \end{scope}
            \begin{scope}[xshift=7cm,yshift=7cm]
                \draw[very thick] (0,2) -- (1.9,0.62) -- (1.18,-1.62) -- (-1.18,-1.62) -- (-1.9,0.62) -- (0,2);
                \draw[very thick] (-1.9,0.62) -- (1.9,0.62);
                \node at (0,1.2){1};
                \node at (0,-0.4){23};
            \end{scope}
            \begin{scope}[xshift=14cm,yshift=0cm]
                \draw[very thick] (0,2) -- (1.9,0.62) -- (1.18,-1.62) -- (-1.18,-1.62) -- (-1.9,0.62) -- (0,2);
                \draw[very thick] (-1.9,0.62) -- (1.9,0.62);
                \node at (0,1.2){2};
                \node at (0,-0.4){13};
            \end{scope}
            \begin{scope}[xshift=21cm,yshift=-7cm]
                \draw[very thick] (0,2) -- (1.9,0.62) -- (1.18,-1.62) -- (-1.18,-1.62) -- (-1.9,0.62) -- (0,2);
                \draw[very thick] (-1.9,0.62) -- (1.9,0.62);
                \node at (0,1.2){3};
                \node at (0,-0.4){12};
            \end{scope}
            \begin{scope}[xshift=14cm,yshift=7cm]
                \draw[very thick] (0,2) -- (1.9,0.62) -- (1.18,-1.62) -- (-1.18,-1.62) -- (-1.9,0.62) -- (0,2);
                \draw[very thick] (-1.9,0.62) -- (1.18,-1.62);
                \node at (-0.8,-0.9){3};
                \node at (0.4,0.4){12};
            \end{scope}
            \begin{scope}[xshift=21cm,yshift=0cm]
                \draw[very thick] (0,2) -- (1.9,0.62) -- (1.18,-1.62) -- (-1.18,-1.62) -- (-1.9,0.62) -- (0,2);
                \draw[very thick] (-1.9,0.62) -- (1.18,-1.62);
                \node at (-0.8,-0.9){1};
                \node at (0.4,0.4){23};
            \end{scope}
            \begin{scope}[xshift=28cm,yshift=-7cm]
                \draw[very thick] (0,2) -- (1.9,0.62) -- (1.18,-1.62) -- (-1.18,-1.62) -- (-1.9,0.62) -- (0,2);
                \draw[very thick] (-1.9,0.62) -- (1.18,-1.62);
                \node at (-0.8,-0.9){2};
                \node at (0.4,0.4){13};
            \end{scope}
            \begin{scope}[xshift=21cm,yshift=7cm]
                \draw[very thick] (0,2) -- (1.9,0.62) -- (1.18,-1.62) -- (-1.18,-1.62) -- (-1.9,0.62) -- (0,2);
                \draw[very thick] (0,2) -- (1.18,-1.62);
                \node at (1.2,0.3){2};
                \node at (-0.5,-0.4){13};
            \end{scope}
            \begin{scope}[xshift=28cm,yshift=0cm]
                \draw[very thick] (0,2) -- (1.9,0.62) -- (1.18,-1.62) -- (-1.18,-1.62) -- (-1.9,0.62) -- (0,2);
                \draw[very thick] (0,2) -- (1.18,-1.62);
                \node at (1.2,0.3){3};
                \node at (-0.5,-0.4){12};
            \end{scope}
            \begin{scope}[xshift=35cm,yshift=-7cm]
                \draw[very thick] (0,2) -- (1.9,0.62) -- (1.18,-1.62) -- (-1.18,-1.62) -- (-1.9,0.62) -- (0,2);
                \draw[very thick] (0,2) -- (1.18,-1.62);
                \node at (1.2,0.3){1};
                \node at (-0.5,-0.4){23};
            \end{scope}
            \begin{scope}[xshift=0cm,yshift=0cm]
                \draw[very thick] (2,0) -- (5,0);
            \end{scope}
            \begin{scope}[xshift=7cm,yshift=0cm]
                \draw[very thick] (2,0) -- (5,0);
            \end{scope}
            \begin{scope}[xshift=14cm,yshift=0cm]
                \draw[very thick] (2,0) -- (5,0);
            \end{scope}
            \begin{scope}[xshift=21cm,yshift=0cm]
                \draw[very thick] (2,0) -- (5,0);
            \end{scope}
            \begin{scope}[xshift=-7cm,yshift=7cm]
                \draw[very thick] (2,0) -- (5,0);
            \end{scope}
            \begin{scope}[xshift=0cm,yshift=7cm]
                \draw[very thick] (2,0) -- (5,0);
            \end{scope}
            \begin{scope}[xshift=7cm,yshift=7cm]
                \draw[very thick] (2,0) -- (5,0);
            \end{scope}
            \begin{scope}[xshift=14cm,yshift=7cm]
                \draw[very thick] (2,0) -- (5,0);
            \end{scope}
            \begin{scope}[xshift=7cm,yshift=-7cm]
                \draw[very thick] (2,0) -- (5,0);
            \end{scope}
            \begin{scope}[xshift=14cm,yshift=-7cm]
                \draw[very thick] (2,0) -- (5,0);
            \end{scope}
            \begin{scope}[xshift=21cm,yshift=-7cm]
                \draw[very thick] (2,0) -- (5,0);
            \end{scope}
            \begin{scope}[xshift=28cm,yshift=-7cm]
                \draw[very thick] (2,0) -- (5,0);
            \end{scope}
            \begin{scope}[xshift=35cm,yshift=-7cm]
                \draw[very thick] (2,0) -- (5,0);
            \end{scope}
            \begin{scope}[xshift=28cm,yshift=0cm]
                \draw[very thick] (2,0) -- (5,0);
            \end{scope}
            \begin{scope}[xshift=21cm,yshift=7cm]
                \draw[very thick] (2,0) -- (5,0);
            \end{scope}
            \begin{scope}[xshift=0cm,yshift=0cm]
                \draw[very thick] (0,2.2) -- (0,5);
            \end{scope}
            \begin{scope}[xshift=7cm,yshift=0cm]
                \draw[very thick] (0,2.2) -- (0,5);
            \end{scope}
            \begin{scope}[xshift=14cm,yshift=0cm]
                \draw[very thick] (0,2.2) -- (0,5);
            \end{scope}
            \begin{scope}[xshift=21cm,yshift=0cm]
                \draw[very thick] (0,2.2) -- (0,5);
            \end{scope}
            \begin{scope}[xshift=28cm,yshift=0cm]
                \draw[very thick] (0,2.2) -- (0,5);
            \end{scope}
            \begin{scope}[xshift=7cm,yshift=-7cm]
                \draw[very thick] (0,2.2) -- (0,5);
            \end{scope}
            \begin{scope}[xshift=14cm,yshift=-7cm]
                \draw[very thick] (0,2.2) -- (0,5);
            \end{scope}
            \begin{scope}[xshift=21cm,yshift=-7cm]
                \draw[very thick] (0,2.2) -- (0,5);
            \end{scope}
            \begin{scope}[xshift=28cm,yshift=-7cm]
                \draw[very thick] (0,2.2) -- (0,5);
            \end{scope}
            \begin{scope}[xshift=14cm,yshift=-14cm]
                \draw[very thick] (0,2.2) -- (0,5);
            \end{scope}
            \begin{scope}[xshift=21cm,yshift=7cm]
                \draw[very thick] (0,2.2) -- (0,5);
            \end{scope}
            \begin{scope}[xshift=21cm,yshift=-14cm]
                \draw[very thick] (0,2.2) -- (0,5);
            \end{scope}
            \begin{scope}[xshift=14cm,yshift=7cm]
                \draw[very thick] (0,2.2) -- (0,5);
            \end{scope}
            \begin{scope}[xshift=35cm,yshift=-7cm]
                \draw[very thick] (0,2.2) -- (0,5);
            \end{scope}
            \begin{scope}[xshift=28cm,yshift=-14cm]
                \draw[very thick] (0,3.5) -- (0,5);
            \end{scope}
            \begin{scope}[xshift=7cm,yshift=7cm]
                \draw[very thick] (0,2.2) -- (0,3.5);
            \end{scope}
            \begin{scope}[xshift=35cm,yshift=0cm,color=gray]
                \draw[very thick] (0,2) -- (1.9,0.62) -- (1.18,-1.62) -- (-1.18,-1.62) -- (-1.9,0.62) -- (0,2);
                \draw[very thick] (0,2) -- (-1.18,-1.62);
                \node at (-1.2,0.3){2};
                \node at (0.5,-0.4){13};
            \end{scope}
            \begin{scope}[xshift=28cm,yshift=7cm,color=gray]
                \draw[very thick] (0,2) -- (1.9,0.62) -- (1.18,-1.62) -- (-1.18,-1.62) -- (-1.9,0.62) -- (0,2);
                \draw[very thick] (0,2) -- (-1.18,-1.62);
                \node at (-1.2,0.3){1};
                \node at (0.5,-0.4){23};
            \end{scope}
            \begin{scope}[xshift=21cm,yshift=14cm,color=gray]
                \draw[very thick] (0,2) -- (1.9,0.62) -- (1.18,-1.62) -- (-1.18,-1.62) -- (-1.9,0.62) -- (0,2);
                \draw[very thick] (0,2) -- (-1.18,-1.62);
                \node at (-1.2,0.3){3};
                \node at (0.5,-0.4){12};
            \end{scope}
            \begin{scope}[xshift=42cm,yshift=-7cm,color=gray]
                \draw[very thick] (0,2) -- (1.9,0.62) -- (1.18,-1.62) -- (-1.18,-1.62) -- (-1.9,0.62) -- (0,2);
                \draw[very thick] (0,2) -- (-1.18,-1.62);
                \node at (-1.2,0.3){3};
                \node at (0.5,-0.4){12};
            \end{scope}
            \begin{scope}[xshift=14cm,yshift=-14cm,color=gray]
                \draw[very thick] (0,2) -- (1.9,0.62) -- (1.18,-1.62) -- (-1.18,-1.62) -- (-1.9,0.62) -- (0,2);
                \draw[very thick] (0,2) -- (-1.18,-1.62);
                \node at (-1.2,0.3){3};
                \node at (0.5,-0.4){12};
            \end{scope}
        \end{tikzpicture}
    \caption{The graph $\CPF_3$, drawn on a torus.\label{fig:cpf3}}
\end{figure}


One can check that $\CPF(I_2(m))$ is a $4$-regular graph that can be drawn on a torus, just as in the case of $\CPF_3$.  Here, the sides of the fundamental parallelogram for this torus are the vectors $(m,-m)$ and $(m+1,1)$.


\section{Topology on noncrossing partitions}
\label{sec:topnc}

The {\it order ideal} generated by a subset $X\subset P$ in a poset $P$ is:
\[
    \mathcal{I}(P,X)
    :=
    \big\{ 
        y \in P 
            \;:\; 
        \exists x \in X, \; y\leq x
    \big\}.
\]
It is considered as a poset itself, where the order is inherited from $P$.  The goal of this section is to show that certain order ideals in $\bNC$ (the proper part of $\NC$) have the homotopy type of a wedge of spheres, and this will be used in the next section.

Define the {\it inversion set} of $w\in W$ by:
\[
    \Inv(w)
        :=
    \big\{
         t \in T \; : \; \ell_S(wt) < \ell_S(w)
    \big\}
\]
where $\ell_S$ is the Coxeter length (the reflection length $\ell$ could be denoted by $\ell_T$).  
It is well-known that inversions can be reformulated in terms of roots as follows:
\[
    \Inv(w) 
    =
    \big\{
        t_\alpha \;:\;
        \alpha \in \Phi_+ \cap w^{-1}(\Phi_-)
    \big\}.
\]

\begin{prop} \label{prop:idealNC}
  Let $w\in W$.  Assume $w\neq e$ (equivalently, $\Inv(w)\neq \varnothing$).  Then, 
  we have:
    \[
        \Omega\big( \mathcal{I}(\bNC , \Inv(w) ) \big)
            \simeq
        (\mathbb{S}^{n-2})^{\vee k_w},
    \]
    where the integer $k_w$ is:
    \begin{align} \label{def:kw}
        k_w :=
        \# \big\{
            f \in \Delta^+_{n-1} \;:\; 
            f \subset \Inv(w)
        \big\}.
    \end{align}
\end{prop}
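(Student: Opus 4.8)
The plan is to realize the order ideal as a union of cones, pass to a nerve, and split the problem into a shellability statement and an enumeration. Write $\mathcal{I}:=\mathcal{I}(\bNC,\Inv(w))$. The reflections are exactly the coatoms of $\NC$, so $\mathcal{I}=\bigcup_{t\in\Inv(w)}\langle t\rangle$ with $\langle t\rangle:=\{\pi\in\bNC:\pi\leq t\}=[c,t]_\NC\setminus\{c\}$; each $\langle t\rangle$ has a maximum, hence contractible order complex, and for $\varnothing\neq S\subseteq\Inv(w)$ one has $\bigcap_{t\in S}\langle t\rangle=\{\pi\in\bNC:\pi\leq\bigwedge_\NC S\}$, empty exactly when $\bigwedge_\NC S=c$ and otherwise again with a maximum. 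Hence the nerve lemma gives $\Omega(\mathcal{I})\simeq\mathcal{N}_w$, the complex on vertex set $\Inv(w)$ whose faces are the $S$ with $\bigwedge_\NC S\neq c$. If $\bigwedge_\NC\Inv(w)\neq c$ then every subset has meet $\neq c$, so $\mathcal{N}_w$ is a full simplex and $\Omega(\mathcal{I})\simeq\bullet$; moreover $\sigma:=\bigwedge_\NC\Inv(w)$ then satisfies $0\neq V^\sigma\subseteq\bigcap_{t\in\Inv(w)}V^t$, so no facet of $\Delta^+$ — whose $n$ roots are linearly independent (the corresponding product of reflections being reduced) and hence span $V^*$ — can lie inside $\Inv(w)$, i.e.\ $k_w=0$. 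So the statement holds in this case; assume henceforth $\bigwedge_\NC\Inv(w)=c$.

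I would then show $\Omega(\mathcal{I})$ is pure of dimension $n-2$ — its maximal elements are exactly the reflections in $\Inv(w)$, so a maximal chain lies in some $[c,t]_\NC\setminus\{c\}$ with $[c,t]_\NC$ isomorphic to $\NC$ of a rank-$(n-1)$ reflection group, hence graded — and, the technical heart, that it is shellable. I would prove shellability by induction on $n$ and on $\#\Inv(w)$: peel off a coatom $t_0\in\Inv(w)$, write $\mathcal{I}=\mathcal{I}(\bNC,\Inv(w)\setminus\{t_0\})\cup\langle t_0\rangle$, apply the gluing lemma for shellable complexes used in the proof of Proposition~\ref{prop:subcompl}, and identify the intersection — an order ideal inside $\bNC(W')$ (where $[c,t_0]_\NC\cong\NC(W')$) generated by the meets $t_0\wedge t$, $t\in\Inv(w)\setminus\{t_0\}$ — as again of the form $\mathcal{I}(\bNC(W'),\Inv(w'))$ for a suitable $w'\in W'$; that this shape is preserved under the induction is the delicate point. (Alternatively one restricts the standard reflection-order EL-labeling of $\NC$ to $\widehat{\mathcal{I}}$.) This yields $\Omega(\mathcal{I})\simeq(\mathbb{S}^{n-2})^{\vee k}$ with $k=(-1)^{n}\tilde\chi(\mathcal{N}_w)$.

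It remains to prove $k=k_w$. Expanding $\tilde\chi(\mathcal{N}_w)$ and using $\sum_{S\subseteq\Inv(w)}(-1)^{|S|}=0$ gives $k=(-1)^{n}\sum_{S\subseteq\Inv(w),\,\bigwedge_\NC S=c}(-1)^{|S|}$, which by the crosscut theorem equals $(-1)^{n}\mu_{L_w}(c,e)$, where $L_w$ is the meet-subsemilattice of $\NC$ generated by $\Inv(w)\cup\{e\}$, whose coatoms are exactly $\Inv(w)$. To see this equals $k_w$ I would invoke the cluster complex: the homology facets produced by the shelling are maximal chains of $\mathcal{I}$, and — as in the prototype $w=w_0$, where $\mathcal{I}=\bNC$ and $k=\Cat^+(W)=\#\Delta^+_{n-1}$ — they correspond bijectively to the facets of $\Delta^+$, via the falling-chain description of Athanasiadis and Tzanaki's shelling~\cite{athanasiadistzanaki} of $\Delta^+$ together with the map $f\mapsto\prod f$; one then checks that a homology chain lies in the sub-ideal $\mathcal{I}(\bNC,\Inv(w))$ precisely when all roots of the matching facet lie in $\Inv(w)$. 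The main obstacles are the shellability induction — keeping the intersections of ``inversion-set'' type — and verifying that this chain--cluster bijection is compatible with restricting to the subset $\Inv(w)$.
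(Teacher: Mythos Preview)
Your nerve-lemma reduction $\Omega(\mathcal{I})\simeq\mathcal{N}_w$ is valid and a clean opening, and the degenerate case $\bigwedge_\NC\Inv(w)\neq c$ is fine. But the two steps you flag as ``main obstacles'' are genuine gaps, not technicalities. In the gluing induction, one can choose $t_0$ so that $\Inv(w)\setminus\{t_0\}$ is again an inversion set, but the intersection $\langle t_0\rangle\cap\mathcal{I}(\bNC,\Inv(w)\setminus\{t_0\})$ is the ideal in $[c,t_0]\setminus\{c\}\cong\bNC(W')$ generated by the meets $t_0\wedge_\NC t$ for $t\in\Inv(w)\setminus\{t_0\}$. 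These meets need not be coatoms of $[c,t_0]$---two reflections in $\NC$ can have meet equal to $c$, or of any intermediate rank---so the ideal is not of the form $\mathcal{I}(\bNC(W'),\Inv(w'))$ and your induction hypothesis does not apply. The parenthetical alternative of restricting the EL-labeling of $\NC$ does not work either: EL-labelings do not restrict to order ideals in general, and you supply no labels for the new covers $t\lessdot\hat 1$. Finally, the identity $(-1)^n\mu_{L_w}(c,e)=k_w$ is asserted via a ``one then checks'' that is in fact the crux of the whole problem; nothing in your argument actually ties the count to facets of $\Delta^+$ lying inside $\Inv(w)$.

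The paper takes a quite different route that uses the convexity of inversion sets geometrically and avoids shellability altogether. Realize $\Delta^+$ as a triangulation of an $(n-1)$-simplex in an affine hyperplane $H^1$, and let $\Xi(w)$ be the full subcomplex of $\Delta^+$ on the vertex set $\Inv(w)$. The half-space $w^{-1}(H^-)$ cuts out a convex, hence contractible, region of $|\Delta^+|$ whose vertices are exactly the inversions of $w$; an explicit deformation retraction (affine on each simplex) shows $|\Xi(w)|$ is a retract of that region, so $\Xi(w)$ is contractible. Deleting its $k_w$ top-dimensional faces gives the $(n-2)$-skeleton $\Xi'(w)\simeq(\mathbb{S}^{n-2})^{\vee k_w}$, with the sphere count appearing for free. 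The bridge to $\mathcal{I}$ is the map $\rho(f)=\prod f$ from the open star $\Theta'(w)$ of $\Xi'(w)$ in the $(n-2)$-skeleton of $\Delta^+$ onto $\mathcal{I}$: each Quillen fiber $\pi\backslash\rho$ is again a $\Theta$-type poset for the parabolic $W_{\pi^\kappa}$, contractible by the same convexity argument, so Quillen's fiber lemma yields $\Omega(\mathcal{I})\simeq\Omega(\Theta'(w))\simeq\Xi'(w)$.
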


If $w$ is the longest element of $W$, denoted by $w_\circ$, we have $\Inv(w_\circ)=T$ and the order ideal in the proposition is the whole poset $\bNC$.  Athanasiadis {\it et al.}~\cite{athanasiadistzanaki} have proved via EL-shellability that $\Omega(\bNC) \simeq (\mathcal{S}^2)^{\vee (\Cat^+)}$, where $\Cat^+ = \# \Delta^+_{n-1}$ is the number of facets in $\Delta^+$.  Our statement is a generalization, and the proof is indeed inspired from~\cite{athanasiadistzanaki} (and will take the rest of this section).

The proof heavily relies on the combinatorics and geometry of the positive cluster complex $\Delta^+$ (which is $\Delta^{(1),+}(W)$ in earlier notation).  A key property is the convexity of an inversion set, and to use it we need to describe the geometry of $\Delta^+$ rather than seeing it as an abstract simplicial complex.   Consider a vector $\rho$ such that $\langle \alpha | \rho \rangle >0$ for every positive root $\alpha$, and define
\begin{align*}
    H &:= \big\{ v\in V \;:\;  \langle v|\rho\rangle=0 \big\},\quad
    H^1 := \big\{ v\in V \;:\;  \langle v|\rho\rangle=1 \big\}, \\
    H^+ &:= \big\{ v\in V \;:\; \langle v|\rho\rangle>0 \big\},\quad
    H^- := \big\{ v\in V \;:\; \langle v|\rho\rangle<0 \big\}.
\end{align*}
From the choice of $\rho$, it is easily seen that 
$\Span_{\mathbb{R}^+}(\Pi) \cap H^1$ is an $(n-1)$-dimensional simplex in $H^1$, and its vertices are the simple roots (up to a positive scalar factor). 

\begin{lemm}
    The simplices $\Span_{\mathbb{R}^+}(f) \cap H^1$ for $f \in \Delta^+$ form a triangulation of the simplex $\Span_{\mathbb{R}^+}(\Pi) \cap H^1$, and this triangulation is combinatorially isomorphic to $\Delta^+$.
\end{lemm}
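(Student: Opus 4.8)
The plan is to realize the simplices in the statement as a $\rho$-slice of the subfan of the (complete, simplicial) cluster fan consisting of the cones contained in the simple-root cone, and then to prove that this subfan is complete over that cone by an induction on the rank.

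First I would record, for each positive root $\alpha$, the point $\bar\alpha := \alpha/\langle \alpha \,|\, \rho\rangle \in H^1$; then $\Span_{\mathbb{R}^+}(\alpha)\cap H^1 = \{\bar\alpha\}$, and for $f = \{\alpha_1,\dots,\alpha_k\}\in\Delta^+$ we have $\Span_{\mathbb{R}^+}(f)\cap H^1 = \mathrm{conv}(\bar\alpha_1,\dots,\bar\alpha_k)$. By the Athanasiadis--Tzanaki description of faces recalled in Section~\ref{sec:clus}, the reflections $t_{\alpha_1},\dots,t_{\alpha_k}$ admit an ordering whose product is a reduced factorization in $\NC$; hence, by a standard property of reduced reflection factorizations (Carter's lemma), the roots $\alpha_1,\dots,\alpha_k$ are linearly independent. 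Since $H^1$ is an affine hyperplane avoiding $0$, rescaling linearly independent vectors into $H^1$ yields affinely independent points, so $\Span_{\mathbb{R}^+}(f)\cap H^1$ is a genuine $(k-1)$-simplex with vertex set $\{\bar\alpha_1,\dots,\bar\alpha_k\}$. Distinct positive roots have distinct rays, so $f\mapsto \sigma_f := \Span_{\mathbb{R}^+}(f)\cap H^1$ is injective and inclusion-preserving, and by Proposition~\ref{prop:subcompl} (applied to the maximum of $\NC$) the resulting family is pure of dimension $n-1 = \dim \Sigma$, where $\Sigma := \Span_{\mathbb{R}^+}(\Pi)\cap H^1$. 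What remains is to prove that the $\sigma_f$ cover $\Sigma$ and that $\sigma_f\cap\sigma_g = \sigma_{f\cap g}$ for all faces $f,g$, i.e.\ that they form a triangulation of $\Sigma$ isomorphic to $\Delta^+$.

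For this I would pass back from slices to cones. For $m=1$, $\Delta^{(1)}$ is the cluster complex, and by Fomin--Zelevinsky~\cite{fominzelevinsky} the cones $\Span_{\mathbb{R}^+}(\widehat f)$ --- where $\widehat f$ is the set of roots underlying $f$, with the convention that a vertex $(-\alpha)^1$ contributes $-\alpha$ --- form a complete simplicial fan $\mathcal{F}$ in $V$. A face $f$ lies in $\Delta^+$ exactly when $\widehat f\subset\Phi_+$, which, since the only roots inside the cone $C := \Span_{\mathbb{R}^+}(\Pi)$ are the positive ones, is the same as $\Span_{\mathbb{R}^+}(\widehat f)\subset C$; so $\{\Span_{\mathbb{R}^+}(f):f\in\Delta^+\}$ is precisely the set of cones of $\mathcal{F}$ contained in $C$. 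Once we know this subfan covers all of $C$, slicing by $H^1$ gives everything: the cones cover $C$ (so the $\sigma_f$ cover $\Sigma$), and any two cones of $\mathcal{F}$ meet in a common face, which for the simplicial cones $\Span_{\mathbb{R}^+}(f),\Span_{\mathbb{R}^+}(g)$ is $\Span_{\mathbb{R}^+}(f\cap g)$ (their rays being distinct positive roots), so $\sigma_f\cap\sigma_g = \sigma_{f\cap g}$. To see the subfan covers $C$, I would induct on $n=\rk(W)$, the case $n=1$ being trivial. For the step, the facets of the simplicial cone $C$ are $\Span_{\mathbb{R}^+}(\Pi\setminus\{\alpha_s\})$ for $s\in S$; using that the link of $(-\alpha_s)^1$ in $\Delta^{(1)}$ is $\Delta^{(1)}(W_{S\setminus\{s\}})$, the compatibility of the cluster fan with standard parabolic subgroups, and the inductive hypothesis for $W_{S\setminus\{s\}}$, each facet of $C$ is a union of cones of $\mathcal{F}$; hence $\partial C$ is a union of cones of $\mathcal{F}$. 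A transversality argument then finishes: if some $\sigma\in\mathcal{F}$ met $\mathrm{relint}(C)$ without lying in $C$, choose $x\in\sigma\cap\mathrm{relint}(C)$ and a generic $y\in\sigma\setminus C$; the point $z$ where $[x,y]$ exits $C$ lies in the relative interior of a facet $C_j$ of $C$ and, generically, of a maximal cone $\mu\subset C_j$ of $\mathcal{F}$, whence $z\in\sigma\cap\mu$ forces $\mu\subset\sigma$ by the fan axiom, so $\mu$ is a facet of $\sigma$ and $\sigma$ lies on the $C$-side of $\mathrm{aff}(\mu)$ (the side of $x$), making $y\in\sigma\setminus C$ impossible.

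The substantive inputs are imported: completeness and simpliciality of the $m=1$ cluster fan~\cite{fominzelevinsky}, and its compatibility with standard parabolic subgroups --- the latter being exactly what makes the boundary bookkeeping in the induction go through. Granting these, the argument is routine, the only delicate point being the genericity in the transversality step. One could instead bypass the induction and cite directly that the positive cluster complex is a flag triangulation of a simplex, as follows from~\cite{fominreading,athanasiadistzanaki}, and merely check that its vertices are the points $\bar\alpha$; but since the next section needs the explicit geometry of $\Delta^+$ in any case, the fan-theoretic route is the one I would write out.
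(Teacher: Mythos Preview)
Your argument is correct and follows the same route as the paper's two-line proof: both identify the simplices $\sigma_f$ as the $H^1$-slice of the subfan of the Fomin--Zelevinsky cluster fan sitting inside the cone $C=\Span_{\mathbb{R}^+}(\Pi)$. Where the paper simply writes ``a convex subset in the cluster fan'' and cites~\cite{fominzelevinsky}, you supply an explicit induction-plus-transversality proof that the positive cones cover $C$; this is sound (and a welcome level of detail), though a quicker alternative is to note that any maximal cluster cone containing $-\alpha_s$ has its remaining vertices in $\Phi_+^{S\setminus\{s\}}$ by the compatibility relation, hence the whole cone lies in the half-space where the $\alpha_s$-coordinate is nonpositive and so misses $\mathrm{relint}(C)$, whence completeness of the fan gives coverage immediately.
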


\begin{proof}
The cones $\Span_{\mathbb{R}^+}(f)$ form a simplicial fan, as a convex subset in the {\it cluster fan} (see~\cite{fominzelevinsky}).  Taking the intersection with the affine hyperplane thus gives a triangulation. 
\end{proof}

See for example the left part in Figure~\ref{fig:theta}, in the case $A_3$ and a bipartite Coxeter element (where $(i,j)$ is a reflection in the symmetric group $\mathfrak{S}_4$, {\it i.e.}, a transposition).  We can thus see the geometric realization of $\Delta^+$ (or any of its faces) as a simplex in $H^1$. 

Define a subposet $\Theta(w)$ of (the face poset of) $\Delta^+$ as:
\[
    \Theta(w) 
        :=
    \big\{
        f \in \Delta^+ \; : \;
        f \cap \Inv(w) \neq \varnothing
    \big\},
\]
and a subposet $\Theta'(w)$ of $\Theta(w)$ by removing elements of $\Delta^+_{n-1}$:
\[
    \Theta'(w) 
        :=
    \big\{
        f \in \Theta(w) \; : \; 
        \dim(f)<n-1
    \big\}.
\]
In particular, if $\Theta(w) \cap \Delta^+_{n-1} = \varnothing$ then $\Theta(w)=\Theta'(w)$.

For example, see the right part of Figure~\ref{fig:theta}.  Here, $w$ is the permutation $\sigma\in\mathfrak{S}_4$ with inversion set $\{(1,4),(2,3),(2,4),(3,4)\}$ (namely, $\sigma = 2431$).  The hyperplane $w^{-1}(H)$ is the red line, it separates the inversions of $w$ (to its right) from its non-inversions (to its left).  The poset $\Theta(w)$ contains the faces having a vertex in $H^-$ (the half-plane to the right of the red line), this gives $4$ vertices, $9$ edges, and all $5$ clusters.  The poset $\Theta'(w)$ contains $4$ vertices and $9$ edges (thick edges in the picture).  Note that these posets don't contain the empty face. 

\begin{figure}
    \centering
    \begin{tikzpicture}[scale=2]
        \draw[fill=lightgray] (0,0) -- (1,1.732) -- (2,0) -- (0,0);
        \draw (0,0) -- (1.5,0.866);
        \draw (2,0) -- (0.5,0.866);
        \draw (1.5,0.866) -- (0.5,0.866);
        \node at (-0.2,-0.2) {$(1,2)$};
        \node at (2.2,-0.2) {$(3,4)$};
        \node at (1,1.9) {$(2,3)$};
        \node at (1,0.3) {$(1,4)$};
        \node at (0.2,1) {$(1,3)$};
        \node at (1.8,1) {$(2,4)$};
    \end{tikzpicture}
    \qquad 
        \begin{tikzpicture}[scale=2]
        \draw[fill=lightgray] (0,0) -- (1,1.732) -- (2,0) -- (0,0);
        \draw (0,0) -- (1.5,0.866);
        \draw (2,0) -- (0.5,0.866);
        \draw (1.5,0.866) -- (0.5,0.866);
        \node at (-0.2,-0.2) {\phantom{$(1,2)$}};
        \node at (2.2,-0.2) {\phantom{$(3,4)$}};
        \draw[color=red] (0.7,2.3) -- (0.7,-0.3);
        \node[color=red] at (0.1,2.1) {$w^{-1}(H_+)$};
        \node[color=red] at (1.3,2.1) {$w^{-1}(H_-)$};
        \draw[line width=1mm,line cap=round] (1,1.732) -- (2,0);
        \draw[line width=1mm,line cap=round] (2,0) -- (1,0.58) -- (1.5,0.866);
        \draw[line width=1mm,line cap=round, loosely dashed] (1,1.732) -- (0.7,1.21);
        \draw[line width=1mm,line cap=round, loosely dashed] (1.5,0.866) -- (0.7,0.866);
        \draw[line width=1mm,line cap=round, loosely dashed] (2,0) -- (0.7,0);
        \draw[line width=1mm,line cap=round, loosely dashed] (1,0.58) -- (0.7,0.41);
        \draw[line width=1mm,line cap=round, loosely dashed] (1,0.58) -- (0.7,0.75);
    \end{tikzpicture}
    \caption{The complex $\Delta^+$ in type $A_2$ with a bipartite Coxeter element (left). Illustration of the poset $\Theta'(w)$ (right).}
    \label{fig:theta}
\end{figure}

Though these posets are what we really use in the proof, to write precise arguments it is convenient to use the following two simplicial complexes:
\[
    \Xi(w)
        :=
    \big\{
        f \in \Delta^+ \; : \; 
        f \subset \Inv(w)
    \big\}
\]
is the full subcomplex of $\Delta^+$ with vertex set $\Inv(w)$, and:
\[
    \Xi'(w)
        :=
    \big\{
        f \in \Xi(w) \; : \; 
        \dim(f) < n-1
    \big\}
\]
is its $(n-2)$-skeleton.  Note that the integer $k_w$ defined in~\eqref{def:kw} is the number of $(n-1)$-dimensional faces of $\Xi(w)$.  The complex $\Xi'(w)$ has 4 vertices and 4 edges (the four plain thick edges in Figure~\ref{fig:theta}), and the poset $\Theta'(w)$ has 5 more elements (the dashed edges). The complex $\Xi(w)$ contains one cluster (the one entirely to the right of the red line), and there are 4 clusters that are in $\Theta(w)$ but not in $\Xi(w)$ (those through which goes the red line).  

Geometrically, we can see $\Theta(w)$ as the {\it open star} of $\Xi(w)$ inside $\Delta^+$ (the open star is usually defined only for a face rather than a subcomplex, see~\cite{kozlov}).  Similarly, $\Theta'(w)$ is the open star of $\Xi'(w)$ inside the $(n-2)$-skeleton of $\Delta^+$.

\begin{lemm} \label{lemm:thetaxi}
    We have $\Omega(\Theta(w))\simeq \Xi(w)$, and $\Omega(\Theta'(w))\simeq \Xi'(w)$.
\end{lemm}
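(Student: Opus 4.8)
The plan is to realize $\Theta(w)$ as deformation retracting onto the face poset of $\Xi(w)$ by a single explicit order-preserving map, and then to invoke the standard closure-operator lemma of poset topology (see~\cite{wachs} or~\cite{kozlov}): if $\phi\colon P\to P$ is order-preserving and idempotent ($\phi\circ\phi=\phi$) and satisfies $\phi(x)\le x$ for all $x\in P$, then the inclusion $\phi(P)\hookrightarrow P$ induces a deformation retraction of $|\Omega(P)|$ onto $|\Omega(\phi(P))|$. I will treat $\Theta(w)$ and $\Xi(w)$, and indicate at the end the (routine) modification for the primed versions.

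First I would define $\phi\colon\Theta(w)\to\Theta(w)$ by $\phi(f):=f\cap\Inv(w)$. One checks this is well defined and lands in the poset $P(\Xi(w))$ of nonempty faces of $\Xi(w)$: since $f$ is a face of $\Delta^+$ so is $f\cap\Inv(w)$ (faces are closed under passing to subsets); all its vertices lie in $\Inv(w)$, so it is a face of the full subcomplex $\Xi(w)$; and it is nonempty precisely because $f\in\Theta(w)$. The map is order-preserving, since $f\subseteq f'$ gives $f\cap\Inv(w)\subseteq f'\cap\Inv(w)$; it is idempotent, since $\phi(f)\subseteq\Inv(w)$ forces $\phi(\phi(f))=\phi(f)$; and $\phi(f)\subseteq f$ for all $f$, that is, $\phi(f)\le f$ in the inclusion order. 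Finally its image is exactly $P(\Xi(w))$: it clearly lands there, and any nonempty face $g$ of $\Xi(w)$ lies in $\Theta(w)$ and satisfies $\phi(g)=g$.

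Applying the closure-operator lemma then yields $|\Omega(\Theta(w))|\simeq|\Omega(P(\Xi(w)))|$, and since $\Omega(P(\Xi(w)))$ is the barycentric subdivision of $\Xi(w)$, its geometric realization is homeomorphic to $|\Xi(w)|$; this gives $\Omega(\Theta(w))\simeq\Xi(w)$. For the primed statement the very same map $\phi$ works after restriction to $\Theta'(w)$: if $\dim f<n-1$ then $\dim(f\cap\Inv(w))\le\dim f<n-1$, so $\phi$ maps $\Theta'(w)$ into $P(\Xi'(w))$, and every verification above goes through verbatim, giving $\Omega(\Theta'(w))\simeq\Xi'(w)$.

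I do not anticipate a genuine obstacle here: the whole content is that sending a face of $\Delta^+$ to its intersection with $\Inv(w)$ is a well-behaved retraction, and the only point deserving a little care is bookkeeping with the empty face — one must set up $\Theta(w)$ and $P(\Xi(w))$ to consist of nonempty faces only, so that $\phi$ never outputs $\varnothing$ and the identification of $\Omega(P(\Xi(w)))$ with the barycentric subdivision of $\Xi(w)$ holds on the nose. If one prefers to avoid the closure-operator lemma, the same conclusion follows from Quillen's fiber lemma applied to $\phi$: for a nonempty face $g$ of $\Xi(w)$, the fiber $\phi^{-1}(\{h\in P(\Xi(w)):h\ge g\})=\{f\in\Theta(w):f\cap\Inv(w)\supseteq g\}$ has $g$ as its minimum (from $g\subseteq f\cap\Inv(w)\subseteq f$), hence is contractible, so $\phi$ is a homotopy equivalence.
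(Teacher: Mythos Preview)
Your proof is correct and takes essentially the same approach as the paper: both define the retraction $\phi(f)=f\cap\Inv(w)$ from $\Theta(w)$ onto the poset of nonempty faces of $\Xi(w)$ and invoke the Quillen comparison lemma (packaged by you as the closure-operator lemma) to conclude the homotopy equivalence, with the primed case handled identically. Your write-up is in fact slightly more thorough in explicitly verifying idempotence and the barycentric-subdivision identification.
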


\begin{proof}
Any subcomplex of a complex is a deformation retraction of its open star (under mild conditions), and in particular a subcomplex and its open star are homotopy equivalent.  However, we did not formally defined the geometric realization of open stars, so let us also give a more formal poset-theoretic proof.  See~\cite[Lemma~70.1]{munkres} for the relevant statement in terms of simplicial complexes.

Suppose that we have two posets $P_1$ and $P_2$, and two poset maps $f,g:P_1\to P_2$ such that $f(p)\leq g(p)$ for all $p\in P_1$.  It was shown by Quillen~\cite[Section~1.3]{quillen} that the maps $\Omega(P_1)\to \Omega(p_2)$ induced by $f$ and $g$ are then homotopy equivalent.  Here, we consider the map $\phi:\Theta(w) \to \Xi(w)$ defined by 
\[
    f \mapsto f \cap \Inv(w).
\]
The image of $\phi$ is the poset of nonempty faces of $\Xi(w)$.  We also consider the map $\psi$ defined as the inclusion of $\operatorname{im}(\phi)$ in $\Theta(w)$.  We have $(\phi\circ\psi)(p)=p$ for all $p\in\operatorname{im}(\phi)$, and $(\psi\circ\phi)(p) \leq p$ for all $p\in\Theta(w)$.  By the result of Quillen mentioned above, the map induced by $\phi$ and $\psi$ on the order complexes are homotopy equivalences inverse to each other.  This proves the result.  The statement about $\Theta'(w)$ and $\Xi'(w)$ is similar.
\end{proof}

\begin{lemm} \label{lemm:thetaxitoptriv}
    The complexes $\Omega(\Theta(w))$ and $\Xi(w)$ are topologically trivial.
\end{lemm}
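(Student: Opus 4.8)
The plan is to reduce everything to the contractibility of $\Xi(w)$, since $\Omega(\Theta(w))\simeq\Xi(w)$ by Lemma~\ref{lemm:thetaxi}. Recall that $\Delta^+$ is geometrically realized as a triangulation of the simplex $\Span_{\mathbb{R}^+}(\Pi)\cap H^1$, with the positive roots (up to positive scalars) as its vertices. Because $w$ is orthogonal, a positive root $\alpha$ satisfies $t_\alpha\in\Inv(w)$ if and only if $w\alpha\in\Phi_-$, i.e.\ $\langle\alpha\,|\,w^{-1}\rho\rangle=\langle w\alpha\,|\,\rho\rangle<0$. So, writing $U:=\{v\in V:\langle v\,|\,w^{-1}\rho\rangle<0\}$ for the corresponding open half-space, $\Xi(w)$ is exactly the full subcomplex of this triangulation on the set of vertices lying in $U$. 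Moreover no vertex of the triangulation lies on the bounding hyperplane $\partial U$: if $\alpha\in\Phi_+$ then $w\alpha$ is a root, and $\langle\beta\,|\,\rho\rangle\neq 0$ for every root $\beta$ by the choice of $\rho$.

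Given this, the statement follows from a purely geometric fact, which I would isolate and prove: if a finite simplicial complex is realized as a triangulation of a polytope $Q$ and $U$ is an open half-space containing at least one vertex of the triangulation and no vertex on its boundary, then the full subcomplex on the vertices lying in $U$ is a strong deformation retract of the convex region $R:=Q\cap\overline U$, hence contractible. Here $Q=\Span_{\mathbb{R}^+}(\Pi)\cap H^1$, and $R\neq\varnothing$ since $\Inv(w)\neq\varnothing$ furnishes a vertex in $U\subset R$. For the retraction: on a face $f$ of the triangulation, split its vertex set as $f=f_-\uplus f_+$ according to the sign of $\langle\cdot\,|\,w^{-1}\rho\rangle$; the vertices of $f_+$ lie strictly outside $\overline U$, so the convex polytope $f\cap\overline U$ is disjoint from $\mathrm{conv}(f_+)$ and meets the face $\mathrm{conv}(f_-)$ of $\Xi(w)$. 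Writing a point of $f\cap\overline U$ in barycentric coordinates and letting $H_t$ rescale (and renormalize) only the coordinates indexed by $f_+$ by a factor decreasing from $1$ to $0$, one checks directly that $H_t$ maps $f\cap\overline U$ into itself (the renormalizing denominator stays positive because a point of $f\cap\overline U$ always has nonzero $f_-$-component), that $H_0=\mathrm{id}$, $H_1(f\cap\overline U)=\mathrm{conv}(f_-)$, and $H_t$ fixes $\mathrm{conv}(f_-)$ pointwise. Since $H_t$ touches only the $f_+$-coordinates, its restriction to a face $g\subseteq f$ coincides with the homotopy built intrinsically from $g$; the $H_t$ therefore glue across the closed cells of the triangulation into a strong deformation retraction $R\to|\Xi(w)|$.

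The conclusion is then immediate: $\Xi(w)\simeq R$ is contractible, hence so is $\Omega(\Theta(w))\simeq\Xi(w)$, which is the assertion ``topologically trivial''. The one place where care is needed — and where a tempting shortcut fails — is that $R$ is \emph{not} in general a union of closed cells of the triangulation: the hyperplane $\partial U$ cuts through the interiors of the ``straddling'' simplices, so there is no equality $|\Xi(w)|=R$ and one genuinely needs the deformation retraction. Its technical heart is the compatibility of the per-cell homotopies, which works precisely because each of them alters only the barycentric coordinates attached to vertices outside $\overline U$; the genericity of $\rho$, ensuring vertices avoid $\partial U$, is exactly what makes these homotopies well-defined. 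I would also remark that this argument uses nothing about $\Delta^+$ beyond the fact, already recorded, that it is realized as a triangulation of a convex simplex.
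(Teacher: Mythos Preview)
Your proof is correct and takes essentially the same approach as the paper: both intersect the realized simplex $|\Delta^+|$ with the half-space $w^{-1}(H^-)$ to get a convex (hence contractible) region, and then build a deformation retraction onto $|\Xi(w)|$ by pushing each point along the segment toward the barycentric projection onto its inversion-vertices. The only cosmetic differences are that you use the closed half-space and a reparametrized homotopy, and you spell out the gluing across faces more explicitly than the paper does.
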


\begin{proof}
Consider $X := w^{-1}(H^-) \cap |\Delta^+| \subset H^1 $.  In the example of Figure~\ref{fig:theta}, it is the part of the gray triangle that lies to the right of the red line.  It is convex as an intersection of two convex subsets in $H^1$ (a half-space and a simplex), so it is contractible.  The geometric realization of a vertex $t\in \Delta^+_1$ (as a simplex in $H^1$) lies in $X$ if and only if $t\in \Inv(w)$, by the characterization of inversions in terms of roots.

We build a deformation retraction from $X$ to $|\Xi(w)|$ as follows.  Let $v \in X$, and $f \in \Theta(w)$ be such that $v\in |f|$.  There is a unique decomposition $v = (1-\lambda) v^+ + \lambda v^-$ where $v^+ \in |f  \cap w^{-1}(H^+) |$, $v^- \in |f \cap w^{-1}(H^-) | $, and $\lambda \in [0,1]$.  More concretely, we write $v$ as a convex combination of the vertices of $f$, then keep the vertices that correspond to inversions of $w$ (respectively, non-inversions of $w$) to get $v_-$ (respectively, $v_+$).  This is done with the convention that if $v \in |\Xi(w)|$, $v^+$ doesn't exist and $v=v^-$.  

Now, the deformation retraction $H : [0,1]\times X \to |\Xi(w)|$ is defined as follows:
\[
    H(t,v) := (1-t) v  + t v^-.
\]
The map $v \mapsto v^-$ is affine on each simplex $|f|$ for $f\in\Theta(w)$, and in particular continuous.  It follows that $H$ is affine (hence continuous) on each subspace $[0,1]\times |f|$.  Clearly, we have $H(0,v)=v$, $H(1,v) = v^- \in |\Xi(w)|$, and $H(t,v)=v$ if $v \in |\Xi(w)|$.  As a deformation retraction is a homotopy equivalence, $\Xi(w)$ is topologically trivial.  By the  previous lemma, $\Omega(\Theta(w))$ is as well.
\end{proof}

\begin{lemm}
    We have $\Omega(\Theta'(w)) \simeq (\mathbb{S}^{n-2})^{\vee k_w}$.
\end{lemm}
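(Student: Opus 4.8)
By Lemma~\ref{lemm:thetaxi} we have $\Omega(\Theta'(w)) \simeq \Xi'(w)$, so the plan is to determine the homotopy type of $\Xi'(w)$ directly. The crucial point is that $\Xi'(w)$ is exactly the $(n-2)$-skeleton of $\Xi(w)$, and $\Xi(w)$ is contractible by Lemma~\ref{lemm:thetaxitoptriv}. So everything reduces to the standard fact that the $(d-1)$-skeleton of a contractible $d$-dimensional simplicial complex is homotopy equivalent to a wedge of $(d-1)$-spheres, one for each $d$-dimensional face; here $d=n-1$ and the number of $(n-1)$-dimensional faces of $\Xi(w)$ is $k_w$ by~\eqref{def:kw}.

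To carry this out I would argue as follows. The inclusion $\Xi'(w)\hookrightarrow\Xi(w)$ is a cofibration (it is a simplicial pair), and collapsing the $(n-2)$-skeleton yields a wedge of $(n-1)$-spheres, one per $(n-1)$-simplex, so $\Xi(w)/\Xi'(w)\simeq(\mathbb{S}^{n-1})^{\vee k_w}$. Plugging this into the reduced long exact sequence of the pair $(\Xi(w),\Xi'(w))$ and using $\tilde H_i(\Xi(w))=0$ for all $i$, the connecting homomorphisms give isomorphisms $\tilde H_i(\Xi'(w))\cong\tilde H_{i+1}(\Xi(w)/\Xi'(w))$ for all $i$; hence $\tilde H_{n-2}(\Xi'(w))\cong\mathbb{Z}^{k_w}$ and all other reduced homology groups of $\Xi'(w)$ vanish. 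When $n\geq 4$ and $k_w\geq 1$, the complex $\Xi'(w)$ is $(n-2)$-dimensional with $n-2\geq 2$, and it has the same $2$-skeleton as $\Xi(w)$ (since $2\leq n-2$), so $\pi_1(\Xi'(w))\cong\pi_1(\Xi(w))=1$ because $\pi_1$ only depends on the $2$-skeleton; Proposition~\ref{prop:characwedge} then yields $\Xi'(w)\simeq(\mathbb{S}^{n-2})^{\vee k_w}$, as claimed.

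The remaining cases are elementary. If $\dim\Xi(w)<n-1$ then $k_w=0$ and $\Xi'(w)=\Xi(w)\simeq\bullet=(\mathbb{S}^{n-2})^{\vee 0}$. For $n=3$ (with $k_w\geq 1$), $\Xi'(w)$ is a graph with $\tilde H_0(\Xi'(w))\cong\tilde H_1(\Xi(w)/\Xi'(w))=0$, hence a connected graph, hence a wedge of $k_w$ circles by the remark following Proposition~\ref{prop:characwedge}. For $n=2$ (with $k_w\geq 1$), the contractible graph $\Xi(w)$ is a tree, so its $0$-skeleton $\Xi'(w)$ is a discrete set of $k_w+1$ points, i.e.\ $(\mathbb{S}^0)^{\vee k_w}$. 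Finally $n=1$ forces $\Xi(w)$ to be a single vertex, $k_w=1$, and both sides reduce to the empty complex $\mathbb{S}^{-1}=(\mathbb{S}^{-1})^{\vee 1}$. I do not expect a serious obstacle: the substantive content is already in Lemmas~\ref{lemm:thetaxi} and~\ref{lemm:thetaxitoptriv}, and the only care needed is to justify cleanly that $\Xi(w)/\Xi'(w)$ is a wedge of $k_w$ spheres of dimension $n-1$ and to record the few low ranks where Proposition~\ref{prop:characwedge} does not literally apply.
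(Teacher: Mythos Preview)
Your proof is correct and follows essentially the same route as the paper: reduce to $\Xi'(w)$ via Lemma~\ref{lemm:thetaxi}, use contractibility of $\Xi(w)$ from Lemma~\ref{lemm:thetaxitoptriv} to compute the homology of its $(n-2)$-skeleton, invoke the $2$-skeleton argument for simple connectivity, and conclude with Proposition~\ref{prop:characwedge}. The only cosmetic difference is that the paper computes the homology by comparing chain complexes directly, whereas you use the long exact sequence of the pair $(\Xi(w),\Xi'(w))$ and the identification $\Xi(w)/\Xi'(w)\simeq(\mathbb{S}^{n-1})^{\vee k_w}$; these are equivalent arguments, and your treatment of the low-rank cases is more explicit than the paper's.
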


\begin{proof}
    We prove $\Xi'(w) \simeq (\mathbb{S}^{n-2})^{\vee k_w}$, and the result will follow by Lemma~\ref{lemm:thetaxi}.  The idea is to note that $\Xi(w)$ is topologically trivial (by the previous lemma) and $\Xi'(w)$ is obtained by removing $k_w$ $(n-1)$-dimensional facets.  
    
    It is straightforward to describe the chain complex of $\Xi'(w)$ in terms of that of $\Xi(w)$, and we get that $\tilde H_{n-2}(\Xi'(w))$ is isomorphic to $\mathbb{Z}^{k_w}$ and all other homology groups vanish.  If $n>3$, $\Xi'(w)$ is simply-connected because the fundamental group of a complex only depends on its $2$-skeleton (and $\Xi(w)$ is simply-connected).  We can use Proposition~\ref{prop:characwedge} to complete the proof.
\end{proof}



To prove the last lemma of this series, we use the complex $\Delta^+(\pi)$ for $\pi\in\NC$ (the case $m=1$ of the complex $\Delta^{(m),+}(\pi)$ defined in Proposition~\ref{prop:subcompl}).  When $\ell(\pi)>0$, the geometric realization of $\Delta^+(\pi)$ is a $(\ell(\pi)-1)$-dimensional ball and in particular this complex is topologically trivial.  See~\cite[Corollary~7.7]{bradywatt}.  Another way to see this is to identify $\Delta^+(\pi)$ with the positive cluster complex associated to the parabolic subgroup $W_{\pi}$ (but a technical point is that we need a definition of $\Delta^+$ for each standard Coxeter element, not just the bipartite Coxeter element, see for example~\cite{bianejosuatverges}).

\begin{lemm}
    We have:
    \[
        \Omega(\Theta'(w))
            \simeq
        \Omega(\mathcal{I}(\bNC , \Inv(w) )).
    \]
\end{lemm}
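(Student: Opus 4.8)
The plan is to apply Quillen's fiber lemma to the order map
\[
    \varphi\colon \Theta'(w)\longrightarrow \mathcal{I}(\bNC,\Inv(w)),\qquad f\longmapsto \textstyle\prod f .
\]
Well-definedness is immediate: if $f\in\Theta'(w)$ then $\varnothing\neq f$ and $\#f<n$, so $0<\ell(\prod f)<n$ and hence $\prod f\in\bNC$; moreover a vertex $\alpha$ of $f$ with $t_\alpha\in\Inv(w)$ gives $\prod f\leq_{\NC}\prod\{\alpha\}=t_\alpha$ (the map $f\mapsto\prod f$ being order-reversing), so $\prod f\in\mathcal{I}(\bNC,\Inv(w))$. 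Since $\varphi$ is order-reversing, it is order-preserving as a map into the opposite poset $\mathcal{I}(\bNC,\Inv(w))^{\mathrm{op}}$, and Quillen's fiber lemma (\cite{quillen}; see also~\cite[\S4]{wachs}) reduces the claim to showing that for every $q\in\mathcal{I}(\bNC,\Inv(w))$ the fiber
\[
    F_q:=\varphi^{-1}\bigl(\{\tau\in\mathcal{I}(\bNC,\Inv(w))\,:\,\tau\geq_{\NC}q\}\bigr)
        =\{f\in\Theta'(w)\,:\,\textstyle\prod f\geq_{\NC}q\}
\]
has contractible order complex (the second equality uses that $\varphi$ already lands in $\mathcal{I}(\bNC,\Inv(w))$).

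Next I would identify $F_q$. Because $\underline f=(\prod f)^\kappa$ and $\kappa$ is an involutive anti-automorphism of $\NC$, one has $\prod f\geq_{\NC}q\iff\underline f\leq_{\NC}q^\kappa$, hence $\{f\in\Delta^+:\prod f\geq_{\NC}q\}=\Delta^+(q^\kappa)$ in the notation of Proposition~\ref{prop:subcompl} (case $m=1$). By that proposition $\Delta^+(q^\kappa)$ is pure of dimension $\rk(q^\kappa)-1\leq n-2$, so the condition $\dim f<n-1$ in the definition of $\Theta'(w)$ is vacuous on it and
\[
    F_q=\{f\in\Delta^+(q^\kappa)\,:\,f\cap\Inv(w)\neq\varnothing\}.
\]
Thus $F_q$ is the open star, taken inside $\Delta^+(q^\kappa)$, of the full subcomplex $\Xi_q:=\{f\in\Delta^+(q^\kappa):f\subseteq\Inv(w)\}$; exactly as in the proof of Lemma~\ref{lemm:thetaxi} (the maps $f\mapsto f\cap\Inv(w)$ and the inclusion of the image are order-comparable), this gives $\Omega(F_q)\simeq|\Xi_q|$, so it suffices to prove $\Xi_q$ contractible. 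In particular $F_q\neq\varnothing$: if $q\leq_{\NC}t$ with $t\in\Inv(w)$ and $\alpha\in\Phi_+$ is the root with $t_\alpha=t$, then $\{\alpha\}\in F_q$.

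The contractibility of $\Xi_q$ is the crux, and I would obtain it by transplanting the proof of Lemma~\ref{lemm:thetaxitoptriv} into $\Delta^+(q^\kappa)$. Since $q\in\bNC$ we have $\ell(q^\kappa)=\rk(q)\geq1$, so by the discussion preceding this lemma (together with Proposition~\ref{prop:subcompl}) $\Delta^+(q^\kappa)$ is a shellable ball which is isomorphic, root-compatibly, to the positive cluster complex $\Delta^+(W')$ of a parabolic subgroup $W'$ of positive rank — namely the one whose reflections are exactly $\{t\in T:t\geq_{\NC}q\}$ — with vertex set the positive root system $V_q:=\{\alpha\in\Phi_+:t_\alpha\geq_{\NC}q\}$ of $W'$. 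Using a convex geometric model of $\Delta^+(W')$ attached to the induced (standard, though possibly non-bipartite) Coxeter element of $W'$, one may realize $|\Delta^+(q^\kappa)|$ as the convex simplex $\Span_{\mathbb{R}^+}(V_q)\cap H^1$ sitting inside $|\Delta^+|=\Span_{\mathbb{R}^+}(\Pi)\cap H^1$, with each face $f$ realized as $\Span_{\mathbb{R}^+}(f)\cap H^1$. Then $w^{-1}(H^-)\cap|\Delta^+(q^\kappa)|$ is the intersection of a convex simplex with an open half-space, hence convex, and it is nonempty since $V_q\cap\Inv(w)\neq\varnothing$ (the witness vertex $\{\alpha\}$ above); so it is contractible. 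Finally the deformation retraction $H(t,v)=(1-t)v+tv^-$ of Lemma~\ref{lemm:thetaxitoptriv} restricts to $\Delta^+(q^\kappa)$: for $v$ in the relative interior of a face $f\in\Delta^+(q^\kappa)$ the point $v^-$ is a convex combination of the vertices of $f$ lying in $w^{-1}(H^-)$, which form a subface $f'\subseteq f$ with $f'\subseteq\Inv(w)$, so $f'\in\Xi_q$, $v^-\in|\Xi_q|$, and the segment from $v$ to $v^-$ stays in $w^{-1}(H^-)\cap|f|$. This exhibits $w^{-1}(H^-)\cap|\Delta^+(q^\kappa)|$ deformation-retracting onto $|\Xi_q|$, so $\Xi_q$, and therefore $F_q$, is contractible, and Quillen's lemma yields $\Omega(\Theta'(w))\simeq\Omega\bigl(\mathcal{I}(\bNC,\Inv(w))^{\mathrm{op}}\bigr)=\Omega\bigl(\mathcal{I}(\bNC,\Inv(w))\bigr)$.

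The main obstacle is precisely the geometric input of the previous paragraph: producing a convex simplicial realization of $\Delta^+(q^\kappa)$ that is compatible with the ambient inversion half-space $w^{-1}(H^-)$. This rests on identifying $\Delta^+(q^\kappa)$ with a positive parabolic cluster complex together with its convex fan model in the root subspace $\Span_{\mathbb{R}}(V_q)\subseteq V$, and hence on the availability — technical, as the excerpt already flags — of positive cluster complexes (and their convex fan realizations) for arbitrary standard Coxeter elements, not only bipartite ones. Once this compatible model is in hand the deformation retraction of Lemma~\ref{lemm:thetaxitoptriv} carries over verbatim, and the remaining steps are routine bookkeeping with the Kreweras complement and Quillen's fiber lemma.
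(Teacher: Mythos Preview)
Your proposal is correct and follows essentially the same route as the paper: the map $f\mapsto\prod f$ together with Quillen's fiber lemma, the identification of each fiber with the open star of $\Inv(w)$ inside $\Delta^+(q^\kappa)$, and the observation that the constraint $\dim f<n-1$ is vacuous there. The only difference is organizational: instead of re-running the geometric deformation-retraction argument on each $\Delta^+(q^\kappa)$, the paper notes that $\Inv(w)$ intersected with the reflections of the parabolic subgroup $W_{q^\kappa}$ is itself an inversion set $\Inv(v)$ for some $v\in W_{q^\kappa}$, so the fiber \emph{is} $\Theta(v)$ in the parabolic setting, and Lemma~\ref{lemm:thetaxitoptriv} applies verbatim. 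Both approaches rest on the same technical input you flag (a convex model of $\Delta^+$ for the induced, not necessarily bipartite, Coxeter element), which the paper also acknowledges.
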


\begin{proof}
Define a poset map
\[
    \rho : \Theta'(w) \to \mathcal{I}(\bNC , \Inv(w) )
\]
by $\rho(f) = \prod f$.  Our goal is to apply Quillen fiber lemma~\cite[Proposition~1.6]{quillen} to get the result.

Let us first check that the image of $\rho$ is indeed in $\mathcal{I}(\bNC , \Inv(w) )$.  First note that by definition of $\Theta'(w)$, its elements have cardinality between $1$ and $n-1$, so that $f\in \Theta'(w)$ implies $ \prod f\in  \bNC$.  Next, $f\in \Theta'(w)$ means that $f \in \Delta^+$ contains a vertex $t \in \Inv(w)$. So, $\prod f \leq t $ and it follows $\prod f \in \mathcal{I}(\bNC , \Inv(w) )$.  

Now, for $\pi \in \mathcal{I}(\bNC , \Inv(w) )$, consider its {\it fiber}:
\[
    \pi\backslash\rho
    :=
    \big\{ 
        f \in \Theta'(w) 
            \;: \; 
        \prod f \geq \pi \big\}.
\]
It is a subposet of $\Delta^+(\pi^{\kappa})$, because the condition $\prod f \geq \pi$ is equivalent to $\underline{f} \leq \pi^{\kappa}$.  By definition of $\Theta'(w)$, we also have:
\begin{align} \label{eq:fiberpirho}
    \pi\backslash\rho
        =
    \big\{
        f \in \Delta^+(\pi^{\kappa})
        \; : \;
        f \cap \Inv(w) \neq \varnothing \text{ and }
        \dim(f)<n-1
    \big\}.
\end{align}
Note that the condition $\dim(f)<n-1$ can be removed: if $\dim(f)=n-1$ for $f\in \Delta^+$, we have $\prod f = c \not\leq \pi^{\kappa}$ (since $\pi\in \bNC$).  From~\eqref{eq:fiberpirho}, we can identify the fiber $\pi\backslash\rho$ with a poset $\Theta'(v)$, where:
\begin{itemize}
    \item Instead of $\Delta^+$, the poset is defined relatively to $\Delta^+(\pi^{\kappa})$ (the complex $\Delta^+$ associated to the parabolic subgroup $W_{\pi^\kappa}$),
    \item $v \in W_{\pi^\kappa}$ is defined by the condition $\Inv(v) = \Inv(w) \cap W_{\pi}$.
\end{itemize}
It follows from Lemma~\ref{lemm:thetaxitoptriv} that $\pi\backslash \rho \simeq \bullet$.  

Via Quillen's fiber lemma~\cite[Proposition~1.6]{quillen}, $\rho$ induces an homotopy equivalence of the order complexes of $\Theta'(w)$ and $\mathcal{I}(\bNC , \Inv(w) )$. 
\end{proof}

\begin{proof}[Proof of Proposition~\ref{prop:idealNC}]
This is an immediate consequence of the previous two lemmas. 
\end{proof}


\section{Topology of the parking function poset}
\label{sec:toppf}

In this section, we describe the homotopy type of $\Omega(\bPF)$ as a wedge of spheres.  Note that the type $A$ case has been obtained in~\cite[Section~2]{delcroixogerjosuatvergesrandazzo}, with a rather intricate method leading to shellability of the poset.

We use here a mixture of different methods (and we don't prove that $\bPF$ is shellable).  To begin, the next two lemmas give an inductive tool to show that a complex has the homotopy type of a wedge of spheres.  

\begin{lemm}
  Let $d\geq 1$.  Suppose that a $d$-dimensional simplicial complex $X$ is a union of two $d$-dimensional subcomplexes $A \cup B$, where for some $k,\ell \geq 0$ we have:
  \begin{itemize}
      \item $A \simeq (\mathbb{S}^d)^{\vee k}$,
      \item $B$ is topologically trivial,
      \item $A\cap B \simeq (\mathbb{S}^{d-1})^{\vee \ell}$ (where the intersection is taken in the geometric realization of $X$).
  \end{itemize}
  Then we have $X \simeq (\mathbb{S}^d)^{\vee(k+\ell)}$.
\end{lemm}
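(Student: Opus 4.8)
The plan is to compute $\tilde H_*(X)$ by Mayer--Vietoris, establish simple connectivity by the Seifert--van Kampen theorem, and then invoke Proposition~\ref{prop:characwedge} (handling $d=1$ with the remark that follows it). Throughout I identify a complex with its geometric realization, so that $|X|=|A|\cup|B|$ and $|A|\cap|B|=|A\cap B|$; since $A$ and $B$ are subcomplexes of $X$ they can be thickened to open neighborhoods deformation retracting onto them, so the couple $(|A|,|B|)$ is excisive and admits the van Kampen decomposition. Note also that $A\cap B$ is nonempty (the space $(\mathbb{S}^{d-1})^{\vee\ell}$ is a point when $\ell=0$), so $X$ is connected as a union of the connected sets $A,B$ meeting in a point.

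\textbf{Homology.} Feed the hypotheses into the reduced Mayer--Vietoris sequence
\[
    \cdots \to \tilde H_i(A\cap B) \to \tilde H_i(A)\oplus \tilde H_i(B) \to \tilde H_i(X) \to \tilde H_{i-1}(A\cap B) \to \cdots .
\]
We have $\tilde H_*(B)=0$, $\tilde H_*(A)$ concentrated in degree $d$ with value $\mathbb{Z}^k$, and $\tilde H_*(A\cap B)$ concentrated in degree $d-1$ with value $\mathbb{Z}^\ell$. In every degree $i\neq d$ both neighbors of $\tilde H_i(X)$ vanish (using $\dim X=d$ for $i>d$, and connectedness for $i=0$), so $\tilde H_i(X)=0$; in degree $d$ the sequence reads $0\to \mathbb{Z}^k \to \tilde H_d(X)\to \mathbb{Z}^\ell\to 0$, which splits because $\mathbb{Z}^\ell$ is free, giving $\tilde H_d(X)\cong \mathbb{Z}^{k+\ell}$.

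\textbf{Conclusion.} If $d\geq 2$, then $A$ is simply connected (a wedge of $d$-spheres), $B$ is simply connected (topologically trivial), and $A\cap B$ is path-connected (a wedge of $(d-1)$-spheres, or a point); hence van Kampen gives $\pi_1(X)\cong \pi_1(A)*_{\pi_1(A\cap B)}\pi_1(B)=1$, and Proposition~\ref{prop:characwedge} yields $X\simeq(\mathbb{S}^d)^{\vee(k+\ell)}$. If $d=1$, the computation above shows $X$ is a connected graph with $\tilde H_1(X)\cong \mathbb{Z}^{k+\ell}$, and the remark following Proposition~\ref{prop:characwedge} gives $X\simeq(\mathbb{S}^1)^{\vee(k+\ell)}$ directly.

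\textbf{Remarks on the obstacles.} There is no deep obstruction here; the only care needed is bookkeeping near degrees $d$ and $d-1$ in the Mayer--Vietoris sequence, together with the observation that $A\cap B$ fails to be path-connected exactly when $d=1$ --- which is why that case must bypass Proposition~\ref{prop:characwedge} --- and the (routine) verification that $(|A|,|B|)$ is a good enough couple for Mayer--Vietoris and van Kampen to apply. Alternatively, one may argue homotopy-theoretically in one stroke: $B$ topologically trivial gives $X\simeq X/B=A/(A\cap B)$, the mapping cone of the inclusion $A\cap B\hookrightarrow A$; since $(\mathbb{S}^d)^{\vee k}$ is $(d-1)$-connected, this inclusion becomes nullhomotopic after the given identifications, so the cone is $(\mathbb{S}^d)^{\vee k}\vee \Sigma(\mathbb{S}^{d-1})^{\vee\ell}=(\mathbb{S}^d)^{\vee(k+\ell)}$.
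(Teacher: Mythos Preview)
Your proof is correct and follows essentially the same approach as the paper: Mayer--Vietoris for the homology computation, van Kampen for simple connectivity when $d\geq 2$, and the connected-graph observation for $d=1$, concluding via Proposition~\ref{prop:characwedge}. Your write-up is in fact more careful on a few points the paper leaves implicit (nonemptiness of $A\cap B$, splitting of the short exact sequence, excisiveness), and the alternative cofiber argument you sketch at the end is a nice bonus not present in the paper.
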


\begin{proof}
We use the Mayer-Vietoris long exact sequence in homology (in the framework of simplicial complexes, see~\cite{kozlov}).  It gives that $\tilde H_d(X)$ is an extension of $\tilde H_d(A) \oplus \tilde H_d(B) = \tilde  H_d(A) \simeq \mathbb{Z}^k$ by $\tilde H_{d-1}(A\cap B) \simeq \mathbb{Z}^\ell$.  Therefore, there is an isomorphism $\tilde H_d(X) \simeq \mathbb{Z}^{k+\ell}$.  Moreover, the other reduced homology groups vanish.

If $d=1$, the result is clear because any connected $1$-dimensional complex is a wedge of $\mathbb{S}^1$.  Now assume $d \geq 2$, then a straightforward application of the (simplicial) Van Kampen theorem shows that $X$ is simply connected.  Since moreover $X$ has a unique nonzero homology group which is a free abelian group, this determines its homotopy type as being a wedge of spheres via Proposition~\ref{prop:characwedge}.
\end{proof}


\begin{lemm} \label{lemm:genshell}
    Suppose that a $d$-dimensional complex $X$ is a union of $d$-dimensional subcomplexes $A_1\cup A_2 \cup \dots \cup A_m$, where:
    \begin{itemize}
        \item $\forall i \in \llbracket 1,m\rrbracket$, $A_i$ is topologically trivial,
        \item $\forall i \in \llbracket 2,m\rrbracket$, $\exists k_i \geq 0$, such that $\big(\cup_{j=1}^{i-1} A_j \big) \cap A_i \simeq (\mathbb{S}^{d-1})^{\vee k_i}$. 
    \end{itemize}
    Then $X \simeq (\mathbb{S}^d)^{\vee k}$ with $k=\sum_{i=2}^{m} k_i$.
\end{lemm}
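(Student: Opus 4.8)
The plan is to prove the statement by induction on the number $m$ of subcomplexes, peeling off $A_m$ at the last step and invoking the preceding lemma (the two‑piece version) as the engine of the induction. Concretely, for $i\in\llbracket 1,m\rrbracket$ set $X_i:=A_1\cup A_2\cup\dots\cup A_i$, so that $X_1=A_1$ and $X_m=X$; each $X_i$ is a $d$-dimensional subcomplex of $X$, being a finite union of $d$-dimensional subcomplexes. I would prove by induction on $i$ that $X_i\simeq(\mathbb{S}^d)^{\vee(k_2+\dots+k_i)}$, with the convention that the exponent is the empty sum (hence $0$) when $i=1$.

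For the base case $i=1$, the hypothesis is exactly that $A_1$ is topologically trivial, i.e. $A_1\simeq\bullet=(\mathbb{S}^d)^{\vee 0}$, which matches the empty sum. For the inductive step from $i-1$ to $i$, I would apply the preceding lemma to the decomposition $X_i=X_{i-1}\cup A_i$: by the induction hypothesis $X_{i-1}\simeq(\mathbb{S}^d)^{\vee k}$ with $k=k_2+\dots+k_{i-1}$; the second piece $A_i$ is topologically trivial by assumption; and the intersection $X_{i-1}\cap A_i=\big(\bigcup_{j=1}^{i-1}A_j\big)\cap A_i$ is, again by assumption, homotopy equivalent to $(\mathbb{S}^{d-1})^{\vee k_i}$. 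This is precisely the intersection ``taken in the geometric realization'' that the preceding lemma asks for, since for subcomplexes of $X$ the geometric realization of the intersection equals the intersection of the geometric realizations. The preceding lemma then yields $X_i\simeq(\mathbb{S}^d)^{\vee(k+k_i)}$, which advances the induction. Taking $i=m$ gives the claim with $k=\sum_{i=2}^m k_i$.

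\textbf{Main obstacle.} I do not expect a serious obstacle here: once the two‑piece lemma is granted, this is a routine bookkeeping induction. The only points deserving a moment's care are (i) checking that each intermediate union $X_{i-1}$ really is a $d$-dimensional subcomplex, so that the hypotheses of the preceding lemma apply verbatim, and (ii) noting that the truncated family $A_1,\dots,A_i$ still satisfies the lemma's hypotheses — which it does, because $\bigcup_{j=1}^{i-1}A_j$ does not depend on whether it is viewed inside $X_i$ or inside $X$. As a consistency check: when each $A_i$ is a single $d$-simplex (a facet) and the $A_i$ are listed in a shelling order, each $k_i$ is $0$ or $1$ according as the new facet is attached along a contractible part of its boundary or along its full boundary sphere, and the lemma recovers the classical fact that a shellable $d$-complex is a wedge of $d$-spheres — which explains the name.
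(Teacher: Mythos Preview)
Your proof is correct and follows essentially the same approach as the paper: an induction on $i$ showing $\bigcup_{j=1}^i A_j \simeq (\mathbb{S}^d)^{\vee \ell_i}$ with $\ell_i=\sum_{j=2}^i k_j$, each step driven by the preceding two-piece lemma. You have simply spelled out the base case and inductive step in more detail than the paper, which dispatches it in a sentence.
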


\begin{proof}
    By iterating the previous lemma, an immediate induction on $i$ shows that
    \[
        \bigcup_{j=1}^i A_j
            \simeq
        (\mathbb{S}^d)^{\vee \ell_i}
    \]
    where $\ell_i = \sum_{j=2}^i k_j$.  We then take $i=m$.
\end{proof}

Note that the previous lemma encompasses the notion of shellability (which, corresponds to the case where each $A_i$ is a $d$-simplex and each $\big(\cup_{j=1}^i A_j \big) \cap A_{i+1}$ a nonempty union of $(d-1)$-simplices).

The general idea is to apply the previous lemma to $\Omega(\bPF)$, by taking each $A_i$ to be of the form $\Omega(\mathcal{I}(\bPF,\{ \mu \}))$ where $\mu$ is a maximal element of $\PF$.  It remains to find an appropriate total order on the maximal elements of $\PF$ (we will see below that we can take any linear extension of the Bruhat order), and understand the topology of the intersections of subcomplexes as in the second assumption of Lemma~\ref{lemm:genshell}.  A couple of lemmas will help dealing with the latter point.

\begin{lemm} \label{lemm:equ}
    Let $ w' W_{\pi'}, w W_{\pi} \in \PF$ such that $w' W_{\pi'} \leq w W_{\pi}$.  Assume that $w'$ and $w$ are minimal length coset representatives.  Then $w'\leq w$ in the Bruhat order on $W$, and
    \[
        w' = w
            \Longleftrightarrow
        \pi' \notin \mathcal{I}(\NC,\Inv(w)).
    \]
\end{lemm}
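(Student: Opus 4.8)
The plan is to unwind the definitions and reduce everything to two standard facts about cosets of parabolic subgroups. Since the order on $\PF$ is reverse inclusion, the hypothesis $w'W_{\pi'} \leq wW_{\pi}$ means $wW_{\pi} \subseteq w'W_{\pi'}$, so $w \in w'W_{\pi'}$; hence $w = w'u$ for some $u \in W_{\pi'}$, and the two cosets $w'W_{\pi'}$ and $wW_{\pi'}$ coincide, with $w'$ as their minimal length representative. For the Bruhat inequality I will use that the minimal length representative of a left coset of a parabolic subgroup of $W$ is its unique minimum in the Bruhat order: writing $u = \tilde s_1\cdots\tilde s_r$ reduced with respect to the canonical simple system of $W_{\pi'}$ (the one attached to the positive system $\Phi_{W_{\pi'}}\cap\Phi_+$), one has $\ell_S(w'\tilde s_1\cdots\tilde s_i) = \ell_S(w') + i$, which gives a saturated chain $w' \lessdot w'\tilde s_1 \lessdot \cdots \lessdot w$ in the Bruhat order, so $w' \leq w$. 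In particular $w = w'$ if and only if $u = e$, that is, if and only if $w$ itself is the minimal length representative of $wW_{\pi'}$.

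Write $T_{\pi'} := T \cap W_{\pi'}$ for the reflections lying in $W_{\pi'}$. The standard characterization of minimal length coset representatives says that $w$ is the minimal length representative of $wW_{\pi'}$ if and only if $\Inv(w) \cap T_{\pi'} = \varnothing$ (equivalently, $w(\Phi_{W_{\pi'}}\cap\Phi_+)\subseteq\Phi_+$). Combined with the previous paragraph, the second assertion of the lemma follows once I establish
\[
    \Inv(w) \cap T_{\pi'} \neq \varnothing
        \quad\Longleftrightarrow\quad
    \pi' \in \mathcal{I}(\NC,\Inv(w)).
\]

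I will prove this last equivalence through the fixed-space description of both sides. For ``$\Leftarrow$'': if $\pi' \leq t$ for some $t \in \Inv(w)$, then $t$ occurs in a reduced reflection factorization of $\pi'$, hence $V^{\pi'} \subseteq V^t$ (for a reduced factorization $v = t_1\cdots t_k$ into reflections one has $V^v = \bigcap_i V^{t_i}$); thus $t \in W_{\pi'}$ and $t \in \Inv(w)\cap T_{\pi'}$. For ``$\Rightarrow$'': given $t \in \Inv(w)\cap T_{\pi'}$, I claim $\pi' \leq t$ in $\NC$. Indeed $t\pi' \in W_{\pi'}$, so $V^{t\pi'} \supseteq V^{\pi'}$ and hence $\ell(t\pi') \leq n - \dim V^{\pi'} = \ell(\pi')$; on the other hand multiplication by a reflection changes the reflection length by exactly $\pm1$, so $\ell(t\pi') = \ell(\pi')-1$, which (recalling $t^{-1}=t$) is precisely the condition $\ell(t) + \ell(t^{-1}\pi') = \ell(\pi')$, i.e. $t \geq \pi'$ in the absolute order. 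Since $\pi' \geq c$ this also gives $t \geq c$, so $t \in \NC$ and $\pi' \leq t$ there; as $t \in \Inv(w)$, we conclude $\pi' \in \mathcal{I}(\NC,\Inv(w))$.

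The point requiring care is that $W_{\pi'}$ is in general a \emph{non-standard} parabolic subgroup, for which $\ell_S$ is not additive over the factorization $w = w'u$; the Bruhat-minimality and the inversion characterization must be set up relative to $W_{\pi'}$'s own Coxeter length and canonical positive system $\Phi_{W_{\pi'}}\cap\Phi_+$ (using that elements of $W_{\pi'}$ preserve $\Phi_+\setminus\Phi_{W_{\pi'}}$, a property special to parabolic subgroups). The other slightly delicate step is the reflection-length computation yielding $t \in T_{\pi'} \Rightarrow \pi' \leq t$, which is exactly where parabolic subgroups, fixed subspaces, and the absolute order have to be played off against each other.
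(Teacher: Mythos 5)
Your overall route is the same as the paper's: reduce to $wW_{\pi'}=w'W_{\pi'}$, invoke Bruhat-minimality and the inversion-set characterization of the minimal length representative of a coset of the (generally non-standard) parabolic subgroup $W_{\pi'}$, and translate the condition $\Inv(w)\cap W_{\pi'}=\varnothing$ into $\pi'\notin\mathcal{I}(\NC,\Inv(w))$. Your third paragraph, which proves the identity $T\cap W_{\pi'}=\{t\in T \,:\, \pi'\le t\}$ in both directions via Carter's lemma and the behaviour of reflection length, is correct and is actually more detailed than the paper, which simply asserts this identity; this is a genuine improvement in rigor at that step.

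However, the two auxiliary claims you offer to justify the coset-representative facts are both false. Take $W=\mathfrak{S}_3$ and $\pi'=(13)$, so that $W_{\pi'}=\{e,(13)\}$ is a non-standard parabolic subgroup with canonical positive system $\{e_1-e_3\}$ and canonical generator $\tilde s_1=(13)$. Then $(13)\in W_{\pi'}$ sends the positive root $e_1-e_2\notin\Phi_{W_{\pi'}}$ to $e_3-e_2\in\Phi_-$, so elements of $W_{\pi'}$ do \emph{not} preserve $\Phi_+\setminus\Phi_{W_{\pi'}}$; and with $w'=e$ one has $\ell_S(w'\tilde s_1)=\ell_S((13))=3\neq \ell_S(w')+1$, so the products $w'\tilde s_1\cdots\tilde s_i$ do not form a saturated Bruhat chain (the canonical generators of $W_{\pi'}$ are reflections of $W$, not simple reflections, and multiplying by them can raise $\ell_S$ by any odd amount). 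The conclusions you draw from these claims --- existence, uniqueness and Bruhat-minimality of the minimal length element of $wW_{\pi'}$, and its characterization by $\Inv(w)\cap W_{\pi'}=\varnothing$ --- are nonetheless true for parabolic subgroups, and the paper also uses them without proof; the standard justification goes through the gate property of residues in the Coxeter complex (the chambers whose closure contains a generic point of $V^{\pi'}$ form a gated set, and the gate of the fundamental chamber is the minimal representative), not through the length additivity or root-positivity you assert. Since you single this out as ``the point requiring care,'' the care supplied is the wrong one and should be replaced.
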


\begin{proof}
By Lemma~\ref{lemm:pftonc}, the relation $w' W_{\pi'} \leq w W_{\pi}$ implies $\pi' \leq \pi$.  We deduce $W_{\pi} \subset W_{\pi'}$.  

By definition, the relation $w' W_{\pi'} \leq w W_{\pi}$ means that we have the inclusion $w W_{\pi} \subset w' W_{\pi'}$.  Multiplying on the right by $W_{\pi'}$ (and using $W_{\pi} \subset W_{\pi'}$), we get $w W_{\pi'} = w' W_{\pi'}$.  Since $w'$ is a minimal length representative in $W/W_{\pi'}$, we get $w' \leq w$.

The case of equality means that $w$ is also a minimal-length representative in $W / W_{\pi'}$.  This is also equivalent to $\Inv(w) \cap W_{\pi'} = \varnothing$.  Since
\[
    T \cap W_{\pi'} 
        =
    \{t \in T \;:\; \pi' \leq t\},
\]
the condition $\Inv(w) \cap W_{\pi'} = \varnothing$ is also equivalent to 
\[
    \Inv(w) \cap \{t \in T \;:\; \pi' \leq t\}
        =
    \varnothing,
\]
{\it i.e.}, $\pi' \notin \mathcal{I}(\NC,\Inv(w))$.
\end{proof}

\begin{lemm} \label{lemm:ideals}
    Let $w_1,w_2,\dots$ be a linear extension of the Bruhat order on $W$.  For all $i>1$, the map $w_i W_{\pi} \mapsto \pi$ realizes an isomorphism of poset:
    \[
    \mathcal{I}(\bPF,\{ \{w_1\} , \dots , \{w_{i-1}\}\})
        \cap
    \mathcal{I}(\bPF,\{\{w_{i}\}\})
            \simeq
    \mathcal{I}(\bNC,\Inv(w_i)).
    \]
\end{lemm}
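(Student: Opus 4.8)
The plan is to realise the claimed isomorphism as a restriction of the isomorphism $[W,\{w_i\}]\xrightarrow{\sim}\NC$ from Lemma~\ref{lemm:pftonc}, and to pin down precisely which elements of that interval land in the intersection.

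First I would record the easy half. Since the order on $\PF$ is reverse inclusion, $\PF$ has minimum $W$ and no maximum, so $\bPF=\PF\setminus\{W\}$. By Lemma~\ref{lemm:pftonc}, the principal order ideal of $\{w_i\}$ in $\PF$ is the interval $[W,\{w_i\}]=\{w_iW_\pi:\pi\in\NC\}$ and $w_iW_\pi\mapsto\pi$ is a poset isomorphism onto $\NC$; as $W=w_iW_c$ corresponds to $\pi=c$, this restricts to a poset isomorphism $\mathcal{I}(\bPF,\{\{w_i\}\})\xrightarrow{\sim}\NC\setminus\{c\}$. In particular every element of the left-hand side of the statement is uniquely of the form $w_iW_\pi$ with $\pi\in\NC\setminus\{c\}$, and $w_iW_\pi\mapsto\pi$ is injective and order-preserving there; so all that remains is to prove that $w_iW_\pi$ lies in $\mathcal{I}(\bPF,\{\{w_1\},\dots,\{w_{i-1}\}\})$ if and only if $\pi\in\mathcal{I}(\bNC,\Inv(w_i))$.

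Next I would unwind this membership condition. Because the order is reverse inclusion, $w_iW_\pi\le\{w_j\}$ means $w_j\in w_iW_\pi$, so $w_iW_\pi$ lies in $\mathcal{I}(\bPF,\{\{w_1\},\dots,\{w_{i-1}\}\})$ iff the coset $w_iW_\pi$ contains some $w_j$ with $j<i$. Let $w'$ be the minimal length coset representative of $w_iW_\pi$, so $w'W_\pi=w_iW_\pi$. For any $v$ in this coset we have $w'W_\pi\le\{v\}$ (because $v\in w'W_\pi$, and $v$ is trivially the minimal length representative of the singleton $\{v\}$), so Lemma~\ref{lemm:equ} gives $w'\le v$ in the Bruhat order; since $w_1,w_2,\dots$ is a linear extension of the Bruhat order, $w'$ therefore has the smallest index among the coset elements, strictly smaller than that of any other coset element. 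Hence the coset contains some $w_j$ with $j<i$ if and only if $w'\neq w_i$. Applying Lemma~\ref{lemm:equ} once more, now to $w'W_\pi\le\{w_i\}$ (both $w'$ and $w_i$ being minimal length representatives of their cosets), we get $w'=w_i$ iff $\pi\notin\mathcal{I}(\NC,\Inv(w_i))$, so $w_iW_\pi$ lies in $\mathcal{I}(\bPF,\{\{w_1\},\dots,\{w_{i-1}\}\})$ iff $\pi\in\mathcal{I}(\NC,\Inv(w_i))$. Finally, since $e$ is the maximum of $\NC$ it is below no reflection, so $e\notin\mathcal{I}(\NC,\Inv(w_i))$ and $\mathcal{I}(\bNC,\Inv(w_i))=\mathcal{I}(\NC,\Inv(w_i))\setminus\{c\}$; combined with $\pi\neq c$ (forced by $w_iW_\pi\in\bPF$) this makes the condition $\pi\in\mathcal{I}(\NC,\Inv(w_i))$ equivalent to $\pi\in\mathcal{I}(\bNC,\Inv(w_i))$. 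So $w_iW_\pi\mapsto\pi$ restricts to a bijection of the left-hand side onto $\mathcal{I}(\bNC,\Inv(w_i))$, and being a restriction of a poset isomorphism it is a poset isomorphism.

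The main obstacle is the middle step: one has to combine the minimal-coset-representative and subword properties packaged inside Lemma~\ref{lemm:equ} with the chosen linear extension of the Bruhat order, in order to conclude that the coset $w_iW_\pi$ meets an earlier maximal vertex exactly when its minimal length representative fails to be $w_i$. Everything else is bookkeeping with Lemmas~\ref{lemm:pftonc} and~\ref{lemm:equ}.
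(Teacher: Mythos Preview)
Your proof is correct and follows essentially the same route as the paper: restrict the isomorphism $[W,\{w_i\}]\simeq\NC$ from Lemma~\ref{lemm:pftonc}, then use Lemma~\ref{lemm:equ} together with the linear extension to show that $w_iW_\pi$ lies under some earlier $\{w_j\}$ precisely when $\pi\in\mathcal{I}(\NC,\Inv(w_i))$. You are a bit more explicit than the paper in two places---justifying via Lemma~\ref{lemm:equ} that the minimal-length representative $w'$ is Bruhat-below every coset element (hence has minimal index), and checking that $e$ never appears so that passing between $\mathcal{I}(\NC,\Inv(w_i))$ and $\mathcal{I}(\bNC,\Inv(w_i))$ is harmless---but the argument is the same.
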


\begin{proof}
    First note that $w_i W_{\pi} \mapsto \pi$ is a bijective poset map from $\mathcal{I}(\PF,\{\{w_{i}\}\})$ to $\NC$, see Lemma~\ref{lemm:pftonc}.  By removing the bottom element, it also gives a bijective poset map from $\mathcal{I}(\bPF,\{\{w_{i}\}\})$ to $\bNC \cup \{e\}$.  To show that a further restriction gives the announced isomorphism, it remains to show that for all $\pi \in \bNC$, we have:
    \begin{equation} \label{eq:equiv}
        w_i W_{\pi} \in \mathcal{I}(\bPF,\{ \{w_1\} , \dots , \{w_{i-1}\} \})
            \Longleftrightarrow
        \pi \in \mathcal{I}(\bNC,\Inv(w_i)).
    \end{equation}
    for any $\pi \in \bNC$.  To do that, consider $w'$ defined as the minimal-length representative in $w_i W_\pi$, so that $w' \leq w_i$ in the Bruhat order and $w' W_\pi = w_i W_\pi$.  Because $w_1,w_2,\dots$ is a linear extension of the Bruhat order, we have $w' = w_j$ for some $j \leq i$. 

    By Lemma~\ref{lemm:equ}, applied on the relation $w_j W_\pi \leq \{w_i\}$, we obtain that the condition on the right-hand side of~\eqref{eq:equiv} is equivalent to $w_j\neq w_i$, {\it i.e.}, $j<i$.

    On the other side, the condition on the left-hand side of~\eqref{eq:equiv} means that there exists $k<i$ such that $w_i W_\pi \leq \{w_k\}$ holds in $\PF$, {\it i.e.}, $w_k \in w_i W_\pi$.  This is equivalent to $w_i$ not being the minimal-length representative in $w_i W_\pi$, {\it i.e.}, $j<i$.  So, both sides of~\eqref{eq:equiv} are indeed equivalent.
\end{proof}

\begin{theo} \label{theo:pftop}
    We have:
    \[
        \Omega(\bPF)
        \simeq
        (\mathbb{S}^{n-1})^{\vee (h-1)^n}.
    \]
    Moreover, we have $\tilde \bH_{n-1}(\bPF) = \sign \otimes \park'$.
\end{theo}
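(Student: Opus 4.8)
The plan is to exhibit $\Omega(\bPF)$ as an iterated union of contractible subcomplexes glued along wedges of $(n-2)$-spheres, apply Lemma~\ref{lemm:genshell} to conclude it is a wedge of $(n-1)$-spheres, and then read off the number of spheres and the $W$-character from the equivariant Euler characteristic. Fix a linear extension $w_1, w_2, \dots, w_{|W|}$ of the Bruhat order on $W$; necessarily $w_1 = e$. Put $A_i := \Omega\!\big(\mathcal I(\bPF,\{\{w_i\}\})\big)$. Since the maximal elements of $\PF$, hence of $\bPF$, are exactly the singletons $\{w\}=wW_e$, every element of $\bPF$ lies below some $\{w_i\}$, so $\Omega(\bPF)=A_1\cup\dots\cup A_{|W|}$ is a union of $(n-1)$-dimensional subcomplexes. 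Each $A_i$ is topologically trivial: by Lemma~\ref{lemm:pftonc} the map $w_iW_\pi\mapsto\pi$ identifies $\mathcal I(\bPF,\{\{w_i\}\})$ with $\NC\setminus\{c\}$, which has a greatest element, so $A_i$ is a cone.

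Next I would compute the intersections required by Lemma~\ref{lemm:genshell}. For finitely many order ideals, the order complex of their union (resp.\ intersection) is the union (resp.\ intersection) of the order complexes; combined with Lemma~\ref{lemm:ideals}, for $i\geq 2$ this gives
\[
    \Big(\bigcup_{j<i}A_j\Big)\cap A_i
    =
    \Omega\!\Big(\mathcal I\big(\bPF,\{\{w_1\},\dots,\{w_{i-1}\}\}\big)\cap\mathcal I\big(\bPF,\{\{w_i\}\}\big)\Big)
    \;\cong\;
    \Omega\!\big(\mathcal I(\bNC,\Inv(w_i))\big).
\]
As $w_1=e$, we have $w_i\neq e$ for $i\geq 2$, so Proposition~\ref{prop:idealNC} applies and yields $\big(\bigcup_{j<i}A_j\big)\cap A_i\simeq(\mathbb{S}^{n-2})^{\vee k_{w_i}}$. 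Lemma~\ref{lemm:genshell} (with $d=n-1$) then gives $\Omega(\bPF)\simeq(\mathbb{S}^{n-1})^{\vee k}$, where $k=\sum_{i=2}^{|W|}k_{w_i}$.

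It remains to identify $k$ and $\tilde\bH_{n-1}(\bPF)$, which I would do in one stroke. Since $\Omega(\bPF)$ is a wedge of $(n-1)$-spheres, its reduced homology is concentrated in degree $n-1$, so the Hopf trace formula gives $\tilde\bH_{n-1}(\bPF)=(-1)^{n-1}\tilde\bchi(\bPF)$. As $\PF$ has a minimum ($W$) and no maximum, equation~\eqref{eq:relzetachi} gives $\tilde\bchi(\bPF)=-\bzeta(\PF,-1)=-\park_{-1}$, where $\bzeta(\PF,m)=\park_m$ is read as a polynomial in $m$. By Proposition~\ref{prop:zetaPF}, $\park_m(w)=(mh+1)^{n-\ell(w)}$, so $\park_{-1}(w)=(1-h)^{n-\ell(w)}$ and therefore
\[
    \tilde\bH_{n-1}(\bPF)(w)
    =
    (-1)^{n-1}\big(-(1-h)^{n-\ell(w)}\big)
    =
    (-1)^{\ell(w)}(h-1)^{n-\ell(w)}
    =
    \sign(w)\,\park'_1(w),
\]
again by Proposition~\ref{prop:zetaPF}; this is the character the theorem denotes $\sign\otimes\park'$. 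Evaluating at $w=e$ gives $k=\dim_{\mathbb{Q}}\big(\mathbb{Q}\otimes\tilde H_{n-1}(\bPF)\big)=(h-1)^n$, which finishes both statements.

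The genuinely hard work sits in the results I am quoting: Proposition~\ref{prop:idealNC} (the homotopy type of the inversion-set ideals in $\bNC$) is the technical core, Lemma~\ref{lemm:ideals} is what matches the glued intersections with those ideals, and Lemma~\ref{lemm:genshell} is the Mayer--Vietoris/van Kampen bookkeeping that turns ``trivial pieces glued along wedges of spheres'' into ``wedge of spheres''. Granting all that, the only new input at this stage is the identification of the sphere count with $(h-1)^n$, which relies on Rhoades' formula $\park_m(w)=(mh+1)^{n-\ell(w)}$ (Proposition~\ref{prop:zetaPF}); the alternative would be to prove directly the combinatorial identity $\sum_{f\in\Delta^+_{n-1}}\#\{w\in W:\,f\subset\Inv(w)\}=(h-1)^n$ (obtained from the above by swapping the order of summation over $f$ and $w$), for which I do not see an elementary argument.
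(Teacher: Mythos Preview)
Your argument is correct and follows essentially the same route as the paper: the same cover by the cones $A_i=\Omega(\mathcal I(\bPF,\{\{w_i\}\}))$ indexed by a linear extension of Bruhat order, Lemma~\ref{lemm:ideals} plus Proposition~\ref{prop:idealNC} to identify the successive intersections, Lemma~\ref{lemm:genshell} to conclude, and then \eqref{eq:relzetachi} with Proposition~\ref{prop:zetaPF} for the character and sphere count. The only cosmetic differences are that you justify contractibility of $A_i$ via the isomorphism with $\NC\setminus\{c\}$ (the paper simply notes $\{w_i\}$ is a cone point) and that your sum $\sum_{i\geq 2}k_{w_i}$ omits $k_e=0$, which the paper leaves implicit.
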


\begin{proof}
    Let $w_1,w_2,\dots$ be a linear extension of the Bruhat order on $W$ as in the previous lemma.  We apply Lemma~\ref{lemm:genshell} with $X = \Omega(\bPF)$ and $A_i = \Omega( \mathcal{I}( \bPF , \{w_i\} ))$.  Note that $A_i$ is topologically trivial because $\{w_i\}$ is a cone point, so the first condition in Lemma~\ref{lemm:genshell} holds.  By Lemma~\ref{lemm:ideals} together Proposition~\ref{prop:idealNC}, the second condition in Lemma~\ref{lemm:genshell} holds as well.  The outcome of Lemma~\ref{lemm:genshell} is that $\Omega(\bPF) \simeq (\mathbb{S}^{n-1})^{\vee \ell}$ with $\ell = \sum_{w\in W} k_w$.  The expression $ \ell = (h-1)^n$ will follow from the second part of the theorem, since $\dim(\park') = (h-1)^n$.  (See Section~\ref{sec:enum} for more about the equality $\sum_{w\in W} k_w = (h-1)^n$.)

    Since that $\Omega(\bPF)$ is a wedge of $(n-1)$-dimensional spheres, it has nonzero homology only in degree $n-1$, so that $\tilde \bchi(\bPF) = (-1)^{n-1} \tilde \bH_{n-1}(\bPF)$.  By Equation~\eqref{eq:relzetachi}, we have the relation $\tilde \bchi(\bPF) = -\bzeta(\PF,-1)$ so that $\tilde \bH_{n-1}(\bPF) = (-1)^n \bzeta(\PF,-1)$.  Using the formula in Proposition~\ref{prop:zetaPF} for $\bzeta(\PF,m)$, the evaluation of $\tilde \bH_{n-1}(\bPF)$ at $w\in W$ is thus:
    \[
        (-1)^n (-h+1)^{n-\ell(w)}
            =
        (-1)^{\ell(w)} (h-1)^{n-\ell(w)},
    \]
    and the two factors in the latter expression are precisely the character evaluations $\sign(w)$ and $\park'(w)$.
\end{proof}

\section{The topology of cluster parking functions}
\label{sec:topcpf}

The results in this section are consequences of Theorem~\ref{theo:pftop}, using the notion of Cohen-Macaulay posets and an associated fiber poset theorem.  We refer to Quillen~\cite{quillen} concerning this material.

\begin{defi}[Quillen~\cite{quillen}]
    Let $P$ be a ranked poset.  Every pair of elements $x,y \in \hat P$ such that $x<y$ defines an open interval $(x,y) := \{ z \in P \;:\; x<z<y\}$ as a subposet of $P$ (or $\hat P$). We say that $P$ is {\it homotopy Cohen-Macaulay} if every nonempty open interval $(x,y)$ satisfies
    \begin{equation}\label{eq:omegaxy}
        \Omega\big((x,y)\big)
        \simeq
        (\mathbb{S}^{\rk(y)-\rk(x)-2})^{\vee k}
    \end{equation}
    for some $k\geq 0$.
\end{defi}

An equivalent definition often seen in the literature is the requirement that the $i$th homotopy group of $\Omega\big((x,y)\big)$ vanishes if $i<\rk(y)-\rk(x)-2$.  This is equivalent to \eqref{eq:omegaxy} (by Proposition~\ref{prop:characwedge}, Hurewicz theorem, and the fact that a $d$-dimensional simplicial complex has no torsion in its $d$th homology group).


Also, there are many variants of Cohen-Macaulay posets, see~\cite{wachs}.


\begin{prop}  \label{prop:pfcohen}
    The poset $\PF$ is homotopy Cohen-Macaulay. 
\end{prop}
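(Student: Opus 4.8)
The plan is to check the definition directly: for every pair $x<y$ in $\hat{\PF}$ one must exhibit $\Omega\big((x,y)\big)$ as a wedge of spheres of dimension $\rk(y)-\rk(x)-2$. Recall that $\PF$ has a minimum $W$ of rank $0$ and that its maximal elements have rank $n$, so $\hat{\PF}$ is obtained by adjoining a single top element $\hat1$, of rank $n+1$; I would split the verification into the cases $y\in\PF$ and $y=\hat1$.

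Suppose first $y=w'W_{\pi'}\in\PF$, with $x=wW_\pi\le y$. The order-preserving map $\PF\to\NC$, $w''W_{\pi''}\mapsto\pi''$, restricts (by the remark following Lemma~\ref{lemm:pftonc}) to a poset isomorphism from the principal ideal $\mathcal I(\PF,\{y\})$ onto $[c,\pi']$ in $\NC$; since this isomorphism sends $x$ to $\pi$, it further restricts to an isomorphism $[x,y]\cong[\pi,\pi']$, whence $(x,y)\cong(\pi,\pi')$. As $\NC(W)$ is EL-shellable, every interval of $\NC$ is homotopy Cohen-Macaulay, so its proper part $(\pi,\pi')$ is a wedge of spheres of dimension equal to the length of $[\pi,\pi']$ minus $2$, namely $\rk(\pi')-\rk(\pi)-2=\rk(y)-\rk(x)-2$.

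Suppose now $y=\hat1$, so that $(x,\hat1)=\{z\in\PF:z>x\}$ is the open principal filter of $x=wW_\pi$. By Lemma~\ref{lemm:pffilter} the closed filter $\{z\ge x\}$ is isomorphic to $\PF(W_\pi)$, with $x$ matching the minimum of $\PF(W_\pi)$; since $\PF(W_\pi)$ has no maximum, deleting that minimum gives $(x,\hat1)\cong\bPF(W_\pi)$. The parabolic subgroup $W_\pi$ has reflection rank $\ell(\pi)=n-\rk(\pi)$, so Theorem~\ref{theo:pftop} (applied to $W_\pi$) identifies $\Omega\big(\bPF(W_\pi)\big)$ with a wedge of spheres of dimension $\ell(\pi)-1=(n+1)-\rk(x)-2=\rk(\hat1)-\rk(x)-2$. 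The two cases together show that every nonempty open interval of $\hat{\PF}$ is a wedge of spheres of the required dimension, i.e.\ $\PF$ is homotopy Cohen-Macaulay.

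The point requiring the most care is having the two auxiliary inputs available in the needed generality. Theorem~\ref{theo:pftop} is used for the subgroups $W_\pi$, which may be reducible; this is harmless, since either its proof goes through verbatim, or one invokes that $\bPF$ of a direct product becomes, after passing to order complexes, a join, and a join of wedges of spheres is again a wedge of spheres (of the added dimension). Similarly one needs $\NC(W)$ homotopy Cohen-Macaulay for reducible $W$, which follows from the product decomposition $\NC(W_1\times W_2)\cong\NC(W_1)\times\NC(W_2)$ together with the stability of shellability under products, reducing to the EL-shellability of $\NC$ in the irreducible case that is invoked already in Section~\ref{sec:topnc}.
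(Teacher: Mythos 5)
Your proof is correct and follows essentially the same route as the paper's: the case $y\in\PF$ is handled via Lemma~\ref{lemm:pftonc} and the shellability of $\NC$, and the case $y=\hat 1$ via Lemma~\ref{lemm:pffilter} together with Theorem~\ref{theo:pftop}, extended to the (possibly reducible) parabolic $W_\pi$ by the product-to-join decomposition that the paper spells out in the remark following its proof. The only difference is cosmetic: you apply Theorem~\ref{theo:pftop} directly to $\bPF(W_\pi)$, whereas the paper phrases this step as an induction asserting that the half-open interval $[x,\hat 1)$ is a product of homotopy Cohen--Macaulay posets.
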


\begin{proof}
    Let $x,y\in \hat {\PF}$ such that $x<y$.  To check that Equation~\eqref{eq:omegaxy} holds, we first consider the case $y\in \PF$.  By Lemma~\ref{lemm:pftonc}, the open interval $(x,y)$ is isomorphic to an open interval in $\NC$, so Equation~\eqref{eq:omegaxy} holds because $\NC$ is a shellable poset (see~\cite{athanasiadisbradymccammondwatt}).

    Now, consider the case where $y$ is the extra top element and $x$ the bottom element.  The open interval $(x,y)$ is $\bPF$, so that Equation~\eqref{eq:omegaxy} holds by Theorem~\ref{theo:pftop}.  
    
    In the remaining case, $y$ is the extra top element and $x$ is an arbitrary element of $\PF$.  By Lemma~\ref{lemm:pffilter}, the half-open interval $[x,y) := \{ z \in \PF \;:\; x\leq z <y\}$ is isomorphic to a product $\PF(W_1) \times \PF(W_2) \times \cdots $.  Via an induction hypothesis, we can deduce that $[x,y)$ is homotopy Cohen-Macaulay as a product of homotopy Cohen-Macaulay posets, each of them having a bottom element.  In particular, Equation~\eqref{eq:omegaxy} holds.
\end{proof}

\begin{rema}
    The result about products of posets in the previous paragraph is~\cite[Exercise~5.1.6]{wachs} for the homology version of Cohen-Macaulay posets.  Let us outline the proof for Quillen's definition. Let $P_1$ and $P_2$ be two ranked posets with minima $0_1$ and $0_2$ (respectively). Then, by a result of Quillen~\cite[Proposition~1.9]{quillen}, we have
    \[
        \Omega( (P_1 \times P_2)\backslash{(0_1,0_2)} )
            \simeq
        \Omega( P_1\backslash\{0_1\} ) \star \Omega( P_2\backslash\{0_2\} )
    \]
    where $\star$ is the join operation (see~\cite[Chapter~0]{hatcher}).  Now, if $\Omega( P_i\backslash\{0_i\} ) \simeq (\mathbb{S}^{d_i-1})^{\vee k_i}$ for $i\in \{1,2\}$, we deduce $\Omega( (P_1 \times P_2)\backslash{(0_1,0_2)} ) \simeq (\mathbb{S}^{d_1+d_2-1})^{\vee (k_1k_2)} $.  This follows from the facts that: i) the join is distributive over wedge sums, ii) $\mathbb{S}^{d_1-1} \star \mathbb{S}^{d_2-1} = \mathbb{S}^{d_1+d_2-1}$.  This argument can be generalized to any finite number of factors, and using it on the product $\PF(W_1) \times \PF(W_2) \times \cdots $ shows that Equation~\eqref{eq:omegaxy} holds in the last case of the previous proof.
\end{rema}


\begin{prop}
    The posets $\CPF^{(m)}$ and $\CPF^{(m),+}$ are homotopy Cohen-Macaulay.
\end{prop}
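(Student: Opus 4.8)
The plan is to prove both statements at once. Write $\CPF$ for either $\CPF^{(m)}$ or $\CPF^{(m),+}$ and $\Delta$ for the corresponding complex $\Delta^{(m)}$ or $\Delta^{(m),+}$. First note that $\CPF$ is ranked, since the order ideal generated by any element is Boolean (Lemma~\ref{lemm:intervalbool}); in fact it is pure of dimension $n-1$, because $\Delta=\Delta(e)$ is pure of dimension $n-1$ by Proposition~\ref{prop:subcompl}, so every face extends to a facet. The engine of the argument is the order-preserving map $p\colon\CPF\to\PF$, $(wW_{\underline f},f)\mapsto wW_{\underline f}$, which exhibits $\CPF$ as the fiber product of $\PF$ and $\Delta$ over $\NC$. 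The key point is that $p$ is rank-preserving: by Lemma~\ref{lemm:pftonc} the rank of $wW_{\underline f}$ in $\PF$ equals $\rk_{\NC}(\underline f)$; since the bipartite Kreweras complement is an anti-automorphism of the graded poset $\NC$ of rank $n$, this equals $n-\rk_{\NC}(\prod f)=\ell(\prod f)$; and since $\prod f$ is a reduced reflection factorization (Athanasiadis--Tzanaki~\cite{athanasiadistzanaki}), $\ell(\prod f)=\#f$, which is the rank of $(wW_{\underline f},f)$ in $\CPF$ (the remark following Lemma~\ref{lemm:intervalbool}).

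Next I would identify the down-fibers of $p$. Fix $q=wW_\pi\in\PF$ with $\pi\in\NC$. Normalizing coset representatives via Lemma~\ref{lemm:pftonc}, one gets $p^{-1}(\PF_{\leq q})=\{(wW_{\underline g},g):g\in\Delta,\ \underline g\leq\pi\}$, and the second projection $(wW_{\underline g},g)\mapsto g$ is a poset isomorphism onto the subcomplex $\Delta(\pi)=\{g\in\Delta:\underline g\leq\pi\}$ of Proposition~\ref{prop:subcompl}. That proposition says $\Delta(\pi)$ is pure of dimension $\rk(\pi)-1$ and shellable, hence homotopy Cohen--Macaulay, and its rank $\rk(\pi)$ is exactly the rank of $q$ in $\PF$. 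In particular a facet $g$ of $\Delta(\pi)$ has $\#g=\rk(\pi)$ and therefore $\underline g=\pi$, so every fiber $p^{-1}(q)$ is nonempty and $p$ is surjective.

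Finally I would invoke the fiber theorem for Cohen--Macaulay posets from Quillen's theory~\cite{quillen}: $p\colon\CPF\to\PF$ is a rank-preserving surjective poset map onto the homotopy Cohen--Macaulay poset $\PF$ (Proposition~\ref{prop:pfcohen}), and all of its down-fibers $p^{-1}(\PF_{\leq q})$ are homotopy Cohen--Macaulay of the same rank as $q$; hence $\CPF$ is homotopy Cohen--Macaulay. I expect the main obstacle to be essentially bookkeeping: getting the exact hypotheses of the fiber theorem to line up, namely the rank-preservation computation above and the identification of the down-fibers as precisely the shellable complexes $\Delta^{(m)}(\pi)$ and $\Delta^{(m),+}(\pi)$ of Proposition~\ref{prop:subcompl}. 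It is worth stressing that the cheaper Quillen fiber lemma (contractible fibers, giving $\Omega(\CPF)\simeq\Omega(\PF)$) is not available here, because the fiber over a maximal element is the whole cluster complex $\Delta^{(m)}=\Delta^{(m)}(e)$, which is an $(n-1)$-sphere and not contractible; one genuinely needs the Cohen--Macaulay version. An alternative would be to induct on $\rk(W)$ using the link identification $\CPF^{(m)}(W_{\underline f})$ of Proposition~\ref{prop:cpflink}, but the top-dimensional connectivity of $\CPF$ still has to be imported from $\PF$, so that route reduces to the same fiber argument.
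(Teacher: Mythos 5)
Your proposal is correct and follows essentially the same route as the paper: the projection $(wW_{\underline f},f)\mapsto wW_{\underline f}$ onto the homotopy Cohen--Macaulay poset $\PF$, identification of the down-fibers with the shellable complexes $\Delta^{(m)}(\pi)$, $\Delta^{(m),+}(\pi)$ of Proposition~\ref{prop:subcompl}, and Quillen's Cohen--Macaulay fiber theorem. The extra checks you supply (rank-preservation via the Kreweras complement, surjectivity, and the remark that the contractible-fiber version of the fiber lemma is unavailable) are correct and consistent with the paper's argument.
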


\begin{proof}
Consider the projection $\rho \; : \; \CPF^{(m)} \to \PF$ defined by:
\[
    ( w W_{\underline{f}} , f ) \mapsto w W_{\underline{f}}.
\]
It clearly preserves the order relations and rank functions.  Our goal is to apply Quillen's fiber poset theorem~\cite[Theorem~9.1]{quillen}.  First, we know that $\PF$ is homotopy Cohen-Macaulay by Proposition~\ref{prop:pfcohen}.  It remains to check that each {\it fiber}
\[
    \rho / (w W_{\pi} )
        :=
    \rho^{-1}\big( \mathcal{I}( \PF , \{w W_{\pi}\} ) \big)
\]
is homotopy Cohen-Macaulay, for $w W_\pi \in \PF$.

Explicitly, the fiber is
\[
    \rho / (w W_{\pi} )
        =
    \big\{
        ( w W_{\underline{f}} , f ) \in \CPF^{(m)} \; : \; \underline{f} \leq \pi
    \big\}.
\]
Since the projection $( w W_{\underline{f}} , f ) \mapsto f$ is clearly bijective, we have an isomorphism of posets:
\begin{equation}
    \rho / (w W_{\pi} )
        \simeq
    \big\{ 
        f\in \Delta^{(m)} \;:\; \underline{f} \leq \pi 
    \big\}.
\end{equation}
By Proposition~\ref{prop:subcompl}, the right-hand side (as a simplicial complex) is purely $(\rk(\pi)-1)$-dimensional and shellable.  As shellability implies that a poset is homotopy Cohen-Macaulay, the fibers are so.  We can thus use Quillen's fiber theorem as announced.

The statement about $\CPF^{(m),+}$ is similar, except that the fibers are subcomplexes of $\Delta^{(m),+}$.
\end{proof}

We eventually arrive at our main result:

\begin{theo} \label{theo:topcpf}
    We have:
    \[
        \CPF^{(m)}
            \simeq
        (\mathbb{S}^{n-1})^{\vee (mh+1)^n },
        \qquad
        \CPF^{(m),+}
            \simeq
        (\mathbb{S}^{n-1})^{\vee (mh-1)^n },
    \]
    Moreover, 
    \begin{equation}  \label{eq:bHcpf}
        \tilde \bH_{n-1}(\CPF^{(m)})
            =
        \sign \otimes \park_{m},
        \qquad
        \tilde \bH_{n-1}(\CPF^{(m),+})
            =
        \sign \otimes \park'_{m}.
    \end{equation}
\end{theo}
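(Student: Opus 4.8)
The plan is to feed the fact that $\CPF^{(m)}$ and $\CPF^{(m),+}$ are homotopy Cohen--Macaulay (the previous proposition) into the equivariant Euler characteristic computation of Proposition~\ref{prop:bchicpf}: the first ingredient already packages the ``wedge of spheres'' conclusion, while the second pins down both the number of spheres and the $W$-action on the top homology. No new connectivity or Van Kampen argument is needed here, since homotopy Cohen--Macaulayness is designed to supply it.

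First I would record the homotopy type. Both complexes are pure of dimension $n-1$: being homotopy Cohen--Macaulay they are ranked, and a facet of $\CPF^{(m)}$ is a pair $(wW_{\underline f},f)$ with $f$ a facet of $\Delta^{(m)}$ (resp.\ $\Delta^{(m),+}$), which has $n$ vertices, so the top rank is $n$. Write $P$ for the face poset of $\CPF^{(m)}$, with minimum $\hat 0=(W,\varnothing)$, and $\hat P = P\cup\{\hat 1\}$ with an adjoined top element, so $\rk(\hat 1)=n+1$. The open interval $(\hat 0,\hat 1)$ in $\hat P$ is the poset of nonempty faces of $\CPF^{(m)}$, whose order complex is the barycentric subdivision of $\CPF^{(m)}$ and hence homotopy equivalent to $\CPF^{(m)}$ itself. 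Since $\rk(\hat 1)-\rk(\hat 0)-2 = n-1$, the homotopy Cohen--Macaulay property applied to this single interval gives $\CPF^{(m)}\simeq\Omega\big((\hat 0,\hat 1)\big)\simeq(\mathbb{S}^{n-1})^{\vee k}$ for some $k\ge 0$, and likewise $\CPF^{(m),+}\simeq(\mathbb{S}^{n-1})^{\vee k'}$.

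Next I would identify $k$, $k'$ and the homology characters. A wedge of $(n-1)$-spheres has reduced homology concentrated in degree $n-1$, so $\tilde\bchi(\CPF^{(m)}) = (-1)^{n-1}\tilde\bH_{n-1}(\CPF^{(m)})$ as virtual characters of $W$; comparing with Proposition~\ref{prop:bchicpf} gives $\tilde\bH_{n-1}(\CPF^{(m)}) = \sign\otimes\park_m$, which is the first identity in~\eqref{eq:bHcpf}. Evaluating this character at $w=e$ and invoking Proposition~\ref{prop:zetaPF} yields $k = \dim\tilde H_{n-1}(\CPF^{(m)}) = \park_m(e) = (mh+1)^n$. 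The positive case is identical with $\park_m$ replaced by $\park'_m$, giving $\sign\otimes\park'_m$ and $k'=(mh-1)^n$.

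The genuine work has already taken place: homotopy Cohen--Macaulayness of $\PF$ (Theorem~\ref{theo:pftop}, which rests on the order-ideal topology of $\bNC$ from Section~\ref{sec:topnc}), the shellability of the fibers $\Delta^{(m)}(\pi)$ (Proposition~\ref{prop:subcompl}), and Quillen's fiber poset theorem together transfer the property to $\CPF^{(m)}$ and $\CPF^{(m),+}$. So in the present argument the only load-bearing observations are that, for a ranked homotopy Cohen--Macaulay poset, the top interval $(\hat 0,\hat 1)$ of $\hat P$ already computes the homotopy type of the underlying complex, and that a wedge of spheres has homology in a single degree --- which is exactly what converts the equivariant Euler characteristic of Proposition~\ref{prop:bchicpf} into the representation $\tilde\bH_{n-1}$. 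If one preferred, the wedge-of-spheres conclusion could instead be extracted directly from Quillen's fiber theorem (in its homotopy-colimit/iterated-join form), but routing through homotopy Cohen--Macaulayness and Proposition~\ref{prop:bchicpf} is cleaner and simultaneously delivers the $W$-character.
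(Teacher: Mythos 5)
Your proposal is correct and follows essentially the same route as the paper: the previous proposition (homotopy Cohen--Macaulayness of $\CPF^{(m)}$ and $\CPF^{(m),+}$, obtained via Quillen's fiber theorem) plus purity gives the wedge of $(n-1)$-spheres, and then the concentration of homology in degree $n-1$ converts the equivariant Euler characteristic of Proposition~\ref{prop:bchicpf} into the identity $\tilde\bH_{n-1}=\sign\otimes\park_m$ (resp.\ $\sign\otimes\park'_m$), whose dimension $(mh+1)^n$ (resp.\ $(mh-1)^n$) determines the number of spheres. Your explicit justification via the top open interval $(\hat 0,\hat 1)$ of $\hat P$ and the barycentric subdivision is a correct unpacking of what the paper leaves implicit.
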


\begin{proof}
Since $\CPF^{(m)}$ is homotopy Cohen-Macaulay and purely $(n-1)$-dimensional, we have $\CPF^{(m)} \simeq (\mathbb{S}^{n-1})^{\vee k}$ for some $k \geq 0$.  The exact value of $k$ follows from the second part of the corollary, since $\dim(\park_{m}) = (mh+1)^n$.

Since $\CPF^{(m)} \simeq (\mathbb{S}^{n-1})^{\vee k}$, its nontrivial homology is in degree $n-1$ so that $\tilde \bchi(\CPF^{(m)}) = (-1)^{n-1} \tilde \bH_{n-1}(\CPF^{(m)})$.  So, Equation~\eqref{eq:bHcpf} follows from Proposition~\ref{prop:bchicpf}.

The statements about $\CPF^{(m),+}$ are proved similarly.
\end{proof}


\section{\texorpdfstring{$f$-vectors and $h$-vectors}{f-vectors and h-vectors} for cluster parking functions}
\label{ssec:h_vecs}


In this section, we present some numerological results for the face enumeration of many of the simplicial complexes we have considered. Most of the results will follow from similar known theorems on the posets $\PF, \NC$, and the cluster complex, but the particularly nice expressions for the $h$-vectors in Theorem~\ref{thm:h-vector} suggest the existence of nice combinatorial shellings (see also Section~\ref{sec:open}) for the complexes $\CPF^{(m)}$ and $\CPF^{(m),+}$.

Recall that the $f$-vector (\emph{face vector}) of an abstract $(d-1)$-dimensional complex $\Gamma$ is the tuple $f(\Gamma):=(f_{-1},f_0,\ldots,f_{d-1})$ with $f_i$ the number of faces of $\Gamma$ that have dimension $i$ and $f_{-1}=1$. When $\Gamma$ is simplicial, its face vector is often encoded via the related $h$-vector $h(\Gamma)=(h_0,\ldots,h_d)$, a transformation of the $f$-vector defined as follows.  Define the $f$-polynomial and $h$-polynomial by
\[
    f(\Gamma;z):=\sum_{i=-1}^{d-1} f_iz^{i+1},
    \quad \text{ and } \quad
    h(\Gamma;z):=\sum_{i=1}^d h_iz^i.
\]
Then the relation is
\[
    h(\Gamma;z)
        =
    (1-z)^d f\big(\Gamma;\tfrac{z}{1-z}\big).
\]

The $h$-polynomial of a simplicial complex $\Gamma$ always satisfies $h(\Gamma;1)=f_{d-1}$ and can be in certain cases (for instance when $\Gamma$ is shellable) interpreted via a statistic on the facets of $\Gamma$. But even the weaker property that $\Gamma$ is Cohen-Macaulay (which is true in our cases), guarantees that its $h$-vector is non-negative and its leading entry $h_d$ is equal to the dimension of its non-trivial homology (see for example \cite[Ch.~II]{stanley}). In this context, we view Theorem~\ref{thm:h-vector} below as a refinement of the numerological part of Theorem~\ref{theo:topcpf} (see more details in Remark~\ref{Rem:h-vec_conc}).

We need first to fix some terminology from the theory of reflection arrangements (see Orlik and Terao~\cite{orlikterao}). For a general hyperplane arrangement $\mathcal{K}$ in some real space $V$ we will denote by $\mathcal{L}_{\mathcal{K}}$ its \emph{intersection lattice} and we will call its elements $X\in\mathcal{L}_{\mathcal{K}}$ \emph{flats} (they are arbitrary intersections of the hyperplanes of $\mathcal{K}$). The \emph{characteristic polynomial} $\chi(\mathcal{K},t):=\sum_{X\in\mathcal{L}_{\mathcal{K}}}\mu(V,X) t^{\dim(X)}$ records the M{\"o}bius function in $\mathcal{L}_{\mathcal{K}}$ and encodes many combinatorial invariants of $\mathcal{K}$; for instance by Zaslavsky's theorem the quantity $r(\mathcal{K}):=(-1)^{\dim(V)}\cdot \chi(\mathcal{K},-1)$ counts the number of regions determined by the arrangement $\mathcal{K}$.

For any flat $X\in\mathcal{L}_{\mathcal{K}}$ we can define two new hyperplane arrangements, the \emph{localization} $\mathcal{K}_X:=\{H\in\mathcal{K}\ | H\supseteq X\}$ which consists of all hyperplanes that contain $X$ and the \emph{restriction} $\mathcal{K}^X$, which is an arrangement with ambient space $X$ defined as the collection of top dimensional flats properly contained in $X$. More formally, we have $\mathcal{K}^X:=\{H\cap X\ | H\in\mathcal{K},\,H\not\supseteq X\}$.
When $\mathcal{A}$ is the arrangement of hyperplanes of a reflection group $W$, the characteristic polynomials of all its localizations and restrictions factor with non-negative integer roots. When $W$ is irreducible, for any flat $X\in\mathcal{L}_{\mathcal{A}}$ we have that 
\[
\chi(\mathcal{A}^X,t)=\prod_{i=1}^{\dim(X)}(t-b_i^X),
\]
where the $b_i^X$ are positive integers known as the \emph{Orlik-Solomon (OS) exponents} of the flat $X$. The group $W$ acts on the intersection lattice $\mathcal{L}_{\mathcal{A}}$ and the OS exponents depend only on the orbit $[X]\in\mathcal{L}_{\mathcal{A}}/W$. In the case of the symmetric group $\mathfrak{S}_n$ and for any flat $X$, the OS exponents are just the numbers $(b_i^X)=(1,2,\ldots,\dim(X))$.

\begin{prop}\label{Prop:f-vector}
For a finite and irreducible real reflection group $W$ with reflection arrangement $\mathcal{A}$ and intersection lattice $\mathcal{L}_{\mathcal{A}}$, the $f$-polynomials of $\CPF^{(m)}$ and $\CPF^{(m),+}$ are:
\begin{align*}
f\big(\CPF^{(m)};z\big)&=\sum_{X\in\mathcal{L}_{\mathcal{A}}}\prod_{i=1}^{\dim(X)}(mh+1+b_i^X)\cdot z^{\dim(X)}\quad\text{and}\\
f\big(\CPF^{(m),+};z\big)&=\sum_{X\in\mathcal{L}_{\mathcal{A}}}\prod_{i=1}^{\dim(X)}(mh-1+b_i^X)\cdot z^{\dim(X)},
\end{align*}
where $b_i^X$ are the OS exponents for the flat $X$.
\end{prop}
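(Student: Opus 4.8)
The plan is to read off the $f$-polynomials directly from the $W$-orbit structure of $\CPF^{(m)}$, reduce the statement to a type-refined face count of the generalized cluster complex, and then recognize that count via the characteristic polynomials of the restricted arrangements $\mathcal A^X$.

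First I would use the orbit description from the proof of Proposition~\ref{prop:bchicpf}: the orbits of $\CPF^{(m)}$ are indexed by $f\in\Delta^{(m)}$, the orbit over $f$ being $W/W_{\underline f}$, and every element $(wW_{\underline f},f)$ of that orbit has rank $\#f$ in $\CPF^{(m)}$ (Lemma~\ref{lemm:intervalbool}); moreover $\rk(\underline f)=n-\ell\big((\prod f)^\kappa\big)=n-(n-\#f)=\#f$, so the flat $X:=V^{\underline f}$ has $\dim X=\rk(\underline f)=\#f$ and the corresponding simplex of $\CPF^{(m)}$ has dimension $\#f-1$. Counting orbit sizes dimension by dimension gives
\[
f\big(\CPF^{(m)};z\big)=\sum_{f\in\Delta^{(m)}}[W:W_{\underline f}]\,z^{\dim V^{\underline f}},
\qquad
f\big(\CPF^{(m),+};z\big)=\sum_{f\in\Delta^{(m),+}}[W:W_{\underline f}]\,z^{\dim V^{\underline f}},
\]
the empty face contributing $z^0$ since $\underline\varnothing=c$ and $W_c=W$. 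Since $W_{\underline f}=W_{V^{\underline f}}$ is the parabolic fixing $X=V^{\underline f}$ pointwise, and both $[W:W_X]$ and $\dim X$ depend only on the $W$-orbit $[X]\in\mathcal L_{\mathcal A}/W$, grouping by orbit yields
\[
f\big(\CPF^{(m)};z\big)=\sum_{[X]\in\mathcal L_{\mathcal A}/W}N_m[X]\cdot[W:W_X]\cdot z^{\dim X},
\]
where $N_m[X]:=\#\{f\in\Delta^{(m)}:[V^{\underline f}]=[X]\}$ is the number of faces of the cluster complex of ``parabolic type'' $[X]$ (and similarly $N_m^+[X]$ for $\Delta^{(m),+}$).

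Next I would match this against the claimed right-hand side. Since $b_i^X$ and $\prod_i(mh\pm1+b_i^X)$ depend only on $[X]$, we have $\sum_{X\in\mathcal L_{\mathcal A}}\prod_i(mh+1+b_i^X)z^{\dim X}=\sum_{[X]}|W{\cdot}X|\prod_i(mh+1+b_i^X)z^{\dim X}$, where $|W{\cdot}X|=[W:N_W(W_X)]$ is the orbit size and $N_W(W_X)=\mathrm{Stab}_W(X)$ is the setwise stabilizer of the flat. Using $[W:W_X]=|W{\cdot}X|\cdot[N_W(W_X):W_X]$, the proposition becomes equivalent to the two per-flat identities
\[
[N_W(W_X):W_X]\cdot N_m[X]=\prod_{i=1}^{\dim X}(mh+1+b_i^X),
\qquad
[N_W(W_X):W_X]\cdot N_m^+[X]=\prod_{i=1}^{\dim X}(mh-1+b_i^X).
\]
Equivalently, using the Orlik--Solomon factorization $\chi(\mathcal A^X,t)=\prod_{i=1}^{\dim X}(t-b_i^X)$ (see \cite{orlikterao}), the number of faces of $\Delta^{(m)}$, resp.\ $\Delta^{(m),+}$, of type $[X]$ equals $\frac{(-1)^{\dim X}}{[N_W(W_X):W_X]}\chi(\mathcal A^X,-mh-1)$, resp.\ $\frac{(-1)^{\dim X}}{[N_W(W_X):W_X]}\chi(\mathcal A^X,-mh+1)$.

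This last statement is the type-refined face enumeration of the generalized cluster complex. In type $A_{n-1}$ it is exactly \eqref{eq!llambda}--\eqref{eq:formulaK}: there $h=n$, a flat of orbit-type $\lambda\vdash n$ has $\dim X=\ell(\lambda)-1$ and $b_i^X=i$, the setwise stabilizer modulo the pointwise one is $\prod_{j\ge1}\mathfrak S_{\mu_j(\lambda)}$ so $[N_W(W_X):W_X]=\prod_j\mu_j(\lambda)!$, and one checks $\prod_j\mu_j(\lambda)!\cdot L_{m,\lambda}=\prod_{i=1}^{\ell(\lambda)-1}(mn+1+i)$ from \eqref{eq!llambda} and \eqref{eq:formulaK}. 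For general $W$ I would invoke the uniform type-refined enumeration of the faces of $\Delta^{(m)}$ and $\Delta^{(m),+}$ from \cite[Theorem~7.1]{douvropoulosjosuatverges} (from which \eqref{eq!llambda} is deduced in type $A$), rewriting it through the factorization of $\chi(\mathcal A^X,t)$; summing over orbits then gives both formulas. The substantive step --- and the one I expect to be the main obstacle --- is precisely this identification: that the type-refined cluster count, multiplied by the relative-group order $[N_W(W_X):W_X]$, is the evaluation of the characteristic polynomial of the restricted arrangement $\mathcal A^X$ at $-(mh\pm1)$. Note that $\overline W_X:=N_W(W_X)/W_X$ is in general a \emph{proper} subgroup of the reflection group attached to $\mathcal A^X$, so this is not a formal consequence of any single arrangement being a reflection arrangement; a useful sanity check is that $[N_W(W_X):W_X]$ divides $\prod_i(mh\pm1+b_i^X)$, which is forced by the enumeration. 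Everything else is bookkeeping about orbit sizes and the passage between per-flat and per-orbit counts.
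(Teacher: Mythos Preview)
Your proposal is correct and follows essentially the same route as the paper: both arguments pass from the $W$-orbit structure of $\CPF^{(m)}$ to a per-type face count of $\Delta^{(m)}$, invoke the type-refined enumeration from \cite{douvropoulosjosuatverges}, and convert between an orbit sum and a sum over all flats via $|W\!\cdot\!X|=[W:N(X)]$. The only cosmetic difference is that the paper cites \cite[Corol.~7.3]{douvropoulosjosuatverges} directly in the OS-exponent product form, whereas you cite \cite[Theorem~7.1]{douvropoulosjosuatverges} and propose to rewrite it via the factorization of $\chi(\mathcal A^X,t)$; the ``substantive step'' you flag as a potential obstacle is thus already packaged in the cited corollary.
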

\begin{proof}
As we mentioned earlier, in our previous work \cite{douvropoulosjosuatverges} we gave product formulas for the number of faces in $\Delta^{(m)}$ and $
\Delta^{(m),+}$ that refined the corresponding $f$-vectors. In particular, we showed \cite[Corol.~7.3]{douvropoulosjosuatverges} that the number of faces $f\in \Delta^{(m)}$ (respectively $f\in\Delta^{(m),+}$) such that $W_{\underline{f}}$ is conjugate to a given parabolic subgroup $W_X$ is equal to
\[
\dfrac{1}{[N(X):W_X]}\cdot \prod_{i=1}^{\dim(X)}(mh+1+b_i^X)\quad\text{respectively}\quad\dfrac{1}{[N(X):W_X]}\cdot \prod_{i=1}^{\dim(X)}(mh-1+b_i^X),
\]
where $b_i^X$ are the OS exponents of the flat $X$ and $N(X)$ and $W_X$ its setwise and pointwise stabilizers.

By construction, the fiber of the projection $\CPF^{(m)}\rightarrow \Delta^{(m)}$ over a face $f$ such that $W_{\underline{f}}\sim W_X$ has precisely $[W:W_X]$ elements, all of which have dimension equal to $\dim(X)-1$. Therefore, we will have 
\begin{align*}
f\big(\CPF^{(m)};z\big)&=\sum_{[X]\in\mathcal{L}_{\mathcal{A}}/W}\Big(\dfrac{[W:W_X]}{[N(X):W_X]}\cdot \prod_{i=1}^{\dim(X)}(mh+1+b_i^X)\Big)\cdot z^{\dim(X)}\\
&=\sum_{X\in\mathcal{L}_{\mathcal{A}}}\prod_{i=1}^{\dim(X)}(mh+1+b_i^X)\cdot z^{\dim(X)},
\end{align*}
since there are exactly $[W:N(X)]$-many flats $X$ in the orbit $[X]\in\mathcal{L}_{\mathcal{A}}/W$. The case of $\CPF^{(m),+}$ is analogous and this completes the proof.
\end{proof}

We are now ready to present the main result of this section, computing the $h$-polynomials of $\CPF^{(m)}$ and $\CPF^{(m),+}$. Recall that in a real reflection group $W$ with set of simple generators $S$, some $s \in S$ is a (left) \emph{descent} for an element $w\in W$ if $\ell_S(sw)<\ell_S(w)$ for the Coxeter length $\ell_S$. The descent polynomial of $W$, denoted by $\operatorname{Des}(W;z)$, is the generating polynomial for the descent number over $W$.

\begin{theo}\label{thm:h-vector}
For a finite and irreducible real reflection group $W$ with reflection arrangement $\mathcal{A}$ and intersection lattice $\mathcal{L}_{\mathcal{A}}$, the $h$-polynomials of $\CPF^{(m)}$ and $\CPF^{(m),+}$ are given via
\begin{align*}
h\big(\CPF^{(m)};z\big)&=\sum_{X\in\mathcal{L}_{\mathcal{A}}}\prod_{i=1}^{\dim(X)}(mh+1-b_i^X)\cdot z^{\dim(X)}\cdot \operatorname{Des}(W_X;z)\quad\text{and}\\
h\big(\CPF^{(m),+};z\big)&=\sum_{X\in\mathcal{L}_{\mathcal{A}}}\prod_{i=1}^{\dim(X)}(mh-1-b_i^X)\cdot z^{\dim(X)}\cdot \operatorname{Des}(W_X;z),
\end{align*}
where $b_i^X$ are the OS exponents for the flat $X$ and $\operatorname{Des}(W_X;z)$ is the descent polynomial of $W_X$.  
\end{theo}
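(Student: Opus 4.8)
The plan is to read off the $h$-polynomials from the $f$-polynomials of Proposition~\ref{Prop:f-vector} via the transformation $h(\Gamma;z)=(1-z)^{d}f(\Gamma;\tfrac{z}{1-z})$ — with $d=n$, since $\CPF^{(m)}$ is pure of dimension $n-1$ — and then to reorganize the resulting sum over the intersection lattice. I will describe the argument for $\CPF^{(m)}$; the case of $\CPF^{(m),+}$ is word for word the same with $mh+1$ replaced by $mh-1$. Writing $p:=mh+1$, Proposition~\ref{Prop:f-vector} gives
\[
   h\big(\CPF^{(m)};z\big)=\sum_{X\in\mathcal{L}_{\mathcal{A}}}\Big(\prod_{i=1}^{\dim X}(p+b_i^X)\Big)\,z^{\dim X}(1-z)^{\,n-\dim X},
\]
and the goal is to identify this with $\sum_{X}\big(\prod_{i}(p-b_i^X)\big)\,z^{\dim X}\operatorname{Des}(W_X;z)$.

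The first move is to re-expand both the starting sum and the target over the whole lattice using Whitney's formula. From $\chi(\mathcal{A}^X,t)=\prod_{i=1}^{\dim X}(t-b_i^X)$ and $\chi(\mathcal{A}^X,t)=\sum_{Z\subseteq X}\mu_{\mathcal{L}_{\mathcal{A}}}(X,Z)\,t^{\dim Z}$ (sum over flats $Z$ contained in $X$) one gets $\prod_{i}(p+b_i^X)=\sum_{Z\subseteq X}(-1)^{\dim X-\dim Z}\mu(X,Z)\,p^{\dim Z}$ and $\prod_{i}(p-b_i^X)=\sum_{Z\subseteq X}\mu(X,Z)\,p^{\dim Z}$. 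For the target I also need a flat-refined form of the descent polynomial: here the input is the classical fact (Björner) that the $h$-polynomial of the Coxeter complex of a reflection group is its descent polynomial, combined with the identification of the Coxeter complex of $W_Y$ with the fan of the reflection arrangement $\mathcal{A}_Y$. Counting faces of that fan by the flat they span and applying the $f\to h$ transform yields
\[
   \operatorname{Des}(W_Y;z)=\sum_{X\supseteq Y} r\big((\mathcal{A}_Y)^X\big)\,z^{\dim X-\dim Y}(1-z)^{\,n-\dim X},
\]
where $r(\cdot)$ is the number of chambers. Substituting these expansions into the two displayed sums and exchanging orders of summation — collecting each side according to the innermost flat $Z$, which carries the factor $p^{\dim Z}$, and then according to the flat $X$, which carries the factor $z^{\dim X}(1-z)^{n-\dim X}$ — reduces the theorem to the purely lattice-theoretic identity
\[
   (-1)^{\dim X-\dim Z}\,\mu_{\mathcal{L}_{\mathcal{A}}}(X,Z)=\sum_{Z\subseteq Y\subseteq X}\mu_{\mathcal{L}_{\mathcal{A}}}(Y,Z)\;r\big((\mathcal{A}_Y)^X\big),
\]
valid for all flats $Z\subseteq X$.

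This last identity is then dispatched by a short Möbius computation: by Zaslavsky's theorem the restriction $(\mathcal{A}_Y)^X$, whose intersection lattice is the interval $\{W:Y\subseteq W\subseteq X\}$, has $r\big((\mathcal{A}_Y)^X\big)=\sum_{Y\subseteq W\subseteq X}(-1)^{\dim X-\dim W}\mu(X,W)$ chambers (using $|\mu(X,W)|=(-1)^{\dim X-\dim W}\mu(X,W)$ in the geometric lattice $\mathcal{L}_{\mathcal{A}}$); plugging this in, swapping the two sums, and applying the defining Möbius relation $\sum_{Z\subseteq Y\subseteq W}\mu(Y,Z)=\delta_{W,Z}$ collapses the double sum to $(-1)^{\dim X-\dim Z}\mu(X,Z)$. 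I expect the main obstacle to be not this calculation but the careful bookkeeping in the reduction to it: pinning down the Coxeter-complex-as-fan identification with the right dimension and degree shifts in the $f\to h$ transform for a non-essential localization arrangement, and verifying that the two triple sums produced from the left- and right-hand sides really coincide term by term (i.e. in the basis $\{z^{j}(1-z)^{n-j}\}$). As consistency checks, setting $z=1$ gives $h(\CPF^{(m)};1)=\sum_X\big(\prod_i(mh+1-b_i^X)\big)\,|W_X|=|W|\cdot\Cat^{(m)}(W)$, the number of facets, and the boundary case $m=0$ returns $h\big(\CPF^{(0)};z\big)=\operatorname{Des}(W;z)$ — since $1$ is always an OS exponent of a flat of positive dimension, all other terms vanish — in agreement with Remark~\ref{rema:coxeter}.
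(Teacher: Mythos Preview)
Your proof is correct and follows essentially the same route as the paper's: start from the $f$-polynomial of Proposition~\ref{Prop:f-vector}, apply the $f\to h$ transform, expand $\prod_i(p+b_i^X)$ as a sum over subflats $Y\subseteq X$ weighted by region counts, swap the order of summation, and recognize the resulting inner sum as the $f\to h$ transform of the Coxeter complex of $W_Y$, hence $\operatorname{Des}(W_Y;z)$.

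The only real difference is in how the key expansion step is justified. The paper invokes Kung's identity $\chi(\mathcal{K},st)=\sum_{Y}\chi(\mathcal{K}_Y,s)\,\chi(\mathcal{K}^Y,t)$ on $\mathcal{A}^X$ with $s=-1$, $t=mh+1$ (together with Zaslavsky's theorem) to get directly
\[
   \prod_{i}(mh+1+b_i^X)\;=\;\sum_{Y\subseteq X}\prod_i(mh+1-b_i^Y)\cdot r(\mathcal{A}^X_Y),
\]
after which one swap of sums finishes the proof. You instead expand both sides over the lattice via the Whitney formula, reduce to the coefficient identity $(-1)^{\dim X-\dim Z}\mu(X,Z)=\sum_{Z\subseteq Y\subseteq X}\mu(Y,Z)\,r\big((\mathcal{A}_Y)^X\big)$, and prove that by Zaslavsky plus the defining M\"obius relation. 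This is exactly the special case of Kung's identity being used, proved from scratch: your argument is thus a bit longer but more self-contained, while the paper's is shorter by citing Kung as a black box. (Your two notations $(\mathcal{A}_Y)^X$ and the paper's $\mathcal{A}^X_Y=(\mathcal{A}^X)_Y$ have the same intersection lattice $\{W:Y\subseteq W\subseteq X\}$, hence the same region count, so there is no discrepancy.)
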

\begin{proof}
After Proposition~\ref{Prop:f-vector} and via the $f$- to $h$-vector transformation, we can write the $h$-polynomial of $\CPF^{(m)}$ as
\begin{equation}
h\big(\CPF^{(m)};z\big)=\sum_{X\in\mathcal{L}_{\mathcal{A}}}\prod_{i=1}^{\dim(X)}(mh+1+b_i^X)\cdot z^{\dim(X)}(1-z)^{\operatorname{codim}(X)}.\label{eqn:h-vec_CPF}
\end{equation}
We will expand the factor $\prod(mh+1+b_i^X)$ by using
Kung's identity (as in \cite[eq.~(3.2)]{aguiarbastidasmahajan}) below, which relates the characteristic polynomials of all restrictions and localizations of a hyperplane arrangement $\mathcal{K}$ (here $s$ and $t$ are arbitrary parameters):
\begin{equation}
\chi(\mathcal{K},st)=\sum_{Y\in\mathcal{L}_{\mathcal{K}}}\chi(\mathcal{K}_Y,s)\cdot \chi(\mathcal{K}^Y,t).\label{eqn:Kung}    
\end{equation}
Indeed, applying Kung's identity \eqref{eqn:Kung} on the restricted reflection arrangement $\mathcal{A}^X$ with the values $s=-1$ and $t=mh+1$, and after Zaslavsky's theorem, gives 
\begin{equation}
\prod_{i=1}^{\dim(X)}(mh+1+b_i^X)=\sum_{Y\subseteq X}\prod_{i=1}^{\dim(Y)}(mh+1-b_i^Y)\cdot r(\mathcal{A}^X_Y), \label{eqn:Kung_applic}   
\end{equation}
where recall that $r(\mathcal{A}^X_Y)$ denotes the number of regions of the arrangement $\mathcal{A}^X_Y$. Combining equations \eqref{eqn:Kung_applic} and \eqref{eqn:h-vec_CPF} we get 
\begin{align*}
h\big(\CPF^{(m)};z\big)&=\sum_{X\in\mathcal{L}_{\mathcal{A}}}z^{\dim(X)}(1-z)^{\operatorname{codim}(X)}\cdot \sum_{Y\subseteq X}\prod_{i=1}^{\dim(Y)}(mh+1-b_i^Y)\cdot r(\mathcal{A}^X_Y),\\
&=\sum_{Y\in\mathcal{L}_{\mathcal{A}}}\prod_{i=1}^{\dim(X)}(mh+1-b_i^Y)\cdot \sum_{X\supseteq Y}r(\mathcal{A}_Y^X)\cdot z^{\dim(X)}(1-z)^{\operatorname{codim}(X)}\\
&=\sum_{Y\in\mathcal{L}_{\mathcal{A}}}\prod_{i=1}^{\dim(X)}(mh+1-b_i^Y)\cdot z^{\dim(Y)}\cdot \operatorname{Des}(W_Y;z).
\end{align*}
For the last equation above, notice that the $f$-vector of the Coxeter complex of $W_Y$ is given by $\sum_{X\supseteq Y}r(\mathcal{A}_Y^X)\cdot z^{\dim(X)-\dim(Y)}$ so that the equation is just the $f$- to $h$-transformation for the Coxeter complex of $W_Y$ (we are also using the known facts --see for example \cite[\S1.2]{stembridge}-- that the $h$-polynomial of the complex is symmetric and equals the generating polynomial for the descent statistic over $W_Y$). The argument for $\CPF^{(m),+}$ is the same; the proof is complete.
\end{proof}

\begin{rema}\label{Rem:h-vec_conc}
Notice that Theorem~\ref{thm:h-vector} explicitly shows that the $h$-vectors of $\CPF^{(m)}$ and $\CPF^{(m),+}$ are non-negative --a fact guaranteed by the Cohen-Macaulay property-- since for any flat $X$, we have $h\geq b_i^X$ for all $i$. 

To see that it also generalizes the numerological part of Theorem~\ref{theo:topcpf}, we need only calculate the top entries $h_n$ of the $h$-vectors and show that they agree with the dimension of the non-trivial homology of the complex --again as expected by the Cohen-Macaulay property. For the case of $\CPF^{(m)}$, notice that by Theorem~\ref{thm:h-vector} we have (since the descent polynomials are monic and of degree equal to the rank of the group)
\[
h_n=\sum_{X\in\mathcal{L}_{\mathcal{A}}}\prod_{i=1}^{\dim(X)}(mh+1-b_i^X)=(mh+1)^n,
\]
where the second equality comes from the well known identity of hyperplane arrangements $t^{\dim(V)} = \sum_{X\in\mathcal{L}_{\mathcal{K}}}\chi(\mathcal{K}^X,t)$ applied on the arrangement $\mathcal{A}^X$ and after setting $t=mh+1$. The case of $\CPF^{(m),+}$ is analogous.
\end{rema}


\begin{rema}[Equivariant $h$-vector]
There is a version of Theorem~\ref{thm:h-vector} which generalizes the representation theoretic Equation~\eqref{eq:bHcpf} of Theorem~\ref{theo:topcpf} and which we briefly describe here. 

When there is a group action on the faces of a simplicial complex $\Gamma$, its $f$-vector can naturally be replaced by a tuple of permutation representations $\mathbf{f}(\Gamma):=(\mathbf{f}_{-1},\mathbf{f}_0,\ldots,\mathbf{f}_{d-1})$ and then one can define an equivariant $h$-vector as a virtual character following the $f$- to $h-$ transformation recipe (as in \cite{stanley_posets} or see also the exposition in \cite[\S2.4]{adamsreiner}). The generalization of Theorem~\ref{thm:h-vector} is given in terms of an equivariant version of the descent polynomial of a real reflection group $(W,S)$ coming from Solomon's descent algebra and defined as
\[
\mathbf{Des}(W;z):=\sum_{I\subseteq S}\bm{\beta_I} z^{|I|},\quad\text{where}\quad\bm{\beta_I}:=\sum_{J\supseteq S\setminus I}(-1)^{|S|-|I|-|J|}\big\uparrow^W_{\langle J\rangle}\mathrm{triv}.
\]
Here the $\bm{\beta_I}$ are the homology representations of $W$ on the top homology of the $I$-selected subcomplex of the Coxeter complex (as in \cite[\S6]{bjorner}) and are also known \cite[\S6]{miller} as ribbon representations because in the case of the symmetric group $S_n$ they correspond to skew Schur characters indexed by ribbon shapes.

Then, with essentially the same proof as in Theorem~\ref{thm:h-vector} one can show that
\begin{equation}
\mathbf{h}\big(\CPF^{(m)};z\big)=\sum_{[X]\in\mathcal{L}_{\mathcal{A}}/W}\Big(\dfrac{\prod_{i=1}^{\dim(X)}(mh+1-b_i^X)}{[N(X):W_X]} \Big)\cdot z^{\dim(X)}\cdot \big\uparrow^W_{W_X}\mathbf{Des}(W_X;z),\label{eq:equiv_h_vec}
\end{equation}
and similarly for $\CPF^{(m),+}$. Notice that in $\mathbf{Des}(W_X;z)$ the highest degree coefficient is always the sign representation, so that equation \eqref{eq:bHcpf} can be deduced from \eqref{eq:equiv_h_vec} after the usual decomposition of the parking space into parabolic cosets.
\end{rema}

In the case of the symmetric group $\mathfrak{S}_n$, the $h$-polynomial of $\CPF_n$ has appeared before in a different context.  Archer et al. \cite{archer} introduced quasi-Stirling permutations as a generalization of the Stirling permutations of Gessel and Stanley. The \emph{quasi-Stirling polynomial} $\overline{Q}_n(z)$ is the generating polynomial for the descent number over quasi-Stirling permutations of length $2n$. Elizalde \cite[Thm.~2.5]{elizalde} gave the following formula for $\overline{Q}_n(z)$:
\begin{equation}
\overline{Q}_n(z)=(1-z)^{2n+1} \sum_{i\geq 0} \frac{i^n}{n+1} \binom{n+i}{i} z^i.\label{eqn:defn:q-stirl}
\end{equation}

\begin{prop}
The $h$-vector of $\CPF_n$ and the quasi-Stirling polynomial $\overline{Q}_n(z)$ satisfy 
\[
\overline{Q}_n(z)=z\cdot h\big(\CPF_n;z\big).
\]
\end{prop}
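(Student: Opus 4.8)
The plan is to write both sides of the asserted identity as explicit polynomials in $z$ and match them term by term. First I would specialize Proposition~\ref{Prop:f-vector} to $W=\mathfrak{S}_n$, where $m=1$, $h=n$, and the reflection arrangement is the braid arrangement $\mathcal{A}_{n-1}$. Its flats are indexed by set partitions of $\{1,\dots,n\}$; a partition with $k$ blocks gives a flat $X$ of dimension $k-1$, with Orlik--Solomon exponents $(b_1^X,\dots,b_{k-1}^X)=(1,2,\dots,k-1)$, and there are $S(n,k)$ (the Stirling number of the second kind) such partitions. Hence Proposition~\ref{Prop:f-vector} yields
\[
f\big(\CPF_n;z\big)=\sum_{k=1}^{n}S(n,k)\prod_{i=1}^{k-1}(n+1+i)\,z^{k-1}=\sum_{k=1}^{n}S(n,k)\,\frac{(n+k)!}{(n+1)!}\,z^{k-1},
\]
which one can sanity-check on small cases (e.g.\ $f(\CPF_3;z)=1+15z+30z^2$).

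Next I would pass to the $h$-polynomial. Since $\CPF_n$ has dimension $n-2$, the $f$-to-$h$ transformation is $h(\CPF_n;z)=(1-z)^{n-1}f\big(\CPF_n;\tfrac{z}{1-z}\big)$, which gives
\[
z\cdot h\big(\CPF_n;z\big)=\sum_{k=1}^{n}S(n,k)\,\frac{(n+k)!}{(n+1)!}\,z^{k}(1-z)^{n-k}.
\]
It then remains to rewrite Elizalde's expression~\eqref{eqn:defn:q-stirl} into this form. Expanding $i^{n}=\sum_{k\ge 0}S(n,k)\,i^{\underline{k}}$ in falling factorials, using $\binom{n+i}{n}i^{\underline{k}}=\frac{(n+k)!}{n!}\binom{n+k+j}{j}$ with $j=i-k$, and summing the negative-binomial series $\sum_{j\ge 0}\binom{n+k+j}{j}z^{j}=(1-z)^{-(n+k+1)}$, one obtains, as formal power series,
\[
\sum_{i\ge 0}\frac{i^{n}}{n+1}\binom{n+i}{i}z^{i}=\sum_{k\ge 1}S(n,k)\,\frac{(n+k)!}{(n+1)!}\,\frac{z^{k}}{(1-z)^{n+k+1}},
\]
where the $k=0$ term drops since $S(n,0)=0$ for $n\ge 1$, and the $k>n$ terms vanish since $S(n,k)=0$. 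Multiplying by $(1-z)^{2n+1}$ collapses the denominators to $(1-z)^{n-k}$ and produces exactly the polynomial displayed above for $z\cdot h(\CPF_n;z)$, proving $\overline{Q}_n(z)=z\cdot h(\CPF_n;z)$.

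The computation is essentially routine generating-function bookkeeping, so I do not expect a genuine obstacle; the one point that needs care is the type-$A$ specialization of Proposition~\ref{Prop:f-vector} in the first step --- correctly reading off $\dim(X)$ and the Orlik--Solomon exponents of a flat from the block structure of the corresponding set partition, and remembering that $\CPF_n$ is $(n-2)$-dimensional so that $d=n-1$ in the $f$-to-$h$ transformation. The formal power series manipulations in the last step are legitimate because both sides of the final identity are polynomials, so it suffices to compare the coefficient of each $z^{i}$; alternatively one could avoid Proposition~\ref{Prop:f-vector} altogether by counting the faces of $\CPF_n$ directly in the labeled-dissection model of Section~\ref{sec:combmod}, but the route above is shorter.
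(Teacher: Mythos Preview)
Your proof is correct and follows essentially the same approach as the paper's: both specialize the $f$-/$h$-vector formula to type $A$ (where flats are set partitions, $\dim(X)=k-1$, and $b_i^X=i$) to write $z\cdot h(\CPF_n;z)=\sum_{k}S(n,k)\,(n+2)\cdots(n+k)\,z^{k}(1-z)^{n-k}$, and both match this to Elizalde's formula via the Stirling identity $i^{n}=\sum_{k}S(n,k)\,i^{\underline{k}}$. The only cosmetic difference is direction: the paper divides by $(1-z)^{2n+1}$ and extracts the coefficient of $z^{i}$ to recover $\frac{i^{n}}{n+1}\binom{n+i}{i}$, whereas you start from Elizalde's infinite sum, expand $i^{n}$ in falling factorials, and collapse the negative-binomial series --- the same computation run backwards.
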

\begin{proof}
In the case of the symmetric groups $W=\mathfrak{S}_n$ and $m=1$, Equation~\eqref{eqn:h-vec_CPF} from the proof of Theorem~\ref{thm:h-vector} can be rewritten as 
\[
\dfrac{z\cdot h(\CPF_n;z)}{(1-z)^{2n+1}}=\sum_{k=1}^n S(n,k)\cdot (n+2)\cdots (n+k)\cdot z^k(1-z)^{-1-n-k},
\]
where $S(n,k)$ are the Stirling numbers of the second kind (since flats of the reflection arrangement of $\mathfrak{S}_n$ correspond to set partitions and since $b_i^X=i$ in $S_n$). To compare with equation \eqref{eqn:defn:q-stirl} we compute the coefficient of $z^i$ in the expression above; it equals
\[
\sum_{j=1}^i S(n,j)\cdot (n+2)\cdots (n+j)\cdot \binom{n+i}{i-j}=\sum_{j=1}^iS(n,j)\cdot\dfrac{i!}{(i-j)!}\cdot\dfrac{1}{n+1}\binom{n+i}{i},
\]
which completes the proof after the standard identity $\sum_{j=1}^iS(n,j)\cdot i!/(i-j)!=i^n$.
\end{proof}

On a different direction we may compare $\CPF$ and $\PF$.
Let us begin with a result from~\cite{delcroixogerjosuatvergesrandazzo} (in the case of the symmetric group).  The {\it Whitney numbers of the first kind} 
of $\PF$ are defined by
\[
    w_i(\PF)
        =
    \sum_{wW_\pi \in \PF,\; \rk(wW_\pi) = i} \mu_{\PF}(wW_\pi)
\]
for $0\leq i \leq n$.  Here, $\mu_{\PF}$ is the Möbius function of $\PF$. 

\begin{prop}
    We have $f_i(\CPF^+) = (-1)^i w_i(\PF)$. 
\end{prop}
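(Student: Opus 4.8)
The plan is to reduce the identity to a statement about a single noncrossing partition, by writing both sides as sums over $\NC$ weighted by indices of noncrossing parabolic subgroups. Throughout I index face numbers by cardinality, so $f_i(\CPF^+)$ is the number of faces of $\CPF^+$ with $i$ vertices (equivalently, of dimension $i-1$). First a preliminary remark: if $(wW_{\underline f},f)\in\CPF^+$ has $|f|=i$, then $\rk(\underline f)=i$. Indeed $\prod f=t_{\alpha_1}\cdots t_{\alpha_i}$ is a reduced factorization (\cite{athanasiadistzanaki}), so $\ell(\prod f)=i$ and hence $\rk(\prod f)=n-i$; and since the bipartite Kreweras complement is a rank-reversing anti-automorphism of $\NC$, $\rk(\underline f)=\rk\big((\prod f)^\kappa\big)=n-\rk(\prod f)=i$.

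Now consider the projection $\CPF^+\to\Delta^{(1),+}$, $(wW_{\underline f},f)\mapsto f$. Its fibre over a face $f$ is the $W$-orbit $\{(wW_{\underline f},f):w\in W\}$, which has $[W:W_{\underline f}]$ elements, all of cardinality $|f|$. Grouping the resulting sum by $\tau:=\underline f\in\NC$ and using Proposition~\ref{prop:subcompl} — according to which $\Delta^{(1),+}(\tau)=\{f:\underline f\le\tau\}$ is pure of dimension $\rk(\tau)-1$, so that its facets are the faces $f$ with $\underline f\le\tau$ of cardinality $\rk(\tau)$, and $\rk(\underline f)=\rk(\tau)$ forces $\underline f=\tau$ — we obtain
\[
    f_i(\CPF^+)=\sum_{\tau\in\NC,\ \rk(\tau)=i}[W:W_\tau]\,a^+(\tau),\qquad a^+(\tau):=\#\{\text{facets of }\Delta^{(1),+}(\tau)\}.
\]
On the other side, the remark following Lemma~\ref{lemm:pftonc} gives that the principal order ideal of $wW_\pi$ in $\PF$ is isomorphic to the interval $[c,\pi]$ of $\NC$, with minima matched; hence $\mu_{\PF}(wW_\pi)=\mu_{\NC}(c,\pi)$, and since the orbit of $wW_\pi$ for fixed $\pi$ has $[W:W_\pi]$ elements,
\[
    w_i(\PF)=\sum_{\pi\in\NC,\ \rk(\pi)=i}[W:W_\pi]\,\mu_{\NC}(c,\pi).
\]
So it suffices to prove the single-partition identity $a^+(\tau)=(-1)^{\rk(\tau)}\mu_{\NC}(c,\tau)$ for every $\tau\in\NC$.

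For this I would use the structure of intervals in $\NC$. Every interval $[c,\tau]$ is isomorphic to the noncrossing partition lattice $\NC(W')$ of a reflection group $W'$ of rank $\rk(\tau)$ with an appropriate Coxeter element — in type $A$ this is the classical fact that contracting the blocks of $\tau$ identifies $[c,\tau]$ with $\NC(\mathfrak{S}_\ell)$, $\ell$ being the number of blocks of $\tau$ — and, under this isomorphism, the order ideal $\Delta^{(1),+}(\tau)$ is carried onto the positive cluster complex $\Delta^{(1),+}(W')$, so that $a^+(\tau)$ equals the number of positive clusters of $W'$. On the other hand, applying the theorem of Athanasiadis and Tzanaki recalled just after Proposition~\ref{prop:idealNC} to $W'$ (extended multiplicatively over its irreducible factors, both $\mu$ and the positive-cluster count being multiplicative) gives $\Omega(\overline{\NC(W')})\simeq(\mathbb{S}^{\rk(W')-2})^{\vee c}$ with $c$ the number of positive clusters of $W'$; taking reduced Euler characteristics, $\mu_{\NC(W')}(\hat 0,\hat 1)=(-1)^{\rk(W')}c$. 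Transporting this back through $[c,\tau]\cong\NC(W')$ yields $\mu_{\NC}(c,\tau)=(-1)^{\rk(\tau)}a^+(\tau)$, and plugging into the two displays above finishes the proof.

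I expect the main obstacle to be the compatibility of the interval isomorphism $[c,\tau]\cong\NC(W')$ with the identification $\Delta^{(1),+}(\tau)\cong\Delta^{(1),+}(W')$ through the maps $\prod$ and $\underline{(\cdot)}$: this is essentially the inductive reduction already performed in the proof of Proposition~\ref{prop:subcompl}, but turning it into a clean isomorphism requires the positive cluster complex (and the Athanasiadis–Tzanaki result) relative to an arbitrary standard Coxeter element, since $W'$ need not carry a bipartite one — see \cite{bianejosuatverges}. In the symmetric-group case one can instead argue by direct computation: combining the formula for $f_i(\CPF^+)$ from Proposition~\ref{Prop:f-vector} at $m=1$ with the classical evaluation $\mu_{\NC(\mathfrak{S}_\ell)}(\hat 0,\hat 1)=(-1)^{\ell-1}\frac{1}{\ell}\binom{2\ell-2}{\ell-1}$ reduces the statement to a Stirling–Catalan summation identity (provable by Lagrange inversion, and already implicit in \cite{delcroixogerjosuatvergesrandazzo}).
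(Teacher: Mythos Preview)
Your proof is correct and follows essentially the same route as the paper: both sides are rewritten as sums over $\NC$ weighted by $[W:W_\pi]$, reducing the identity to $\#\{f\in\Delta^+:\underline f=\pi\}=(-1)^{\rk(\pi)}\mu_{\NC}(c,\pi)$, which is then deduced from the equality $|\mu_{\NC}|=\Cat^+$ applied to the smaller group attached to the interval $[c,\pi]$. The paper compresses this last step into a one-line parenthetical (``two well-known interpretations of $\Cat^+$''), whereas you spell it out via the interval isomorphism and correctly flag the need for $\Delta^+$ relative to a non-bipartite Coxeter element---a point the paper itself raises elsewhere.
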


\begin{proof}
    This is exactly the same proof as~\cite[Proposition~5.7]{delcroixogerjosuatvergesrandazzo}.  By Lemma~\ref{lemm:pftonc}, the interval $[W,wW_\pi]$ in $\PF$ is isomorphic to the interval $[c,\pi]$ in $\NC$, so that $\mu_{\PF}(wW_\pi) = \mu_{\NC}(\pi)$.  This number $\mu_{\NC}(\pi)$ is also the number of $f\in \Delta^+$ such that $\underline{f} = \pi$. (This follows from two well-known interpretations of $\Cat^+$: either the Möbius number of $\NC$ or the number of facets of $\Delta^+$.)  We thus have
    \begin{align*}
        (-1)^iw_i(\PF)
            &=
        \sum_{wW_\pi \in \PF,\; \rk(wW_\pi) = i} (-1)^i\mu_{\NC}(\pi) \\
            &=
        \sum_{wW_\pi \in \PF,\; \rk(wW_\pi) = i} \#\Big\{ f \in \Delta^+ \;:\; 
        \underline{f} = \pi \Big\}
        = f_i(\CPF^+).
    \end{align*}
\end{proof}

\section{New combinatorial objects equinumerous with parking functions}

\label{sec:enum}

Recall that we defined, for each $w\in W$:
    \begin{align} \label{def:kw2}
        k_w :=
        \# \big\{
            f \in \Delta^+_{n-1} \;:\; 
            f \subset \Inv(w)
        \big\}.
    \end{align}
By a double counting of the number of spheres in the homotopy type of the parking function poset $\PF$, we got the identity 
\begin{equation} \label{eq:idkw}
    \sum_{w\in W} k_w =(h-1)^n
\end{equation}
(see the proof of Theorem~\ref{theo:pftop}).  We investigate here some enumerative consequences of this identity.  Through this section, we use the inversion set defined as a set of positive roots:  for $w\in W$,
\[
    \Inv(w) 
        :=
    w^{-1}(\Phi_-) \cap \Phi_+.
\]

\begin{defi}
    Let $\LC$ denote the set of {\it $W$-labeled clusters}, defined by:
    \[
        \LC 
            :=
        \big\{
            (f,w) \in \Delta_{n-1} \times W
                \;:\;
            f \cap \Inv(w) = \varnothing
        \big\}.
    \]
    Moreover, let $\LC^+ := \LC \mathrel{\cap} (\Delta_{n-1}^+ \times W)$ denote the set of {\it $W$-labeled positive clusters}.
\end{defi}

The term ``labeled cluster'' occasionally appears in the cluster algebra literature, as the cluster in a labeled seed.  It is thus better that the terminology refers to the group $W$.

Let us describe explicitly $\mathfrak{S}_n$-labeled clusters (type $A_{n-1}$) in the case of the linear Coxeter element $c=(2,3,\dots,n-1,n,1)$.  It is convenient to represent a type $A_{n-1}$ cluster $f$ as follows: draw $n$ dots on an horizontal line, draw an arch above the line from the $i$th dot to the $j$th dot if $f$ contains the positive root $e_i - e_j$ where $i<j$.  This representation can be characterized by the following properties:
\begin{itemize}
    \item the arches are noncrossing and alternating, {\it i.e.}, there is no two arches $(i,j)$ and $(k,l)$ such that $i<k \leq j < l$,
    \item each connected component defined by the arches is an interval of consecutive dots.
\end{itemize}
Such a system of arches is, for example, 
\[
\begin{tikzpicture}[scale=0.5]
   \tikzstyle{ver} = [circle, draw, fill, inner sep=0.5mm]
   \tikzstyle{edg} = [line width=0.6mm]
   \node[ver] at (1,0) {};
   \node[ver] at (2,0) {};
   \node[ver] at (3,0) {};
   \node[ver] at (4,0) {};
   \node[ver] at (5,0) {};
   \node[ver] at (6,0) {};
   \node[ver] at (7,0) {};
   \node[ver] at (8,0) {};
   \node[ver] at (9,0) {};
   \node[ver] at (10,0) {};
   \node[ver] at (11,0) {};
   \node[ver] at (12,0) {};
   \node[ver] at (13,0) {};
   \node[ver] at (14,0) {};
   \node[ver] at (15,0) {};
   \node[ver] at (16,0) {};
   \node[ver] at (17,0) {};
   \draw[edg] (4,0) arc [start angle=0, end angle=180, radius=1.5];
   \draw[edg] (4,0) arc [start angle=0, end angle=180, radius=1];
   \draw[edg] (4,0) arc [start angle=0, end angle=180, radius=0.5];
   \draw[edg] (6,0) arc [start angle=0, end angle=180, radius=2.5];
   \draw[edg] (6,0) arc [start angle=0, end angle=180, radius=0.5];
   \draw[edg] (8,0) arc [start angle=0, end angle=180, radius=0.5];
   \draw[edg] (10,0) arc [start angle=0, end angle=180, radius=0.5];
   \draw[edg] (10,0) arc [start angle=0, end angle=180, radius=1.5];
   \draw[edg] (12,0) arc [start angle=0, end angle=180, radius=0.5];
   \draw[edg] (13,0) arc [start angle=0, end angle=180, radius=1];
   \draw[edg] (15,0) arc [start angle=0, end angle=180, radius=0.5];
   \draw[edg] (16,0) arc [start angle=0, end angle=180, radius=1];
   \draw[edg] (17,0) arc [start angle=0, end angle=180, radius=1.5];
   \draw[edg] (17,0) arc [start angle=0, end angle=180, radius=3];
 \end{tikzpicture}\,.
\]
Now, a $\mathfrak{S}_n$-labeled cluster is obtained by labelling the $n$ dots with integers in $\{1,\dots,n\}$, in such a way that for each arch, the label at its left endpoint is smaller than the label at its right endpoint.  For example, the 16 $\mathfrak{S}_3$-labeled clusters are:
\begin{align*}
   \begin{tikzpicture}[scale=0.5]
   \tikzstyle{ver} = [circle, draw, fill, inner sep=0.5mm]
   \tikzstyle{edg} = [line width=0.6mm]
   \node[ver] at (1,0) {};
   \node[ver] at (2,0) {};
   \node[ver] at (3,0) {};
   \node at (1,-0.6){1};
   \node at (2,-0.6){2};
   \node at (3,-0.6){3};
   \draw[edg] (3,0) arc [start angle=0, end angle=180, radius=1];
   \draw[edg] (2,0) arc [start angle=0, end angle=180, radius=0.5];
   \end{tikzpicture}, \quad
   \begin{tikzpicture}[scale=0.5]
   \tikzstyle{ver} = [circle, draw, fill, inner sep=0.5mm]
   \tikzstyle{edg} = [line width=0.6mm]
   \node[ver] at (1,0) {};
   \node[ver] at (2,0) {};
   \node[ver] at (3,0) {};
   \node at (1,-0.6){1};
   \node at (2,-0.6){3};
   \node at (3,-0.6){2};
   \draw[edg] (3,0) arc [start angle=0, end angle=180, radius=1];
   \draw[edg] (2,0) arc [start angle=0, end angle=180, radius=0.5];
   \end{tikzpicture},\;
   \begin{tikzpicture}[scale=0.5]
   \tikzstyle{ver} = [circle, draw, fill, inner sep=0.5mm]
   \tikzstyle{edg} = [line width=0.6mm]
   \node[ver] at (1,0) {};
   \node[ver] at (2,0) {};
   \node[ver] at (3,0) {};
   \node at (1,-0.6){1};
   \node at (2,-0.6){2};
   \node at (3,-0.6){3};
   \draw[edg] (3,0) arc [start angle=0, end angle=180, radius=1];
   \draw[edg] (3,0) arc [start angle=0, end angle=180, radius=0.5];
   \end{tikzpicture}, \quad
   \begin{tikzpicture}[scale=0.5]
   \tikzstyle{ver} = [circle, draw, fill, inner sep=0.5mm]
   \tikzstyle{edg} = [line width=0.6mm]
   \node[ver] at (1,0) {};
   \node[ver] at (2,0) {};
   \node[ver] at (3,0) {};
   \node at (1,-0.6){2};
   \node at (2,-0.6){1};
   \node at (3,-0.6){3};
   \draw[edg] (3,0) arc [start angle=0, end angle=180, radius=1];
   \draw[edg] (3,0) arc [start angle=0, end angle=180, radius=0.5];
   \end{tikzpicture}, \quad
   \begin{tikzpicture}[scale=0.5]
   \tikzstyle{ver} = [circle, draw, fill, inner sep=0.5mm]
   \tikzstyle{edg} = [line width=0.6mm]
   \node[ver] at (1,0) {};
   \node[ver] at (2,0) {};
   \node[ver] at (3,0) {};
   \node at (1,-0.6){1};
   \node at (2,-0.6){2};
   \node at (3,-0.6){3};
   \draw[edg] (2,0) arc [start angle=0, end angle=180, radius=0.5];
   \end{tikzpicture}, \quad
   \begin{tikzpicture}[scale=0.5]
   \tikzstyle{ver} = [circle, draw, fill, inner sep=0.5mm]
   \tikzstyle{edg} = [line width=0.6mm]
   \node[ver] at (1,0) {};
   \node[ver] at (2,0) {};
   \node[ver] at (3,0) {};
   \node at (1,-0.6){1};
   \node at (2,-0.6){3};
   \node at (3,-0.6){2};
   \draw[edg] (2,0) arc [start angle=0, end angle=180, radius=0.5];
   \end{tikzpicture}, \quad
   \begin{tikzpicture}[scale=0.5]
   \tikzstyle{ver} = [circle, draw, fill, inner sep=0.5mm]
   \tikzstyle{edg} = [line width=0.6mm]
   \node[ver] at (1,0) {};
   \node[ver] at (2,0) {};
   \node[ver] at (3,0) {};
   \node at (1,-0.6){2};
   \node at (2,-0.6){3};
   \node at (3,-0.6){1};
   \draw[edg] (2,0) arc [start angle=0, end angle=180, radius=0.5];
   \end{tikzpicture}, \quad
   \begin{tikzpicture}[scale=0.5]
   \tikzstyle{ver} = [circle, draw, fill, inner sep=0.5mm]
   \tikzstyle{edg} = [line width=0.6mm]
   \node[ver] at (1,0) {};
   \node[ver] at (2,0) {};
   \node[ver] at (3,0) {};
   \node at (1,-0.6){1};
   \node at (2,-0.6){2};
   \node at (3,-0.6){3};
   \draw[edg] (3,0) arc [start angle=0, end angle=180, radius=0.5];
   \end{tikzpicture}, \\
   \begin{tikzpicture}[scale=0.5]
   \tikzstyle{ver} = [circle, draw, fill, inner sep=0.5mm]
   \tikzstyle{edg} = [line width=0.6mm]
   \node[ver] at (1,0) {};
   \node[ver] at (2,0) {};
   \node[ver] at (3,0) {};
   \node at (1,-0.6){2};
   \node at (2,-0.6){1};
   \node at (3,-0.6){3};
   \draw[edg] (3,0) arc [start angle=0, end angle=180, radius=0.5];
   \end{tikzpicture}, \quad
   \begin{tikzpicture}[scale=0.5]
   \tikzstyle{ver} = [circle, draw, fill, inner sep=0.5mm]
   \tikzstyle{edg} = [line width=0.6mm]
   \node[ver] at (1,0) {};
   \node[ver] at (2,0) {};
   \node[ver] at (3,0) {};
   \node at (1,-0.6){3};
   \node at (2,-0.6){1};
   \node at (3,-0.6){2};
   \draw[edg] (3,0) arc [start angle=0, end angle=180, radius=0.5];
   \end{tikzpicture}, \quad
   \begin{tikzpicture}[scale=0.5]
   \tikzstyle{ver} = [circle, draw, fill, inner sep=0.5mm]
   \tikzstyle{edg} = [line width=0.6mm]
   \node[ver] at (1,0) {};
   \node[ver] at (2,0) {};
   \node[ver] at (3,0) {};
   \node at (1,-0.6){1};
   \node at (2,-0.6){2};
   \node at (3,-0.6){3};
   \end{tikzpicture}, \quad
   \begin{tikzpicture}[scale=0.5]
   \tikzstyle{ver} = [circle, draw, fill, inner sep=0.5mm]
   \tikzstyle{edg} = [line width=0.6mm]
   \node[ver] at (1,0) {};
   \node[ver] at (2,0) {};
   \node[ver] at (3,0) {};
   \node at (1,-0.6){2};
   \node at (2,-0.6){1};
   \node at (3,-0.6){3};
   \end{tikzpicture}, \quad
   \begin{tikzpicture}[scale=0.5]
   \tikzstyle{ver} = [circle, draw, fill, inner sep=0.5mm]
   \tikzstyle{edg} = [line width=0.6mm]
   \node[ver] at (1,0) {};
   \node[ver] at (2,0) {};
   \node[ver] at (3,0) {};
   \node at (1,-0.6){3};
   \node at (2,-0.6){1};
   \node at (3,-0.6){2};
   \end{tikzpicture}, \quad
   \begin{tikzpicture}[scale=0.5]
   \tikzstyle{ver} = [circle, draw, fill, inner sep=0.5mm]
   \tikzstyle{edg} = [line width=0.6mm]
   \node[ver] at (1,0) {};
   \node[ver] at (2,0) {};
   \node[ver] at (3,0) {};
   \node at (1,-0.6){1};
   \node at (2,-0.6){3};
   \node at (3,-0.6){2};
   \end{tikzpicture}, \quad
   \begin{tikzpicture}[scale=0.5]
   \tikzstyle{ver} = [circle, draw, fill, inner sep=0.5mm]
   \tikzstyle{edg} = [line width=0.6mm]
   \node[ver] at (1,0) {};
   \node[ver] at (2,0) {};
   \node[ver] at (3,0) {};
   \node at (1,-0.6){2};
   \node at (2,-0.6){3};
   \node at (3,-0.6){1};
   \end{tikzpicture}, \quad
   \begin{tikzpicture}[scale=0.5]
   \tikzstyle{ver} = [circle, draw, fill, inner sep=0.5mm]
   \tikzstyle{edg} = [line width=0.6mm]
   \node[ver] at (1,0) {};
   \node[ver] at (2,0) {};
   \node[ver] at (3,0) {};
   \node at (1,-0.6){3};
   \node at (2,-0.6){2};
   \node at (3,-0.6){1};
   \end{tikzpicture}.
\end{align*}

\begin{prop} \label{prop:fccard}
    We have $\# \LC = (h+1)^n$, and $\# \LC^+ = (h-1)^n$.
\end{prop}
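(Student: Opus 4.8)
The plan is to treat the two equalities separately: the formula for $\#\LC^+$ will follow directly from identity~\eqref{eq:idkw} after a sign-flip on $W$, and the formula for $\#\LC$ will be reduced to the $\LC^+$ case by decomposing the facets of the cluster complex along standard parabolic subgroups.

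For $\#\LC^+$, let $w_\circ$ denote the longest element of $W$. Since $w_\circ^{-1}=w_\circ$ and $w_\circ(\Phi_-)=\Phi_+$, one gets $\Inv(w_\circ w)=\Phi_+\setminus\Inv(w)$ for every $w\in W$. Hence the bijection $w\mapsto w_\circ w$ of $W$ carries a pair $(f,w)\in\LC^+$ — that is, $f\in\Delta^+_{n-1}$ with $f\cap\Inv(w)=\varnothing$, equivalently $f\subseteq\Phi_+\setminus\Inv(w)$ — to the pair $(f,w_\circ w)$ with $f\subseteq\Inv(w_\circ w)$. Therefore
\[
\#\LC^+=\sum_{v\in W}\#\bigl\{f\in\Delta^+_{n-1}\;:\;f\subseteq\Inv(v)\bigr\}=\sum_{v\in W}k_v=(h-1)^n
\]
by the definition of $k_v$ in~\eqref{def:kw2} and~\eqref{eq:idkw}. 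The same argument, applied factorwise, shows more generally that $\#\LC^+(W')=\park'_1(W')(e)$ for any (possibly reducible) standard parabolic $W'=\prod_i W'_i$, because $\Delta^+$, the inversion sets, the longest element, and the character $\park'_1$ all split along $W'=\prod_i W'_i$ (and $\park'_1(W'_i)(e)=(h_i-1)^{n_i}$ by Proposition~\ref{prop:zetaPF}).

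For $\#\LC$, I would decompose each facet $f$ of $\Delta$ as $f=I_f\sqcup g_f$ with $I_f:=f\cap(-\Pi)$ and $g_f:=f\cap\Phi_+$. Iterating the description of the link of a negative simple vertex, $\mathrm{lk}\bigl(\{(-\alpha)^1\}\bigr)=\Delta\bigl(W_{S\setminus\{t_\alpha\}}\bigr)$, one sees that $g_f$ is a facet of $\Delta^+(W_{J(f)})$ for $J(f):=S\setminus\{t_\alpha:-\alpha\in I_f\}$, and that $f\mapsto(J(f),g_f)$ is a bijection between the facets of $\Delta$ and the pairs $(J,g)$ with $J\subseteq S$ and $g$ a facet of $\Delta^+(W_J)$. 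As $I_f$ consists of negative roots and $\Inv(w)\subseteq\Phi_+$, we have $f\cap\Inv(w)=g_f\cap\Inv(w)$; and writing $w=x\cdot{}_Jw$ with ${}_Jw\in W_J$ and $x$ the minimal-length representative of $wW_J$, one has $\Inv(w)\cap\Phi_+(W_J)=\Inv_{W_J}({}_Jw)$, so $f\cap\Inv(w)=\varnothing$ is equivalent to $g_f\cap\Inv_{W_J}({}_Jw)=\varnothing$. Summing over $w\in W$, with each $v\in W_J$ arising from $[W:W_J]$ values of $w$, gives
\[
\#\LC=\sum_{J\subseteq S}[W:W_J]\cdot\#\LC^+(W_J)=\sum_{J\subseteq S}[W:W_J]\cdot\park'_1(W_J)(e),
\]
using the $\LC^+$ computation above for each $W_J$.

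It remains to identify $\sum_{J\subseteq S}[W:W_J]\cdot\park'_1(W_J)(e)$ with $(h+1)^n=\park_1(e)$. This is the combinatorial content of the identity $\park_1=\sum_{J\subseteq S}\mathrm{Ind}_{W_J}^{W}\park'_1(W_J)$ for the parking characters; at the level of sets it amounts to the bijection $\NC(W)\to\bigsqcup_{J\subseteq S}\NC'(W_J)$ sending $\pi$ to $(J_\pi,\pi)$, where $J_\pi$ is the unique minimal $J\subseteq S$ with $\pi\in W_J$ (unique because $W_{J}\cap W_{J'}=W_{J\cap J'}$), its well-definedness resting on the standard fact that a noncrossing partition is a Coxeter element of its parabolic closure and hence lies in $\NC(W_{J_\pi})$. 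Granting this, $\sum_{J}[W:W_J]\#\PF'(W_J)=\sum_{\pi\in\NC(W)}[W:W_\pi]=\#\PF(W)=(h+1)^n$, which together with $\#\LC^+(W_J)=\#\PF'(W_J)$ from the first part completes the proof. I expect this last numerical/character identity to be the main obstacle: in type $A$ it is the classical decomposition of a parking function into its prime blocks, while for general $W$ it should be invoked from~\cite{douvropoulosjosuatverges} or deduced from the support decomposition of $\NC(W)$ just indicated.
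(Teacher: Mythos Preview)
Your argument is correct and follows essentially the same route as the paper's proof: first reduce $\#\LC^+$ to $\sum_w k_w=(h-1)^n$ via the involution $w\mapsto w_\circ w$ (the paper writes $w\mapsto ww_\circ$, which in fact does \emph{not} give $\Phi_+=\Inv(w)\uplus\Inv(ww_\circ)$ under the convention $\Inv(w)=w^{-1}(\Phi_-)\cap\Phi_+$; your left-multiplication version is the correct one, though the final sum is of course unaffected), then decompose a facet of $\Delta$ along its negative-simple part to obtain the bijection $\LC\to\bigsqcup_{J\subseteq S}W/W_J\times\LC^+(W_J)$, and finally match this with the corresponding decomposition of $\PF$ into prime parking functions. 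The paper invokes this last identity as the character equality $\park_1=\sum_{J}\mathrm{Ind}_{W_J}^W\park'_1(W_J)$ from~\cite[Proposition~4.1]{douvropoulosjosuatverges}, which is exactly the ``support decomposition of $\NC$'' you sketch, so there is no remaining obstacle.
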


\begin{proof}
    We begin with positive $W$-labeled clusters.  Let $w\in W$, and note that we have a set partition $\Phi_+ = \Inv(w) \mathrel{\uplus} \Inv(w w_\circ)$ (where $w_\circ$ is the longest element of $W$).  Consequently, for $f\in \Delta^+$ there is an equivalence: $f \mathrel{\cap} \Inv(w) =\varnothing$ iff $f \subset \Inv(ww_\circ)$. The number of $f\in \Delta_{n-1}^+$ such that $(f,w)$ is a positive $W$-labeled cluster is thus $k_{ww_\circ}$ (by the definition in~\eqref{def:kw2}).  By summing over $w$, the number of positive labeled clusters is:
    \[
        \# \LC^+
            =
        \sum_{w\in W} k_{ww_\circ}
            =
        \sum_{w\in W} k_{w}
            =
        (h-1)^n,
    \]
    by Equation~\eqref{eq:idkw}.

    To get the other result, first recall that there is a bijection $W \to (W/W_I) \times W_I$ given by 
    \[
        w \mapsto ( wW_I, w_I )
    \]
    where $w_I \in W_I$ is defined by $\Inv(w_I) = \Inv(w) \mathrel{\cap} \Phi^I_+$ (and $\Phi^I_+ \subset \Phi_+$ is the set of positive roots of $W_I$).  Using that, we can give a bijection:
    \begin{equation} \label{bij:fc}
        \LC \to \biguplus_{I\subset S}
        W/W_I \times \LC^+(W_I).
    \end{equation}
    Explicitly, let $(f,w) \in \LC$ and let $I \subset S$ be such that the negative roots in $f$ are indexed by $S\backslash I$.  Then, this $(f,w)$ is associated to $(wW_I, (f\cap\Phi_+, w_I) ) \in W/W_I \times \LC^+(W_I)$.  It is straightforward to check that this defines a bijection as in~\eqref{bij:fc}.
    
    Now, there is also a bijection:
    \begin{equation} \label{bij:pf}
        \PF \to \biguplus_{I\subset S}
        W/W_I \times \PF'(W_I).
    \end{equation}
    It is a consequence of an identity on characters given in~\cite[Proposition~4.1]{douvropoulosjosuatverges}.  It can be given explicitly by sending $w W_\pi$ to $(wW_I , w_I W_\pi)$ where $I\subset S$ is minimal such that $\pi\in W_I$.
    
    Using the two bijections in~\eqref{bij:fc} and~\eqref{bij:pf}, together with $ \# \LC^+(W_I) = \#  \PF'(W_I)$, we get $\# \LC = \# \PF$.  It follows that $\LC = (h+1)^n$.
\end{proof}

Let us mention that the naive Fuß generalization of the previous result doesn't hold (more precisely: counting pairs $(f,w) \in \Delta^{(m)}_{n-1} \times W$ such that $f \cap \Inv(w) = \varnothing$ does not give $(mh+1)^n$, or any interesting number whatsoever).  

Let us end this section with a small remark.  Consider the quantities:
\begin{align}
    \ell_w &:= 
    \# \big\{
        f \in \Delta_{n-1} \;:\; f \mathrel{\cap} \Inv(w) = \varnothing
    \big\}, \\
    m_w &:= 
    \# \big\{
        \pi \in \NC \;:\; W_\pi \mathrel{\cap} \Inv(w) = \varnothing
    \big\}.
\end{align}
Note that $\ell_w = k_{w w_\circ}$.  On the other side, $m_w$ is the number of $w W_\pi \in \PF$ where $w$ is a minimal length coset representative.  There seems to be some similarities between these numbers.  We have $\ell_e=m_e$ (this is the Catalan number of $W$), $\ell_{w_\circ} = m_{w_\circ} = 0$, and $\sum_{w\in W} \ell_w = \sum_{w\in W} m_w = (h+1)^n$.  Very often, we have $\ell_w = m_w$.  However, this equality does not hold in general.  When $W=\mathfrak{S}_4$ and $c=(1234)$, with $w=2413$ we have $\ell_w = 4$ and $m_w=5$.

In the first version of this work, an open problem was to find a direct bijection between $\LC$ and parking functions.  It turns out that there is a simple answer.  To $(f,w) \in \LC$, we associate $(\varphi(f),w)$ where $\varphi$ is the bijection between clusters and noncrossing partitions given in~\cite{athanasiadisbradymccammondwatt}.  By definition of this bijection, we have $\varphi(f) \leq \prod f$.  By convexity of inversion sets, $\Inv(w)$ is disjoint from $W_{\varphi(f)}$, so $w$ is a minimal length representative in $W/W_{\varphi(f)}$.  The conclusion is that the bijection is given by $(f,w) \mapsto wW_{\varphi(f)}$.

\section{\texorpdfstring{Interpretation via $m$-Catalan arrangements}{Interpretation via m-Catalan arrangements}}

There is a well-known encoding of the $h$-vectors for the complexes $\Delta^{(m)}$ and $\Delta^{(m),+}$ as statistics on the dominant regions of the $m$-Catalan arrangement. We will present in this section a generalization for our $\CPF^{(m)}$ and $\CPF^{(m),+}$ complexes.

Recall that for a crystallographic group $W$ with root system $\Phi$ and a choice of positive roots $\Phi^{+}$, its reflection arrangement $\mathcal{A}_W$ and its $m$-Catalan arrangement $\operatorname{Cat}^{(m)}(W)$ are defined via the equations of their hyperplanes as
\[
\operatorname{Cat}^{(m)}(W):=\big\{\langle\alpha,x\rangle=k\ |\ \alpha\in\Phi^{+}, k\in \mathbb{Z},\ -m\leq k \leq m\big\}\quad\text{and}\quad \mathcal{A}_W=\operatorname{Cat}^{(0)}(W).
\]
The hyperplanes of the reflection arrangement $\mathcal{A}_W$ divide the ambient space into $|W|$-many connected components; there is a unique one for which $\langle \alpha,x\rangle\geq 0$ for all $\alpha\in\Phi^{+}$ which we call the fundamental chamber of $W$. Now, since the reflection arrangement of $W$ is a subarrangement of the $m$-Catalan arrangement, all regions of $\operatorname{Cat}^{(m)}(W)$ must lie in some chamber of $\mathcal{A}_W$. We will call those regions that lie in the fundamental chamber of $\mathcal{A}_W$ \emph{dominant regions} and denote them by $\mathcal{DRC}^{(m)}(W)$, while we will write  $\mathcal{RC}^{(m)}(W)$ for the set of \emph{all} regions of $\operatorname{Cat}^{(m)}(W)$. 

The hyperplanes of $\operatorname{Cat}^{(m)}(W)$ that support facets of a region $R\in \mathcal{RC}^{(m)}(W)$ are called the \emph{walls} of $R$. Those walls of $R$ that separate it from the origin, while they themselves \emph{do not contain the origin}, will be called the \emph{floors} of $R$ (see Fig.~\ref{fig:m-floors}). A floor of some region $R$ in $\operatorname{Cat}^{(m)}(W)$ that corresponds to a hyperplane of the form $\langle \alpha,x\rangle=m$ for some $\alpha\in\Phi^{+}$ will be called an $m$-floor. The number of $m$-floors of a region $R$ is denoted by $\operatorname{mfl}(R)$.

The statistic $\operatorname{mfl}(R)$ is particularly interesting because its distribution on the set of dominant regions $\mathcal{DRC}^{(m)}(W)$ encodes the $h$-polynomial of the generalized cluster complex $\Delta^{(m)}$ of $W$. The following statement is a combination of \cite[Thm.~1]{thiel_eur} and \cite[Thm.~1.2]{athan_trans}. 

\begin{prop}\label{prop:h_via_mfl}
For any Weyl group $W$ of rank $n$ and any $m\in\mathbb{Z}_{\geq 0}$, the $h$-polynomial of the complex $\Delta^{(m)}$ is given via
\begin{equation}
h(\Delta^{(m)};z)=\sum_{R\in \mathcal{DRC}^{(m)}(W)}z^{n-\operatorname{mfl}(R)}
\end{equation}
where $\operatorname{mfl}(R)$ is the number of $m$-floors of the region $R$.
\end{prop}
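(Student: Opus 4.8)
The plan is to obtain the identity by aligning two results on deformations of the reflection arrangement: Athanasiadis's combinatorial interpretation of the $h$-vector of the generalized cluster complex \cite[Thm.~1.2]{athan_trans}, and Thiel's description of the pertinent region statistic through the floors and ceilings of the $m$-Catalan arrangement \cite[Thm.~1]{thiel_eur}. We concentrate on the essential case $m\geq 1$; for $m=0$ the arrangement $\operatorname{Cat}^{(0)}(W)$ is the reflection arrangement itself and the statement is read off from the conventions fixed in the remark following the definition of $\Delta^{(m)}$.

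First I would restate, in the language of this section, the content of \cite[Thm.~1.2]{athan_trans}: for a Weyl group $W$ of rank $n$ the generalized Narayana numbers $h_i=h_i(\Delta^{(m)})$ enumerate the dominant regions of $\operatorname{Cat}^{(m)}(W)$ according to a natural statistic, so that
\[
h(\Delta^{(m)};z)=\sum_{R\in\mathcal{DRC}^{(m)}(W)}z^{\,s(R)}
\]
for an appropriate $s$. Athanasiadis's original phrasing of $s$ (via chains in the root poset, or via the alcove representing $R$, depending on the exact formulation one adopts) has to be translated into the wall/floor vocabulary of Figure~\ref{fig:m-floors}; this is exactly the dictionary developed in \cite{thiel_eur}, under which $s(R)$ becomes a count of certain walls of $R$ lying on the outermost level $\langle\alpha,x\rangle=m$.

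Second I would invoke \cite[Thm.~1]{thiel_eur}: Thiel's study of the floors and ceilings of $\operatorname{Cat}^{(m)}(W)$ shows that, for a dominant region $R$, the number $\operatorname{mfl}(R)$ of its $m$-floors and Athanasiadis's statistic are complementary within the bound $0\leq\operatorname{mfl}(R)\leq n$, that is $s(R)=n-\operatorname{mfl}(R)$. Feeding this into the displayed identity yields
\[
h(\Delta^{(m)};z)=\sum_{R\in\mathcal{DRC}^{(m)}(W)}z^{\,n-\operatorname{mfl}(R)},
\]
which is the assertion. As an internal consistency check one verifies the extreme coefficients: $h_0=1$ records the unique dominant region whose $n$ outermost-level walls are all floors, while the top coefficient $h_n$ equals the number of dominant regions with no $m$-floor, which by the Cohen-Macaulay property of $\Delta^{(m)}$ must coincide with $\dim\tilde H_{n-1}(\Delta^{(m)})$, compatibly with the face enumeration of the cluster complex recorded in \cite{douvropoulosjosuatverges}.

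The obstacle here is not mathematical depth but careful bookkeeping across the two references: one must reconcile the $m$-Shi arrangement with the symmetric $m$-Catalan arrangement (harmless, as they determine the same dominant regions inside the fundamental chamber), reconcile the floor-version and ceiling-version of the statistic (whose joint behaviour over dominant regions is precisely what \cite{thiel_eur} pins down), and fix which of the conventions $z^i$ or $z^{n-i}$ is used for the $h$-vector in each source. Once these normalizations are matched so that \cite[Thm.~1.2]{athan_trans} and \cite[Thm.~1]{thiel_eur} genuinely speak about the same combinatorial data, the proof reduces to the single substitution above.
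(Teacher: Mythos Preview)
Your proposal is correct and matches the paper's treatment: the paper does not give a standalone proof but simply states that the proposition ``is a combination of \cite[Thm.~1]{thiel_eur} and \cite[Thm.~1.2]{athan_trans}''. Your write-up is a more verbose version of exactly this citation, correctly identifying that Athanasiadis supplies the interpretation of $h_i(\Delta^{(m)})$ via dominant regions and Thiel supplies the translation into the $m$-floor statistic, with the remaining work being normalization bookkeeping.
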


\begin{figure}
\centering
\includegraphics[scale=0.5]{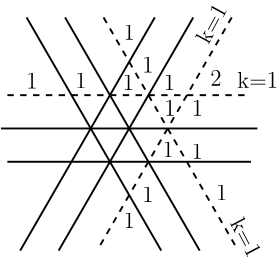}\hspace{3cm}
\includegraphics[scale=0.35]{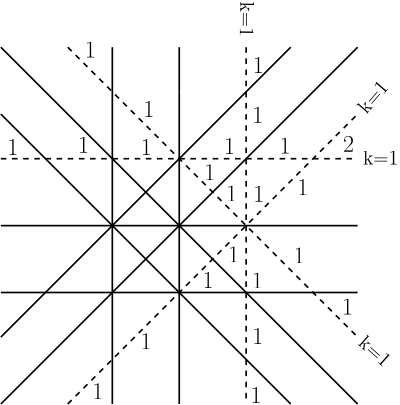}
\caption{\small Regions with $\operatorname{mfl}(R)>0$ in $\operatorname{Cat}^{(1)}(A_2)$ and $\operatorname{Cat}^{(1)}(B_2)$}\label{fig:m-floors}
\end{figure}

Now, the collection of $m$-floors of a region $R$ allows us to define a finer statistic for it. Every $m$-floor is parallel to a unique, central, reflection hyperplane of $W$, and the intersection of the $m$-floors of $R$ is parallel to a unique flat $X$ of the reflection arrangement $\mathcal{A}$ of $W$. We call the $W$-orbit $[X]\in \mathcal{L}_{\mathcal{A}}/W$ of this flat, the \emph{parabolic type} of the region $R$. The following lemma describes the distribution of the $m$-floors statistic in the $W$-orbit of a dominant region $R$ in terms of its parabolic type.

\begin{lemm}
For a Weyl group $W$ and any dominant region $R\in\mathcal{RC}^{(m)}(W)$ of parabolic type $[X]$, we have that ($\operatorname{Des}(W_X;z)$ denotes the descent polynomial of $W_X$ as in Section~\ref{ssec:h_vecs})
\[
\sum_{w\in W}z^{\operatorname{mfl}(w\cdot R)}=[W:W_X]\cdot \operatorname{Des}(W_X;z).
\]
\end{lemm}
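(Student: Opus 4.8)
The plan is to follow how the statistic $\operatorname{mfl}$ transforms under the $W$-action and then reduce the resulting sum to a descent enumeration over $W_X$. The first observation is one of equivariance: although $\operatorname{Cat}^{(m)}(W)$ is not $W$-invariant as a labelled family, it \emph{is} $W$-invariant as a set of hyperplanes, since $w$ sends the hyperplane $\{\langle\alpha,x\rangle=k\}$ (with $\alpha\in\Phi$, $|k|\le m$) to $\{\langle w\alpha,x\rangle=k\}$, which after possibly replacing $w\alpha$ by $-w\alpha$ and $k$ by $-k$ is again a hyperplane of $\operatorname{Cat}^{(m)}(W)$. Hence $W$ permutes $\mathcal{RC}^{(m)}(W)$. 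Moreover every region of $\operatorname{Cat}^{(m)}(W)$ lies inside a single chamber of the reflection arrangement $\mathcal{A}_W=\operatorname{Cat}^{(0)}(W)$, whose $W$-stabilizer is trivial; so $R$ has trivial stabilizer and $|W\cdot R|=|W|$, which makes the left-hand side of the lemma meaningful.

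Next I would compute $\operatorname{mfl}(w\cdot R)$. Write $H_{\alpha,j}:=\{x\in V:\langle\alpha,x\rangle=j\}$ and let $H_{\alpha_1,m},\dots,H_{\alpha_k,m}$, with $\alpha_1,\dots,\alpha_k\in\Phi^+$, be the $m$-floors of $R$. Since $w$ is a linear automorphism permuting the hyperplanes of the arrangement and fixing the origin, it carries walls of $R$ to walls of $w\cdot R$ and floors of $R$ to floors of $w\cdot R$. Because $R$ is dominant, every floor of $R$ is of the form $H_{\beta,j}$ with $\beta\in\Phi^+$ and $1\le j\le m$, and $w(H_{\beta,j})=\{x:\langle w\beta,x\rangle=j\}$ is an $m$-floor of $w\cdot R$ exactly when $w\beta\in\Phi^+$ and $j=m$ (if $w\beta\in\Phi^-$ the hyperplane reads $\langle-w\beta,x\rangle=-j$ with $-j\le-1<0\ne m$). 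Therefore
\[
\operatorname{mfl}(w\cdot R)=\#\{\,i\in\{1,\dots,k\}:w\alpha_i\in\Phi^+\,\}.
\]

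The third ingredient, which I expect to be the main obstacle, is the precise structure of the roots $\alpha_1,\dots,\alpha_k$. By the definition of the parabolic type, $X$ is the direction space of $\bigcap_i H_{\alpha_i,m}$, so $X^{\perp}=\operatorname{span}(\alpha_1,\dots,\alpha_k)$ and $W_X=\langle s_\alpha:\alpha\in\Phi\cap X^{\perp}\rangle$. I would invoke the known combinatorial description of the $m$-floors of a dominant region of $\operatorname{Cat}^{(m)}(W)$ (implicit in the correspondence of dominant regions with faces of the cluster complex and with geometric chains of ideals; see \cite{athan_trans,thiel_eur} and references therein): the set $\{\alpha_1,\dots,\alpha_k\}$ is precisely the simple system of the parabolic sub-root-system $\Phi_X:=\Phi\cap X^{\perp}$ relative to the induced positive system $\Phi_X^{+}:=\Phi_X\cap\Phi^{+}$. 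In particular the $\alpha_i$ are linearly independent, $k=\operatorname{rank}(W_X)=\operatorname{codim}(X)$, and $W_X=\langle s_{\alpha_1},\dots,s_{\alpha_k}\rangle$. (If a direct proof is wanted: one checks that a root $\delta\in\Phi_X^{+}$ which is decomposable in $\Phi_X^{+}$ forces $H_{\delta,m}$ to be a redundant inequality for $R$, using dominance together with the inequalities cut out by the remaining $m$-floors, so that $H_{\delta,m}$ cannot be a wall; this leaves exactly the indecomposable roots, i.e.\ the simple system of $\Phi_X^{+}$.)

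Finally I would carry out the coset computation. Since $W_X$ is a parabolic subgroup whose simple system $\{\alpha_1,\dots,\alpha_k\}$ lies in $\Phi^{+}$, each left coset of $W/W_X$ has a unique minimal-length representative $v$, characterized by $v(\Phi_X^{+})\subseteq\Phi^{+}$ (equivalently $v(\Phi_X^{-})\subseteq\Phi^{-}$); there are $[W:W_X]$ of them. Writing $w=vu$ with $u\in W_X$, we get $w\alpha_i=v(u\alpha_i)\in\Phi^{+}$ if and only if $u\alpha_i\in\Phi_X^{+}$, so that by the second step $\operatorname{mfl}(w\cdot R)=\#\{i:u\alpha_i\in\Phi_X^{+}\}=k-\operatorname{des}_R(u)$, where $\operatorname{des}_R(u):=\#\{i:u\alpha_i\in\Phi_X^{-}\}$ is the number of right descents of $u$ in $W_X$. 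Summing over the $[W:W_X]$ choices of $v$ and over $u\in W_X$,
\[
\sum_{w\in W}z^{\operatorname{mfl}(w\cdot R)}=[W:W_X]\sum_{u\in W_X}z^{\,k-\operatorname{des}_R(u)}=[W:W_X]\operatorname{Des}(W_X;z),
\]
where the last equality uses the palindromicity of the descent polynomial of $W_X$ (coming, as recalled in the excerpt, from the symmetry of the $h$-polynomial of the Coxeter complex of $W_X$) together with the fact that the left- and right-descent enumerators of $W_X$ coincide via $u\mapsto u^{-1}$. This gives the stated identity.
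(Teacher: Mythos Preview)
Your argument is correct and follows essentially the same route as the paper's: both compute $\operatorname{mfl}(w\cdot R)$ as the number of $m$-floor roots of $R$ that $w$ keeps positive, then use that these roots form a simple system for the parabolic $W_X$, decompose $w$ as a minimal coset representative times an element of $W_X$, and conclude via the palindromicity of $\operatorname{Des}(W_X;z)$. The only difference is in how the key structural fact is sourced: the paper cites Sommers' theorem that $D(R)$ is $W$-conjugate to a subset of $S$ (and then works with a standard parabolic $W_J$), whereas you assert directly that $D(R)$ is the simple system of $\Phi_X\cap\Phi^{+}$---this is equivalent to Sommers' statement, but your parenthetical ``direct'' argument for it is not complete (decomposability of $\delta$ inside $\Phi_X^{+}$ does not by itself make $H_{\delta,m}$ redundant unless you already know the summands are $m$-floor roots), so citing \cite{sommers} is the cleaner justification.
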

\begin{proof}
Let $D(R)$ denote the set of positive roots $\alpha\in\Phi^+$ such that the hyperplane $\langle \alpha,x\rangle=m$ is a wall of $R$. Because $R$ is dominant all its other walls correspond to hyperplanes of the form $\langle\alpha,x\rangle=k$ for some $0\leq k<m$. Now, if $\langle\alpha,x\rangle=k$ is a wall of $R$, then $\langle w\cdot \alpha,x\rangle=k$ is the corresponding wall of $w\cdot R$. Thus, the number of $m$-floors of the region $wR$ is given via
\[
\operatorname{mfl}(w\cdot R)=\operatorname{mfl}(R)-\#\{\alpha\in D(R):\,w\cdot\alpha\in\Phi^{-}\}.
\]
We can assume by \cite{sommers} that after some conjugation the set $D(R)$ is a subset $J\subset S$ of the set $S$ of simple roots of $W$, and $W_X=W_J$. Now, there is a well known unique factorization result $W=W^J\cdot W_J$, where $W^J$ denotes the set of minimum length coset representatives for $W/W_J$. For each $J\subset S$, every element $w\in W$ can be (uniquely) expressed as $w=w^J\cdot w_J$, with $w^J\in W^J$ and $w_J\in W_J$, and where the descents of $w^J$ lie in $S\setminus J$. This means that all elements $w^J$ will send the set of positive simple roots $J$ to positive roots, while the elements $w_J$ will send precisely  their descent sets $\operatorname{des}_S(w_J)$ to negative roots. That is, we will have 
\[
\operatorname{mfl}(w^Jw_J\cdot R)=|J|-\#\operatorname{des}_S(w_J).
\]
We can now rewrite the statistic for the distribution of $m$-floors as:
\[
\sum_{w=w^Jw_J\in W}z^{\operatorname{mfl}(w\cdot R)}=[W:W_J]\cdot\sum_{g\in W_J}z^{|J|-\#\operatorname{des}_S(w_J)}=[W:W_J]\cdot \operatorname{Des}(W_J;z),
\]
where the last equality is the symmetry of descent polynomials. This completes the proof.
\end{proof}

We are now ready to prove the main theorem of this section.

\begin{theo}\label{thm:CPF_Cat}
For every Weyl group $W$ of rank $n$ and every $m\in\mathbb{Z}_{\geq 0}$, the $h$-polynomial of the complex $\CPF^{(m)}$ is given via 
\begin{equation}
h(\CPF^{(m)};z)=\sum_{R\in \mathcal{RC}^{(m)}(W)}z^{n-\operatorname{mfl}(R)} \label{eq:thm_CPF_Cat}
\end{equation}
where $\operatorname{mfl}(R)$ is the number of $m$-floors of the region $R$.
\end{theo}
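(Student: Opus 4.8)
The plan is to match the right-hand side of~\eqref{eq:thm_CPF_Cat} with the formula for $h(\CPF^{(m)};z)$ established in Theorem~\ref{thm:h-vector}, by decomposing the set of all regions $\mathcal{RC}^{(m)}(W)$ into $W$-orbits and applying the lemma above orbit by orbit. Since the reflection arrangement $\mathcal{A}$ is a subarrangement of $\operatorname{Cat}^{(m)}(W)$, each region of $\operatorname{Cat}^{(m)}(W)$ lies in a unique chamber of $\mathcal{A}$; as $W$ permutes these chambers simply transitively, it acts freely on $\mathcal{RC}^{(m)}(W)$, and every orbit meets the set of dominant regions $\mathcal{DRC}^{(m)}(W)$ in exactly one element. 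Hence $\sum_{R\in\mathcal{RC}^{(m)}(W)}z^{n-\operatorname{mfl}(R)}=\sum_{R\in\mathcal{DRC}^{(m)}(W)}\sum_{w\in W}z^{n-\operatorname{mfl}(w\cdot R)}$.

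For the inner sum, fix a dominant region $R$ of parabolic type $[X]\in\mathcal{L}_{\mathcal{A}}/W$. As in the proof of the preceding lemma, after conjugating we may take the set of roots of the $m$-floors of $R$ to be $J\subseteq S$ with $W_X=W_J$, so that $\operatorname{mfl}(R)=|J|=\operatorname{rank}(W_X)=\operatorname{codim}(X)$ and, writing $w=w^Jw_J$ with $w^J\in W^J$ and $w_J\in W_J$, one has $\operatorname{mfl}(w\cdot R)=\operatorname{codim}(X)-\#\operatorname{des}_S(w_J)$. Summing over $w$ and recognizing $\operatorname{Des}(W_X;z)=\sum_{g\in W_X}z^{\#\operatorname{des}_S(g)}$, this gives
\[
    \sum_{w\in W}z^{n-\operatorname{mfl}(w\cdot R)}=[W:W_X]\,z^{\dim(X)}\operatorname{Des}(W_X;z).
\]
Grouping the dominant regions by parabolic type, and writing $N_{[X]}$ for the number of dominant regions of $\operatorname{Cat}^{(m)}(W)$ of parabolic type $[X]$, we obtain $\sum_{R\in\mathcal{RC}^{(m)}(W)}z^{n-\operatorname{mfl}(R)}=\sum_{[X]}N_{[X]}[W:W_X]\,z^{\dim(X)}\operatorname{Des}(W_X;z)$. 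On the other hand, gathering in Theorem~\ref{thm:h-vector} the $[W:N(X)]$ flats lying in each orbit $[X]$ (the data $\dim(X)$, the OS exponents $b_i^X$, and the subgroup $W_X$ depend only on $[X]$) rewrites its right-hand side as $\sum_{[X]}[W:N(X)]\prod_{i=1}^{\dim(X)}(mh+1-b_i^X)\,z^{\dim(X)}\operatorname{Des}(W_X;z)$. Comparing, the theorem is reduced to the identity
\[
    N_{[X]}=\frac{1}{[N(X):W_X]}\prod_{i=1}^{\dim(X)}(mh+1-b_i^X)
\]
for every orbit $[X]$ --- a product formula for the number of dominant regions of the $m$-Catalan arrangement whose $m$-floors span a flat of $\mathcal{A}$ conjugate to $X$.

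This refined enumeration of dominant regions is where I expect the main difficulty to lie; the rest of the argument is bookkeeping. It can be obtained from the parabolic-type refinement of the theorems of Athanasiadis and Thiel behind Proposition~\ref{prop:h_via_mfl}: the dominant regions with a prescribed support flat $Y$ are governed by the restricted arrangement $\mathcal{A}^Y$, whose characteristic polynomial factors as $\prod_{i=1}^{\dim(Y)}(t-b_i^Y)$, and the product $\prod_{i=1}^{\dim(X)}(mh+1-b_i^X)$ emerges upon evaluating at $t=mh+1$ (the level of the outermost hyperplanes $\langle\alpha,x\rangle=m$), with the factor $[N(X):W_X]^{-1}$ accounting for the passage from flats to $W$-orbits. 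Alternatively, and self-containedly within the tools of this paper, one can substitute the product formula for $f(\Delta^{(m)};z)$ from~\cite[Corollary~7.3]{douvropoulosjosuatverges} into Proposition~\ref{prop:h_via_mfl} and run exactly the Kung's-identity expansion used in the proof of Theorem~\ref{thm:h-vector}, which isolates $N_{[X]}$ in the stated form. Once this is settled, the two displays above coincide and~\eqref{eq:thm_CPF_Cat} follows; the companion statement for $\CPF^{(m),+}$ is obtained by the same argument with $mh+1$ replaced by $mh-1$ throughout.
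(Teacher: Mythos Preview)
Your proposal is correct and follows essentially the same route as the paper: decompose $\mathcal{RC}^{(m)}(W)$ into $W$-orbits of dominant regions, apply the preceding lemma orbit by orbit to produce the factor $[W:W_X]\,z^{\dim(X)}\operatorname{Des}(W_X;z)$, and then match with Theorem~\ref{thm:h-vector} via the parabolic-type count $N_{[X]}=\frac{1}{[N(X):W_X]}\prod_{i=1}^{\dim(X)}(mh+1-b_i^X)$, which the paper (like your first justification) takes from~\cite{athan_trans}. One caveat: your ``self-contained'' alternative of feeding the $f$-polynomial of $\Delta^{(m)}$ into Proposition~\ref{prop:h_via_mfl} and running Kung's identity only recovers the total $h$-polynomial $\sum_{[X]}N_{[X]}z^{\dim(X)}$ as a sum of products, not the individual $N_{[X]}$; isolating each $N_{[X]}$ genuinely requires the parabolic-type refinement from~\cite{athan_trans}, so that alternative does not stand on its own.
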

\begin{proof}
The proof is a direct application of the previous lemma. We start by emphasizing that the statement of Proposition~\ref{prop:h_via_mfl} is compatible with the notion of parabolic type. Indeed \cite{athan_trans} implies that in fact
\[
\sum_{R\in \mathcal{DRC}^{(m)}(W)}z^{n-\operatorname{mfl}(R)}=\sum_{[X]\in\mathcal{L}_{\mathcal{A}}/W}\Big(\dfrac{\prod_{i=1}^{\dim(X)}(mh+1-b_i^X)}{[N(X):W_X]} \Big)\cdot z^{\dim(X)}.
\]
Now, combinig this with the previous lemma, we get that 
\begin{align*}
\sum_{R\in \mathcal{RC}^{(m)}}z^{n-\operatorname{mfl}(R)}&=\sum_{[X]\in\mathcal{L}_{\mathcal{A}}/W}\Big(\dfrac{\prod_{i=1}^{\dim(X)}(mh+1-b_i^X)}{[N(X):W_X]} \Big)\cdot z^{\dim(X)}\cdot [W:W_X]\cdot\operatorname{Des(W_X;z)}\\
&=\sum_{X\in\mathcal{L}_{\mathcal{A}}}\prod_{i=1}^{\dim(X)}(mh+1-b_i^X)\cdot z^{\dim(X)}\cdot \operatorname{Des}(W_X;z),
\end{align*}
where the second equality is using that there are $[W:N(X)]$ many flats in the $W$-orbit $[X]$. This last expression is equal to the $h$-polynomial of the complex $\CPF^{(m)}$ after Theorem~\ref{thm:h-vector} and the proof is complete.
\end{proof}

\begin{rema}
Theorem~\ref{thm:CPF_Cat} has a counterpart for the complex $\CPF^{(m),+}$ where the summation is over the \emph{bounded} regions of $\operatorname{Cat}^{(m)}(W)$ and $m$-floors are replaced with $m$-ceilings as in \cite{thiel}. The proof is exactly the same.

We remark also that our current understanding of Theorem~\ref{thm:CPF_Cat} is purely numerological; the two sides of equation \eqref{eq:thm_CPF_Cat} have been computed separately and shown to agree. It would be nice to have a conceptual explanation for the connection between them.
\end{rema}

\section{Open problems}
\label{sec:open}





\begin{open}
    Complete Section~\ref{sec:combmod} by giving a combinatorial model for type $D$ cluster parking functions (possibly building on the combinatorial model of type $D$ clusters in~\cite[Section~5.3]{fominreading}).  It could be useful to have combinatorial models for the three infinite families of the finite type classification.
\end{open}


\begin{open}
    Complete Proposition~\ref{prop:flag} by showing that $\CPF^{(m)}$ and $\CPF^{(m),+}$ are flag complexes.  This could be done by proving Conjecture~\ref{conj:helly} about the Helly property (which might be interesting to investigate on its own).
\end{open}

\begin{open}
    Let $\mathbb{Z}_{mh+2}$ denote the cyclic group or order $mh+2$.  Via the rotation $\mathcal{R}$, $\mathbb{Z}_{mh+2}$ acts on $\Delta^{(m)}$ by automorphisms (see~\cite[Theorem~3.4]{fominreading}).  We showed in \cite[Proposition~10.1]{douvropoulosjosuatverges} that the conjugacy class of $\underline{f}$ is invariant under this action.  This suggests to extend this action to $\CPF^{(m)}$, in a way that is compatible with the action of $W$: is there a natural action of $W \times \mathbb{Z}_{mh+2}$ on $\CPF^{(m)}$ ?  In type $A$, $B$, and  $I$, $\mathbb{Z}_{mh+2}$ acts by rotation of labeled dissections. 
\end{open}


\begin{open}
Show that $\CPF^{(m)}$ and $\CPF^{(m),+}$ are shellable complexes, and construct a shelling that gives a combinatorial proof and interpretation of Theorem~\ref{thm:h-vector}. 
\end{open}


\begin{open}
    Build an explicit basis $(\beta_\phi)_{\phi\in \PF^{(m)}}$ of $\tilde H_{n-1}(\CPF^{(m)})$ (respectively, an explicit basis $(\beta_\phi)_{\phi\in \PF'^{(m)}}$ of $\tilde H_{n-1}(\CPF^{(m),+})$), such that the action of $W$ is given by:
    \[
        w \cdot \beta_{\phi}
            =
        (-1)^{\ell(w)} \beta_{w\cdot \phi}.
    \]
    Indeed, the combinatorial nature of the group action on the homology suggests their existence.  (Note that we need to take the homology with rational coefficients to ensure the existence of such a basis.)
\end{open}





%

\begin{open}
    We defined cluster parking functions as a subposet of $\PF \times \Delta^{(m)}$.  In another direction, the parking function poset also has a Fuß analog $\PF^{(m)}$.  It has been described explicitly (in type $A$, but this extends immediately to the general case) in~\cite[Section~6]{delcroixogerjosuatvergesrandazzo}.  This suggests to mix both Fuß generalizations.  Let $m_1$ and $m_2$ be positive integers.  Can we define $m_1$-cluster $m_2$-parking functions as a subposet of $\PF^{(m_2)} \times \Delta^{(m_1)}$ and study their properties ?
\end{open}



\begin{open}
    Find an explicit isomorphism  of $W$-modules between $\tilde H_{n-1}(\CPF)$ and Gordon's ring $R_W$, defined in~\cite{gordon} as a quotient of diagonal coinvariants.
\end{open}




\begin{open}
    In type $A$, there are combinatorial definitions of both {\it rational} parking functions~\cite{armstrongloehrwarrington} and {\it rational} associahedra~\cite{armstrongrhoadeswilliams}.  Build on these to define rational cluster parking functions.
\end{open}

\small

\end{document}